\theoremstyle{plain}
\newtheorem{satz}{Theorem}[section]
\newtheorem*{theorem*}{Theorem}
\newtheorem{lem}[satz]{Lemma}
\newtheorem{folg}[satz]{Corollary}
\newtheorem{prop}[satz]{Proposition}
\theoremstyle{definition}
\newtheorem{mydef}[satz]{Definition}
\newtheorem{bem}[satz]{Remark}
\newtheorem{prob}[satz]{Problem}
\begin{document}
\bibliographystyle{plain}

\title{\textbf{Symmetry Groups of Principal Bundles Over Non-Compact Bases}}
\author{Jakob Sch\"utt}
\maketitle
\thispagestyle{empty}

\begin{abstract}
In this work we describe how to obtain the structure of an infinite-dimen\-sional Lie group on the group of compactly carried bundle automorphisms $\mathrm{Aut}_c(\mathcal{P})$ for a locally convex prinicpal bundle $\mathcal{P}$ over a finite-dimensional smooth $\sigma$-compact base $M$. For this we first consider the Lie group structure on the group of compactly carried vertical bundle morphisms $\mathrm{Gau}_c(\mathcal{P})$ (in both cases ``compactly carried" refers to being compactly carried on the base in a certain sense). We then introduce the Lie group structure on $\mathrm{Aut}_c(\mathcal{P})$ as an extension of a certain open Lie subgroup of the compactly carried diffeomorphisms $\mathrm{Diff}_c(M)$ by the gauge group $\mathrm{Gau}_c(\mathcal{P})$. We find an explicit condition on $\mathcal{P}$ ensuring that $\mathrm{Gau}_c(\mathcal{P})$ can be equipped with a Lie group structure enabling the extension just mentioned and show that this condition is satisfied by selected classes of bundles.\\
[\baselineskip]
\textbf{Keywords:} infinite-dimensional Lie
group; mapping group; gauge group; automorphism group; principal bundle; compact support
\\[\baselineskip]
\textbf{MSC:} 58D05, 22E65, 81R10
\end{abstract}

\bigskip

\section{Introduction}
In this paper, we will introduce two new examples of infinite-dimensional Lie groups, the gauge group $\mathrm{Gau}_c(\mathcal{P})$ and the group of bundle automorphisms $\mathrm{Aut}_c(\mathcal{P})$ over a prinicipal $G$-bundle $\mathcal{P}$ with $\sigma$-compact finite-dimensional base $M$. Both Lie group structures will be described in an explicit and accessible way. The gauge group will be shown to be isomorphic to a closed subgroup of a weak direct product of certain mapping groups. Then, $\mathrm{Aut}_c(\mathcal{P})$ can be obtained by extending (an open subgroup of) $\mathrm{Diff}_c(M)$, the group of compactly carried diffeomorphisms, by the gauge group. In this sense the present work relates two major classes of infinite-dimensional Lie groups in a non-trivial way, namely groups of diffeomorphisms and mapping groups. To make this thesis more readable many of the requirements needed for the later parts are organized at the beginning. 
The general structure is as follows.
\bigskip

In the first part we introduce our framework of infinite-dimensional differential calculus. As the groups of mappings we are interested in are defined on compact manifolds we need to find a way to cover our $\sigma$-compact base $M$ with such manifolds. If there exist components of $M$ that are not compact such a cover cannot be expected to exist, however we recall the concept of manifolds with corners to remedy this problem. For this, we will reiterate the basics about differentiable mappings between not necessarily open subsets of locally convex spaces.\\
Next, we explain how to turn spaces of mappings with values in locally convex spaces into locally convex spaces themselves. These spaces will serve as modelling space for groups of mappings, i.e., Lie group valued functions and it will turn out that the charts are given by push-forwards of suitable Lie group charts. Very briefly, we will touch upon spaces of sections. Furthermore, we state several useful smoothness result about maps between spaces of mappings and groups of mappings. Most results of these two sections are taken from \cite{GloBO}.\\
As mentioned above, we want to use vector (subspaces) of direct sums of locally convex spaces as modelling space for $\mathrm{Gau}_c(\mathcal{P})$, hence we introduce them at this point. Additionally, we define weak direct products of Lie groups which are also modelled on said sums and state various facts about differentiable mappings between these objects. We also consider direct limits to understand the topology of spaces of compactly carried maps. However, in the later parts we do not use these results very much, as the aforementioned facts about direct sums prove to be very powerful.
\bigskip

Next, we have a short part about principal bundles and trivialization systems. There are many ways to define principal bundles. To make this work more accessible for readers who are not familiar with the concept, we introduce them in a straightforward and simple way and explain some fundamentals about transition functions. Trivializing systems are covers of the base $M$ by suitable trivializing sets. Frequently we want them to have nice properties, for example to be relative compact and locally finite. Therefore, we will provide several lemmas and remarks concerning covers and trivializing systems. At the end of this section, we define the associated bundle. Later we will show that its sections can be related to the direct sums we are interested in. Again, as it is more convenient to work directly with the sums we will not need the associated bundle for our main results but it may become useful in the future because there are some results about so called almost local mappings between spaces of sections (see \cite[Theorem F.30]{GloTF}). We continue with our two main parts.
\bigskip

First, we discuss several isomorphic Lie algebras, most importantly the closed Lie subalgebra $\mathfrak{g}_{\overline{\mathcal{V}}}(\mathcal{P})$ of the direct sum $\bigoplus_{i\in\mathbb{N}}C^\infty(\overline{V}_i,\mathfrak{g})$ for an adequate trivializing system $\overline{\mathcal{V}}:=(\overline{V}_i,\sigma_i)_{i\in\mathbb{N}}$. Throughout the remainder this will be our modelling space of choice.
Next, we want to equip the gauge group with a Lie group structure. To do so we show that it is isomorphic as an abstract group to the closed subgroup $G_{\overline{\mathcal{V}}}(\mathcal{P})$ of the weak direct product $\prod^\ast_{i\in\mathbb{N}}C^\infty(\overline{V}_i,G)$. Unfortunately in the infinite-dimensional case closed subgroups are not automatically Lie subgroups, which is why we need to recall from \cite{Wockel3} the so called ``property SUB" and extend it to a certain property $\text{SUB}_\oplus$ which is better adapted to infinite direct sums. In short, this is a condition that assures the existence of a submanifold chart, modelling $G_{\overline{\mathcal{V}}}(\mathcal{P})$ on $\mathfrak{g}_{\overline{\mathcal{V}}}(\mathcal{P})$. We will prove that the Lie group structure of $G_{\overline{\mathcal{V}}}(\mathcal{P})$ is independent (up to isomorphism) of the trivializing system and we show several cases for which the property $\text{SUB}_\oplus$ holds. However, whether there exist principal bundles that do not satisfy this property remains an open question at this point.
\bigskip

In the last part, we finally turn to the group of bundle automorphisms. We want to establish the Lie group structure by an extension of Lie groups. After a short introduction to the theory of non-abelian Lie group extensions it will turn out that we need to find a so called smooth factor system. We spend the rest of this work with checking the various requirements of a smooth factor system and finally arrive at:
\begin{theorem*}
Let $\mathcal{P}$ be a smooth principal $G$-bundle over a finite-dimensional $\sigma$-compact smooth manifold $M$. If $\mathcal{P}$ has the property $\text{SUB}_\oplus$, then $\mathrm{Aut}_c(\mathcal{P})$ carries a Lie group structure such that we have an extension of smooth Lie groups
\[
\mathrm{Gau}_c(\mathcal{P})\hookrightarrow\mathrm{Aut}_c(\mathcal{P})\xtwoheadrightarrow{\mathcal{Q}}\mathrm{Diff}_c(M)_{\mathcal{P}},
\]
where $\mathcal{Q}:\mathrm{Aut}_c(\mathcal{P})\rightarrow\mathrm{Diff}_c(M)_{\mathcal{P}}$ is the natural homomorphism and $\mathrm{Diff}_c(M)_{\mathcal{P}}:=\mathrm{im}(\mathcal{Q})$. 
\end{theorem*}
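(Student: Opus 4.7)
The plan is to realize $\mathrm{Aut}_c(\mathcal{P})$ as a non-abelian Lie group extension of $\mathrm{Diff}_c(M)_{\mathcal{P}}$ by $\mathrm{Gau}_c(\mathcal{P})$, using the correspondence between such extensions and smooth factor systems recalled in the previous section. Since $\mathrm{Gau}_c(\mathcal{P})$ is abstractly the kernel of $\mathcal{Q}$ and $\mathrm{Diff}_c(M)_{\mathcal{P}} := \mathrm{im}(\mathcal{Q})$, the sequence is already exact as abstract groups; the entire substance of the theorem is producing the smooth structure.

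First I would verify that $\mathrm{Diff}_c(M)_{\mathcal{P}}$ is open in $\mathrm{Diff}_c(M)$ and therefore inherits a Lie group structure. This is a lifting argument: any compactly carried diffeomorphism sufficiently close to $\mathrm{id}_M$ has support contained in a relatively compact region of $M$ over which a bundle lift can be assembled locally from the transition functions of $\mathcal{P}$, so it lies in $\mathrm{im}(\mathcal{Q})$. Next, I would pick a local set-theoretic section $\sigma: V \to \mathrm{Aut}_c(\mathcal{P})$ of $\mathcal{Q}$ on an identity neighborhood $V\subseteq\mathrm{Diff}_c(M)_{\mathcal{P}}$ and form the candidate conjugation action and factor cocycle
\[
S(\varphi)(\gamma) := \sigma(\varphi)\,\gamma\,\sigma(\varphi)^{-1}, \qquad \omega(\varphi_1,\varphi_2) := \sigma(\varphi_1)\,\sigma(\varphi_2)\,\sigma(\varphi_1\varphi_2)^{-1}\in\mathrm{Gau}_c(\mathcal{P}).
\]
The algebraic identities required of a factor system follow directly from the associativity in $\mathrm{Aut}_c(\mathcal{P})$ and from the fact that $\sigma$ sections $\mathcal{Q}$.

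The main obstacle is verifying the smoothness of $S$ and $\omega$ near the identity. Under the isomorphism $\mathrm{Gau}_c(\mathcal{P})\cong G_{\overline{\mathcal{V}}}(\mathcal{P})$, the action $S(\varphi)$ reduces chart by chart to composition with a lift of $\varphi^{-1}$ followed by conjugation by transition functions on the factors $C^\infty(\overline{V}_i,G)$; each of these operations is smooth by the mapping-space results cited earlier. The delicate step is assembling these componentwise statements into joint smoothness of the global maps
\[
S:\mathrm{Diff}_c(M)_{\mathcal{P}}\times\mathrm{Gau}_c(\mathcal{P})\to\mathrm{Gau}_c(\mathcal{P}),\qquad \omega: V\times V\to\mathrm{Gau}_c(\mathcal{P}).
\]
This is precisely where the hypothesis $\text{SUB}_\oplus$ enters in an essential way: the submanifold chart it supplies transfers the problem from the closed subgroup $G_{\overline{\mathcal{V}}}(\mathcal{P})\subseteq\prod^\ast_{i\in\mathbb{N}} C^\infty(\overline{V}_i,G)$ to the modelling direct sum $\bigoplus_{i\in\mathbb{N}} C^\infty(\overline{V}_i,\mathfrak{g})$, where the direct-sum calculus developed in the first part of the paper renders the assembly tractable. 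Once the factor system $(S,\omega)$ has been shown to be smooth, the general non-abelian extension theorem immediately yields the Lie group structure on $\mathrm{Aut}_c(\mathcal{P})$ making the stated sequence an extension of Lie groups.
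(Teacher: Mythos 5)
Your overall strategy is the one the paper follows: exhibit a smooth factor system $(T,\omega)$ built from a locally smooth section of $\mathcal{Q}$ and invoke the non-abelian extension theorem (\ref{II.8}), with the property $\text{SUB}_\oplus$ entering to transfer smoothness questions from $G_{\overline{\mathcal{V}}}(\mathcal{P})$ to the modelling direct sum. However, there is a genuine gap at the step you dispose of in one clause, namely that a lift of a diffeomorphism near $\mathrm{id}_M$ ``can be assembled locally from the transition functions.'' A diffeomorphism $g$ close to the identity need not preserve any trivializing cover, so there is no direct local assembly: the lift of \ref{2.3} only exists for diffeomorphisms whose support lies in a \emph{single} trivializing set. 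The paper's solution is the fragmentation of Lemmas \ref{2.4}--\ref{2.5}: using a chart $\varphi$ of $\mathrm{Diff}_c(M)$ and a partition of unity, one factors $g=\lim_{n\to\infty}s_1(g)\circ\cdots\circ s_n(g)$ with $\mathrm{supp}(s_i(g))\subseteq V_i$ and smooth $s_i$, lifts each factor by \ref{2.3}, and defines $S(g)$ as the (locally finite) composite. Over a non-compact base this product is genuinely infinite, which forces the shrinking of the chart domain to the set $\Omega$ in \ref{2.4} (a point the paper notes was overlooked in the compact-base literature) and necessitates the explicit local formula of \ref{2.7} for $S(g)\circ\sigma_i$, on which all subsequent smoothness proofs (\ref{2.9}--\ref{2.12}) rest. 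Without this construction your section $\sigma$ is merely set-theoretic and you have no handle on the smoothness of $T$ and $\omega$.

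A second, smaller omission: the definition of a smooth factor system (\ref{II.6}) requires, beyond smoothness of $\omega$ near the identity, that for \emph{every} $g$ in the base group the map $\omega_g\colon x\mapsto\omega(g,x)\,\omega(gxg^{-1},g)^{-1}$ be smooth on some identity neighbourhood; equivalently one must show that $g'\mapsto F\circ S(g')\circ F^{-1}\circ S(g\circ g'\circ g^{-1})^{-1}$ is smooth for arbitrary $F\in\mathrm{Aut}_c(\mathcal{P})$ with $F_M=g$. This is the second half of \ref{2.12} and needs its own argument (the conjugating $F$ is not of the form $S(h)$ in general), so it cannot be subsumed under the smoothness of $\omega$ on $V\times V$. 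Your proposal addresses only the latter.
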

In the proof of this theorem we give an explicit description of the Lie group structure of the bundle automorphisms in terms of charts. Note that short of the property $\text{SUB}_\oplus$ mentioned above there are no other requirements for the Lie group $G$ or the principal bundle $\mathcal{P}$ for this theorem to hold.
\bigskip

This work is based on \cite{Wockel3}, where the base $M$ was assumed to be compact. While the general structure of our approach is similar, some details of the proofs differ considerably. At many points the smoothness of mappings with values in direct sums must be checked. If $M$ is compact these sums are finite and it is enough to check smoothness af all components. This does not suffice in our situation, where we have to deal with infinite direct sums. In general, we will reduce the smoothness of the maps in question to the smoothness of suitable maps between direct sums, where we have powerful tools at our disposal (see for example \cite{GloMM}). Also, the ``property SUB" introduced by Wockel is too restrictive for our setting and we need to change it to the property $\text{SUB}_\oplus$ mentioned above. Furthermore, to find a smooth factor system, we have to fragment a compactly carried diffeomorphism of $M$ into diffeomorphisms which are carried on specific compact sets. For compact $M$ this fragmentation is always finite but in our situation it arises as some limit and this makes several adjustments necessary. Finally, we were able to use Theorem \ref{exp} and Theorem \ref{eval} recently proven in \cite{Hamsa} to simplify some of the smoothness arguments.

\section{Setting}
In this work all locally convex spaces are assumed to be Hausdorff $\mathbb{R}$-vector spaces. Furthermore, all Lie groups are smooth locally convex Lie groups.
\subsection{Manifolds with corners}
This section introduces the elementary notions of differential calculus on locally convex spaces for not necessarily open subsets with dense interior and manifolds modelled on such spaces. 
In the case of open subsets, this approach is known as Keller's $C^K_c$-theory, see \cite{GloComplete}, \cite{GloBO}, \cite{Michor} and \cite{Milnor} for a streamlined introduction.
However, at several points it will be necessary to cover a $\sigma$-compact manifold with compact subsets that also carry some sort of manifold structure and to this end, we introduce manifolds with corners, a special case of manifolds with rough boundary from \cite{GloBO}.
\begin{mydef}\label{2.1.2}
Let $E$ and $F$ be locally convex space, $U\subseteq E$ be open, $f\colon U\rightarrow F$ be a map and $r\in\mathbb{N}\cup\{\infty\}$. We say that $f$ is $C^r$ if it is continuous and, for all $k\in\mathbb{N}$ such that $k\leq r$, the iterated directional derivatives $d^{(k)}f(x,y_1,\ldots, y_k):=D_{y_1}\ldots D_{y_k}f(x)$ exist for all $x\in U$ and $y_1,\ldots, y_k\in E$, and define a continuous map $d^{(k)} f\colon U\times E^k\rightarrow F$. The $C^{\infty}$-maps are called \textit{smooth}.\\
More generally, if $V\subseteq E$ is a locally convex subset with dense interior we say that a continuous map $g\colon V\rightarrow F$ is $C^r$ if $g\big|_{ V^\circ}$ is $C^r$ in the sense above and for each $k\in\mathbb{N}$ with $k\leq r$ the map $d^{(k)} g\colon V^\circ\times E^k\rightarrow F$ extends to a continuous map defined on $V\times E^k$, where $d^{(0)}g:=g$.
\end{mydef}

\begin{mydef}\label{kartenwechsel}
Let $E$ be a locally convex space, $\Lambda:=(\lambda_1,\ldots,\lambda_n)$ be continuous linearly independent linear functionals on $E$ and $E^{+}_\Lambda:=\bigcap_{k=1}^n\lambda_k^{-1}(\mathbb{R}_0^{+})$. A Hausdorff space $M$ is called a \textit{smooth manifold with corners} if there exists a collection $(U_i,\varphi_i)_{i\in I}$ of homeomorphisms $\varphi_i\colon U_i\rightarrow\varphi_i(U_i)$ onto open subsets $\varphi_i(U_i)$ of $E^{+}_\Lambda$ (where $\Lambda$ can depend on $i$) such that $\bigcup_{i\in I}U_i=M$  and for each pair $\varphi_i$ and $\varphi_j$ with $U_i\cap U_j\neq\varnothing$, the coordinate change
\begin{align*}
\varphi_i(U\cap U_j)\ni x\mapsto\varphi_j\left(\varphi_i^{-1}(x)\right)\in\varphi_j(U_i\cap U_j)
\end{align*}
is smooth in the sense of \ref{2.1.2}. The set $\{\varphi_i\colon i\in I\}$ is called a (smooth) \textit{atlas} of $M$ and the maps $\varphi_i$ are called charts. Note that $E^{+}_\Lambda$ is locally convex as it is the finite intersection of locally convex spaces $\lambda_k^{-1}(\mathbb{R}_0^{+})$. Furthermore, $\varphi_j(U_i\cap U_j)$ is open in $\varphi_j(U_j)$ and thus a locally convex subset with dense interior in $E$.
\end{mydef}

\begin{bem}
Note that the set of atlases for a smooth manifold with corners can be ordered by inclusion and every atlas is contained in a unique \textit{maximal atlas}. We will always assume our manifolds with corners to be equipped with the maximal atlas.\\
Let $M$ be a smooth manifold with corners and $x\in M$. If $\varphi\colon U_\varphi\rightarrow V_\varphi$ and $\psi\colon U_\psi\rightarrow V_\psi$ are two charts of $M$ with $x\in U_\varphi\cap U_\psi$, it can be shown that $\psi(x)\in\partial V_\psi$ if and only if $\varphi(x)\in\partial V_\varphi$. Hence we can define the \textit{(formal) boundary} $\partial M$ of $M$ to consist of all $x\in M$ such that there exists a chart for which the image of $x$ lies in the boundary. A manifold with corners for which $\partial M=\varnothing$ is called a \textit{manifold without corners}.
\end{bem}

\begin{mydef}[Submanifolds with corners]\cite[cf. Chapter 3]{DiffTop}
 Let $M$ be a smooth manifold with corners modelled on a locally convex space $E$. A subset $M'\subseteq M$ is called a \textit{submanifold with corners} of $M$ if for every $x\in M$ there exists a chart $\varphi\colon U\rightarrow\varphi(U)$ with $x\in U$, a closed subspace $E'$ of $E$ and a finite system $\Lambda':=(\lambda'_1,\ldots,\lambda'_n)$ of continuous linearly independent linear functionals on $E'$, such that 
\begin{enumerate}
 \item $\varphi(U\cap M')=\varphi(U)\cap E'^{+}_{\Lambda'}$ and
 \item $\varphi(U)\cap E'^{+}_{\Lambda'}$ is an open subset of $E'^{+}_{\Lambda'}$,
\end{enumerate}
where $E'^{+}_{\Lambda'}:=\bigcap_{k=1}^n\lambda'^{-1}_k(\mathbb{R}_0^{+})$.
\end{mydef}

\begin{bem}\label{umfkt}
Let $M$ be a manifold with corners and $M'$ be a submanifold with corners of $M$. From the definition it is immediately obvious that the inclusion $i\colon M'\rightarrow M$ is smooth. \\
If $M$ is finite-dimensional, we can assume that its modelling space is $[0,\infty)^k\times\mathbb{R}^{n-k}$ for some $0\leq k\leq n$. In this situation, there are many instances where we want to find for a given $x\in M$ a relatively compact open neighbourhood $U$ of $x$ such that $\overline{U}$ is a submanifold with corners of $M$. This can be achieved as follows. We may assume w.l.o.g. that there exists a chart $\varphi\colon W\rightarrow W'$ with $x\in W$, such that $\varphi(x)\in[0,1)^k\times(0,1)^{n-k}=:U'$ and $[0,1]^n\subseteq W'$. Then $U:=\varphi^{-1}(U')$ is a relatively compact open neighbourhood of $x$ and $\overline{U}=\varphi^{-1}([0,1]^n)$ is a submanifold with corners of $M$.
\end{bem}

\begin{mydef}[Smooth mappings between manifolds]
Let $M$ and $N$ be smooth manifolds with corners modelled on locally convex spaces $E$ and $F$. A map $f\colon M\rightarrow N$ is called smooth if $f$ is continuous and, for every chart $\psi$ of $M$ and $\phi$ of $N$, the map
\[
\phi\circ f\circ\psi^{-1}\colon E\supseteq\psi(f^{-1}(U_\phi)\cap U_\psi)\rightarrow F
\]
is smooth. Note that $\psi(f^{-1}(U_\phi)\cap U_\psi)$ is an open subset of $U_\psi$ and hence a locally convex subset with dense interior in $E$. 
\end{mydef}

\begin{bem}[The tangent bundles of manifolds with corners]
Similarly to ordinary manifolds, we can define the tangent bundle $TM$ for a smooth manifold with corners $M$ modelled on a locally convex space $E$. The exact construction is a bit technical and too long to be reiterated completely at this point but the general idea is as follows: We define a certain equivalence relation between triples $(\phi,x,v)$, where $\phi$ is a chart of $M$ such that $x=\phi(p)$ for some $p\in U_\phi$ and $v\in E$. Then we equip $TM$ with the final topology with respect to the maps
\[
T\phi^{-1}\colon V_\phi\times E\rightarrow TM,\quad (T\phi^{-1})(x,y):=[\phi,x,y],
\]
where $\phi$ runs through the charts of a maximal atlas of $M$. It turns out that these $T\phi^{-1}$ define homeomorphisms and $TM$ has the structure of a smooth manifold with corners. Furthermore, $TM$ is a vector bundle with fibres $T_xM$ isomorphic to $E$. For a detailed construction see \cite{GloBO}. We define the iterated tangent bundles inductively by $T^kM:=T(T^{k-1}M)$.
\end{bem}

\begin{mydef}{(Vector bundles)}
Let $M$ be a smooth manifold with corners modelled over a locally convex space and $F$ be a locally convex space. A smooth \textit{vector bundle} over $M$ with typical fibre $F$ is a smooth manifold with corners $E$, together with a smooth surjection $\pi\colon E\rightarrow M$ and equipped with a vector space structure on each fibre $E_m:=\pi^{-1}(\{m\})$, such that for each $m_0\in M$ there exists an open neighbourhood $M_\psi$ of $m_0$ and a diffeomorphism
\[
\psi\colon\pi^{-1}(M_\psi)\rightarrow M_\psi\times F
\]
(called a ``local trivialization of $E$ about $m_0$") such that $\psi(E_m)=\{m\}\times F$ for each $m\in M_\psi$ and $\mathrm{pr}_F\circ\psi\big|_{E_m}\colon E_m\rightarrow F$ is linear (and thus an isomorphism of topological vector spaces with respect to the topology on $E_m$ induced by $E$). We will often write $\pi\colon E\rightarrow M$ for such a vector bundle.\\
We call a vector bundle \textit{trivial} if there exists a global trivialization. 
For a local trivialization like above we denote by $E\big|_{M_\psi}:=\pi^{-1}(M_\psi)$ the \textit{restricted bundle}. It is obviously a trivial vector bundle.
\end{mydef}

\begin{bem}
Let $M$ be a smooth manifold with corners and $f\colon M\rightarrow E$ be a $C^r$-map to a locally convex space $E$. Then we define the tangent map of $f$ by $Tf\colon TM\rightarrow TY= Y\times Y$ and it has the form $(x,v)\mapsto (f(x),df(x,v))$ for $x\in M$ and $v\in T_xM$. We set $d^0f:=f$ and define $d^kf\colon T^kM\rightarrow E$ recursively by $d^kf:=d(d^{k-1}f)$ for all $k\leq r$.
\end{bem}

\subsection{Spaces of mappings}

In the following, we define spaces of mappings $C^r(M,E)$ between manifolds with corners $M$ and locally convex spaces $E$ and turn them into locally convex spaces. We will also turn groups $C^r(K,G)$ of mappings between a compact manifold with corners $K$ and a Lie group $G$, into Lie groups. For both concepts several useful facts are mentioned.

\begin{mydef}[The compact-open topology]
Let $X$ and $Y$ be Hausdorff topological spaces. We write $C(X,Y)$ for the space of continuous maps from $X$ to $Y$. Given a compact subset $K\subseteq X$ and an open subset $U\subseteq Y$, we define
\[
\lfloor K, U\rfloor:=\{\gamma\in C(X,Y)\colon\gamma(K)\subseteq U\}.
\]
Then the sets
\[
\lfloor K_1,U_1\rfloor\cap\cdots\cap\lfloor K_n, U_n\rfloor,
\]
where $n\in\mathbb{N}$, $K_1,\ldots K_n$ are compact subsets of $X$ and $U_1,\ldots U_n$ open subsets of $Y$, form a basis for a topology on $C(X,Y)$, called the \textit{compact-open topology}. We write $C(X,Y)_{c.o.}$ for $C(X,Y)$ equipped with the compact-open topology. 
It can be easily shown that $C(X,Y)_{c.o.}$ is Hausdorff and the point evaluation \[
\mathrm{ev}_x\colon C(X,Y)_{c.o.}\rightarrow Y,\quad \mathrm{ev}_x(\gamma):=\gamma(x)\] 
is continuous for every $x\in X$.
\end{mydef}

\begin{mydef}[The topology of $C^r(M,E)$]
Let $r\in\mathbb{N}_0\cup\{\infty\}$ and $M$ be a smooth manifold  (with or without corners). If $E$ is a locally convex space, we let $C^r(M,E)$ be the vector space of $E$-valued $C^r$-maps on $M$. We give $C^r(M,E)$ the initial topology with respect to the mappings
\[
d^k\colon C^r(M,E)\rightarrow C(T^kM,E)_{c.o.},\quad \gamma\mapsto d^k\gamma,
\]
where $k\in\mathbb{N}_0$ such that $k\leq r$. Given a compact subset $K$ of $M$, we define
\[
C^r_K(M,E):=\left\{\gamma\in C^r(M,E)\colon\gamma\big|_{M\setminus K}=0\right\},
\]
and equip this vector subspace of $C^r(M,E)$ with the induced topology.
\end{mydef}

\begin{bem}\label{4.2.3}
The topology on $C^r(M,E)$ defined above makes the linear map
\[
C^r(M,E)\rightarrow\prod_{\mathbb{N}\ni k\leq r} C(T^kM,E)_{c.o.},\quad \gamma\mapsto(d^k\gamma)_{0\leq k\leq r}
\]
a topological embedding. This implies that $C^r(M,E)$ is a Hausdorff locally convex space. Since the inclusion map $d^0\colon C^r(M,E)\rightarrow C(M,E)_{c.o.}$ is continuous and linear and the point evaluations $\mathrm{ev}_x\colon\gamma\mapsto\gamma(x)$ on $C(M,E)_{c.o.}$ are continuous linear maps, we have that
\[
C^r_K(M,E)=\bigcap_{x\in M\setminus K}(\mathrm{ev}_x\circ d^0)^{-1}(\{0\})
\]
is a closed vector subspace of $C^r(M,E)$.
\end{bem}

\begin{lem}\label{A.14}\cite[cf. Lemma 2.2.6]{Diss}
If $M$ is a smooth manifold with corners, $E$ is a locally convex vector space and $U\subseteq M$ is an open set, then the restriction map
\[
\mathrm{res}_U\colon C^{\infty}(M,E)\rightarrow 
C^{\infty}(U,E),\quad\eta\mapsto\eta\big|_U
\]
is continuous and linear. If additionally $\overline{U}$ is a submanifold with corners of $M$, then
\[
\mathrm{res}_{\overline{U}}\colon C^{\infty}(M,E)\rightarrow C^{\infty}(\overline{U},E),\quad\eta\mapsto\eta\big|_{\overline{U}}
\]
is also continuous and linear.
\end{lem}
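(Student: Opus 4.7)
The plan is to exploit the universal property of the initial topology on $C^\infty(M,E)$ and $C^\infty(U,E)$ (resp. $C^\infty(\overline{U},E)$) together with the naturality of iterated differentials under pullback. Linearity of $\mathrm{res}_U$ and $\mathrm{res}_{\overline{U}}$ is automatic from pointwise operations, so the only substantive issue is continuity.

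First I would handle both cases in a uniform way. In both cases the subset in question carries a manifold-with-corners structure for which the inclusion $\iota\colon U\hookrightarrow M$ (resp.\ $\iota\colon\overline{U}\hookrightarrow M$) is smooth: for $U$ open this is immediate, and for $\overline{U}$ a submanifold with corners this is precisely the content of Remark \ref{umfkt}. Consequently the iterated tangent map $T^k\iota\colon T^kU\to T^kM$ (resp.\ $T^k\overline{U}\to T^kM$) is well-defined and continuous for every $k\in\mathbb{N}_0$.

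Next I would verify the chain-rule identity
\[
d^k(\eta\circ\iota)=d^k\eta\circ T^k\iota\qquad\text{for all }\eta\in C^\infty(M,E),
\]
proceeding by induction on $k$ from the familiar formula $d(\eta\circ\iota)(x,v)=d\eta(\iota(x),d\iota(x,v))$ applied via the construction of $TM$ recalled in the preceding remark. Given this, for the restriction map $\mathrm{res}$ (standing for either of the two maps) one has the factorisation
\[
d^k\circ\mathrm{res}\;=\;(T^k\iota)^{\ast}\circ d^k,
\]
where $(T^k\iota)^{\ast}\colon C(T^kM,E)_{c.o.}\to C(T^kU,E)_{c.o.}$ (resp.\ into $C(T^k\overline{U},E)_{c.o.}$) denotes precomposition with $T^k\iota$.

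It then remains to observe that $d^k\colon C^\infty(M,E)\to C(T^kM,E)_{c.o.}$ is continuous by definition of the topology on $C^\infty(M,E)$, and that the pullback $(T^k\iota)^{\ast}$ is continuous with respect to the compact-open topologies: a subbasic open set $\lfloor K,V\rfloor$ in $C(T^kU,E)_{c.o.}$ pulls back to $\lfloor T^k\iota(K),V\rfloor$, which is open because $T^k\iota(K)\subseteq T^kM$ is compact. Since the topology on $C^\infty(U,E)$ (resp.\ $C^\infty(\overline{U},E)$) is the initial one with respect to the family $(d^k)_{k\in\mathbb{N}_0}$, the continuity of $d^k\circ\mathrm{res}$ for all $k$ yields the continuity of $\mathrm{res}$. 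I do not foresee a genuine obstacle here; the only mildly delicate point is bookkeeping the chain rule for the iterated tangent functor on manifolds with corners, which however follows the same pattern as in the open case since the defining differentials are taken on the (dense) interior and extend continuously to the boundary by Definition \ref{2.1.2}.
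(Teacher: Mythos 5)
Your proposal is correct and follows essentially the same route as the paper: the paper's (very terse) proof likewise reduces everything to the observation that compact subsets of $T^nU$ and $T^n\overline{U}$ are compact in $T^nM$, so that continuity follows from the definition of the initial topologies via the maps $d^k$ into the compact-open spaces. Your version merely makes explicit the factorisation through $(T^k\iota)^{\ast}$ and the chain rule that the paper leaves implicit.
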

\begin{proof}
Obviously both maps are linear thus we only need to show that the maps are continuous. For every $n\in\mathbb{N}_0$ each compact set $K\subseteq T^nU$ or $K'\subseteq T^n\overline{U}$ is also compact in $T^nM$. Hence the lemma follows directly from the definition of the topology above.
\end{proof}

\begin{lem}\label{cont}cf. \cite[Lemma 4.24]{GloTF}
Let $M$ be a smooth manifold with corners, $K\subseteq M$ compact and $E$ be a locally convex space. If $V\subseteq M$ is open such that $K\subseteq V$, then the continuation
\[
\kappa\colon C^r_K(V,E)\rightarrow C^r_K(M,E),\quad f\mapsto\tilde{f},
\]
where
\[
\tilde{f}(x):=\begin{cases}
f(x)&\quad\text{if }x\in V\\
0&\quad\text{else}
\end{cases}
\]
is continuous and linear. Additionally we get $C^r_K(V,E)\cong C^r_K(M,E)$.
\end{lem}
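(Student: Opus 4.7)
The plan is to verify that $\kappa$ is well-defined and linear, establish its continuity via the initial topology on $C^r(M,E)$, and then exhibit an explicit continuous inverse. Well-definedness reduces to showing $\tilde f$ is $C^r$: since $K$ is compact in the Hausdorff space $V$, the set $V \setminus K$ is open in $V$, and $f$ vanishes there. Hence $\tilde f$ vanishes on the open set $M \setminus K = (V \setminus K) \cup (M \setminus V)$. Together with $V$ this open set covers $M$ (as $K \subseteq V$), and $\tilde f$ is $C^r$ on each piece (being equal to $f$ on $V$ and to $0$ on $M \setminus K$), so $\tilde f \in C^r_K(M,E)$. Linearity is immediate.

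For continuity, Remark \ref{4.2.3} lets me reduce to showing that $d^k \circ \kappa \colon C^r_K(V,E) \to C(T^k M, E)_{c.o.}$ is continuous for each $k \leq r$. Fix a subbasic open set $\lfloor L, W \rfloor$ with $L \subseteq T^k M$ compact and $W \subseteq E$ open, and write $\pi_k \colon T^k M \to M$ for the iterated basepoint projection. By regularity (applied to the compact set $K$ and the disjoint closed set $M \setminus V$) I choose an open $U \subseteq M$ with $K \subseteq U \subseteq \overline U \subseteq V$; then
\[
L' := L \cap \pi_k^{-1}(\overline U), \qquad L'' := L \setminus \pi_k^{-1}(U)
\]
are compact subsets with $L = L' \cup L''$, $L' \subseteq T^k V$, and $L'' \subseteq \pi_k^{-1}(M \setminus K)$. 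Because $\tilde f$ is identically zero on the open set $M \setminus K$, all its derivatives vanish on $\pi_k^{-1}(M \setminus K)$, and in particular on $L''$. Hence the condition $d^k \tilde f(L) \subseteq W$ is either equivalent to $d^k f(L') \subseteq W$ (if $L'' = \emptyset$ or $0 \in W$), or defines the empty set (if $L'' \neq \emptyset$ and $0 \notin W$). In each case the preimage under $d^k \circ \kappa$ is open in $C^r_K(V,E)$, the former by continuity of $d^k \colon C^r(V,E) \to C(T^k V, E)_{c.o.}$ composed with the inclusion $C^r_K(V,E) \hookrightarrow C^r(V,E)$.

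Finally, the restriction $\mathrm{res}_V$ from Lemma \ref{A.14} (whose proof works for general $r$) sends $C^r_K(M,E)$ continuously into $C^r_K(V,E)$, because a function supported in $K \subseteq V$ restricts to a function still supported in $K$; and $\mathrm{res}_V$ is manifestly the two-sided inverse of $\kappa$, yielding the asserted topological isomorphism $C^r_K(V,E) \cong C^r_K(M,E)$. I expect the main subtlety to lie in the continuity step, specifically in the decomposition of an arbitrary compact $L \subseteq T^k M$ into a piece $L' \subseteq T^k V$ on which $d^k f$ controls $d^k \tilde f$ and a piece $L''$ on which $d^k \tilde f$ vanishes identically; the case split on whether $0 \in W$ is minor but needs to be handled to conclude that the preimage is open rather than just "almost open".
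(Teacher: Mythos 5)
Your proof is correct. Note that the paper itself offers no argument for this lemma --- it simply defers to \cite[Lemma 4.24]{GloTF} --- so there is no in-paper proof to compare against; what you have written is a complete and essentially standard argument of the kind that reference supplies: gluing $f$ with the zero function over the open cover $\{V,\,M\setminus K\}$ for well-definedness, reduction via the initial topology of Remark \ref{4.2.3} to subbasic sets $\lfloor L,W\rfloor$ for continuity, and the restriction map of Lemma \ref{A.14} as the continuous inverse. Your case split on whether $0\in W$ is exactly the right care to take. One small remark on the continuity step: the separating open set $U$ with $K\subseteq U\subseteq\overline U\subseteq V$ exists without comment when $M$ is finite-dimensional (local compactness), but since the lemma is stated for arbitrary manifolds with corners modelled on locally convex spaces, where regularity of $M$ is not entirely free, it is cleaner to avoid it altogether: since $d^k\tilde f$ vanishes on $\pi_k^{-1}(M\setminus K)$, one may take $L':=L\cap\pi_k^{-1}(K)$, which is automatically compact and contained in $T^kV$, and observe that $d^k\tilde f(L)\subseteq W$ holds if and only if $d^kf(L')\subseteq W$ and, in case $L\not\subseteq\pi_k^{-1}(K)$, additionally $0\in W$. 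This gives the same conclusion with no appeal to regularity. Everything else, including the observation that the proof of Lemma \ref{A.14} goes through for finite $r$, is fine.
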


\begin{lem}\cite{GloBO}\label{pullback}
Let $M$ and $N$ be smooth manifolds with corners, E be a locally convex space, and $f\colon M\rightarrow N$ be a $C^r$-map. Then the pullback
\[
(f^\ast=)C^r(f,E)\colon C^r(N,E)\rightarrow C^r(M,E),\quad\gamma\mapsto\gamma\circ f
\]
is a continuous linear mapping.
\end{lem}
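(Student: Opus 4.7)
The plan is to exploit the initial topology defining $C^r(M,E)$ and reduce the whole problem to continuity of pullback on the compact-open function space. Linearity is immediate, since composition distributes over linear combinations of $\gamma$: $(\alpha\gamma_1+\beta\gamma_2)\circ f=\alpha(\gamma_1\circ f)+\beta(\gamma_2\circ f)$, so nothing further is required there.

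For continuity, by Remark \ref{4.2.3} the topology on $C^r(M,E)$ is initial with respect to the maps $d^k\colon C^r(M,E)\to C(T^kM,E)_{c.o.}$ for $k\leq r$. Hence it suffices to prove that, for every such $k$, the composition
\[
C^r(N,E)\xrightarrow{C^r(f,E)} C^r(M,E)\xrightarrow{d^k} C(T^kM,E)_{c.o.},\quad\gamma\mapsto d^k(\gamma\circ f),
\]
is continuous. The key identity is the iterated chain rule: a straightforward induction, using $T(\gamma\circ f)=T\gamma\circ Tf$ together with the recursive definition $d^k=d(d^{k-1})$, yields
\[
d^k(\gamma\circ f)=d^k\gamma\circ T^kf,
\]
where $T^kf\colon T^kM\to T^kN$ is a continuous (indeed $C^{r-k}$) map between iterated tangent bundles of manifolds with corners as discussed in the remark on tangent bundles. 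This rewrites the above composition as
\[
C^r(N,E)\xrightarrow{d^k} C(T^kN,E)_{c.o.}\xrightarrow{(T^kf)^{*}} C(T^kM,E)_{c.o.},\quad\gamma\mapsto d^k\gamma\mapsto d^k\gamma\circ T^kf.
\]

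The first arrow is continuous by the very definition of the topology on $C^r(N,E)$. For the second, I would invoke the standard fact that pullback along any continuous map is continuous in the compact-open topology: for a subbasic open set $\lfloor K,U\rfloor$ with $K\subseteq T^kM$ compact and $U\subseteq E$ open, one has $((T^kf)^{*})^{-1}(\lfloor K,U\rfloor)=\lfloor T^kf(K),U\rfloor$, and $T^kf(K)\subseteq T^kN$ is compact. Composing these two continuous maps gives continuity of $d^k\circ C^r(f,E)$, and the universal property of the initial topology finishes the argument.

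The only genuinely delicate step is verifying the iterated chain rule $d^k(\gamma\circ f)=d^k\gamma\circ T^kf$ within the $C^r$-calculus on manifolds with corners of Definition \ref{2.1.2}, since the derivatives are defined only on the interior and then continuously extended; however this is a routine consequence of the chain rule in the sense of \cite{GloBO}, applied on interiors and extended by continuity, and can simply be cited. Everything else is formal manipulation of the initial topology and a one-line compact-open-topology check.
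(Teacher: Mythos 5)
The paper gives no proof of this lemma, citing it directly from \cite{GloBO}; your argument is precisely the standard one found there, reducing continuity via the initial topology to the chain-rule identity $d^k(\gamma\circ f)=d^k\gamma\circ T^kf$ and the continuity of pullback along a continuous map in the compact-open topology. The argument is correct, including the compact-open computation $((T^kf)^{*})^{-1}(\lfloor K,U\rfloor)=\lfloor T^kf(K),U\rfloor$, so there is nothing to add.
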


\begin{lem}\cite{GloBO}
If $M$ is a compact manifold with corners, $E$ and $F$ are locally convex spaces, $U\subseteq E$ is open and $f\colon M\times U\rightarrow F$ is smooth, then the mapping
\[
f_\ast\colon C^{\infty}(M,U)\rightarrow C^{\infty}(M,F),\gamma\mapsto f\circ(\mathrm{id}_M,\gamma) 
\]
is smooth.
\end{lem}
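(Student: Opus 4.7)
The plan is to reduce the smoothness of $f_*$ to an inductive formula for its directional derivatives, combined with a continuity argument based on the initial topology of $C^\infty(M,F)$.

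First I would verify that $f_*$ is well-defined and that its domain $C^\infty(M,U)$ is open in $C^\infty(M,E)$. Openness follows from compactness of $M$: the continuous map $d^0\colon C^\infty(M,E)\to C(M,E)_{c.o.}$ from Remark \ref{4.2.3} pulls the basic open set $\lfloor M,U\rfloor$ back to $C^\infty(M,U)$. Well-definedness is the classical chain rule, since $(m\mapsto f(m,\gamma(m)))=f\circ(\mathrm{id}_M,\gamma)$ is a composition of smooth maps between manifolds with corners.

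Next, I would compute the directional derivatives of $f_*$ and show they are again pushforwards. For $\gamma_0\in C^\infty(M,U)$ and $\eta\in C^\infty(M,E)$, the chain rule applied fibrewise gives
\[
df_*(\gamma_0;\eta)(m)=d_2 f(m,\gamma_0(m);\eta(m))=g(m,\gamma_0(m),\eta(m)),
\]
where $g\colon M\times U\times E\to F$, $g(m,u,v):=d_2 f(m,u;v)$, is smooth. Hence $df_*(\gamma_0;\eta)=g_*(\gamma_0,\eta)$ viewed as a pushforward on $C^\infty(M,U\times E)\cong C^\infty(M,U)\times C^\infty(M,E)$. By induction on $k$, every iterated directional derivative $d^{(k)}(f_*)$ has the same form: a pushforward by a smooth map $M\times U\times E^k\to F$ obtained from iterated partial derivatives of $f$. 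Combined with the definition of $C^\infty$ on locally convex subsets with dense interior (Definition \ref{2.1.2}), this reduces the proof of smoothness of $f_*$ to the continuity of every such pushforward.

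Finally, I would verify this continuity using the initial topology on $C^\infty(M,F)$. It suffices to show that each $d^k\circ f_*\colon C^\infty(M,U)\to C(T^kM,F)_{c.o.}$ is continuous. Expanding $d^k(f\circ(\mathrm{id}_M,\gamma))$ by the higher-order chain rule expresses it as a continuous function of (fixed) partial derivatives of $f$ and the maps $d^j\gamma$ for $j\leq k$, evaluated at the tangent vectors in $T^kM$. Since the maps $d^j\colon C^\infty(M,E)\to C(T^jM,E)_{c.o.}$ are continuous by construction of the initial topology, and pre-composition with fixed continuous maps is continuous for the compact-open topology, one obtains continuity of $d^k\circ f_*$ on the basic neighbourhoods $\lfloor K,V\rfloor$ with $K\subseteq T^kM$ compact and $V\subseteq F$ open.

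The main obstacle is the bookkeeping for the iterated chain rule, in particular organizing the Faà di Bruno-type expansion so that all terms are manifestly of the pushforward form, and ensuring that the continuity estimates behave well uniformly on compact subsets of $T^kM$. Compactness of $M$ is used crucially here to guarantee that images and relevant neighbourhoods remain tractable, which is exactly why this lemma fails in the stated form without the compactness hypothesis.
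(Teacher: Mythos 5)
The paper gives no proof of this lemma (it is quoted from \cite{GloBO}), so your proposal can only be measured against the standard argument in that source. Your overall architecture --- openness of $C^\infty(M,U)$ via $\lfloor M,U\rfloor$, the observation that each iterated directional derivative of $f_\ast$ is again a pushforward by a smooth map $M\times U\times E^k\rightarrow F$, induction on the order, and continuity checked through the maps $d^k$ into $C(T^kM,F)_{c.o.}$ --- is exactly the right skeleton and agrees with the proof there.

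There is, however, one genuine gap, and it sits at the step you dispatch with ``the chain rule applied fibrewise gives $df_\ast(\gamma_0;\eta)(m)=d_2f(m,\gamma_0(m);\eta(m))$''. The fibrewise chain rule only identifies the \emph{candidate} for the directional derivative and shows that the difference quotients $t^{-1}\bigl(f_\ast(\gamma_0+t\eta)-f_\ast(\gamma_0)\bigr)$ converge \emph{pointwise} on $M$. For $f_\ast$ to be $C^1$ in the sense of Definition \ref{2.1.2}, these difference quotients must converge in the topology of $C^\infty(M,F)$, i.e.\ uniformly on compact subsets of each $T^kM$ together with all derivatives; this is the entire analytic content of the lemma and does not follow from pointwise convergence plus continuity of the limiting pushforward. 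The standard repair is to introduce the difference-quotient map $f^{[1]}(m,x,y,t)=t^{-1}\bigl(f(m,x+ty)-f(m,x)\bigr)$ for $t\neq 0$, extended by $d_2f(m,x;y)$ at $t=0$, which is again smooth on $M\times U^{[1]}$ since $f$ is; then $t^{-1}\bigl(f_\ast(\gamma_0+t\eta)-f_\ast(\gamma_0)\bigr)=(f^{[1]})_\ast(\gamma_0,\eta,t)$, and the continuity of this pushforward (an instance of the continuity statement you already plan to prove) yields convergence in $C^\infty(M,F)$ as $t\to 0$. Note that you cannot instead integrate $d_2f$ along the segment, since $F$ is not assumed complete. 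Alternatively, within this paper the whole lemma follows in two lines from Theorems \ref{exp} and \ref{eval}: $f_\ast$ is smooth iff $(\gamma,m)\mapsto f(m,\gamma(m))=f\bigl(m,\epsilon(\gamma,m)\bigr)$ is smooth, which it is as a composition of smooth maps. Either fix should be incorporated; the Fa\`a di Bruno bookkeeping you single out as the main obstacle is, by comparison, routine.
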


\begin{lem}\cite{GloBO}\label{pushforward}
If $M$ is a compact finite-dimensional smooth manifold with corners, $E$ and $F$ are locally convex spaces, $U\subseteq E$ is open and $f\colon U\rightarrow F$ is smooth, then the push-forward
\[
f_\ast\colon C^{\infty}(M,U)\rightarrow C^{\infty},\gamma\mapsto f\circ\gamma
\]
is smooth.
\end{lem}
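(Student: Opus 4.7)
The plan is to reduce this statement to the preceding lemma, which handles push-forwards along smooth maps $M\times U\rightarrow F$. Given $f\colon U\rightarrow F$, I would first construct $\tilde{f}\colon M\times U\rightarrow F$ by setting $\tilde{f}(m,u):=f(u)$. This is just the composition $f\circ\mathrm{pr}_U$, where $\mathrm{pr}_U\colon M\times U\rightarrow U$ is the projection onto the second factor. Since $\mathrm{pr}_U$ is the restriction to the (locally convex, dense-interior) subset $M\times U$ of a continuous linear projection, it is smooth, and $f$ is smooth by assumption, so $\tilde{f}$ is smooth by the chain rule for smooth maps between manifolds with corners modelled over locally convex spaces.

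The next step is a trivial computation identifying the two push-forwards. For any $\gamma\in C^{\infty}(M,U)$ and $m\in M$,
\[
\tilde{f}\circ(\mathrm{id}_M,\gamma)(m)=\tilde{f}(m,\gamma(m))=f(\gamma(m))=(f\circ\gamma)(m),
\]
so $\tilde{f}_\ast=f_\ast$ as maps $C^{\infty}(M,U)\rightarrow C^{\infty}(M,F)$. Applying the preceding lemma to $\tilde{f}$ immediately yields smoothness of $f_\ast$.

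The main (very minor) obstacle is to confirm that the basic calculus statements I invoke — namely the chain rule and the smoothness of $\mathrm{pr}_U$ — are valid in the framework of Definition \ref{2.1.2}, where differentiability is formulated on locally convex subsets with dense interior. Both are standard in \cite{GloBO}. Note that compactness of $M$ enters through the preceding lemma, while the finite-dimensionality assumption plays no active role in this reduction; it is retained because it matches the form in which the ambient lemmas are cited.
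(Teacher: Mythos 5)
Your reduction is correct. The paper itself gives no proof of this lemma — it is simply quoted from \cite{GloBO} — so there is no in-paper argument to compare against; but your derivation from the preceding (unlabelled) lemma is exactly the standard one: $\tilde{f}:=f\circ\mathrm{pr}_U\colon M\times U\rightarrow F$ is smooth since $\mathrm{pr}_U$ is locally the restriction of a continuous linear projection to a locally convex subset with dense interior, the identity $\tilde{f}\circ(\mathrm{id}_M,\gamma)=f\circ\gamma$ is immediate, and the preceding lemma then gives smoothness of $f_\ast$. Your side remark is also accurate: finite-dimensionality of $M$ does no work in this reduction, and compactness enters only through the lemma being invoked.
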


\begin{lem}\label{fstern}
Let $P$ be a smooth manifold with corners and $E$ a locally convex space. If $f\colon P\times E\rightarrow E$ is a smooth map, then the map
\[
f_\star\colon C^\infty(P,E)\rightarrow C^\infty(P,E),\quad \eta\mapsto \bigg( x\mapsto f\big(x,\eta(x)\big)\bigg)
\]
is also smooth.
\end{lem}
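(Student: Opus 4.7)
The plan is to reduce the smoothness of $f_\star$ to the smoothness of its adjoint via an exponential law, using the evaluation and exponential law theorems alluded to in the introduction (Theorems \ref{exp} and \ref{eval} of \cite{Hamsa}). This is much cleaner than trying to work chart-by-chart on $P$, and it exploits the fact that the non-compactness of $P$ is handled globally by those two theorems.

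First I would verify that $f_\star$ is well-defined, i.e., that $x \mapsto f(x, \eta(x))$ is indeed in $C^\infty(P, E)$ for every $\eta \in C^\infty(P, E)$. This is immediate from the chain rule, since $x \mapsto (x, \eta(x))$ is smooth as a map $P \to P \times E$ and $f$ is smooth by assumption.

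Next I would rewrite the adjoint
\[
f_\star^{\wedge}\colon C^\infty(P, E) \times P \to E, \quad (\eta, x) \mapsto f(x, \eta(x))
\]
as the composition
\[
f_\star^{\wedge} = f \circ \bigl(\mathrm{pr}_P,\, \mathrm{ev}\bigr),
\]
where $\mathrm{pr}_P \colon C^\infty(P, E) \times P \to P$ is the second projection and $\mathrm{ev} \colon C^\infty(P, E) \times P \to E$, $(\eta, x) \mapsto \eta(x)$, is the evaluation. The projection is smooth trivially, the evaluation is smooth by Theorem \ref{eval}, and $f$ is smooth by hypothesis; hence $f_\star^{\wedge}$ is smooth as a composition of smooth maps between manifolds with corners and locally convex spaces.

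Finally, I would invoke the exponential law (Theorem \ref{exp}), which asserts that a map $g \colon A \to C^\infty(P, E)$ with $A$ a manifold with corners is smooth if and only if the associated map $g^\wedge \colon A \times P \to E$ is smooth. Applying this to $A := C^\infty(P, E)$ and $g := f_\star$ yields the smoothness of $f_\star$. The main obstacle, if any, is really hidden inside the evaluation and exponential law theorems themselves, since $P$ is only $\sigma$-compact; once those are available the argument becomes a one-line composition plus adjunction. No compactness assumption on $P$ is needed in the present lemma because both ingredients of the reduction already accommodate manifolds with corners of the type we consider.
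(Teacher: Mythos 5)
Your reduction has a genuine gap: both ingredients you invoke are stated in the paper only for \emph{compact} manifolds with corners, whereas the $P$ of Lemma \ref{fstern} is an arbitrary manifold with corners. Theorem \ref{eval} gives smoothness of the evaluation $C^\infty(M,G)\times M\to G$ only for compact $M$, and Theorem \ref{exp} likewise assumes $M$ compact. In the applications of the lemma inside the paper (e.g.\ in the proof of Proposition \ref{1.4}), $P$ is the total space of the principal bundle, which is non-compact and, since $G$ may be an infinite-dimensional locally convex Lie group, not even locally compact. Your closing assertion that ``no compactness assumption on $P$ is needed \dots because both ingredients of the reduction already accommodate manifolds with corners of the type we consider'' is therefore exactly where the argument fails.

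The failure is not merely a matter of citation hygiene. For finite-dimensional non-compact $P$ one could repair your argument by localizing to compact submanifolds with corners (Remark \ref{umfkt}) and gluing. But for $P$ modelled on an infinite-dimensional locally convex space the evaluation map $C^\infty(P,E)\times P\to E$ is in general not even continuous for the compact-open $C^\infty$ topology: a basic neighbourhood $\lfloor K,U\rfloor$ of $\eta$ only controls $\gamma$ on a compact set $K$, and no neighbourhood of a point $x$ in a non-locally-compact $P$ is contained in such a $K$, so one cannot force $\gamma(x')$ close to $\eta(x)$ for $x'$ near $x$ and $\gamma$ near $\eta$. Hence the composition $f_\star^{\wedge}=f\circ(\mathrm{pr}_P,\mathrm{ev})$ cannot be certified smooth by this route. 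The paper itself does not argue via an exponential law here at all: it defers to the direct superposition-operator result \cite[cf.\ Proposition 4.16]{GloTF} (see also \cite{GloBO}), whose proof works straight from the definition of the compact-open $C^\infty$ topology and the iterated differentials $d^{(k)}$, and which is valid for arbitrary $P$. If you want a self-contained argument you would need to reproduce that direct estimate-free computation, not the adjunction.
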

\begin{proof}
See \cite{GloBO} (\cite[cf. Proposition 4.16]{GloTF}).
\end{proof}

\begin{lem}\label{mult}
Let $E$ be a locally convex space, $M$ be a finite-dimensional smooth manifold with corners and $f\colon M\rightarrow\mathbb{R}$ be a $C^r$-map. Then
\[
m_f\colon C^r(M,E)\rightarrow C^r(M,E),\quad\gamma\mapsto f\cdot\gamma
\]
(pointwise product) is a continuous linear map.
\end{lem}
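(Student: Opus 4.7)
The plan is as follows. Linearity of $m_f$ is immediate from the pointwise definition. For continuity, I would exploit the description of the topology on $C^r(M,E)$ given in Remark \ref{4.2.3}: it is initial with respect to the maps $d^k\colon C^r(M,E)\to C(T^kM,E)_{c.o.}$ for $0\leq k\leq r$. Consequently, it suffices to check that $\gamma\mapsto d^k(f\gamma)$ is continuous into the compact-open topology for each such $k$.

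The heart of the argument is an iterated Leibniz formula. In local coordinates on a chart of $M$ (and of the induced chart on $T^kM$), one has
\[
d^k(f\gamma)(x;v_1,\ldots,v_k)=\sum_{I\subseteq\{1,\ldots,k\}} d^{|I|}f\!\left(x;(v_i)_{i\in I}\right)\cdot d^{k-|I|}\gamma\!\left(x;(v_j)_{j\notin I}\right),
\]
so $d^k\circ m_f$ decomposes as a finite sum of maps of the form $\gamma\mapsto \mu_I\cdot(d^{k-|I|}\gamma\circ\pi_I)$, where $\mu_I$ is a fixed continuous scalar-valued function (a directional derivative of $f$) and $\pi_I$ is a smooth projection between iterated tangent bundles. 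By Lemma \ref{pullback}, the assignment $\gamma\mapsto d^{k-|I|}\gamma\circ\pi_I$ factors as a composition of continuous linear maps into $C(T^kM,E)_{c.o.}$, and pointwise multiplication by the fixed scalar function $\mu_I$ is a continuous operation on $C(T^kM,E)_{c.o.}$ because scalar multiplication $\mathbb{R}\times E\to E$ is continuous bilinear and $\mu_I$ is bounded on every compact set. Summing the finitely many terms yields the required continuity of $d^k\circ m_f$.

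The main technical obstacle will be making the above local formula coherent across charts of the iterated tangent bundle $T^kM$, since $T^kM$ carries a somewhat intricate natural coordinate structure. I would circumvent a global intrinsic Leibniz rule by arguing directly with the compact-open subbase: every compact subset of $T^kM$ can be covered by finitely many relatively compact pieces sitting inside charts adapted to the chart structure of $M$, and continuity of $d^k\circ m_f$ into $C(T^kM,E)_{c.o.}$ can be tested against such compacta. On each of these the local coordinate expression above applies verbatim, and the preceding continuity argument goes through without further global considerations.
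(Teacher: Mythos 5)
Your proposal is correct, but it takes a genuinely different route from the paper: the paper's proof of Lemma \ref{mult} is a one-line citation, obtained by substituting \cite[Proposition 2.5]{GloSS} into the proof of \cite[Corollary 2.7]{GloSS} and replacing the ingredients by their analogues for manifolds with rough boundary from \cite{GloBO}. You instead give a self-contained argument: by the initial topology of Remark \ref{4.2.3} it suffices to show that each $d^k\circ m_f$ is continuous into $C(T^kM,E)_{c.o.}$, and an iterated Leibniz rule writes $d^k(f\gamma)$ as a finite sum of terms of the form $\mu\cdot(d^{j}\gamma\circ\pi)$ with $\pi\colon T^kM\rightarrow T^jM$ a smooth canonical projection and $\mu$ a continuous scalar function built from derivatives of $f$; pullback along a fixed continuous map and multiplication by a scalar function bounded on compact sets are both continuous linear operations for the compact-open topology (the latter by a balanced-neighbourhood argument), so each $d^k\circ m_f$ is continuous. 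This is essentially what the cited references do, so what your version buys is independence from the unpublished manuscript \cite{GloSS}, at the cost of some bookkeeping on $T^kM$. One point to tighten in a write-up: the displayed formula is the Leibniz rule for the symmetric multilinear derivatives $d^{(k)}$ of Definition \ref{2.1.2}, whereas the topology is defined via the recursively defined $d^k=d(d^{k-1}(\cdot))$ on $T^kM$, whose points carry $2^k$ components; the clean way to obtain the decomposition you need is an induction on $k$ using $d(\mu\cdot h)=d\mu\cdot(h\circ\mathrm{pr})+(\mu\circ\mathrm{pr})\cdot dh$ together with $d(d^j\gamma\circ\pi)=d^{j+1}\gamma\circ T\pi$, which also makes the formula global and renders the chart-by-chart patching unnecessary (your localization of compact subsets of $T^kM$ is nonetheless a valid alternative). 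On a manifold with corners one additionally observes that these identities hold on the dense interior and extend by continuity, as built into Definition \ref{2.1.2}.
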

\begin{proof}
This follows by substituting \cite[p.6 Proposition 2.5]{GloSS} in the proof of \cite[p.8 Corollary 2.7]{GloSS} with the respective lemma for manifolds with rough boundary in \cite{GloBO}.
\end{proof}

Next, we briefly introduce spaces of sections.

\begin{mydef}
A smooth \textit{section} of a smooth vector bundle $\pi\colon E\rightarrow M$ is a smooth mapping $\sigma\colon M\rightarrow E$ such that $\pi\circ\sigma=\mathrm{id}_M$. Its \textit{support} $\mathrm{supp}(\sigma)$ is the closure of $\{x\in M\colon \sigma(x)\neq 0_x\}$. In this situation we let $C^r(M,E)$ be the set of all $C^r$-sections and $C^r_K(M,E)$ be the set of all $C^r$-sections with support contained in a given compact set $K\subseteq M$.\\
Further, if the bundle has typical fibre $F$, $\sigma\colon M\rightarrow E$ is a smooth section and $\psi\colon\pi^{-1}(M_\psi)\rightarrow M_\psi\times F$ is a local trivialization, we define $\sigma_\psi:=\mathrm{pr}_F\circ\psi\circ\sigma\big|_{M_\psi}\colon M_\psi\rightarrow F$.
\end{mydef}

\begin{bem}\cite[p.10 Lemma 3.7 and Definition 3.8]{GloSS}\label{vectorbundletop}
If $\pi\colon E\rightarrow M$ is a smooth vector bundle with typical fibre $F$, we call a set of local trivializations $\psi$ of $E$ whose domains cover $E$ an \textit{atlas of local trivializations}. Let $\mathcal{A}$ be such an atlas. Then the map
\[
\Gamma\colon C^r(M,E)\rightarrow\prod_{\psi\in\mathcal{A}}C^r(M_\psi,F),\quad\sigma\mapsto(\sigma_\psi)_{\psi\in\mathcal{A}}
\]
is an injection with closed image. We give $C^r(M,E)$ the locally convex Hausdorff topology turning this linear map into a topological embedding and for every compact subset $K\subseteq M$ we equip the subspace $C^r_K(M,E)$ with the induced topology. This topology does not depend on the choice of the atlas $\mathcal{A}$ (see \cite[p.11 Lemm 3.9]{GloSS}).
\end{bem}

The Lie groups we are interested in will be weak direct products of Lie groups of the following form. In this sense the next result forms the basis of this thesis.

\begin{satz}[The Lie group structure on $C^{\infty}(K,G)$]\cite{GloBO}
Let $K$ be a compact smooth manifold with corners and let $G$ be a Lie group modelled on the locally convex space $\mathfrak{g}$. Furthermore, let $W\subseteq G$ be a symmetric unity neighbourhood and let $\varphi\colon W\rightarrow\varphi(W)\subseteq\mathfrak{g}$ be a centered chart of $G$, i.e., $\varphi(1)=0$. Furthermore denote $\varphi_\ast\colon C^{\infty}(K,W)\rightarrow C^{\infty}(M,\mathfrak{g}),\gamma\mapsto\varphi\circ\gamma$. Then $\varphi_\ast$ induces a locally convex manifold structure on $C^{\infty}(K,G)$, turning it into a Lie group with respect to the pointwise operations.
\end{satz}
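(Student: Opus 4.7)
The plan is to use $\varphi_\ast$ as a single chart around the constant identity map $\mathbf{1}\in C^{\infty}(K,G)$ and then translate it left-invariantly to obtain an atlas, and finally verify that multiplication and inversion are smooth in these coordinates.

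First I would show that $\Phi_0:=\varphi_\ast\colon C^{\infty}(K,W)\to C^{\infty}(K,\varphi(W))$ is a bijection onto an open subset of the locally convex space $C^{\infty}(K,\mathfrak{g})$. Injectivity and surjectivity are immediate pointwise, and openness follows from the compactness of $\eta(K)\subseteq\varphi(W)$ for any $\eta\in C^{\infty}(K,\varphi(W))$, together with the continuity of $d^{0}\colon C^{\infty}(K,\mathfrak{g})\to C(K,\mathfrak{g})_{c.o.}$ noted in Remark \ref{4.2.3}: we can find a $\lfloor K,U\rfloor$-neighborhood still contained in $C^{\infty}(K,\varphi(W))$. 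Declaring $\Phi_0$ a chart equips an identity neighborhood in $C^{\infty}(K,G)$ with a locally convex manifold model on $C^{\infty}(K,\mathfrak{g})$.

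Next, for each $\gamma_0\in C^{\infty}(K,G)$ I would define the translated chart $\Phi_{\gamma_0}:=\Phi_0\circ L_{\gamma_0^{-1}}$ on $\gamma_0\cdot C^{\infty}(K,W)$. By left translation it suffices, for smoothness of all transition functions, to show that for any fixed $\gamma_0\in C^{\infty}(K,G)$ the chart-level left translation
\[
\eta\mapsto\varphi\bigl(\gamma_0(\cdot)\cdot\varphi^{-1}(\eta(\cdot))\bigr)
\]
is smooth on its (open) domain in $C^{\infty}(K,\mathfrak{g})$. This is the main technical hurdle, since $K$ is not assumed finite-dimensional and so Lemma \ref{pushforward} is not directly available. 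The key observation is that the map $K\times\varphi(W')\to\mathfrak{g}$, $(x,v)\mapsto\varphi\bigl(\gamma_0(x)\cdot\varphi^{-1}(v)\bigr)$, is smooth on a suitable open neighbourhood (using smoothness of $\gamma_0$, of multiplication in $G$, and of $\varphi,\varphi^{-1}$), and Lemma \ref{fstern} then delivers smoothness of the induced pushforward on $C^{\infty}(K,\mathfrak{g})$. The same argument identifies two overlapping translates $\Phi_{\gamma_0},\Phi_{\gamma_1}$ as smoothly compatible.

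Once compatibility is established, I would carry the initial topology induced by the $\Phi_{\gamma_0}$ onto $C^{\infty}(K,G)$; it is Hausdorff because point evaluations into $G$ separate points and $G$ is Hausdorff. Finally, to check the Lie group axioms, I would reduce multiplication and inversion, via left translation, to their chart-level expressions
\[
(\eta,\xi)\mapsto\varphi\bigl(\varphi^{-1}(\eta(\cdot))\cdot\varphi^{-1}(\xi(\cdot))\bigr),\qquad \eta\mapsto\varphi\bigl(\varphi^{-1}(\eta(\cdot))^{-1}\bigr)
\]
on appropriate opens, and once again invoke Lemma \ref{fstern} applied to the underlying smooth maps $K\times(\varphi(W')\times\varphi(W'))\to\mathfrak{g}$ and $K\times\varphi(W')\to\mathfrak{g}$ coming from the Lie group operations in $G$. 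Left-invariance of the atlas then propagates smoothness of the operations from $\mathbf{1}$ to all of $C^{\infty}(K,G)$, completing the proof.
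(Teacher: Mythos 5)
The paper does not prove this theorem; it is quoted verbatim from \cite{GloBO}, so there is no in-paper argument to compare against. Your proposal reproduces the standard construction from that reference: declare $\varphi_\ast$ a chart at the constant identity, propagate it by left translations, and reduce smoothness of transition maps and of the group operations to superposition (push-forward) lemmas. The skeleton is correct, and your openness argument for $C^{\infty}(K,\varphi(W))$ via $d^0$ and the compact-open topology is exactly right.

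One point deserves care. The superposition results actually available in the paper are Lemma \ref{fstern}, which requires $f$ defined on all of $P\times E$, and the unnamed lemma before \ref{pushforward}, which handles $f\colon M\times U\rightarrow F$ with $U\subseteq E$ open but still a \emph{product} domain. The transition map between two translated charts $\Phi_{\gamma_0},\Phi_{\gamma_1}$ is the superposition with $(x,v)\mapsto\varphi\bigl(\gamma_1(x)^{-1}\gamma_0(x)\varphi^{-1}(v)\bigr)$, whose natural domain $\bigl\{(x,v)\colon\gamma_1(x)^{-1}\gamma_0(x)\varphi^{-1}(v)\in W\bigr\}$ is an open subset of $K\times\mathfrak{g}$ that is \emph{not} of product form, so neither stated lemma literally applies; one needs the general ``$\Omega$-lemma'' of \cite{GloBO} for superposition over open $\Omega\subseteq K\times E$. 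You can avoid this either by invoking that stronger result explicitly, or by restructuring the argument around Proposition \ref{A.4} (Lie groups from local data): there you only need multiplication, inversion and each fixed conjugation to be smooth on a small identity neighbourhood, and compactness of $K$ lets you shrink $W$ to $W'$ with $W'W'\subseteq W$, $W'^{-1}=W'$ and $\gamma_0(x)W''\gamma_0(x)^{-1}\subseteq W$ for all $x\in K$, so that all relevant superpositions live on genuine product domains $K\times U$. With that adjustment the proof closes; as written, the appeal to Lemma \ref{fstern} for the chart changes is a small but genuine overreach.
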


\begin{lem}\label{A.17}\cite[Lemma 2.2.20]{Diss}
If $M$ is a compact finite-dimensional manifold with corners, $G$ is a Lie group and $\overline{U}\subseteq M$ is a submanifold with corners of $M$, then the restriction
\[
\mathrm{res}\colon C^{\infty}(M,G)\rightarrow C^{\infty}(\overline{U},G),\quad\gamma\mapsto\gamma\big|_{\overline{U}}
\]
is a smooth homomorphism of Lie groups.
\end{lem}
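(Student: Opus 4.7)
The plan is to observe first that the map $\mathrm{res}$ is clearly a group homomorphism, since the group operations on $C^\infty(M,G)$ and $C^\infty(\overline{U},G)$ are defined pointwise and restriction commutes with pointwise multiplication and inversion. Hence, by the standard fact that a group homomorphism between Lie groups is smooth as soon as it is smooth on some identity neighbourhood, it suffices to verify smoothness of $\mathrm{res}$ near the constant map $1_G$.

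To do so, I would use the chart construction from the preceding theorem. Fix a symmetric unit neighbourhood $W\subseteq G$ and a centered chart $\varphi\colon W\to\varphi(W)\subseteq\mathfrak{g}$. The corresponding push-forward $\varphi_\ast$ yields a chart $\Phi_M\colon C^\infty(M,W)\to C^\infty(M,\mathfrak{g})$ around $1_G$ in $C^\infty(M,G)$, and analogously $\Phi_{\overline{U}}$ around $1_G$ in $C^\infty(\overline{U},G)$. Since $\gamma\in C^\infty(M,W)$ implies $\gamma|_{\overline{U}}\in C^\infty(\overline{U},W)$, the restriction map $\mathrm{res}$ sends $C^\infty(M,W)$ into $C^\infty(\overline{U},W)$, so I can read it off in these charts.

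The key computation is then immediate: for $\eta\in\Phi_M(C^\infty(M,W))\subseteq C^\infty(M,\mathfrak{g})$ one has
\[
\Phi_{\overline{U}}\circ\mathrm{res}\circ\Phi_M^{-1}(\eta) \;=\; \varphi\circ(\varphi^{-1}\circ\eta)\big|_{\overline{U}} \;=\; \eta\big|_{\overline{U}},
\]
so the local representative of $\mathrm{res}$ is exactly the linear restriction map $C^\infty(M,\mathfrak{g})\to C^\infty(\overline{U},\mathfrak{g})$, $\eta\mapsto\eta|_{\overline{U}}$. By Lemma \ref{A.14}, this map is continuous and linear (here one uses precisely the hypothesis that $\overline{U}$ is a submanifold with corners, so that the second half of that lemma applies). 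A continuous linear map between locally convex spaces is smooth, hence $\mathrm{res}$ is smooth on an identity neighbourhood, and the homomorphism property then extends this smoothness to all of $C^\infty(M,G)$.

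No genuine obstacle is expected; the only subtle point is invoking the correct half of Lemma \ref{A.14}, which requires $\overline{U}$ to be a submanifold with corners (and not merely the closure of an open set). Everything else is a direct unwinding of the chart construction provided by the preceding theorem.
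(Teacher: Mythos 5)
Your proof is correct and follows exactly the route the paper intends: the paper itself gives no proof of this lemma (it is cited from \cite[Lemma 2.2.20]{Diss}), but the expected argument is precisely yours, namely that $\mathrm{res}$ is a pointwise-defined homomorphism whose local representative in the push-forward charts is the continuous linear restriction map of Lemma \ref{A.14} (the half requiring $\overline{U}$ to be a submanifold with corners), hence smooth near the identity and therefore everywhere.
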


\begin{prop}\label{A.18}\cite[Proposition 2.2.21]{Diss}
Let $G$ be a Lie group, $M$ be compact smooth manifold with an open cover $\mathcal{V}=(V_i)_{i=1,\ldots,n}$ such that $\overline{\mathcal{V}}=(\overline{V}_i)_{i=1,\ldots,n}$ is a cover by compact submanifolds with corners of $M$. Then
\[
G_{\overline{\mathcal{V}}}:=\left\{(\gamma_i)_{i=1,\ldots,n}\in\prod_{i=1}^n C^{\infty}(\overline{V}_i,G)\colon\gamma_i(x)=\gamma_j(x)\text{ for all }x\in\overline{V}_i\cap\overline{V}_j\right\}
\]
is a closed subgroup of $\prod_{i=1,\ldots,n}C^{\infty}(\overline{V}_i,G)$, which is a Lie group modelled on the closed subspace
\[
\mathfrak{g}_{\overline{\mathcal{V}}}:=\left\{(\eta_i)_{i=1,\ldots,n}\in\bigoplus_{i=1,\ldots,n}C^{\infty}(\overline{V}_i,\mathfrak{g})\colon\eta_i(x)=\eta_j(x)\text{ for all }x\in\overline{V}_i\cap\overline{V}_j\right\}
\]
of $\bigoplus_{i=1,\ldots,n}C^{\infty}(\overline{V}_i,\mathfrak{g})$ and the gluing map
\[
\mathrm{glue}\colon G_{\overline{\mathcal{V}}}\rightarrow C^{\infty}(M,G),\quad\mathrm{glue}\big((\gamma_i)_{i\in\mathbb{N}}\big)(x)=\gamma_i(x)\text{  if }x\in\overline{V}_i
\]
is an isomorphism of Lie groups.
\end{prop}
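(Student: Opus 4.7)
The plan is to separate the three assertions: (i) $G_{\overline{\mathcal{V}}}$ is an abstract closed subgroup of $\prod_{i=1}^n C^\infty(\overline{V}_i,G)$, (ii) it carries a compatible Lie group structure modelled on $\mathfrak{g}_{\overline{\mathcal{V}}}$, and (iii) $\mathrm{glue}$ is an isomorphism of Lie groups. For (i), the subgroup property is immediate since the compatibility condition $\gamma_i(x)=\gamma_j(x)$ is preserved by pointwise multiplication and inversion. Closedness follows because, for each pair $i\neq j$ and each $x\in\overline{V}_i\cap\overline{V}_j$, the map $(\gamma_k)_k\mapsto\gamma_i(x)\gamma_j(x)^{-1}$ is continuous into the Hausdorff group $G$, so $G_{\overline{\mathcal{V}}}$ is the intersection of closed preimages of $\{1\}$. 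An analogous argument using the continuous evaluations $(\eta_k)_k\mapsto\eta_i(x)-\eta_j(x)$ shows $\mathfrak{g}_{\overline{\mathcal{V}}}$ is closed in $\bigoplus_{i=1}^n C^{\infty}(\overline{V}_i,\mathfrak{g})$; note that for a finite index set the direct sum coincides (topologically) with the product, so there is no subtlety about sum topologies here.

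For (ii), I would take a centered chart $\varphi\colon W\to\varphi(W)\subseteq\mathfrak{g}$ of $G$ and form the product chart $\prod_i\varphi_\ast\colon\prod_{i=1}^n C^\infty(\overline{V}_i,W)\to\bigoplus_{i=1}^n C^\infty(\overline{V}_i,\mathfrak{g})$, which provides the Lie group structure on the ambient product. Since $\varphi$ is injective, the compatibility conditions on $(\gamma_i)_i$ and on $(\varphi\circ\gamma_i)_i$ are equivalent, so the product chart restricts to a bijection between $G_{\overline{\mathcal{V}}}\cap\prod_i C^\infty(\overline{V}_i,W)$ and an open neighborhood of $0$ in $\mathfrak{g}_{\overline{\mathcal{V}}}$. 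Left translating by elements of $G_{\overline{\mathcal{V}}}$ yields a smooth atlas, with transition maps inherited from the ambient group, and the smoothness of multiplication and inversion on the product then restricts to $G_{\overline{\mathcal{V}}}$ by construction.

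For (iii), $\mathrm{glue}$ is clearly an algebraic isomorphism with inverse the restriction $\mathrm{res}\colon\gamma\mapsto(\gamma|_{\overline{V}_i})_i$. Smoothness of $\mathrm{res}$ is immediate from Lemma \ref{A.17} applied componentwise. The nontrivial direction is smoothness of $\mathrm{glue}$ itself; expressing it in the charts of (ii), this reduces to continuity of the linear map
\[
L\colon\mathfrak{g}_{\overline{\mathcal{V}}}\to C^\infty(M,\mathfrak{g}),\qquad (\eta_i)_i\mapsto \eta\text{ with }\eta|_{\overline{V}_i}=\eta_i.
\]

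The main obstacle is precisely continuity of $L$, because a compatible family of smooth functions on the pieces a priori only glues to a continuous function, and the $C^\infty$-topology on $C^\infty(M,\mathfrak{g})$ involves all higher derivatives globally. My plan is to resolve this by a partition of unity argument: choose a smooth partition of unity $(\rho_i)_{i=1,\ldots,n}$ on $M$ subordinate to $(V_i)_{i=1,\ldots,n}$, noting that $\mathrm{supp}(\rho_i)\subseteq V_i$ is compact since $M$ is. Writing $\eta=\sum_i \rho_i\eta$ and using $\eta|_{\overline{V}_i}=\eta_i$ on a neighborhood of $\mathrm{supp}(\rho_i)$, one has $L((\eta_i)_i)=\sum_{i=1}^n \kappa_i\bigl((\rho_i\cdot\eta_i)|_{V_i}\bigr)$, where $\kappa_i\colon C^\infty_{\mathrm{supp}(\rho_i)}(V_i,\mathfrak{g})\to C^\infty(M,\mathfrak{g})$ is extension by zero. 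This presents $L$ as a finite sum of compositions of continuous linear maps: the projection onto the $i$-th factor, multiplication by $\rho_i$ (Lemma \ref{mult}), restriction to the open set $V_i$ (Lemma \ref{A.14}), and extension by zero (Lemma \ref{cont}). Hence $L$ is continuous, $\mathrm{glue}$ is smooth, and combined with smoothness of $\mathrm{res}$ this completes the proof that $\mathrm{glue}$ is an isomorphism of Lie groups.
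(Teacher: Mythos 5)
Your argument is correct and matches the standard approach: the paper itself gives no proof of this proposition (it is quoted from \cite[Proposition 2.2.21]{Diss}), but your three steps are exactly the ones the paper deploys in its non-compact generalizations --- closedness via continuous point evaluations as in \ref{1.4b} and \ref{1.6}, the restricted product chart as the submanifold chart required by \ref{3.5.19} (trivially bijective here because the transition condition is the identity, which is why no $\text{SUB}_\oplus$-type hypothesis is needed), and the partition-of-unity decomposition of the gluing map into the continuous linear pieces of \ref{mult}, \ref{A.14} and \ref{cont}, precisely as in \ref{topemb} and \ref{algebraglue}. No gaps worth flagging; at most one could add the one-line check that the glued map is smooth because each point has a neighbourhood $V_i$ on which it coincides with $\gamma_i$.
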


\begin{satz}\label{exp}\cite[cf.]{Hamsa}
Let $N$ be a smooth manifold, $M$ be a compact smooth manifolds with corners and $G$ a smooth Lie group. Then a map
\[
f\colon N\rightarrow C^\infty(M,G)
\]
is smooth if and only if the map
\[
\hat{f}\colon N\times M\rightarrow G,\quad \hat{f}(n,m)\mapsto f(n)(m)
\]
is smooth.
\end{satz}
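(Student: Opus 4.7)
My plan is to split the statement into its two directions, handling the easy one via the smooth evaluation map and reducing the hard one, through the chart structure on $C^\infty(M,G)$, to the classical exponential law for maps into a locally convex space.

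For the forward direction, I would simply write
\[
\hat{f} = \mathrm{ev} \circ (f \times \mathrm{id}_M) \colon N \times M \longrightarrow C^\infty(M,G) \times M \longrightarrow G,
\]
where $\mathrm{ev}(\gamma, m) := \gamma(m)$. Smoothness of $\mathrm{ev}$ is the companion result Theorem~\ref{eval}, and $f \times \mathrm{id}_M$ is smooth, so the composition is smooth.

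For the reverse direction, fix $n_0 \in N$; since smoothness is local, it suffices to show that $f$ is smooth on some open neighborhood of $n_0$. Set $g_0 := f(n_0)$ and $h(n) := f(n) \cdot g_0^{-1}$, understood pointwise in $G$. Right-multiplication by $g_0$ is a diffeomorphism of the Lie group $C^\infty(M,G)$, so it is enough to prove smoothness of $h$ near $n_0$. Note that $h(n_0)$ is the constant function $1_G$, and that
\[
\hat{h}(n,m) \;=\; \hat{f}(n,m)\,\hat{f}(n_0,m)^{-1}
\]
is smooth as a composition of $\hat{f}$ with the smooth group operations in $G$. Choose a centered chart $\varphi \colon W \to \varphi(W) \subseteq \mathfrak{g}$ of $G$ around $1$. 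Since $M$ is compact and $\hat{h}(\{n_0\} \times M) = \{1\} \subseteq W$, continuity of $\hat{h}$ furnishes an open neighborhood $U$ of $n_0$ with $\hat{h}(U \times M) \subseteq W$. Consequently $h(U) \subseteq C^\infty(M,W)$, and this set lies in the domain of the canonical Lie group chart $\varphi_*$ of $C^\infty(M,G)$. It therefore suffices to verify that
\[
\varphi_* \circ h|_U \colon U \longrightarrow C^\infty(M,\mathfrak{g})
\]
is smooth, and its associated two-variable map $(n,m) \mapsto \varphi(\hat{h}(n,m))$ on $U \times M$ is smooth by composition.

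This reduces the problem to the vector-valued exponential law: for a locally convex space $E$ and a compact manifold with corners $M$, a map $k \colon U \to C^\infty(M,E)$ is smooth if and only if $\hat{k} \colon U \times M \to E$ is smooth. This is the main technical step, and I would invoke the corresponding result from \cite{GloBO}. Its proof proceeds by unpacking the initial topology on $C^\infty(M,E)$ with respect to the maps $d^j \colon C^\infty(M,E) \to C(T^j M, E)_{c.o.}$ and checking inductively that the iterated directional derivatives of $k$ exist, take values in $C^\infty(M,E)$, and are continuous. The main obstacle is controlling smoothness in the space-of-mappings variable simultaneously with smoothness in $M$; this is where compactness of $M$ is indispensable, ensuring that pointwise-in-$M$ uniform estimates are available on all relevant jet spaces.
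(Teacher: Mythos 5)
The paper does not prove this statement at all: it is imported verbatim from Alzaareer's dissertation \cite{Hamsa} (as is Theorem \ref{eval}), so there is no in-paper argument to compare yours against. Taken on its own terms, your proof is correct and is the standard derivation. The forward direction via $\hat{f}=\mathrm{ev}\circ(f\times\mathrm{id}_M)$ is fine given that the paper states Theorem \ref{eval} as an independent result, though you should be aware that in the source literature the smoothness of $\mathrm{ev}$ is usually itself obtained by applying the ``only if'' direction of the exponential law to $\mathrm{id}_{C^\infty(M,G)}$, so a self-contained treatment would either prove $\mathrm{ev}$ smooth directly or prove the forward direction by hand (checking that the iterated differentials of $\hat f$ exist and are continuous, which is where compactness of $M$ enters). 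Your reverse direction is clean: the translation by $g_0=f(n_0)$, the tube-lemma argument giving $\hat h(U\times M)\subseteq W$, and the passage through the chart $\varphi_\ast$ are all exactly as one would want, and they correctly isolate the genuine content in the vector-valued exponential law $C^\infty(U\times M,E)\cong C^\infty(U,C^\infty(M,E))$ for $M$ compact, which is available in \cite{GloBO} without completeness assumptions on $E$. The only cosmetic gap is that you invoke that law for $U$ an open subset of the manifold $N$ rather than of a locally convex space; since smoothness is local and $N$ is covered by charts, this is harmless, but it deserves a sentence. In short: the argument is sound, but it is a reconstruction of the cited external proof rather than an alternative to anything in this paper.
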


\begin{satz}\label{eval}\cite[cf.]{Hamsa}
If $M$ is a smooth compact manifold with corners and $G$ is a smooth Lie group, then the evaluation map
\[
\epsilon\colon C^\infty(M,G)\times M\rightarrow G,\quad(f,m)\mapsto f(m)
\]
is smooth.
\end{satz}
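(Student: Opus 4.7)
The plan is to deduce this smoothness as an immediate corollary of the exponential law stated as Theorem \ref{exp}. I would take $N := C^\infty(M,G)$, which by the preceding theorem carries the structure of a smooth Lie group modelled on the locally convex space $C^\infty(M,\mathfrak{g})$ and is therefore in particular a smooth manifold in the sense of Section 2.1. With this choice of source manifold, Theorem \ref{exp} applies, and the identity map
\[
\mathrm{id}\colon C^\infty(M,G)\to C^\infty(M,G)
\]
is tautologically smooth. Thus by the exponential law its associated two-variable map
\[
\widehat{\mathrm{id}}\colon C^\infty(M,G)\times M\to G,\qquad(f,m)\mapsto\mathrm{id}(f)(m)=f(m),
\]
is smooth as well. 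But $\widehat{\mathrm{id}}$ coincides with the evaluation map $\epsilon$ by definition, so $\epsilon$ is smooth.

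The only subtle point, and what I would regard as the sole place requiring a moment's thought rather than an obstacle, is the verification that Theorem \ref{exp} is genuinely applicable when $N$ is an infinite-dimensional manifold (here, itself a mapping group) rather than a finite-dimensional one. Since Theorem \ref{exp} is formulated for arbitrary smooth manifolds $N$ in the Keller $C^\infty_c$-framework of locally convex calculus and the cited reference \cite{Hamsa} establishes it in that generality, the choice $N=C^\infty(M,G)$ is admissible and no further work is needed. The argument is therefore essentially the one-line observation that $\epsilon$ is precisely the adjoint of $\mathrm{id}_{C^\infty(M,G)}$ under the exponential law.
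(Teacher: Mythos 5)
Your argument is correct as a formal deduction within this paper: Theorem \ref{exp} is stated for an arbitrary smooth manifold $N$ in the locally convex setting, the Lie group $C^\infty(M,G)$ is such a manifold, and the evaluation map is indeed $\widehat{\mathrm{id}}$ for $\mathrm{id}_{C^\infty(M,G)}$, so the forward direction of the exponential law gives smoothness of $\epsilon$ at once. Note, however, that the paper does not prove Theorem \ref{eval} at all --- both \ref{exp} and \ref{eval} are imported from \cite{Hamsa} as black boxes --- so there is no internal proof to compare against; you are supplying a reduction of one cited result to the other rather than reproducing an argument from the text. The one caveat worth recording is a foundational one: in the literature the implication ``$f$ smooth $\Rightarrow$ $\hat f$ smooth'' is itself frequently established by writing $\hat f=\epsilon\circ(f\times\mathrm{id}_M)$ and invoking smoothness of $\epsilon$, so your derivation is only non-circular if the source proves the exponential law independently of the evaluation map (which \cite{Hamsa} does, but this is the point that actually ``requires a moment's thought,'' rather than the applicability of \ref{exp} to infinite-dimensional $N$, which is immediate from its hypotheses). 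Within the logic of this paper, where \ref{exp} is taken as given, your one-line argument is sound.
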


\subsection{Direct limits and direct sums}
As the theory of direct limits of locally convex spaces is well known we shall only briefly reiterate some facts at this point. For more details see \cite{Jarch}, \cite{Schaef} or \cite{Bour}.
\begin{satz}\cite[cf. p.84, Theorem 1 and p.112 Proposition 9]{Jarch}\label{directlim}
Let $(E_n)_{n\in\mathbb{N}}$ be a sequence of Hausdorff locally convex spaces such that $E_n\subseteq E_{n+1}$ and the inclusion $\iota_n\colon E_n\rightarrow E_{n+1}$ is a topological embedding for all $n\in\mathbb{N}$. If $E=\bigcup_{n\in\mathbb{N}}E_n$, then the locally convex direct limit topology defined by the inclusion $i_n\colon E_n\hookrightarrow E$ turns $E$ into a Hausdorff locally convex space. Furthermore, the topology of $E$ induces the respective topology on all $E_n$ and a convex set $U\subseteq E$ is open if and only if $U\cap E_n$ is open in $E_n$ for all $n\in\mathbb{N}$. We write $E=\varinjlim E_n$.
\end{satz}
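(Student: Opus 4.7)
The plan is to verify each assertion of the theorem using the standard theory of strict inductive limits of locally convex spaces. The hypothesis that each $\iota_n: E_n \hookrightarrow E_{n+1}$ is a topological embedding places us in the strict inductive limit setting, where these facts are classical.

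First I would give the intrinsic description of the direct limit topology: it is the finest locally convex topology on $E$ making every inclusion $i_n: E_n \to E$ continuous, and a basis of absolutely convex $0$-neighborhoods is given by sets of the form $U = \operatorname{conv}\!\bigl(\bigcup_n U_n\bigr)$, where each $U_n$ is an absolutely convex $0$-neighborhood in $E_n$. This is a standard construction \cite{Jarch} that immediately yields a locally convex vector space structure on $E$. The continuity of each $i_n$ is built into the definition, giving one half of the claim that $E$ induces the original topology on each $E_n$.

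The other half --- that each $i_n: E_n \to E$ is a topological embedding --- is the essential technical content and is the classical Dieudonn\'e--Schwartz lemma. To establish it, one shows that any absolutely convex $0$-neighborhood $U_n \subseteq E_n$ has the form $V \cap E_n$ for some absolutely convex $0$-neighborhood $V \subseteq E$. The construction proceeds inductively: using the hypothesis that $\iota_n$ is a topological embedding, one extends $U_n$ to an absolutely convex $0$-neighborhood $U_{n+1} \subseteq E_{n+1}$ with $U_{n+1} \cap E_n = U_n$ (a Hahn--Banach--type extension enabled precisely by strictness), then iterates to obtain a compatible sequence $(U_k)_{k \geq n}$. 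Setting $V := \bigcup_{k \geq n} U_k$ produces an absolutely convex set in $E$ whose trace on every $E_k$ is open, hence a $0$-neighborhood in the direct limit topology, satisfying $V \cap E_n = U_n$. My plan is to cite the worked-out version in \cite[p.~84 Theorem~1 and p.~112 Proposition~9]{Jarch} rather than replicate the extension argument in detail.

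The remaining claims follow formally. For Hausdorffness, given distinct $x, y \in E$, choose $n$ large enough that $x, y \in E_n$; the Hausdorff original topology on $E_n$ separates them by open sets which extend to disjoint open sets in $E$ via the embedding property. For the openness criterion, if $U \subseteq E$ is convex with $U \cap E_n$ open in every $E_n$, then for any $x \in U$ the translate $U - x$ is absolutely convex with open trace in every $E_n$; by the basis description above, such a set is a $0$-neighborhood of $0$ in $E$, so $U$ is a neighborhood of $x$. The main obstacle is the Dieudonn\'e--Schwartz extension step, the technical backbone of the theory; as this is thoroughly documented in \cite{Jarch}, \cite{Schaef} and \cite{Bour}, the proof reduces to citation once this ingredient is in place.
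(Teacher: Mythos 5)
Your proposal is correct and matches the paper's treatment: the paper states this result without proof, citing exactly the passages in \cite{Jarch} that you defer to for the Dieudonn\'e--Schwartz extension step, which is indeed the technical backbone. The remaining deductions you sketch (Hausdorffness by separating $x-y$ from $0$ using the embedding property, and the convex-openness criterion via the absolutely convex basis) are the standard arguments; the only slip is that $U-x$ is merely convex rather than absolutely convex, which is harmless since a convex $0$-neighbourhood trace in each $E_n$ still contains an absolutely convex one and $U-x$ therefore contains a basic $0$-neighbourhood of the direct limit topology.
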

\begin{bem}\label{directlimstetig}
Let $F$ be a locally convex space. In the situation of \ref{directlim} we have that if the image of $f\colon F\rightarrow E$ is contained in some $E_n$ then $f$ is continuous if and only if $f\big|^{E_n}\colon F\rightarrow E_n$ is continuous. Conversely, if $g\colon E\rightarrow F$ is a linear map, then $g$ is continuous if and only if $g\big|_{E_n}$ is continuous for all $n\in\mathbb{N}$. Indeed, let $U\subseteq F$ be open and convex, then $g^{-1}(U)\cap E_n=(g\big|_{E_n})^{-1}(U)$ is open in $E_n$ if each $g\big|_{E_n}$ is continuous and then $g$ is continuous. On the other hand, if $g$ is continuous $g\circ i_n$ is continuous.
\end{bem}

Later we want to construct a sequence of locally convex spaces as in \ref{directlim} out of spaces of mappings that are compactly carried in a certain sense. For this the following definition will be essential.

\begin{mydef}\label{exhaust}
Let $M$ be a $\sigma$-compact finite-dimensional smooth manifold with corners. Then a sequence $(K_n)_{n\in\mathbb{N}}$ of compact subsets $K_n\subseteq M$ such that $K_n\subseteq K_{n+1}^\circ$ and $\bigcup_{n\in\mathbb{N}}K_n=M$ is called \textit{exhaustion by compact sets}. Note that an  exhaustion by compact sets always exists in this situation (\cite[p. 94, Proposition 15]{Bour2}).
\end{mydef}

\begin{bem}\label{1.6patched}
Given a $\sigma$-compact finite-dimensional smooth manifold with corners $M$, an exhaustion by compact sets $(K_n)_{n\in\mathbb{N}}$ of $M$ and a locally convex space $E$, we observe that $\iota_n\colon C^r_{K_n}(M,E)\rightarrow C^r_{K_{n+1}}(M,E)$ is a topological embedding since the respective topologies are both induced by $C^r(M,E)$. Thus the direct limit
\[
C_c^r(M,E):=\varinjlim C^r_{K_n}(M,E)
\]
carries the topology of a locally convex Hausdorff space. Since for every compact set $K\subseteq M$ there exists an $N\in\mathbb{N}$ with $K\subseteq K_n$ for all $n>N$ the topology on $C_c^r(M,E)$ does not depend on the exhaustion. In fact, if we consider the locally convex direct limit with respect to the directed set of all compact sets $K\subseteq M$, we arrive at the same topology for $C_c^r(M,E)$. In particular, if $M$ is compact the locally convex spaces $C_c^r(M,E)$, $C^r_M(M,E)$ and $C^r(M,E)$ coincide.\\
Next, we will expand the above construction to spaces of sections. Let $\pi\colon E\rightarrow M$ be a smooth vector bundle, with the locally convex space $F$ as typical fibre. With \ref{vectorbundletop} and the above we see that the space of compactly carried $C^r$-sections $C^r_c(M,E):=\varinjlim C^r_K(M,E)$ carries a locally convex Hausdorff topology that induces the respective topology on each $C^r_K(M,E)$ for $K\subseteq M$ compact. In case of smooth vector fields we shall write $\mathcal{V}_c(M)$ and $\mathcal{V}_K(M)$.
\end{bem}

A special case of locally convex direct limits are direct sums of locally convex vector spaces. They will be very important objects later in this work.

\begin{mydef}\label{vectorspacesum}
If $(E_j)_{j\in J}$ is a family of locally convex spaces, then we define the \textit{direct sum}
\[
S:=\bigoplus_{j\in J}E_j:=\left\{(x_j)_{j\in J}\in\prod_{j\in J}E_j\colon x_j=0\text{ for almost all }j\in J\right\}.
\]
We define the inclusion
\[
\lambda_j\colon E_j\rightarrow S,x\mapsto (x_i)_{i\in J}\text{, where }
\begin{cases}
x_i=x&\text{ if }i=j\\
0&\text{ else}
\end{cases}
\]
and will momentarily see that there exists a locally convex vector topology on $S$ making all $\lambda_j$ continuous.
\end{mydef}

\begin{lem}\label{boxtop}
In the situation of \ref{vectorspacesum} there exists a finest Hausdorff locally convex vector topology $\mathcal{O}$ on the direct sum $S$ such that all $\lambda_j$ are continuous. This topology has the following properties:
\begin{enumerate}[(a)]
\item A convex set $U\subseteq S$ is a zero neighbourhood in $(S,\mathcal{O})$ if and only if $U\cap E_j:=\lambda_j^{-1}(U)$ is a zero neighbourhood in $E_j$ for all $j\in J$.
\item If the index set $J$ is countable, then the boxes $\bigoplus_{j\in J}U_j:=\bigoplus_{j\in J}E_j\cap\prod_{j\in J}U_j$, where $(U_j)_{j\in J}$ is a family of zero neighbourhoods $U_j\subseteq E_j$, constitute a basis of zero neighbourhoods for $S$.
\end{enumerate}
In the case of (b) we say that $S$ carries the \textit{box topology}.
\end{lem}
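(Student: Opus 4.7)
The plan is to define $\mathcal{O}$ directly as the locally convex topology on $S$ whose absolutely convex zero neighborhoods are exactly those $U\subseteq S$ satisfying the condition that $\lambda_j^{-1}(U)$ is a zero neighborhood in $E_j$ for every $j\in J$. First I would check that the collection $\mathcal{B}$ of all such $U$ is a filter basis of absolutely convex absorbing subsets of $S$ --- intersections of two members are again members, and absorption follows since every $x\in S$ has finite support so each nonzero coordinate can be scaled into the respective $\lambda_j^{-1}(U)$. By the standard correspondence between such filter bases and locally convex vector topologies, $\mathcal{B}$ is the zero neighborhood basis of a unique locally convex vector topology $\mathcal{O}$ on $S$. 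Each $\lambda_j$ is $\mathcal{O}$-continuous by construction, and any locally convex topology making all $\lambda_j$ continuous must have its absolutely convex zero neighborhoods contained in $\mathcal{B}$, hence is coarser than $\mathcal{O}$; this establishes the finest-topology property.

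Next I would establish Hausdorffness by producing enough continuous linear functionals. Given $x=(x_j)_{j\in J}\neq 0$, pick $j_0$ with $x_{j_0}\neq 0$ and a continuous linear functional $\phi_{j_0}\colon E_{j_0}\to\mathbb{R}$ with $\phi_{j_0}(x_{j_0})\neq 0$, available via Hahn-Banach in the Hausdorff locally convex space $E_{j_0}$. Define $\phi\colon S\to\mathbb{R}$ by $\phi(y):=\phi_{j_0}(y_{j_0})$. For any $\epsilon>0$ the absolutely convex set $U_\epsilon:=\{y\in S:|\phi(y)|<\epsilon\}$ satisfies $\lambda_{j_0}^{-1}(U_\epsilon)=\phi_{j_0}^{-1}((-\epsilon,\epsilon))$ and $\lambda_j^{-1}(U_\epsilon)=E_j$ for $j\neq j_0$, so $U_\epsilon\in\mathcal{B}$. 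Hence $\phi$ is $\mathcal{O}$-continuous, and $\phi(x)\neq 0=\phi(0)$ separates $x$ from $0$. Property (a) then follows essentially from the definition of $\mathcal{B}$ together with the standard fact that every zero neighborhood in a locally convex space contains an absolutely convex one.

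For property (b), assume $J=\mathbb{N}$ without loss of generality. That each box $\bigoplus_{j\in\mathbb{N}}U_j$, with each $U_j\subseteq E_j$ an absolutely convex zero neighborhood, lies in $\mathcal{B}$ is immediate since $\lambda_j^{-1}(\bigoplus_i U_i)=U_j$. The substantive direction is that every $U\in\mathcal{B}$ contains a box, and my approach is the standard weighted-sum trick: pick absolutely convex zero neighborhoods $V_j\subseteq\lambda_j^{-1}(U)$ in $E_j$ and set $U_j:=2^{-(j+1)}V_j$. For any $x=\sum_{j=0}^N\lambda_j(x_j)$ with each $x_j\in U_j$, rewrite
\[
x=\sum_{j=0}^N 2^{-(j+1)}\lambda_j(2^{j+1}x_j),
\]
expressing $x$ as a combination of the elements $\lambda_j(2^{j+1}x_j)\in U$ with non-negative scalars summing to at most $1/2<1$; absolute convexity of $U$ combined with $0\in U$ then forces $x\in U$, so $\bigoplus_j U_j\subseteq U$.

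The main obstacle --- and the point at which the countability hypothesis is essential --- is precisely this weighted-sum step in (b): it relies on the convergent geometric series $\sum_{j}2^{-(j+1)}\leq 1$ to control contributions from every coordinate within a single absolutely convex $U$. For uncountable $J$ no such reduction is available, which is why the box topology is in general strictly finer than the locally convex direct sum topology and coincides with it precisely when $J$ is countable.
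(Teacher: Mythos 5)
Your proof is correct in substance, but note that the paper does not actually prove this lemma: its ``proof'' consists entirely of citations to Jarchow for the construction of the finest locally convex topology making the $\lambda_j$ continuous, for the Hausdorff property, and for the box description in the countable case. What you have written out is essentially the standard textbook argument behind those citations --- the filter basis of absolutely convex sets whose $\lambda_j$-preimages are zero neighbourhoods, Hahn--Banach functionals factoring through a single coordinate for Hausdorffness, and the geometric rescaling $U_j:=2^{-(j+1)}V_j$ for part (b) --- so the two routes do not genuinely diverge; yours just makes the outsourced content explicit. Two small points. First, in part (b) the coefficients $2^{-(j+1)}$ for $j=0,\dots,N$ sum to $1-2^{-(N+1)}$, not to at most $1/2$; the bound you actually need, namely that the sum is $\le 1$ so that absolute convexity of $U$ applies, still holds, so the argument is unaffected. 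Second, statement (a) concerns convex, not absolutely convex, sets $U$; to close the ``essentially follows'' step you should observe that if $U$ is convex and each $\lambda_j^{-1}(U)$ is a zero neighbourhood (so in particular $0\in U$), then $U\cap(-U)$ is absolutely convex, has the same preimage property, and is contained in $U$, whence $U$ is an $\mathcal{O}$-zero neighbourhood. With those two lines added the argument is complete.
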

\begin{proof}
The basic construction of this topology can be found in \cite[p. 78 Section 4.3]{Jarch}, where \cite[p. 80 Proposition 3]{Jarch} shows that the resulting topology is Hausdorff. The sum is a locally convex space by \cite[p. 111, Proposition 6]{Jarch} and by \cite[p. 112, Proposition 9]{Jarch} we see that we have the box topology in case $J$ is countable.
\end{proof}

\begin{bem}\label{directsumstetig}
Let $F$ be a locally convex space, $E:=\bigoplus_{n\in\mathbb{N}}E_n$ be a direct sum of locally convex spaces and $f\colon E\rightarrow F$ be linear. Along the lines of \ref{directlimstetig} we see that $f$ is continuous if and only if $f\big|_{E_n}$ is continuous for all $n\in\mathbb{N}$.
\end{bem}

\begin{lem}\label{7.1}\cite[cf. p. 993, Proposition 7.1]{GloMM}
Let $(E_i)_{i\in\mathbb{N}}$ and $(F_i)_{i\in\mathbb{N}}$ be families of locally convex spaces, with locally convex direct sums $E:=\bigoplus_{i\in\mathbb{N}}E_i$ and $F:=\bigoplus_{i\in\mathbb{N}}F_i$. Further, let $U_i\subseteq E_i$ be open zero neighbourhoods and $f_i\colon U_i\rightarrow F_i$ be $C^k$-maps for $k\in\mathbb{N}_0\cup\{\infty\}$ such that that $f_i(0)=0$. Then $U:=\bigoplus_{i\in\mathbb{N}}U_i:=E\cap\prod_{i\in\mathbb{N}}U_i$ is an open subset of $E$, and
\[
f:=\bigoplus_{i\in\mathbb{N}}f_i\colon U\rightarrow F,\quad(v_i)_{i\in\mathbb{N}}\mapsto (f_i(v_i))_{i\in\mathbb{N}}
\]
is a mapping of class $C^k$.
\end{lem}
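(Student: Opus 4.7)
The plan is to first check that $U$ is open and then prove the $C^k$ property by induction on $k\in\mathbb{N}_0$ (the case $k=\infty$ following by taking all finite $k$). The guiding idea is the \emph{finite support along curves} principle: at any fixed $(x,v_1,\ldots,v_j)\in U\times E^j$ only finitely many components of $f$ are nonzero, so the local analysis happens inside a finite direct sum $\bigoplus_{i\leq N}F_i$, which is also a finite product and therefore controlled coordinatewise.

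For openness, fix $x=(x_i)\in U$ and choose $N$ with $x_i=0$ for $i>N$. Pick a zero-neighbourhood $W_i\subseteq E_i$ with $x_i+W_i\subseteq U_i$ for $i\leq N$, and set $W_i:=U_i$ for $i>N$. By Lemma \ref{boxtop}(b) the box $\bigoplus_iW_i$ is a zero-neighbourhood in $E$, and by construction $x+\bigoplus_iW_i\subseteq U$. The same box-basis argument, combined with continuity of each $f_i$ at $x_i$ and $f_i(0)=0$ to handle the tail, shows that $f$ is continuous at every $x\in U$, settling the base case $k=0$.

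For the inductive step, assume the lemma for $k-1\geq 0$. The natural bijection
\[
\Phi\colon\bigoplus_{i\in\mathbb{N}}(U_i\times E_i)\longrightarrow U\times E,\quad((x_i,v_i))_i\mapsto((x_i)_i,(v_i)_i),
\]
is a homeomorphism because basic box zero-neighbourhoods on both sides correspond under $\Phi$ (this rests on Lemma \ref{boxtop}(b), i.e., on countability of the index set). For fixed $(x,v)\in U\times E$, pick $N$ with $x_i=v_i=0$ for $i>N$. All difference quotients $(f(x+tv)-f(x))/t$ then lie in the closed subspace $\bigoplus_{i\leq N}F_i\cong\prod_{i\leq N}F_i$, so componentwise convergence in this finite product shows that $d^{(1)}f(x,v)$ exists and equals $(d^{(1)}f_i(x_i,v_i))_i$. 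Via $\Phi$ this identifies $d^{(1)}f$ with $\bigoplus_id^{(1)}f_i$. Each $d^{(1)}f_i$ is of class $C^{k-1}$ and satisfies $d^{(1)}f_i(0,0)=0$, so the inductive hypothesis produces $d^{(1)}f\in C^{k-1}$; together with continuity of $f$, the standard equivalence that $f$ is $C^k$ iff $f$ is continuous and $d^{(1)}f$ is $C^{k-1}$ closes the induction.

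The main obstacle is the inductive step, where one has to simultaneously verify that $\Phi$ preserves the locally convex topologies and that the difference-quotient limit genuinely converges in the direct sum. Both points hinge on the countable box structure together with the hypothesis $f_i(0)=0$, which keeps the tail of $f$ identically zero along any affine line and confines all derivatives to a finite sub-sum.
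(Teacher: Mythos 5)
Your argument is correct and is essentially the proof the paper itself gives: the paper cites this lemma from \cite{GloMM} but later proves the generalization Lemma \ref{lem1.10} (which it notes entails Lemma \ref{7.1}) by exactly your scheme — induction on the order of differentiability, continuity via box zero-neighbourhoods using $f_i(0)=0$ to control the tail, and identification of $d^{(1)}f$ with $\bigoplus_i d^{(1)}f_i$ under $\bigoplus_i(U_i\times E_i)\cong U\times E$ after computing the difference quotient inside a finite sub-sum. No gaps; your specialization to $I_n=\{n\}$ matches the paper's route step for step.
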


Now, we will define the Lie group equivalent of the direct sum, the weak direct product.

\begin{mydef}\label{GloDl4.1}(\cite[cf. Section 4]{GloDL})
Let $(G_i)_{i\in\mathbb{N}}$ be a family of smooth Lie groups, then we define $\prod^\ast_{i\in\mathbb{N}}G_i\leq\prod_{i\in\mathbb{N}}G_i$ as the subgroup (of topological groups) of all families $(g_i)_{i\in\mathbb{N}}$ such that $g_i=1$ for almost all $i\in\mathbb{N}$. A \textit{box} is a set of the form $\prod^\ast_{i\in\mathbb{N}}U_i:=(\prod^\ast_{i\in \mathbb{N}}G_i)\cap\prod_{i\in\mathbb{N}}U_i$, where $U_i\subseteq G_i$ is open and $1\in U_i$ for almost all $i\in\mathbb{N}$. 
\end{mydef}

The following proposition shows that $\prod^\ast_{i\in\mathbb{N}}G_i$ can be turned into a smooth Lie group modelled on the locally convex direct sum $\bigoplus_{i\in\mathbb{N}}L(G_i)$. Furthermore, the boxes from above form a basis for the topology of $\prod^\ast_{i\in\mathbb{N}}G_i$ because $\bigoplus_{i\in\mathbb{N}}L(G_i)$ carries the box topology (see \ref{boxtop}).

\begin{prop}\label{directsum}\cite[p. 995, Proposition 7.3]{GloMM}
Let $(G_i)_{i\in\mathbb{N}}$ be a family of smooth Lie groups. Then there exists a uniquely determined Lie group 
structure on the weak direct product $\prod^\ast_{i\in\mathbb{N}} G_i$ that is modelled on the locally convex direct sum $\bigoplus_{i\in\mathbb{N}}L(G_i)$, 
such that for some charts $\varphi_i\colon R_i\rightarrow S_i\subseteq L(G_i)$ of $G_i$ around $1$ taking $1$ to $0$, the mapping
\[
\bigoplus_{i\in\mathbb{N}}S_i\rightarrow\sideset{}{^\ast}\prod_{i\in\mathbb{N}}G_i,\quad (x_i)_{i\in \mathbb{N}}\mapsto\left(\varphi^{-1}_i(x_i)\right)_{i\in\mathbb{N}}
\]
is a diffeomorphism of smooth manifolds onto an open subset of $\prod^\ast_{i\in\mathbb{N}}G_i$.
\end{prop}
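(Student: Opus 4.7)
The plan is to build the Lie group structure by specifying a single chart at the identity, obtained componentwise from charts of the $G_i$, and then to propagate it via left translations, with all compatibility checks reducing to Lemma \ref{7.1}. Concretely, for each $i$ pick a symmetric open identity neighborhood $R_i \subseteq G_i$ together with a chart $\varphi_i\colon R_i \to S_i \subseteq L(G_i)$ satisfying $\varphi_i(1)=0$, and shrink to obtain an open symmetric $R_i' \subseteq R_i$ with $R_i' \cdot R_i' \subseteq R_i$; set $S_i' := \varphi_i(R_i')$. Then
\[
\Phi\colon \bigoplus_{i\in\mathbb{N}} S_i \to \sideset{}{^\ast}\prod_{i\in\mathbb{N}} G_i, \quad (x_i)_{i\in\mathbb{N}} \mapsto (\varphi_i^{-1}(x_i))_{i\in\mathbb{N}},
\]
is well-defined (only finitely many $x_i$ are non-zero, and $\varphi_i^{-1}(0)=1$) and is a bijection onto the box $\sideset{}{^\ast}\prod_i R_i$, which is a basic open set by Definition \ref{GloDl4.1}. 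I declare $\Phi$ a chart at the identity and transport it by left translations $L_g \circ \Phi$ to obtain charts around each $g \in \sideset{}{^\ast}\prod_{i\in\mathbb{N}} G_i$.

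The essential step is to check that the group operations are smooth in these charts. Read through $\Phi$ near the identity, inversion is $\bigoplus_i \iota_i$ with $\iota_i(x) := \varphi_i(\varphi_i^{-1}(x)^{-1})$, and multiplication (after restricting to a box contained in $\bigoplus_i S_i'$) is $\bigoplus_i \mu_i$ with $\mu_i(x,y) := \varphi_i(\varphi_i^{-1}(x)\varphi_i^{-1}(y))$. Each $\iota_i$ and $\mu_i$ is smooth on its domain with $\iota_i(0)=0$ and $\mu_i(0,0)=0$, so Lemma \ref{7.1} yields the smoothness of the direct sum maps, using the canonical identification of $\bigoplus_i (S_i' \times S_i')$ with $(\bigoplus_i S_i') \times (\bigoplus_i S_i')$ as locally convex spaces. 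Compatibility of $\Phi$ with a translated chart $L_g \circ \Phi$ on their overlap reads as $(y_i)_i \mapsto (\varphi_i(g_i^{-1}\varphi_i^{-1}(y_i)))_i$; since $g_i = 1$ for almost all $i$, only finitely many components carry a non-trivial translation (handled by ordinary smoothness in the relevant $G_i$), while the remaining components reduce to the identity map on $S_i$, and Lemma \ref{7.1} assembles these pieces into a smooth map on an appropriately shrunk box.

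Uniqueness follows at once: any two Lie group structures making $\Phi$ a diffeomorphism onto an open subset agree on a neighborhood of $1$ and therefore, by smoothness of translations, on all of $\sideset{}{^\ast}\prod_{i\in\mathbb{N}} G_i$. The main obstacle in the argument is the smoothness of multiplication, because componentwise smoothness is a priori too weak to control smoothness on an infinite direct sum; the maneuver that makes everything work is the choice of the smaller neighborhoods $R_i'$, which guarantees that the componentwise products land in $R_i$ and that each $\mu_i(0,0)=0$, precisely matching the hypothesis of Lemma \ref{7.1}. All other smoothness and compatibility statements are routine bookkeeping on top of this key application.
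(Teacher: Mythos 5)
The paper does not prove this proposition at all --- it is imported verbatim from \cite[Proposition 7.3]{GloMM} --- so there is no in-paper argument to compare against; your reconstruction is correct and is essentially the standard proof of the cited result: a chart at the identity assembled from the $\varphi_i$, smoothness of the coordinate expressions of multiplication and inversion via Lemma \ref{7.1} after shrinking to $R_i'$ with $R_i'R_i'\subseteq R_i$, and globalization by translation. The only point you leave implicit is the smoothness of conjugation by a fixed $g\in\prod^\ast_{i\in\mathbb{N}}G_i$ near the identity (condition (iii) of Proposition \ref{A.4}), but this follows by exactly the same ``only finitely many components are non-trivial'' argument you already use for the chart-compatibility check, so the proof is complete in all essentials.
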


Frequently we are interested in mappings from a locally convex direct sum to a weak direct product. The next lemma will be our most important tool for these cases. 

\begin{lem}\label{2.12a}
Let $(E_i)_{i\in\mathbb{N}}$ be a family of locally convex spaces and $(G_i)_{i\in\mathbb{N}}$ be a family of smooth Lie groups. If there exist open neighbourhoods $U_i\subseteq E_i$ such that $0\in U_i$ for almost all $i\in\mathbb{N}$ and smooth maps $f_i\colon U_i\rightarrow G_i$ such that $f_i(0)=1_{G_i}$, if $0\in U_i$, then the map
\[
f\colon\bigoplus_{i\in\mathbb{N}}U_i\rightarrow\sideset{}{^\ast}\prod_{i\in\mathbb{N}} G_i,\quad(x_i)_{i\in\mathbb{N}}\mapsto\left(f_i(x_i)\right)_{i\in\mathbb{N}}
\]
is smooth.
\end{lem}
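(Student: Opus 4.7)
The plan is to reduce smoothness of $f$ to Lemma~\ref{7.1} via the chart on $\sideset{}{^\ast}\prod_{i\in\mathbb{N}}G_i$ from Proposition~\ref{directsum}, after translating source and target so that an arbitrary base point is moved to the origin and to the identity, respectively. Fix $(x_i^0)\in\bigoplus_{i\in\mathbb{N}} U_i$; since almost all $x_i^0$ vanish and only finitely many $U_i$ miss $0$, one has $x_i^0=0\in U_i$ (and hence $f_i(x_i^0)=1_{G_i}$) for all but finitely many $i$. In particular $g^0:=(f_i(x_i^0))_{i\in\mathbb{N}}$ lies in $\sideset{}{^\ast}\prod_{i\in\mathbb{N}}G_i$, and the left-multiplication $L_{g^0}$ is a diffeomorphism of the weak product.

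Next, define $\tilde f_i(y):=f_i(x_i^0)^{-1}f_i(x_i^0+y)$ on a $0$-neighbourhood of $E_i$ with $x_i^0+y\in U_i$. Then $\tilde f_i$ is smooth with $\tilde f_i(0)=1_{G_i}$, and pointwise one has $f(x^0+y)=L_{g^0}\bigl((\tilde f_i(y_i))_{i\in\mathbb{N}}\bigr)$. Because translation by $x^0$ is a diffeomorphism of $\bigoplus_i E_i$, proving smoothness of $f$ near $(x_i^0)$ reduces to proving smoothness of $\tilde f\colon (y_i)\mapsto(\tilde f_i(y_i))$ near the origin.

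To do this, pick charts $\varphi_i\colon R_i\to S_i\subseteq L(G_i)$ with $\varphi_i(1_{G_i})=0$ as in Proposition~\ref{directsum}, giving the chart $\Phi\colon(z_i)\mapsto(\varphi_i^{-1}(z_i))$ on an open neighbourhood of the identity in $\sideset{}{^\ast}\prod_{i\in\mathbb{N}}G_i$. By continuity of $\tilde f_i$ at $0$ and $\tilde f_i(0)=1\in R_i$, choose open $0$-neighbourhoods $V_i\subseteq E_i$ with $x_i^0+V_i\subseteq U_i$ and $\tilde f_i(V_i)\subseteq R_i$; by Lemma~\ref{boxtop}(b) the set $\bigoplus_i V_i$ is then an open $0$-neighbourhood in $\bigoplus_i E_i$. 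The compositions $\varphi_i\circ\tilde f_i\colon V_i\to L(G_i)$ are smooth with $(\varphi_i\circ\tilde f_i)(0)=0$, so Lemma~\ref{7.1} yields a smooth map $\bigoplus_i(\varphi_i\circ\tilde f_i)\colon\bigoplus_i V_i\to\bigoplus_i L(G_i)$, and post-composition with the diffeomorphism $\Phi$ produces smoothness of $\tilde f$, hence of $f$.

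The main obstacle is organisational rather than analytic: the finitely many indices with $x_i^0\neq 0$ or $0\notin U_i$ must be handled so as not to violate the ``almost all components equal zero/identity'' condition required by Proposition~\ref{directsum} and Lemma~\ref{7.1}. The left translation by $g^0$ on the target together with the translation by $x^0$ on the source is precisely what normalises those finitely many indices, so that the reshuffled components $\tilde f_i$ uniformly send $0$ to $1_{G_i}$ and the direct-sum smoothness lemma applies without modification.
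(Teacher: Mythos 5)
Your proof is correct and follows essentially the same route as the paper: localise at a point, translate source and target so the base point goes to $0$ and the identity, and then apply Lemma~\ref{7.1} through the chart of Proposition~\ref{directsum}. The only (immaterial) difference is that you normalise the target by left multiplication with $g^0$ in the weak direct product, whereas the paper instead subtracts $\phi_i(f_i(y_i))$ inside the chart images.
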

\begin{proof}
If $y:=(y_i)_{i\in\mathbb{N}}\in\bigoplus_{i\in\mathbb{N}}U_i$, then we have open neighbourhoods $R_i\subseteq G_i$ of $f_i(y_i)$ such that there exist charts $\phi_i\colon R_i\rightarrow S_i\subseteq L(G_i)$. This leads to a chart of the open neighbourhood $\prod_{i\in\mathbb{N}}^\ast R_i\subseteq\prod_{i\in\mathbb{N}}^\ast G_i$ of $f(y)$ (see \cite[Proposition 7.3]{GloMM}). Since $\bigoplus_{i\in\mathbb{N}}E_i$ is a locally convex space the translation is a diffeomorphism and hence \[
\phi\colon\sideset{}{^\ast}\prod_{i\in\mathbb{N}} R_i\rightarrow\bigoplus_{i\in\mathbb{N}}S_i-(\phi_i(f_i(y_i))),
\quad(x_i)_{i\in\mathbb{N}}\mapsto\bigg(\phi_i(x_i)-\phi_i\big(f_i(y_i)\big)\bigg)_{i\in\mathbb{N}}
\]
is also a chart, mapping $f(y)$ to zero. Further, possibly after shrinking $U_i$, we assume $U_i=f_i^{-1}(R_i)$ which is open because the maps $f_i$ are continuous. Now, using translations again,
\[
g_i\colon U_i-y_i\rightarrow S_i,\quad v-y_i\mapsto\phi_i\big(f_i(v-y_i+y_i)\big)
\]
is smooth and maps zero to zero. It follows from \ref{7.1} that $g:=\bigoplus_{i\in\mathbb{N}}g_i$ is smooth and since it is a local representation of $f$ (by means of translation on the left-hand side) we proved the assertion.
\end{proof}

\begin{folg}\label{productmorph}
 Let $(G_i)_{i\in\mathbb{N}}$ and $(G'_i)_{i\in\mathbb{N}}$ be families of smooth Lie groups and let $f_i\colon G_i\rightarrow G'_i$ be Lie group morphisms for $i\in\mathbb{N}$. Then the map
\[
 f\colon \sideset{}{^\ast}\prod_{i\in\mathbb{N}} G_i\rightarrow \sideset{}{^\ast}\prod_{i\in\mathbb{N}} G'_i,\quad (g_i)_{i\in\mathbb{N}}\mapsto \Big(f_i(g_i)\Big)_{i\in\mathbb{N}}
\]
is a morphism of Lie groups.
\end{folg}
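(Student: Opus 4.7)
The plan is to first verify that $f$ is a well-defined abstract group homomorphism and then reduce its smoothness to smoothness at the identity, which will follow directly from Lemma \ref{2.12a} (or equivalently Lemma \ref{7.1}) applied to the local representation in a chart around $1$.

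First I would note that $f$ is well-defined: if $(g_i)_{i\in\mathbb{N}}\in\prod^\ast_{i\in\mathbb{N}} G_i$, then $g_i=1_{G_i}$ for almost all $i$, hence $f_i(g_i)=1_{G'_i}$ for almost all $i$, so $(f_i(g_i))_{i\in\mathbb{N}}\in\prod^\ast_{i\in\mathbb{N}} G'_i$. Since the group operation on the weak direct product is defined componentwise and each $f_i$ is a group morphism, $f$ is an abstract group homomorphism.

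Next I would establish smoothness of $f$ at the identity. For each $i\in\mathbb{N}$ choose a centred chart $\varphi_i\colon R_i\rightarrow S_i\subseteq L(G_i)$ of $G_i$ around $1_{G_i}$ with $\varphi_i(1_{G_i})=0$, and likewise a centred chart $\varphi'_i\colon R'_i\rightarrow S'_i\subseteq L(G'_i)$ of $G'_i$ around $1_{G'_i}$. By Proposition \ref{directsum} these produce charts of $\prod^\ast_{i\in\mathbb{N}} G_i$ and $\prod^\ast_{i\in\mathbb{N}} G'_i$ about the respective identities. Shrinking $R_i$ if necessary (using continuity of $f_i$), we may assume $f_i(R_i)\subseteq R'_i$. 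Then the maps
\[
h_i\colon S_i\rightarrow G'_i,\quad h_i:=f_i\circ\varphi_i^{-1}
\]
are smooth with $h_i(0)=1_{G'_i}$, so Lemma \ref{2.12a} yields smoothness of
\[
\bigoplus_{i\in\mathbb{N}}S_i\rightarrow\sideset{}{^\ast}\prod_{i\in\mathbb{N}} G'_i,\quad(x_i)_{i\in\mathbb{N}}\mapsto\left(f_i(\varphi_i^{-1}(x_i))\right)_{i\in\mathbb{N}}.
\]
This is precisely $f$ composed with the chart of $\prod^\ast_{i\in\mathbb{N}} G_i$ around $1$, so $f$ is smooth in some neighbourhood of the identity.

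Finally, I would promote local smoothness at $1$ to global smoothness by the standard homomorphism argument: for any $g=(g_i)_{i\in\mathbb{N}}\in\prod^\ast_{i\in\mathbb{N}} G_i$ we have the identity
\[
f=\lambda_{f(g)}\circ f\circ\lambda_{g^{-1}},
\]
where $\lambda$ denotes left translation. Since left translations are diffeomorphisms in both Lie groups, smoothness of $f$ near $g$ follows from smoothness near $1$. The only step with any content is the smoothness at the identity, and the main (minor) obstacle there is simply ensuring one can shrink the $R_i$ uniformly enough that $f_i(R_i)\subseteq R'_i$; this is harmless because the box topology on the direct sum allows each $R_i$ to be shrunk independently.
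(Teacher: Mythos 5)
Your proposal is correct and follows essentially the same route as the paper: the paper likewise observes that $f$ is obviously a group morphism, reduces to smoothness on a unity neighbourhood, and applies Lemma \ref{2.12a} in a chart of the form given by Proposition \ref{directsum}. Your additional step of shrinking the $R_i$ so that $f_i(R_i)\subseteq R'_i$ is harmless but not needed, since Lemma \ref{2.12a} only requires smooth maps $S_i\rightarrow G'_i$ sending $0$ to $1$ and handles the target charts internally.
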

\begin{proof}
 Since $f$ is obviously a group morphism, we only need to check smoothness on a unity neighbourhood. Using a chart as in \ref{directsum} and applying \ref{2.12a} we immediately get the result.
\end{proof}

\begin{lem}\label{topemb}
Let $F$ be a locally convex space, $M$ be a smooth finite-dimensional $\sigma$-compact manifold with corners and $(V_i)_{i\in I}$ a relatively compact locally finite open cover of $M$. Then the map
\[
\Phi\colon C^{\infty}_c(M,F)\rightarrow\bigoplus_{i\in I}C^{\infty}(V_i,F),\quad\gamma\mapsto \left(\gamma\big|_{V_i}\right)_{i\in I}
\]
is a linear topological embedding with closed image.
\end{lem}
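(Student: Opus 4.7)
The plan is to show continuity of $\Phi$ via the direct-limit description of its source, to construct an explicit continuous left inverse $\Psi_0$ using a partition of unity, and finally to identify the image as an intersection of kernels of continuous linear maps. For well-definedness, note that any $\gamma\in C^\infty_c(M, F)$ has compact support $K$, which meets only finitely many $V_i$ by local finiteness; hence $\gamma|_{V_i} = 0$ for all but finitely many $i$, and $\Phi(\gamma)$ genuinely lies in the direct sum. To see that $\Phi$ is continuous, I would fix an exhaustion $(K_n)$ of $M$ by compact sets, and by Remark \ref{1.6patched} together with the direct-limit characterization of continuity (Remark \ref{directlimstetig}), it suffices to check continuity of each restriction $\Phi|_{C^\infty_{K_n}(M, F)}$. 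This restriction factors through the finite sub-sum $\bigoplus_{i \in I_n} C^\infty(V_i, F)$ where $I_n := \{i : V_i \cap K_n \neq \varnothing\}$, since inclusions of finite sub-sums into the full direct sum are continuous by construction; continuity on the finite part reduces to that of each coordinate $\gamma \mapsto \gamma|_{V_i}$, which is Lemma \ref{A.14}.

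Next, I would pick a smooth partition of unity $(\chi_i)_{i \in I}$ subordinate to $(V_i)$; the relative compactness of $V_i$ ensures that $\mathrm{supp}(\chi_i) \subseteq \overline{V_i}$ is compact. For each $i$, multiplication by $\chi_i|_{V_i}$ is continuous on $C^\infty(V_i, F)$ by Lemma \ref{mult} and takes values in $C^\infty_{\mathrm{supp}(\chi_i)}(V_i, F)$, whence extension by zero yields a continuous map into $C^\infty_{\mathrm{supp}(\chi_i)}(M, F) \hookrightarrow C^\infty_c(M, F)$ by Lemma \ref{cont}. Invoking Remark \ref{directsumstetig}, which says a linear map out of a direct sum is continuous iff its restriction to each summand is, I would conclude that
\[
\Psi_0\colon \bigoplus_{i \in I} C^\infty(V_i, F) \to C^\infty_c(M, F),\quad (\gamma_i)_{i\in I} \mapsto \sum_{i \in I}\widetilde{\chi_i\gamma_i}
\]
is continuous (the sum being finite for each individual tuple). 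A pointwise check using $\sum_i \chi_i = 1$ and the fact that components of $\Phi(\gamma)$ agree on overlaps gives $\Psi_0 \circ \Phi = \mathrm{id}_{C^\infty_c(M, F)}$; therefore $\Psi_0|_{\mathrm{Im}(\Phi)}$ is a continuous inverse of $\Phi$, establishing that $\Phi$ is a topological embedding.

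For closedness, I would characterize $\mathrm{Im}(\Phi)$ as the set of tuples $(\gamma_i)$ satisfying $\gamma_i(x) = \gamma_j(x)$ for all pairs $i, j$ and all $x \in V_i \cap V_j$: the inclusion $\mathrm{Im}(\Phi) \subseteq \{\text{compatible tuples}\}$ is immediate, while the reverse follows by patching locally to a smooth function on $M$ whose support sits inside the finite union $\bigcup_{\gamma_i \neq 0}\overline{V_i}$ and is therefore compact. Each compatibility condition is the kernel of the continuous linear functional $(\eta_k) \mapsto \mathrm{ev}_x(\eta_i) - \mathrm{ev}_x(\eta_j)$, continuous since the direct-sum projections are continuous and $\mathrm{ev}_x\colon C^\infty(V_i, F) \to F$ is continuous by Remark \ref{4.2.3}; so the image is an intersection of closed subspaces and thus closed. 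The main obstacle will be the construction of $\Psi_0$: it chains Lemmas \ref{mult} and \ref{cont} with Remark \ref{directsumstetig}, and critically relies on the relative compactness of each $V_i$ to ensure $\chi_i\gamma_i$ lies in a compactly supported space so that extension by zero can be applied.
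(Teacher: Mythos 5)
Your proof is correct and follows essentially the same route as the paper: continuity via the direct-limit description and finite sub-sums using Lemma \ref{A.14}, a continuous left inverse built from a partition of unity via Lemmas \ref{mult} and \ref{cont} together with Remark \ref{directsumstetig} (the paper merely splits your $\Psi_0$ into the composite $\Sigma\circ\Psi$), and the image identified with the pointwise-compatible tuples. Your closedness argument via kernels of evaluation functionals is exactly the one the paper defers to Corollary \ref{algebraglue}, so spelling it out here completes the statement rather than deviating from it.
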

\begin{proof}
Because $M$ is $\sigma$-compact we may assume $I=\mathbb{N}$.
The map is obviously linear and hence it suffices to show that for every compact set $K\subseteq M$ the restriction $\Phi\big|_{\mathrm{C}^{\infty}_K(M,F)}$ is continuous (see \ref{directlimstetig}). Because the cover $(V_i)_{i\in\mathbb{N}}$ is locally finite there exists an $N\in\mathbb{N}$ such that we have $V_n\cap K=\varnothing$ for all $n>N$. Thus 
\[
\Phi\big(C^{\infty}_K(M,F)\big)\subseteq\prod_{i=1}^N C^{\infty}(V_i,F)\subseteq\bigoplus_{i\in\mathbb{N}}C^{\infty}(V_i,F)
\]
and since the sum on the right-hand side induces the product topology on finite products we only need to show that the components of $\Phi\big|_{\mathrm{C}^{\infty}_K(M,F)}$ are continuous. However, the components are given by $C^{\infty}_K(M,F)\rightarrow C^{\infty}(V_i,F),\gamma\mapsto\gamma\big|_{V_i}$ and are continuous because $C^{\infty}_K(M,F)$ carries the induced topology and \ref{A.14} applies.\\
Let $h_i\colon M\rightarrow\mathbb{R}$ be a partition of unity subordinated to $(V_i)_{i\in\mathbb{N}}$ and $K_i:=\mathrm{supp}(h_i)\subseteq V_i$. Now we define
\[
\Psi\colon\bigoplus_{i\in\mathbb{N}}C^{\infty}(V_i,F)\rightarrow\bigoplus_{i\in\mathbb{N}} 
C^\infty_{K_i}(V_i,F),\quad (f_i)_{i\in\mathbb{N}}\mapsto(h_i\cdot f_i)_{i\in\mathbb{N}}
\]
and show that this map is continuous. Since $\Psi$ is linear it suffices to show that 
$\Psi\big|_{C^\infty(V_i,F)}$ is continuous. Furthermore, because the image of $\Psi\big|_{C^{\infty}(V_i,F)}$ is contained in $C^{\infty}_{K_i}(V_i,F)$ we only need to show that 
$\Psi\big|_{C^{\infty}(V_i,F)}\colon C^{\infty}(V_i,F)\rightarrow C^{\infty}_{K_i}(V_i,F)$ is continuous (see \ref{directsumstetig}). But this is the case since this map is only the multiplication with $h_i$ which is smooth by \ref{mult}.\\
Thus $\Theta:=\Psi\circ\Phi$ is linear and continuous. Finally we only need to add up the components again. For this we define
\[
\Sigma\colon\bigoplus_{i\in\mathbb{N}}C_{K_i}^{\infty}(V_i,F)\rightarrow C^{\infty}_c(M,F),\quad(f_i)_{i\in\mathbb{N}}\mapsto\sum_{i\in\mathbb{N}}\tilde{f}_i,\]
where
\[\tilde{f}_i(x):=
\begin{cases}
f_i(x)&\text{ if }x\in V_i\\
0 &\text{else.}
\end{cases}
\]
If we assume $\Sigma$ is continuous we obtain
\[
(\Sigma\circ\Theta)(f)=\sum_{i\in\mathbb{N}}\left((h_i\cdot (\tilde{f}\big|_{V_i}))\right)=\sum_{i\in\mathbb{N}}h_i\cdot f=f
.\]
This implies $\Sigma\circ\Theta=\mathrm{id}_{C^{\infty}_c(M,F)}$ and thus $\Sigma\big|_{\mathrm{im}\Theta}=(\Theta\big|^{\mathrm{im}\Theta})^{-1}$ is continuous. Hence $\Theta=\Psi\circ\Phi$ is a topological embedding which implies $\Phi$ is also a topological embedding.\\
Once again $\Sigma$ is continuous if it is continuous on each summand because $\Sigma$ is linear. Since the map $C^{\infty}_{K_i}(M,F)\rightarrow C^{\infty}_c(M,F)$ is an embedding it suffices to show that $C^{\infty}_{K_i}(V_i,F)\rightarrow C^{\infty}_{K_i}(M,F),f\mapsto\tilde{f}$ is continuous but this follows from \ref{cont}.
\end{proof}

\begin{folg}\label{algebraglue}
Let $F$ be a locally convex space, $M$ be a finite-dimensional $\sigma$-compact smooth manifold with corners and $\mathcal{V}=(V_i)_{i\in\mathbb{N}}$ be a locally finite relatively compact open cover of $M$.
Then
\[
\mathfrak{g}_{\mathcal{V}}:=\left\{(\eta_i)_{i\in\mathbb{N}}\in\bigoplus_{i\in\mathbb{N}}C^{\infty}(V_i,F)\colon\eta_i(x)=\eta_j(x)\text{ for all }x\in U_i\cap U_j \right\}
\]
is a closed vector subspace of $\bigoplus_{i\in\mathbb{N}}C^{\infty}(V_i,F)$. Also, the gluing map
\[
\mathrm{glue}\colon \mathfrak{g}_{\mathcal{V}}\rightarrow C^{\infty}_c(M,F),\quad\mathrm{glue}\big((\eta_i)_{i\in\mathbb{N}}\big)(x):=\eta_i(x)\quad\text{if }x\in U_i
\]
is well-defined and the continuous inverse to the restriction map.
\end{folg}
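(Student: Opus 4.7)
The plan is to deduce everything from Lemma \ref{topemb} by identifying $\mathfrak{g}_{\mathcal{V}}$ with the image of the restriction map $\Phi\colon C^{\infty}_c(M,F)\to\bigoplus_{i\in\mathbb{N}}C^{\infty}(V_i,F)$.

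First I would check directly that $\mathrm{glue}$ is well-defined as a map into $C^{\infty}_c(M,F)$. Well-definedness of $\mathrm{glue}(\eta)(x)$ is immediate from the compatibility condition $\eta_i(x)=\eta_j(x)$ on $V_i\cap V_j$. Smoothness of $\mathrm{glue}(\eta)$ on $M$ follows because its restriction to each open set $V_i$ of the cover equals the smooth map $\eta_i$, and smoothness is a local property. For compact support, I use that $(\eta_i)_{i\in\mathbb{N}}$ lies in the direct sum, so the index set $I:=\{i\in\mathbb{N}:\eta_i\neq 0\}$ is finite; then
\[
\{x\in M:\mathrm{glue}(\eta)(x)\neq 0\}\subseteq\bigcup_{i\in I}V_i,
\]
and taking closures in $M$ gives $\mathrm{supp}\,\mathrm{glue}(\eta)\subseteq\bigcup_{i\in I}\overline{V_i}$, which is compact by relative compactness of the cover.

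Next I would verify that $\mathrm{glue}$ and $\Phi$ are mutually inverse on $\mathfrak{g}_{\mathcal{V}}$: obviously $\Phi\bigl(C^{\infty}_c(M,F)\bigr)\subseteq\mathfrak{g}_{\mathcal{V}}$ since restrictions of a global map agree on overlaps, and the equations $\mathrm{glue}\circ\Phi=\mathrm{id}$ and $\Phi\circ\mathrm{glue}=\mathrm{id}$ reduce to the tautology that a map is determined by its restrictions to the sets of an open cover. In particular $\Phi$ is a linear bijection onto $\mathfrak{g}_{\mathcal{V}}$.

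Now I can harvest the conclusions. Lemma \ref{topemb} states that $\Phi$ is a topological embedding with closed image, so $\mathfrak{g}_{\mathcal{V}}=\mathrm{im}(\Phi)$ is closed in $\bigoplus_{i\in\mathbb{N}}C^{\infty}(V_i,F)$, and its continuous inverse $(\Phi|^{\mathfrak{g}_{\mathcal{V}}})^{-1}=\mathrm{glue}$ is continuous. There is essentially no obstacle here beyond bookkeeping; the only point requiring care is the compact-support check above, where one must combine the finiteness of nonzero components from the direct sum with the relative compactness assumption on the cover to land inside $C^{\infty}_c(M,F)$ rather than merely $C^{\infty}(M,F)$.
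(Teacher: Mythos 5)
Your argument is correct and follows the same basic route as the paper: both identify $\mathfrak{g}_{\mathcal{V}}$ with $\mathrm{im}(\Phi)$ for the restriction map $\Phi$ of Lemma \ref{topemb} and obtain continuity of $\mathrm{glue}$ as the inverse of the corestriction $\Phi\big|^{\mathfrak{g}_{\mathcal{V}}}$. Your verification of well-definedness and of the compact-support estimate $\mathrm{supp}\,\mathrm{glue}(\eta)\subseteq\bigcup_{i\in I}\overline{V_i}$ is more explicit than the paper's ``obviously well-defined''; that is fine. The one point where you diverge is the closedness of $\mathfrak{g}_{\mathcal{V}}$: you import it from the ``closed image'' clause in the statement of Lemma \ref{topemb}, whereas the paper proves it independently here, observing that $\mathfrak{g}_{\mathcal{V}}$ is an intersection of equalizers of the continuous linear maps $(\eta_k)_{k\in\mathbb{N}}\mapsto\eta_i(x)$ and $(\eta_k)_{k\in\mathbb{N}}\mapsto\eta_j(x)$ (composites of the projections onto the summands with point evaluations). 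This matters because the proof of Lemma \ref{topemb} given in the paper only establishes that $\Phi$ is a topological embedding and never actually argues that its image is closed; the equalizer argument in this corollary is in effect what substantiates that clause. So while you are formally entitled to cite the lemma as stated, to avoid a circularity you should either supply the equalizer argument directly (it is two lines) or note that a topological embedding need not have closed image, so the closedness really does require its own justification.
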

\begin{proof}
We have $E:=\mathfrak{g}_{\mathcal{V}}=\mathrm{im}(\Phi)$, where
\[
\Phi\colon C^{\infty}_c(M,F)\rightarrow\bigoplus_{i\in\mathbb{N}}C^{\infty}(V_i,F),\quad\gamma\mapsto \left(\gamma\big|_{V_i}\right)_{i\in\mathbb{N}}
\] is the embedding from \ref{topemb}. Because point evaluations $C^\infty(V_i,F)\rightarrow F$ are continuous as well as the projections from the direct sum to $C^\infty(V_i,F)$, $E$ is closed.
 Furthermore, the gluing map is obviously well-defined and it is continuous as it is the inverse of $\Phi\big|^E$.
\end{proof}

\section{Principal Bundles and Their Associated Bundles}
Roughly speaking, principal bundles are manifolds that locally look like the product of open subsets of a base manifold $M$ and a Lie group $G$. This Lie group acts on the principal bundle and similarly to vector bundles there exists a projection onto the base space that is compatible with the action of $G$. It will turn out that the sections of this projection only differ by smooth functions with values in $G$, so called transition functions. \\
At the end, we introduce the associated bundle, a certain vector bundle that is obtained in a natural way from a given principal bundle via the adjoint action. It will turn out that there is a nice correspondence between the sections of the associated bundle and certain Lie algebras that arise from the principal bundle.
\bigskip

Above, we have already used the term locally finite cover. Although it is a well-known concept because of its importance for this work we make it explicit here.

\begin{mydef}
Let $X$ be a topological space. An \textit{open cover} is a family $(U_i)_{i\in I}$ of open sets in $X$ such that $\bigcup_{i\in I}U_i=X$. 
We define \textit{closed cover}, \textit{compact cover} and \textit{relatively compact cover} analogously. A cover is called \textit{locally finite} if for every $x\in X$ there exists an open neighbourhood $V_x$ such that only finitely many $U_i$ have nonempty intersection with $V_x$. 
\end{mydef}

\begin{lem}\label{finiteinter}
Let $X$ be a topological space and $\mathcal{V}=(V_i)_{i\in\mathbb{N}}$ a locally finite open cover of $X$. Then every compact set $K\subseteq X$ intersects only finitely many of the $V_i$. 
\end{lem}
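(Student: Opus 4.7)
The plan is to use compactness of $K$ to extract a finite subcover from the local finiteness data, then observe that each piece of this subcover meets only finitely many $V_i$.

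First, I would unpack the definition of local finiteness: for each point $x \in X$, pick an open neighbourhood $W_x$ such that the set $I_x := \{i \in \mathbb{N} : V_i \cap W_x \neq \varnothing\}$ is finite. Restricting to $x \in K$, the family $(W_x)_{x \in K}$ is an open cover of $K$ in $X$ (each $x$ lies in its own $W_x$).

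Next, apply compactness of $K$ to extract finitely many points $x_1, \ldots, x_n \in K$ such that $K \subseteq W_{x_1} \cup \cdots \cup W_{x_n}$. Then for any $i \in \mathbb{N}$ with $V_i \cap K \neq \varnothing$, pick a point $p \in V_i \cap K$; since $p$ lies in some $W_{x_j}$, we have $V_i \cap W_{x_j} \neq \varnothing$, hence $i \in I_{x_j}$. Consequently
\[
\{i \in \mathbb{N} : V_i \cap K \neq \varnothing\} \;\subseteq\; I_{x_1} \cup \cdots \cup I_{x_n},
\]
which is a finite union of finite sets, and therefore finite.

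There is no real obstacle here; the argument is a direct application of the definitions. The only thing to be careful about is that the local finiteness condition is stated pointwise on $X$ (not just on $K$), so the neighbourhoods $W_x$ are open in $X$ and their restrictions to $K$ genuinely cover $K$, making compactness immediately applicable.
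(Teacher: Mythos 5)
Your proof is correct and follows essentially the same route as the paper's: use local finiteness to choose, for each point of $K$, a neighbourhood meeting only finitely many $V_i$, extract a finite subcover by compactness, and conclude that the indices $i$ with $V_i\cap K\neq\varnothing$ lie in a finite union of finite sets. Your write-up is simply a more explicit version of the paper's argument.
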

\begin{proof}
Let $K\subseteq X$ be compact. Since $\mathcal{V}$ is locally finite, for every $x\in K$ there exists an open neighbourhood $V_x$ of $x$ such that $V_x$ intersects only finitely many of the $V_i$. These neighbourhoods cover $K$ and by compactness of $K$ there exists a finite subcover. Since every neighbourhood in this finite subcover intersects only finitely many $V_i$ there are only finitely many sets $V_i$ that intersect with $K$.
\end{proof}

\begin{folg}
Let $M$ be a finite-dimensional $\sigma$-compact smooth manifold and $\mathcal{V}=(V_i)_{i\in\mathbb{N}}$ a locally finite open cover by relative compact subsets of $M$. Then every $V_i$ intersects only finitely many sets of $\mathcal{V}$.
\end{folg}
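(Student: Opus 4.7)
The plan is extremely short, since this is an immediate corollary of the preceding lemma. First I would observe that the phrase ``relatively compact'' means each $V_i$ has compact closure $\overline{V_i}$ in $M$. The hypothesis that $M$ is a finite-dimensional $\sigma$-compact smooth manifold plays no essential role at this point beyond ensuring that the ambient space is Hausdorff so that closures behave as expected; the result is really a general topological statement.

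Next, I would fix an arbitrary $V_i$ and apply Lemma \ref{finiteinter} to the compact set $K := \overline{V_i}$. This immediately yields that $\overline{V_i}$ intersects only finitely many members of $\mathcal{V}$. Since $V_i \subseteq \overline{V_i}$, any $V_j$ that meets $V_i$ also meets $\overline{V_i}$, so only finitely many $V_j$ can intersect $V_i$.

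There is no real obstacle here; the entire content is the reduction ``relatively compact open set $\Rightarrow$ its closure is a compact set to which the previous lemma applies.'' I would therefore keep the proof to a single sentence invoking Lemma \ref{finiteinter} with $K = \overline{V_i}$.
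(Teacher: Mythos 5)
Your proof is correct and is essentially identical to the paper's: both apply Lemma \ref{finiteinter} to the compact set $K=\overline{V_i}$ and use that any $V_j$ meeting $V_i$ also meets $\overline{V_i}$. (If anything, your version is slightly cleaner, since the paper phrases this last step as an ``if and only if'' when only the one implication is needed.)
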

\begin{proof}
Since $V_i\cap V_j$ is nonempty if and only if $\overline{V}_i\cap V_j$ is nonempty this follows directly from \ref{finiteinter}.
\end{proof}

\subsection{Smooth principal bundles and trivializing systems}
\begin{mydef}\label{principal}
Let $G$ be a smooth Lie group modelled on a locally convex space $E$ and let $M$ be a smooth manifold with corners. A smooth \textit{principal} $G$\textit{-bundle with base} $M$ is a smooth manifold with corners $P$ together with a smooth right action $\rho\colon P\times G\rightarrow P,\: (p,g)\mapsto p\cdot g$ and a smooth map $\pi\colon P\rightarrow M$ such that
\begin{enumerate}[(a)]
\item for all $m\in M$ there exists an open neighbourhood $U\subseteq M$ of $m$ and a smooth map $\sigma\colon U\rightarrow P$ such that $\pi\circ\sigma=\mathrm{id}_U$. A map with these properties is called a smooth \textit{section} of $P$,
\item the orbit maps $\rho_p\colon G\rightarrow P,g\mapsto\rho(p,g)$ are injective for all $p\in P$,
\item for all $p\in P$ we have $\pi^{-1}\Big(\pi(p)\Big)=\rho_p(G)$ and
\item one can choose the neighbourhood $U$ from (a) such that the map 
\[\theta^{-1}\colon U\times G\rightarrow\pi^{-1}(U),\quad(x,g)\mapsto\rho\left(\sigma(x),g\right)\] is a diffeomorphism (called a \textit{trivialization}).
\end{enumerate}
The orbits of $\rho(p,G)=\pi^{-1}(\pi(p))$ are called \textit{fibres} of $P$, thus (b) implies that $G$ acts freely on the fibres of $P$ and (c) means $G$ acts transitively on the fibres of $P$. We will write $\mathcal{P}=(G,\pi\colon P\rightarrow M)$ in short for this principal $G$-bundle and $x\cdot g:=\rho(x,g)$ for the right action. If we choose a section we will always assume that it satisfies condition (d) and call sets $U\subseteq M$ for which such a section exists \textit{trivializing}. A principal bundle is called \textit{trivial} if there exists a global trivialization.
\end{mydef}

\begin{bem}\label{transition}
Let $\mathcal{P}=(G,\pi\colon P\rightarrow M)$ be a smooth principal $G$-bundle and $\sigma_i\colon U_i\rightarrow P$, $\sigma_j\colon U_j\rightarrow P$ sections of $P$ with $U_i\cap U_j\neq\varnothing$. Additionally, let $\theta^{-1}_i\colon U_i\times G\rightarrow\pi^{-1}(U_i)$ and $\theta^{-1}_j\colon U_i\times G\rightarrow\pi^{-1}(U_j)$ be the corresponding diffeomorphisms. Because $\pi\circ\theta^{-1}_i(x,g)=x=\pi\circ\theta^{-1}_j(x,g)$ for all $x\in U_i\cap U_j$ and $g\in G$, the diagram
\[
\begin{xy}
  \xymatrix{
  					& (U_i\cap U_j)\times G  \ar[dl]_{\theta^{-1}_j|_{(U_i\cap U_j)\times G}}\ar[rd]^{\theta^{-1}_i|_{(U_i\cap U_j)\times G}}  &\\
         \pi^{-1}(U_i\cap U_j)  \ar[rr]^{\theta_{ij}}   &     &   \pi^{-1}(U_i\cap U_j)  
  }
\end{xy}
\]
commutes and for the same reason the smooth map $\theta_{ij}:=\theta_i^{-1}\circ\theta_j$ induces a smooth map $k_{ij}\colon U_i\cap U_j\rightarrow G$ such that 
\[\sigma_i(x)\cdot k_{ij}(x)=\sigma_j(x)\text{ for all }x\in U_i\cap U_j.\] 
The maps $k_{ij}$ are called \textit{transition functions} and we obviously have $k_{ij}^{-1}=k_{ji}$. Furthermore, the transition functions meet the so called \textit{cocycle condition}
\[
k_{ij}(x)k_{jl}(x)=k_{il}(x)\text{ for all }x\in U_i\cap U_j\cap U_l
\]
because $\sigma_i(x) k_{ij}(x)k_{jl}(x)=\sigma_j(x)k_{jl}(x)=\sigma_l(x)$. Occasionally, we want to lift transition functions to $P$. We do so by defining $k_{\sigma_i}(p)\in G$ as the unique group element such that $\sigma_i(\pi(p))\cdot k_{\sigma_i}(p)=p$ for all $p\in\pi^{-1}(U_i)$. The map $k_{\sigma_i}$ is well-defined since $G$ acts freely and transitively on the fibres of $P$. Because we have 
\[ 
\sigma_i(\pi(p\cdot g))\cdot k_{\sigma_i}(p\cdot g)=\sigma_i(\pi(p))\cdot k_{\sigma_i}(p\cdot g)=p\cdot g
\]
for all $g\in G$ it immediately follows that $k_{\sigma_i}(p\cdot g)=k_{\sigma_i}(p)\cdot g$, which implies $k_{\sigma_i}(\sigma_j(x))=k_{\sigma_i}(\sigma_i(x)k_{ij}(x))=k_{ij}(x)$ for all $x\in U_i\cap U_j$. Finally, $k_{\sigma_i}$ is smooth because $k_{\sigma_i}=\mathrm{pr}_2\circ\theta_i$ if $\theta_i$ is the corresponding trivialization.
\end{bem}

\begin{mydef}
Let $\mathcal{P}=(G,\pi\colon P\rightarrow M)$ be a smooth principal $G$-bundle. If $(U_i)_{i\in I}$ is an open cover of $M$ such that $U_i$ is trivializing with smooth sections $\sigma_i\colon U_i\rightarrow P$ for every $i\in I$, 
then $\mathcal{U}:=(U_i,\sigma_i)_{i\in I}$ is called a \textit{smooth open trivializing system} of $\mathcal{P}$. Note that such a system always exists because of \ref{principal}(a) and (d).\\
If additionally $\overline{U}_i$ is a submanifold with corners of $M$ for every $i\in I$ such that the sections $\sigma_i$ can be extended to smooth sections $\overline{\sigma}_i\colon\overline{U}_i\rightarrow P$, then we call $\overline{\mathcal{U}}=(\overline{U}_i,\overline{\sigma}_i)_{i\in I}$ a \textit{smooth closed trivializing system} of $\mathcal{P}$. In this case, $\mathcal{U}$ is called the trivializing system \textit{underlying} $\overline{\mathcal{U}}$.\\
If $\mathcal{U}=(U_i,\sigma_i)_{i\in I}$ and $\mathcal{V}=(V_j,\tau_j)_{j\in J}$ are two smooth open trivializing systems of $\mathcal{P}$, then $\mathcal{V}$ is an \textit{open refinement} of $\mathcal{U}$ if there exists a map $J\ni j\mapsto i(j)\in I$ such that $V_j\subseteq U_{i(j)}$ and $\tau_j=\sigma_{i(j)}|_{V_j}$, i.e., $(V_j)_{j\in J}$ is a refinement of $(U_i)_{i\in I}$ in the topological sense and the sections $\tau_j$ are obtained from sections $\sigma_i$ by restriction.\\
If $\mathcal{U}=(U_i,\sigma_i)_{i\in I}$ is a smooth open trivializing system and $\overline{\mathcal{V}}=(\overline{V}_j,\tau_j)_{j\in J}$ is a smooth closed trivializing systems of $\mathcal{P}$, then $\overline{\mathcal{V}}$ is a \textit{closed refinement} of $\mathcal{U}$ if there exists a map $J\ni j\mapsto i(j)\in I$ such that $\overline{V}_j\subseteq U_{i(j)}$ and $\tau_j=\sigma_{i(j)}|_{\overline{V}_j}$.\\
We say a closed or open smooth trivializing system $\mathcal{V}$ is a \textit{refinement} of a closed smooth trivializing system $\overline{\mathcal{U}}$ if it is a refinement of the underlying open trivializing system $\mathcal{U}$.
\end{mydef}

\begin{lem}\label{finite}
If $\mathcal{P}=(G,\pi\colon P\rightarrow M)$ is a smooth principal $G$-bundle and $M$ is a finite-dimensional $\sigma$-compact manifold, then there exists a smooth open trivializing system $\mathcal{V}=(V_n,\sigma_n)_{n\in\mathbb{N}}$ of $\mathcal{P}$ such that $(V_n)_{n\in\mathbb{N}}$ is locally finite and the sets $V_n$ are relatively compact. We call this a locally finite relatively compact smooth trivializing system of $\mathcal{P}$.
\end{lem}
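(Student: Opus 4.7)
The plan is to start with an arbitrary smooth open trivializing system guaranteed by Definition \ref{principal} and then refine it, using the $\sigma$-compactness and local compactness of $M$, to a locally finite relatively compact one.

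By Definition \ref{principal}(a) and (d), there exists a smooth open trivializing system $\mathcal{U}=(U_i,\sigma_i)_{i\in I}$ of $\mathcal{P}$. By Definition \ref{exhaust}, $M$ admits an exhaustion by compact sets $(K_n)_{n\in\mathbb{N}}$ with $K_n\subseteq K_{n+1}^\circ$ and $\bigcup_{n\in\mathbb{N}}K_n=M$; set $K_{-1}:=K_0:=\varnothing$. For each $n\in\mathbb{N}$ consider the compact ``shell'' $L_n:=K_n\setminus K_{n-1}^\circ$, which is contained in the open set $A_n:=K_{n+1}^\circ\setminus K_{n-2}$ (we stipulate $A_n=K_{n+1}^\circ$ for small $n$). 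For every point $x\in L_n$, pick some $i(x)\in I$ with $x\in U_{i(x)}$. Since $M$ is a finite-dimensional manifold with corners, it is locally compact, so $x$ has an open neighbourhood $W_x$ with $\overline{W_x}$ compact; after shrinking, I may assume $W_x\subseteq U_{i(x)}\cap A_n$. By compactness of $L_n$, finitely many such $W_x$ cover $L_n$; call them $V_n^{(1)},\ldots,V_n^{(r_n)}$, and equip each with the restricted section $\sigma_{i(x)}|_{V_n^{(k)}}$.

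Collecting and reindexing all these $V_n^{(k)}$ over $n\in\mathbb{N}$ and $1\le k\le r_n$ gives a countable family $\mathcal{V}=(V_n,\sigma_n)_{n\in\mathbb{N}}$ of trivializing sets with smooth sections (restrictions of the $\sigma_i$), covering $M$ since $\bigcup_n L_n=M$. Each $V_n$ is relatively compact by construction, and for local finiteness I note that every $x\in M$ lies in $K_m^\circ$ for some $m$, and $K_m^\circ\cap V=\varnothing$ whenever $V$ belongs to a shell $L_\ell$ with $\ell\ge m+2$ (because such a $V$ is contained in $A_\ell\subseteq M\setminus K_{\ell-2}\subseteq M\setminus K_m$). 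Hence $K_m^\circ$ only meets the $V_n$ coming from shells $L_1,\ldots,L_{m+1}$, which are finitely many.

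The only genuinely delicate point is the simultaneous arrangement in the third step: each $V_n^{(k)}$ must be open, relatively compact, trivializing, and sit inside the ``thickened shell'' $A_n$. This is handled by first choosing a trivializing $U_{i(x)}$, then intersecting with a relatively compact neighbourhood of $x$ (using local compactness of $M$) and with the open set $A_n$; the result is still an open neighbourhood of $x$ inside a trivializing set, whose closure lies in the compact $\overline{W_x}\cap\overline{A_n}$. Restricting $\sigma_{i(x)}$ to this smaller set remains a smooth section satisfying Definition \ref{principal}(a), (d), so $\mathcal{V}$ is a smooth open trivializing system as required.
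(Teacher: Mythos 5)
Your proof is correct and follows essentially the same route as the paper: both take an arbitrary trivializing system, use an exhaustion by compact sets, cover each compact shell $K_n\setminus K_{n-1}^\circ$ by finitely many relatively compact trivializing sets confined to the thickened shell $K_{n+1}^\circ\setminus K_{n-2}$, and deduce local finiteness from the fact that any point has a neighbourhood meeting only finitely many shells. The only cosmetic difference is that you witness local finiteness with the neighbourhood $K_m^\circ$ rather than with membership in at most two thickened shells, which changes nothing of substance.
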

\begin{proof}
Let $\mathcal{U}=(U_i,\sigma_i)_{i\in I}$ be a smooth open trivializing system of $\mathcal{P}$. We assume that $M$ is not compact because otherwise we simply can choose a finite subset of $(U_i)_{i\in I}$ to get a suitable cover. Since $M$ is $\sigma$-compact and locally compact there exists an exhaustion by compact sets $(K_n)_{n\in\mathbb{N}}$ of $M$ such that $K_n\subseteq K_{n+1}^\circ$ for all $n\in\mathbb{N}$ (see \ref{exhaust}).
Because $(U_i)_{i\in I}$ is an open cover of $M$ we may cover $K_2$ with finitely many relatively compact sets $(V_k)_{k=1,\ldots,N}$ such that for every $k\in\{1,\ldots N\}$ there exists an $i_k\in I$ with 
$V_k\subseteq U_{i_k}\cap K_3^\circ$. In the same way we cover the compact sets $K_n\setminus K_{n-1}^\circ$ with finitely many open sets $\overline{V}_l\subseteq K_{n+1}^\circ\setminus K_{n-2}\cap U_{i_l}$ for all $n\geq 3$ and thus obtain countably infinite many compact sets $(V_n)_{n\in\mathbb{N}}$ that cover $M$ and form a smooth open trivializing system by relative compact sets $\mathcal{V}=(V_n,\sigma_{i_n}|_{V_n})_{n\in\mathbb{N}}$.\\
It remains to show that $\mathcal{V}$ is locally finite. For $x\in M$ we have $x\in K_3^\circ$ or $x\in K_{n+1}^\circ\setminus K_{n-2}$ for $n\geq 3$. But per construction $x$ can at most be in two sets of this kind and only finitely many $V_n$ have a nonempty intersection with these sets, thus $\mathcal{V}$ is locally finite.\\
By our choice of $(V_n)_{n\in\mathbb{N}}$ it is obvious that $(V_n)_{n\in\mathbb{N}}$ together with the restricted sections forms a smooth open trivializing system by relatively compact open sets.
\end{proof}

\begin{folg}\label{common}
If $M$ is a $\sigma$-compact finite-dimensional manifold and $(U_i)_{i\in I}$ and $(V_j)_{j\in J}$ are two open covers, then there exists a locally finite common refinement $(W_l)_{l\in L}$ of $(U_i)_{i\in I}$ and $(V_j)_{j\in J}$ in the topological sense.  
\end{folg}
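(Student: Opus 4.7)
The plan is to reduce the corollary to the paracompactness-type argument already carried out in Lemma \ref{finite}, applied not to a trivializing cover but to the common intersection refinement. First, I would form the crude common refinement
\[
W^{0}_{(i,j)} := U_i \cap V_j \quad \text{for } (i,j) \in I \times J,
\]
which is manifestly an open cover of $M$ by sets each of which lies in some $U_i$ and in some $V_j$. The only property missing is local finiteness, so the task reduces to refining this cover to a locally finite one in a way that preserves the containment relations.

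To achieve this, I would imitate the exhaustion construction from the proof of Lemma \ref{finite}. Since $M$ is $\sigma$-compact, finite-dimensional and hence locally compact, it admits an exhaustion by compact sets $(K_n)_{n\in\mathbb{N}}$ with $K_n \subseteq K_{n+1}^{\circ}$ (Definition \ref{exhaust}); set $K_n := \varnothing$ for $n \leq 0$. For each $n \in \mathbb{N}$ the compact ``shell'' $K_n \setminus K_{n-1}^{\circ}$ is covered by the open sets
\[
W^{0}_{(i,j)} \cap \bigl(K_{n+1}^{\circ} \setminus K_{n-2}\bigr), \quad (i,j) \in I \times J,
\]
and, being compact, it is covered by finitely many of them. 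Collect these finitely many sets into a finite family $\mathcal{W}_n$; then $\mathcal{W} := \bigcup_{n\in\mathbb{N}} \mathcal{W}_n$ is a countable open cover of $M$ whose members still refine both $(U_i)_{i\in I}$ and $(V_j)_{j\in J}$, since each element of $\mathcal{W}_n$ is contained in some $W^{0}_{(i,j)} = U_i \cap V_j$.

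Local finiteness then follows by the same reasoning as in Lemma \ref{finite}: every $x \in M$ lies in an open set of the form $K_{n+1}^{\circ} \setminus K_{n-2}$, which meets only those members of $\mathcal{W}_m$ with $m \in \{n-1, n, n+1, n+2, n+3\}$ (since elements of $\mathcal{W}_m$ are contained in $K_{m+1}^{\circ} \setminus K_{m-2}$). As each such $\mathcal{W}_m$ is finite, only finitely many members of $\mathcal{W}$ meet this neighbourhood of $x$. I do not expect any real obstacle here; the whole argument is essentially paracompactness of $\sigma$-compact locally compact Hausdorff spaces, and the only slightly delicate point is the indexing of shells, which is handled exactly as in the proof of Lemma \ref{finite}.
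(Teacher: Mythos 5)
Your proposal is correct and follows essentially the same route as the paper: pass to the cover $(U_i\cap V_j)_{(i,j)\in I\times J}$ and then run the exhaustion-by-shells construction from the proof of Lemma \ref{finite} to extract a locally finite refinement. (The precise window of indices $m$ for which $\mathcal{W}_m$ can meet $K_{n+1}^{\circ}\setminus K_{n-2}$ is $n-2\leq m\leq n+2$ rather than the set you list, but this is immaterial since any such window is finite.)
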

\begin{proof}
Since $(U_i)_{i\in I}$ and $(V_j)_{j\in J}$ cover $M$ so does $(U_i\cap V_j)_{i\in I, j\in J}$ and with this open cover the result follows along the lines of the proof of \ref{finite}. 
\end{proof}

\begin{bem}\label{commonb}
Note that we usually cannot find a common refinement in the sense of trivializing systems because in this case the sections of the refinement would need to arise from the sections of the trivializing system.
\end{bem}

\begin{folg}\label{refinement}
Let $\mathcal{P}=(G,\pi\colon P\rightarrow M)$ be a smooth principal $G$-bundle with finite-dimensional $\sigma$-compact base $M$. Then for any smooth open (or closed) locally finite trivializing system $\mathcal{U}=(U_i,\sigma_i)_{i\in\mathbb{N}}$ of $\mathcal{P}$, there exists an open or closed refinement $\mathcal{V}=(V_j,\tau_j)_{j\in\mathbb{N}}$ that is relatively compact (or compact) and locally finite. 
Furthermore, in the open case it can be achieved that $V_i\subseteq U_i$ and $\tau_i=\sigma_i\big|_{V_i}$. 
\end{folg}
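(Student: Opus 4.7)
The strategy is to essentially repeat the construction of Lemma \ref{finite}, but now ``threading" the small pieces through the given locally finite trivializing system $\mathcal{U}=(U_i,\sigma_i)_{i\in\mathbb{N}}$ instead of building them from a generic one. First I would pick an exhaustion by compact sets $(K_n)_{n\in\mathbb{N}}$ of $M$ with $K_n\subseteq K_{n+1}^\circ$, which exists by Definition \ref{exhaust}, and work on the compact ``shells" $K_n\setminus K_{n-1}^\circ$.

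For each point $x\in M$, I would fix some $n=n(x)$ with $x\in K_{n}\setminus K_{n-1}^\circ$ and some index $i(x)\in\mathbb{N}$ with $x\in U_{i(x)}$. Using that $M$ is finite-dimensional and locally compact (and, in the closed case, Remark \ref{umfkt} to arrange the closure to be a submanifold with corners), I can pick a relatively compact open neighbourhood $W_x$ of $x$ satisfying
\[
\overline{W}_x\;\subseteq\; U_{i(x)}\cap\bigl(K_{n+1}^\circ\setminus K_{n-2}\bigr).
\]
The constraint on the right-hand side guarantees local finiteness of any family of such sets, since any compact $K_m$ can meet $W_x$ only when $n(x)\leq m+2$. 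Applying compactness of each shell $K_n\setminus K_{n-1}^\circ$, I can extract a finite subcover; enumerating all these yields a countable family $(V_k)_{k\in\mathbb{N}}$ with $V_k\subseteq U_{i(k)}$ and $\overline{V}_k$ compact (resp.\ a compact submanifold with corners). Setting $\tau_k:=\sigma_{i(k)}\big|_{V_k}$ (resp.\ $\sigma_{i(k)}\big|_{\overline{V}_k}$ via the extension to the closure) gives the desired open (resp.\ closed) refinement, with local finiteness inherited from the ``shell" constraint.

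For the final clause in the open case, I would collapse the refinement map to the identity by grouping: for each $i\in\mathbb{N}$ set $V_i':=\bigcup_{k:\,i(k)=i}V_k$, so $V_i'\subseteq U_i$ and $\tau_i':=\sigma_i\big|_{V_i'}$. Coverage and local finiteness of $(V_i')_{i\in\mathbb{N}}$ follow immediately from those of $(V_k)_{k\in\mathbb{N}}$, since any neighbourhood meeting $V_i'$ must meet some $V_k$ with $i(k)=i$. The subtle point here, which I expect to be the main obstacle, is ensuring that each $V_i'$ is itself relatively compact. This requires choosing the assignment $x\mapsto i(x)$ in a controlled way so that, for each fixed $i$, only finitely many of the chosen $W_x$ carry the label $i(x)=i$. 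I would do this by using the local finiteness of $(U_i)$ together with Lemma \ref{finiteinter}: at stage $n$, only finitely many $U_i$ meet $K_{n+1}^\circ\setminus K_{n-2}$, so only those indices can be assigned at stage $n$, and conversely each $U_i$ can be assigned at only finitely many stages by the same argument. A finite union of relatively compact open sets being relatively compact then finishes the argument, giving an open refinement indexed by the same $\mathbb{N}$ with $V_i\subseteq U_i$ and $\tau_i=\sigma_i\big|_{V_i}$.
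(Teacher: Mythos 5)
Your construction is essentially the paper's: the first claim is obtained by re-running the proof of Lemma \ref{finite} over the compact shells of an exhaustion (with Remark \ref{umfkt} supplying compact submanifolds with corners in the closed case), and the final clause by choosing for each piece $V_k$ an index $i(k)$ with $V_k\subseteq U_{i(k)}$ and grouping, $V_i':=\bigcup_{i(k)=i}V_k$. You also correctly isolate the one delicate point, namely that each label class $\{k\colon i(k)=i\}$ must be finite for $V_i'$ to remain relatively compact; the paper's proof simply asserts this (``the finite, nonempty sets $I_n$'') without argument, so you are, if anything, more attentive here.

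However, the justification you offer for that finiteness does not go through in the stated generality. You claim that ``each $U_i$ can be assigned at only finitely many stages by the same argument,'' i.e.\ by Lemma \ref{finiteinter}; but that lemma applies to \emph{compact} sets, and the hypotheses of the corollary only require $\mathcal{U}$ to be locally finite --- the individual $U_i$ need not be relatively compact. A set $U_i$ meeting infinitely many shells $K_{n+1}^\circ\setminus K_{n-2}$ can be forced to receive labels at infinitely many stages, and no clever choice of the assignment $x\mapsto i(x)$ can repair this: taking $M=\mathbb{R}$, $U_1=\mathbb{R}$ and $U_i=(i-\tfrac{3}{2},\,i-\tfrac{1}{2})$ for $i\geq 2$ gives a locally finite open trivializing system of the trivial bundle for which any family with $V_i\subseteq U_i$ covering $M$ must have $(-\infty,\tfrac{1}{2}]\subseteq V_1$, so no relatively compact choice of $V_1$ exists. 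Hence the ``furthermore'' clause genuinely needs the additional hypothesis that the $U_i$ are relatively compact --- which holds in the situations where the corollary is actually invoked (\ref{refinementcorners}, \ref{2.2}) --- and under that hypothesis your argument is correct and closes a gap the paper leaves open; without it, the step fails.
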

\begin{proof}
In the open case, the first claim follows directly from \ref{finite}, whereas in the closed case, we need to choose closed submanifolds with corneres $V_i$ to cover the compact sets $K_n$ in the proof of \ref{finite}.\\
We now construct a refinement meeting the second statement. Following \ref{commonb}, we can choose $\mathcal{V}$ in a way such that there exists a surjective map $l\colon\mathbb{N}\rightarrow\mathbb{N}$ with $V_j\subseteq U_{l(j)}$. We define the finite, nonempty sets $I_n:=\{i\in\mathbb{N}\colon l(i)=n\}$ and set $W_n:=\bigcup_{i\in I_n}V_i$ with the corresponding section $\tau_n:=\sigma_n\big|_{W_n}$. This construction obviously yields a locally finite open trivializing system.
\end{proof}

\begin{bem}\label{refinementcorners}
In the situation of \ref{refinement}, we can find a refinement $\mathcal{V}$ of $\mathcal{U}$ that consists of manifolds with corners diffeomorphic to $[0,1]^n$ for some $n\in\mathbb{N}$ analogously to the proof of \ref{finite}, using \ref{umfkt}. Since unions of manifolds with corners are not automatically manifolds with corners we cannot directly achieve that $V_j\subseteq U_j$ holds as in \ref{refinement}. However, if $U_j$ is already diffeomorphic to $[0,1]^n$ via $\varphi_j$, then the closed set $\varphi_j(\bigcup_{i\in I_j}V_i)$ is contained in $[\epsilon,1-\epsilon]^n\subseteq (0,1)^n$ for some $0<\epsilon<1$.
Thus, the sets $V'_j:=\varphi_j^{-1}([\epsilon,1-\epsilon]^n)\subseteq U_j$ constitute a locally finite compact refinement such that $V'_j\subseteq U_j$ for all $j\in\mathbb{N}$.
\end{bem}

\subsection{Associated vector bundles}
\begin{mydef}\label{B.1.14}
Let $\mathcal{P}=(G,\pi\colon P\rightarrow M)$ be a smooth principal $G$-bundle over a finite-dimensional base with corners $M$, $F$ a locally convex space and let the map $\lambda\colon G\rightarrow \mathrm{GL}(F)$ be a homomorphism of groups such that \[
\hat{\lambda}\colon G\times F\rightarrow F,\quad (g,v)\mapsto\lambda(g)(v)\]
is smooth. Consider the right action 
\[
\rho\colon(P\times F)\times G\rightarrow P\times F,\quad(p,v).g:=\left(p\cdot g,\lambda(g^{-1})(v)\right).
\]
We define $P_F=P\times_G F:=P\times F/_\sim$, where $\sim$ is the equivalence relation such that $(p,v)\sim (p',v')$ if they are contained in the same orbit with respect to the right action $\rho$. We denote by $[p,v]$ the equivalence classes of $\sim$. Furthermore, we define $\pi_F\colon P_F\rightarrow M,\quad\pi([p,v]):=\pi(p)$. Note that $\pi_F$ is well-defined since if $[p,v]=[p',v']$, then there exists a $g\in G$ with $(p',v')=(p\cdot g,\lambda(g^{-1})(v))$ and then $\pi(p')=\pi(p\cdot g)=\pi(p)$.\\
Let $\mathcal{A}:=\{\theta\colon\pi^{-1}(U_\theta)\rightarrow U_\theta\times G\colon\theta\text{ is a smooth trivialization of } P\}$. We equip $P_F$ with the final topology with respect to the mappings
\[
\widetilde{\theta}^{-1}\colon U_\theta\times F\rightarrow P_F,\quad(x,v)\mapsto[\theta^{-1}(x,1),v],
\]
where $\theta\in\mathcal{A}$. 
It will turn out that $\pi_F\colon P_F\rightarrow M$ defines a vector bundle with standard fibre $F\cong\pi^{-1}_F(\{x\})$, we call it the \textit{vector bundle associated to} $\mathcal{P}$ \textit{via the action} $\hat{\lambda}$ or if there can be no confusion the \textit{associated bundle}.
\end{mydef}
We will now check the various details, showing that $P_F$ is indeed a vector bundle with standard fibre $F$ in the situation above. We proceed as in the respective proof for the tangential bundle in \cite{GloBO}.

\begin{lem}
In the situation of \ref{B.1.14}, the following holds:
\begin{enumerate}[(a)]
\item $\pi_F^{-1}(U_\theta)$ is open for every smooth trivialization $\theta\colon\pi^{-1}(U_\theta)\rightarrow U_\theta\times G$.
\item $\widetilde{\psi}\circ\widetilde{\theta}^{-1}\colon (U_\theta\cap U_\psi)\times F\rightarrow (U_\theta\cap U_\psi)\times F$ is a diffeomorphism for all $\theta$, $\psi\in\mathcal{A}$.
\item $\widetilde{\theta}\colon\pi^{-1}_F(U_\theta)\rightarrow U_\theta\times F$ is a homeomorphism.
\item $P_F$ is a smooth manifold with corners.
\item $\pi_F\colon P_F\rightarrow M$ is smooth.
\item $\pi^{-1}_F(\{x\})$ is homeomorphic to $F$ for all $x\in M$.
\end{enumerate}
\end{lem}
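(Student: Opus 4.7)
The plan is to unwind everything using the final-topology definition of the topology on $P_F$ together with the one computation that makes the whole structure work: the explicit form of the transition maps $\widetilde{\psi}\circ\widetilde{\theta}^{-1}$. Concretely, if $\theta,\psi\in\mathcal{A}$ and $x\in U_\theta\cap U_\psi$, write $\sigma_\theta(x):=\theta^{-1}(x,1)$ and $\sigma_\psi(x):=\psi^{-1}(x,1)$; these are smooth sections and by Remark~\ref{transition} there is a smooth transition function $k_{\theta\psi}\colon U_\theta\cap U_\psi\to G$ with $\sigma_\theta(x)\cdot k_{\theta\psi}(x)=\sigma_\psi(x)$. Applied to the right action on $P\times F$, one finds the clean formula
\[
(\widetilde{\psi}\circ\widetilde{\theta}^{-1})(x,v)=\bigl(x,\lambda(k_{\theta\psi}(x)^{-1})(v)\bigr),
\]
which is smooth because $k_{\theta\psi}$ is smooth and $\hat{\lambda}$ is smooth; the inverse has the same form with the roles of $\theta$ and $\psi$ swapped. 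This single identity is the workhorse.

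With this in hand I would carry out the six parts essentially in the given order. For (a), observe that $\pi_F$ is well-defined and that $(\widetilde{\psi}^{-1})^{-1}\bigl(\pi_F^{-1}(U_\theta)\bigr)=(U_\theta\cap U_\psi)\times F$, which is open in $U_\psi\times F$, so by the final topology $\pi_F^{-1}(U_\theta)$ is open. Part (b) is the transition-function computation above. For (c), the key points are injectivity of $\widetilde{\theta}$ on $\pi_F^{-1}(U_\theta)$ (every element can be written uniquely in the normal form $[\sigma_\theta(x),w]$ since $G$ acts freely and transitively on the fibres of $P$) and the continuity of both $\widetilde{\theta}^{-1}$ (built into the final topology) and of $\widetilde{\theta}$, where the latter is checked by precomposing with each $\widetilde{\psi}^{-1}$ and invoking (b). Part (d) follows by using the maps $\widetilde{\theta}$ composed with product charts of $U_\theta\times F$ as an atlas for $P_F$; the chart changes are smooth by (b). Part (e) is immediate since in such a chart $\pi_F$ reads as the projection to $U_\theta$. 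Part (f): fix $\theta$ with $x\in U_\theta$ and restrict the homeomorphism from (c) to get $\widetilde{\theta}|_{\pi_F^{-1}(\{x\})}\colon\pi_F^{-1}(\{x\})\xrightarrow{\cong}\{x\}\times F\cong F$.

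The main obstacle I expect is the Hausdorff property needed for (d), since a quotient of a Hausdorff space by an equivalence relation is not automatically Hausdorff. To handle it I would distinguish two cases for distinct points $[p,v],[p',v']\in P_F$. If $\pi_F([p,v])\neq\pi_F([p',v'])$ I separate them by preimages under $\pi_F$ of disjoint open neighbourhoods in $M$; these are open by (a). If $\pi_F([p,v])=\pi_F([p',v'])=:x$, pick any $\theta\in\mathcal{A}$ with $x\in U_\theta$; both points lie in $\pi_F^{-1}(U_\theta)$, their images under $\widetilde{\theta}$ are distinct in $U_\theta\times F$ (by (c) and the uniqueness of the normal form), and $U_\theta\times F$ is Hausdorff, so we separate them there and pull back the neighbourhoods via $\widetilde{\theta}^{-1}$, which is continuous.

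Once the six items are in place the vector-space structure on the fibres $\pi_F^{-1}(\{x\})$ is transported from $F$ via $\widetilde{\theta}|_{\pi_F^{-1}(\{x\})}$; independence of the choice of $\theta$ follows from (b) and the linearity of $\lambda(g)\in\mathrm{GL}(F)$, so $\pi_F\colon P_F\to M$ really is a vector bundle with typical fibre $F$ and atlas of local trivializations $\{\widetilde{\theta}:\theta\in\mathcal{A}\}$.
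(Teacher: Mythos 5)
Your proposal is correct and follows essentially the same route as the paper: the explicit transition formula $(\widetilde{\psi}\circ\widetilde{\theta}^{-1})(x,v)=(x,\lambda(k_{\theta\psi}(x)^{-1})(v))$ is the paper's central computation in (b), and parts (a), (c), (e), (f) are handled by the same final-topology and chart arguments. The only noteworthy divergence is your Hausdorff argument, where the case split on whether $\pi_F([p,v])=\pi_F([p',v'])$ (separating via $\pi_F$-preimages in the first case and inside a single chart domain in the second) is cleaner and more complete than the paper's treatment of the case where the two points do not share a chart domain.
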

\begin{proof}
Let $\theta\in\mathcal{A}$ be a trivialization. Set $\widetilde{U}_\theta:=\pi_F^{-1}(U_\theta)$ and let $\widetilde{\theta}\colon\widetilde{U}_\theta\rightarrow U_\theta\times F$ be defined as above. We note that $\widetilde{\theta}$ is bijective.\\ 
(a) Let $\psi\in\mathcal{A}$ and let $\widetilde{\psi}\colon\widetilde{U}_{\psi}\rightarrow U_\psi\times F$ be defined as above, then we have the preimage $(\widetilde{\psi}^{-1})^{-1}(\widetilde{U}_\theta)=\widetilde{\psi}(\widetilde{U}_\theta\cap\widetilde{U}_\psi)=(U_\theta\cap U_\psi)\times F$ which is open in $U_\theta\times F$. Hence $\widetilde{U}_\theta\subseteq P_F$ is open by definition of the final topology since $\psi$ was arbitrary.\\
(b) Let $\theta$, $\psi\in\mathcal{A}$. W.l.o.g. we may assume $U_\theta=U_\psi$ and we have $(\theta\circ\psi^{-1})(x,g)=(x,k(x)g)$ for a smooth transition map $k\colon U_\theta\rightarrow G$. Hence
\begin{align*}
(\widetilde{\theta}\circ\widetilde{\psi}^{-1})(x,v)&=\widetilde{\theta}([\psi^{-1}(x,1),v])\\
&=\widetilde{\theta}([\theta^{-1}(x,k(x)\cdot 1),v])=\widetilde{\theta}([\theta^{-1}(x,1),\lambda(k(x))(v)])\\
&=(x,\lambda(k(x))(v)),
\end{align*}
which defines a diffeomorphism.\\
(c) By definition of the final topology, $\widetilde{\theta}^{-1}$ is continuous. We now show that $\widetilde{\theta}$ is an open map. Let $U\subseteq U_\theta\times F$ be open and let $\psi\in\mathcal{A}$. Then the set $W:=U\cap((U_\theta\cap U_\psi)\times F)$ is open in $U_\theta\times F$, whence
\begin{align*}
(\widetilde{\psi}^{-1})^{-1}(\widetilde{\theta}^{-1}(U))&=\widetilde{\psi}(\widetilde{U}_\psi\cap(\widetilde{\theta})^{-1}(U))\\
&=\widetilde{\psi}((\widetilde{\theta})^{-1}(W))=\widetilde{\psi}\circ\widetilde{\theta}^{-1}(W)
\end{align*}
is open in $U_\psi\times F$, the map $\widetilde{\psi}\circ\widetilde{\theta}^{-1}\colon (U_\theta\cap U_\psi)\times F\rightarrow  (U_\theta\cap U_\psi)\times F$ being a homeomorphism. Hence $\widetilde{\theta}(U)$ is open in $P_F$.\\
(d) In view of (a) and (c) each $\widetilde{\theta}$ is an homeomorphism from an open subset of $P_F$ onto an open subset of $M\times F$. The sets $\widetilde{U}_\theta$ obviously cover $P_F$. Since $M\times F$ is itself a manifold with corners modelled on some $[0,\infty)^n\times F$ we obtain the structure of a smooth manifold with corners on $P_F$ using (b) if we can show that the topology on $P_F$ is Hausdorff. Let $x,y\in P_F$ be two distinct points. If there exists a $\theta\in\mathcal{A}$ with $x,y\in \widetilde{U}_\theta$, then we can find disjoint open neighbourhoods $U_x,U_y\subseteq U_\theta\times F$ of $\widetilde{\theta}(x)$ and $\widetilde{\theta}(y)$. Applying $\widetilde{\theta}^{-1}$ to these sets yields disjoint open sets in $P_F$. Otherwise we may assume that there exist $\theta,\psi\in\mathcal{A}$ with $x\in\widetilde{U}_\theta$ and $y\in\widetilde{U}_\psi$ but $y\notin\widetilde{U}_\theta$ and $y\notin\widetilde{U}_\psi$ which finishes the proof. \\
(e) For $\theta\in\mathcal{A}$ we obtain the local description $\pi_F\big|_{\widetilde{U}_\theta}=\mathrm{pr}_1\circ\widetilde{\theta}$ which is smooth.\\
(f) Let $x\in U_\psi\subseteq M$. Then $\widetilde{\psi}\big|_{\pi^{-1}_F(\{x\})}\colon\pi^{-1}_F(\{x\})\rightarrow\{x\}\times F\cong F$ is a diffeomorphism.
\end{proof}

\begin{mydef}
Let $\mathcal{P}=(G,\pi\colon P\rightarrow M)$ be a smooth principal $G$-bundle. Then the adjoint representation $\mathrm{Ad}\colon G\rightarrow\mathrm{GL}(\mathfrak{g})$ defines a group operation as needed in Definition \ref{B.1.14}. The corresponding associated vector bundle is denoted by $\mathrm{Ad}(\mathcal{P})$. Its space of compactly carried smooth sections is denoted by $S^\infty_c(\mathrm{Ad}(\mathcal{P}))$. For a given compact subset $K$ of $M$ we further define $S^\infty_K(\mathrm{Ad}(\mathcal{P}))$ as the smooth sections of $\mathrm{Ad}(\mathcal{P})$ vanishing outside of $K$.
\end{mydef}

\section{The Gauge Group as an Infinite-Dimensional Lie Group}
Consider a principal bundle $\mathcal{P}$ over a $\sigma$-compact base $M$. 
In this section we describe a way to turn the group of vertical bundle automorphisms on $\mathcal{P}$, the so called gauge group, into an infinite-dimensional Lie group. We will require these automorphisms to be compactly carried on the base in a certain sense and will then be able to identify the gauge group with the space of $G$-equivariant smooth mappings $C^\infty_c(P,G)^G$.\\ As $P$ will not be compact we cannot topologise this space directly, however we can identify it with 
a closed subgroup $G_{\overline{\mathcal{V}}}(\mathcal{P})$ of the Lie group $\prod^\ast_{i\in\mathbb{N}}C^\infty(\overline{V}_i,G)$, where the components are Lie groups as all $\overline{V}_i\subseteq M$ will be compact. Unfortunately, in the infinite-dimensional case, closed subgroups are not automatically Lie subgroups. To remedy this, we extend Wockel's property SUB to $\text{SUB}_\oplus$, which enables us in many situations to equip $G_{\overline{\mathcal{V}}}(\mathcal{P})$ with a Lie group structure. Furthermore, we will prove interesting identifications for the respective Lie algebras.\\
\bigskip

Throughout this section, $M$ will be a smooth $\sigma$-compact finite-dimensional manifold with corners.

\begin{mydef}\label{1.1}
Let $G$ be a Lie group, $M$ a finite-dimensional smooth manifold and $\mathcal{P}=(G,\pi\colon P\rightarrow M)$ a smooth principal bundle with the right action $\rho_g\colon P\rightarrow P,\quad p\mapsto p\cdot g$ for all $g\in G$. We define the group of \textit{compactly carried bundle automorphisms} of $P$ by
\begin{align*}\mathrm{Aut}_c(\mathcal{P}):=\{f\in \mathrm{Diff}(P)\colon\rho_g\circ f=f\circ\rho_g \text{ for  all } g\in G\\
 \text{ and supp}(f)\subseteq\pi^{-1}(K) \text{ for some compact } K\subseteq M\}.\end{align*}
Note that this definition of being compactly carried on the base differs from the usual use of the word but there will be no ambiguities. In this context we further define the group of (smooth) compactly carried vertical bundle automorphisms or short the \textit{gauge group} of $\mathcal{P}$ as
\[\mathrm{Gau}_c(\mathcal{P}):=\left\{f\in\mathrm{Aut}_c(\mathcal{P})\colon\pi\circ f=\pi\right\}.\]
This means that $\mathrm{Gau}_c(\mathcal{P})$ keeps the fibres invariant.
\end{mydef}

\begin{bem}\label{1.1b}
If we identify $M$ with the orbit space $P/G$, then each $F\in\mathrm{Aut}_c(\mathcal{P})$ gives rise to an element $F_M\in\mathrm{Diff}_c(M):=\{f\in\mathrm{Diff}(M):f|_{M\setminus K}=\mathrm{id}|_{M\setminus K},K\subseteq M\text{ compact}\}$ by $F_M(p\cdot G):=F(p)\cdot G$ for all $p\in P$. 
Let $\sigma\colon U\rightarrow P$ be a smooth section.
Then $F_M(x)=\pi(F(\sigma(x)))$ holds, thus $F_M\big|_U$ is smooth and since the section was arbitrary this implies that $F_M$ is smooth.. 
In addition, we have 
\[
F_M\circ F'_M(p\cdot G)=F_M(F'(p)\cdot G)=(F\circ F')_M(p\cdot G)
\] 
which entails $F_M^{-1}=(F^{-1})_M$ and leads to a homomorphism
\[
\mathcal{Q}\colon\mathrm{Aut}_c(\mathcal{P})\rightarrow\mathrm{Diff}_c(M),\quad F\mapsto F_M
\]
because the support of $F_M$ is obviously compact. The gauge group is clearly the kernel of $\mathcal{Q}$. We denote by $\mathrm{Diff}_c(M)_\mathcal{P}$ the image of $\mathcal{Q}$.
\end{bem}

\begin{mydef}\label{1.2}
In the situation above, we denote by
\begin{align*}
C_c^{\infty}(P,G)^G:=\{\gamma\in &C^{\infty}(P,G)\colon\gamma(p\cdot g)=g^{-1}\cdot\gamma(p)\cdot g\text{ for all }p\in P,g\in G\text{ and }\\
&\text{supp}(\gamma)\subseteq\pi^{-1}(K)\text{ for some compact }K\subseteq M\}
\end{align*}
the group of $G$-equivariant compactly carried smooth maps from $P$ to $G$. We will often identify $\mathrm{Gau_c(\mathcal{P})}$ and $C_c^{\infty}(P,G)^G$ through the following isomorphism:
\[
C_c^{\infty}(P,G)^G\ni f\mapsto\Big(p\mapsto p\cdot f(p)\Big)\in\mathrm{Gau}_c(\mathcal{P}).
\]
Note that the latter map is indeed an element of the gauge group because $\pi(p\cdot f(p))=\pi(p)$, and a compactly carried $f$ obviously translates to a compactly carried map in the gauge group.\\
Conversely, if $\gamma\in\mathrm{Gau}_c(\mathcal{P})$, then there exists a map $f\colon P\rightarrow G$ with $\gamma(p)=p\cdot f(p)$ because the elements of the gauge group keep the fibres constant and $G$ acts freely and transitively on the fibres. This map must be $G$-equivariant because $\gamma(p\cdot g)=p\cdot g\cdot f(p\cdot g)=\gamma(p)\cdot g=p\cdot f(p)g$ implies $f(p\cdot g)=g^{-1}f(p)g$ for all $p\in P$ and $g\in G$. 
To see that $f$ is smooth let $\sigma\colon U\rightarrow P$ be a smooth section with the respective diffeomorphism $\theta\colon\pi^{-1}(U)\rightarrow U\times G$. For $p:=\sigma(x)\cdot g\in\pi^{-1}(U)$, we have $\gamma(p)=p\cdot f(p)=\sigma(x)\cdot gf(p)=\theta^{-1}(x,gf(p))$ which implies 
$gf(p)=\mathrm{pr}_2(\theta(\gamma(p)))=k_\sigma(\gamma(p))$ and leads to
\[
f(\theta^{-1}(x,g))=f(p)=g^{-1}k_\sigma(\gamma(p))=g^{-1}k_\sigma\Big(\gamma\big(\theta^{-1}(x,g)\big)\Big)
\] 
and hence, $f\big|_{\pi^{-1}(U)}$ is smooth. Again, a compactly carried $\gamma$ carries over to a compactly carried $f$.
\end{mydef}

Next we will define the algebraic counterpart of the gauge group, the gauge algebra. As we will topologise $C^\infty_c(P,G)^G$ via certain weak products of Lie groups and the respective direct sums of Lie algebras, we will not need the gauge algebra in the later parts, however it  may be useful in future works. 

\begin{mydef}
For a smooth principal bundle $\mathcal{P}=(G,\pi\colon P\rightarrow M)$ over the $\sigma$-compact finite-dimensional manifold with corners $M$ and $K\subseteq M$ compact we define the space
\[
C^{\infty}_K(P,\mathfrak{g}):=\left\{\eta\in C^{\infty}(P,\mathfrak{g})\colon\mathrm{supp}(\eta)\subseteq\pi^{-1}(K)\right\}
\]
and equip it with the induced topology of $C^{\infty}(P,\mathfrak{g})$. With the same argument as in \ref{4.2.3} we see that $C^{\infty}_K(P,\mathfrak{g})$ is a closed vector subspace of $C^{\infty}(P,\mathfrak{g})$. Now, 
\[
C^{\infty}_K(P,\mathfrak{g})^G:=\left\{\eta\in C^{\infty}_K(P,\mathfrak{g})\colon\eta(p\cdot g)=\mathrm{Ad}(g^{-1}).\eta(p)
\text{ for all }p\in P,g\in G\right\}
\]
is a closed vector subspace of $C^{\infty}_K(P,\mathfrak{g})$ (cf. \ref{1.4b}) and it is a Lie algebra with the pointwise Lie bracket because $[\mathrm{Ad}(g).h,\mathrm{Ad}(g).h]=\mathrm{Ad}(g).[h,h']$ for all $g\in G$ and $h,h'\in\mathfrak{g}$. Since for $K,K'\subseteq M$ compact with $K\subseteq K'$ the maps $C^{\infty}_K(P,\mathfrak{g})^G\hookrightarrow C^\infty_{K'}(P,\mathfrak{g})^G$ are continuous we can continue as in \ref{1.6patched} and define the \textit{gauge algebra} by
\[
\mathfrak{gau}_c(\mathcal{P}):=C^{\infty}_c(P,\mathfrak{g})^G:=\varinjlim C^{\infty}_K(P,\mathfrak{g})^G.
\]
\end{mydef}
The next lemma gives us a topological Lie algebra isomorphic to $\mathfrak{gau}_c(\mathcal{P})$ that will indirectly serve as the modelling space for the gauge group.
\begin{lem}\label{1.4b}
Let $\mathcal{P}=(G,\pi\colon P\rightarrow M)$ be a smooth principal $G$-bundle over the $\sigma$-compact finite-dimensional manifold $M$. If $\overline{\mathcal{V}}:=(\overline{V}_i,\sigma_i)_{i\in\mathbb{N}}$ is a locally finite compact trivializing system of $\mathcal{P}$ with transition functions $k_{ij}\colon\overline{V}_i\cap\overline{V}_j\rightarrow G$, we denote by $\mathfrak{g}_{\overline{\mathcal{V}}}(\mathcal{P})$ the set
\[
\left\{(\eta_i)_{i\in\mathbb{N}}\in\bigoplus_{\substack{i\in\mathbb{N}}}C^{\infty}(\overline{V}_i,\mathfrak{g})\colon\eta_i(m)=\mathrm{Ad}\left(k_{ij}(m)\right).\eta_j(m)\text{ for all }m\in\overline{V}_i\cap\overline{V}_j\right\}.
\]
Then $\mathfrak{g}_{\overline{\mathcal{V}}}(\mathcal{P})$ is a closed topological Lie subalgebra of $\bigoplus_{\substack{i\in\mathbb{N}}}C^{\infty}(\overline{V}_i,\mathfrak{g})$.
\end{lem}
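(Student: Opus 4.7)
The plan is to first check algebraic closure under the Lie bracket, then use a continuity argument on point-evaluations to deduce topological closure.

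For the algebraic part, I would first note that $\mathfrak{g}_{\overline{\mathcal{V}}}(\mathcal{P})$ is a linear subspace because each compatibility condition
\[
\eta_i(m) - \mathrm{Ad}(k_{ij}(m)).\eta_j(m) = 0
\]
is linear in $(\eta_k)_{k\in\mathbb{N}}$. Next, if $(\eta_k)_k$ and $(\eta'_k)_k$ both satisfy the compatibility conditions, then for $m\in\overline{V}_i\cap\overline{V}_j$,
\[
[\eta_i,\eta'_i](m) = [\mathrm{Ad}(k_{ij}(m)).\eta_j(m),\mathrm{Ad}(k_{ij}(m)).\eta'_j(m)] = \mathrm{Ad}(k_{ij}(m)).[\eta_j,\eta'_j](m),
\]
using that $\mathrm{Ad}(g)$ is a Lie algebra morphism. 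Thus the componentwise pointwise Lie bracket preserves $\mathfrak{g}_{\overline{\mathcal{V}}}(\mathcal{P})$. The continuity of this bracket on the ambient direct sum follows by an application of Lemma \ref{7.1} to the componentwise bracket maps $C^\infty(\overline{V}_i,\mathfrak{g})\times C^\infty(\overline{V}_i,\mathfrak{g})\to C^\infty(\overline{V}_i,\mathfrak{g})$, each of which is smooth (in fact continuous bilinear), so the bracket on the direct sum is continuous as well.

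The main step is topological closedness. For every pair $i,j\in\mathbb{N}$ with $\overline{V}_i\cap\overline{V}_j\neq\varnothing$ and every $m\in\overline{V}_i\cap\overline{V}_j$, I define
\[
\Phi_{i,j,m}\colon\bigoplus_{k\in\mathbb{N}}C^\infty(\overline{V}_k,\mathfrak{g})\to\mathfrak{g},\quad(\eta_k)_k\mapsto\eta_i(m)-\mathrm{Ad}(k_{ij}(m)).\eta_j(m).
\]
Then $\mathfrak{g}_{\overline{\mathcal{V}}}(\mathcal{P})=\bigcap_{i,j,m}\Phi_{i,j,m}^{-1}(\{0\})$, so it suffices to show that each $\Phi_{i,j,m}$ is continuous. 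Since $\Phi_{i,j,m}$ is linear, by Remark \ref{directsumstetig} it is enough to verify continuity on each summand $C^\infty(\overline{V}_k,\mathfrak{g})$. On the summand with index $k\notin\{i,j\}$ the map vanishes; on $C^\infty(\overline{V}_i,\mathfrak{g})$ it equals the point evaluation $\mathrm{ev}_m$; and on $C^\infty(\overline{V}_j,\mathfrak{g})$ it equals $-\mathrm{Ad}(k_{ij}(m))\circ\mathrm{ev}_m$. Point evaluations are continuous since they factor through the continuous maps $d^0\colon C^\infty(\overline{V}_k,\mathfrak{g})\to C(\overline{V}_k,\mathfrak{g})_{c.o.}$ (see Remark \ref{4.2.3}), and $\mathrm{Ad}(k_{ij}(m))$ is a continuous linear endomorphism of $\mathfrak{g}$. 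Hence $\Phi_{i,j,m}$ is continuous, the intersection is closed, and $\mathfrak{g}_{\overline{\mathcal{V}}}(\mathcal{P})$ is therefore a closed topological Lie subalgebra as claimed.

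The only subtlety I anticipate is making sure the Lie bracket on the infinite direct sum is actually continuous (it is not automatic for arbitrary bilinear maps to extend continuously to direct sums), but Lemma \ref{7.1} handles precisely this situation by allowing assembly of componentwise smooth maps with values in the corresponding direct sum. Everything else is routine bookkeeping with point evaluations and the continuity criterion for linear maps out of direct sums.
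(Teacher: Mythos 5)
Your proposal is correct and follows essentially the same route as the paper: the bracket compatibility is checked exactly as in the paper using that $\mathrm{Ad}(k_{ij}(m))$ is a Lie algebra automorphism, and closedness is obtained by writing $\mathfrak{g}_{\overline{\mathcal{V}}}(\mathcal{P})$ as an intersection of preimages of $\{0\}$ under continuous maps built from point evaluations (the paper phrases this as equalizers of continuous maps being closed). Your additional verification that the bracket is continuous on the infinite direct sum via Lemma \ref{7.1} is a point the paper leaves implicit, but it does not change the substance of the argument.
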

\begin{proof}
It is obvious that $\mathfrak{g}_{\overline{\mathcal{V}}}(\mathcal{P})$ is a topological subspace if we endow it with the induced topology. What is more, for $(\gamma_i)_{i\in\mathbb{N}}$, $(\eta_i)_{i\in\mathbb{N}}\in\mathfrak{g}_{\overline{\mathcal{V}}}(\mathcal{P})$ we have $[\gamma_i,\eta_i](m)=[\mathrm{Ad}(k_{ij}(m)).\gamma_j,\mathrm{Ad}(k_{ij}(m)).\eta_j](m)=\mathrm{Ad}(k_{ij}(m)).[\gamma_j,\eta_j](m)$ for all $i,j\in\mathbb{N}$ and  $m\in\overline{V}_i\cap\overline{V}_j$ and thus $\mathfrak{g}_{\overline{\mathcal{V}}}(\mathcal{P})$ is a topological Lie subalgebra.\\
Because the evaluation map $\mathrm{ev}_{i,m}\colon C^{\infty}(\overline{V}_i,\mathfrak{g})\rightarrow\mathfrak{g}$ is continuous for every component and equalizers of continuous maps are closed, the set 
\[
C:=\bigcap_{i,j\in\mathbb{N}}\bigcap_{m\in\overline{V}_i\cap\overline{V}_j}\left\{(\eta_k)_{k\in\mathbb{N}}\in\bigoplus_{k\in\mathbb{N}}C^{\infty}(\overline{V}_i,\mathfrak{g})\colon\mathrm{ev}_{i,m}\circ\eta_i=\Big(\mathrm{Ad}\big(k_{ij}(m)\big)\big(\mathrm{ev}_{j,m}(\eta_j)\big)\Big)\right\}
\]
is closed and we have $C=\mathfrak{g}_{\overline{\mathcal{V}}}(\mathcal{P})$.
\end{proof}

\begin{prop}\label{1.4}
Let $\mathcal{P}=(G,\pi\colon P\rightarrow M)$ be a smooth principal $G$-bundle over a $\sigma$-compact finite-dimensional manifold $M$. If $\overline{\mathcal{V}}:=(\overline{V}_i,\sigma_i)_{i\in\mathbb{N}}$ is a locally finite compact trivializing system of $\mathcal{P}$,   $k_{ij}\colon\overline{V}_i\cap\overline{V}_j\rightarrow G$ are the respective transition functions and $\mathcal{V}$ denotes the smooth, relatively compact open trivializing system underlying $\overline{\mathcal{V}}$, then we let $\mathfrak{g}_{\mathcal{V}}(\mathcal{P})$ be the set
\[
\left\{(\eta_i)_{i\in\mathbb{N}}\in\bigoplus_{\substack{i\in\mathbb{N}}}C^{\infty}(V_i,\mathfrak{g})\colon\eta_i(m)=\mathrm{Ad}\left(k_{ij}(m)\right).\eta_j(m)\text{ for all }m\in V_i\cap V_j\right\},
\]
and we have isomorphisms of topological vector spaces
\[\mathfrak{gau}_c(\mathcal{P})=C^{\infty}_c(P,\mathfrak{g})^G\cong\mathfrak{g}_{\overline{\mathcal{V}}}(\mathcal{P})\cong\mathfrak{g}_{\mathcal{V}}(\mathcal{P})\cong \mathrm{S}^\infty_c\left(\mathrm{Ad}(\mathcal{P})\right).\]
Furthermore, each of these spaces is a locally convex Lie algebra in a natural way and the isomorphisms are isomorphisms of topological Lie algebras.
\end{prop}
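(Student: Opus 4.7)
The plan is to establish all four isomorphisms by exhibiting explicit continuous maps in each case and checking that they are mutually inverse Lie algebra morphisms. The backbone of the argument is the interplay between the cocycle relation $\overline{\sigma}_i\cdot k_{ij}=\overline{\sigma}_j$ (Remark \ref{transition}), the $G$-equivariance built into $C^\infty_c(P,\mathfrak{g})^G$, and the description of $\mathrm{Ad}(\mathcal{P})$ via the same cocycle (through the trivializations $\widetilde{\theta}$ of Definition \ref{B.1.14}); all three are governed by one and the same compatibility condition.

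For $C^\infty_c(P,\mathfrak{g})^G\cong\mathfrak{g}_{\overline{\mathcal{V}}}(\mathcal{P})$, I would set $\Phi(\eta):=(\eta\circ\overline{\sigma}_i)_{i\in\mathbb{N}}$. Combining equivariance with $\overline{\sigma}_i\cdot k_{ij}=\overline{\sigma}_j$ yields exactly the $\mathrm{Ad}$-compatibility of $\mathfrak{g}_{\overline{\mathcal{V}}}(\mathcal{P})$, and local finiteness together with Lemma \ref{finiteinter} (applied to the compact base-projection of $\mathrm{supp}(\eta)$) forces $\Phi(\eta)$ to have only finitely many nonzero components, so it actually lands in the direct sum. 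Continuity is checked one step of the direct limit at a time using Remark \ref{directlimstetig} and pullback along $\overline{\sigma}_i$ (Lemma \ref{pullback}). Conversely, given a finitely-supported compatible tuple $(\eta_i)$ I define $\widetilde\eta(p):=\mathrm{Ad}(k_{\sigma_i}(p)^{-1}).\eta_i(\pi(p))$ for $p\in\pi^{-1}(V_i)$; compatibility together with $k_{\sigma_i}(\sigma_j(x))=k_{ij}(x)$ makes this independent of the choice of $i$, smoothness comes from smoothness of $k_{\sigma_i}=\mathrm{pr}_2\circ\theta_i$, and equivariance follows from $k_{\sigma_i}(p\cdot g)=k_{\sigma_i}(p)\,g$.

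The isomorphism $\mathfrak{g}_{\overline{\mathcal{V}}}(\mathcal{P})\cong\mathfrak{g}_{\mathcal{V}}(\mathcal{P})$ is obtained componentwise: restriction $\eta_i\mapsto\eta_i|_{V_i}$ is continuous by Lemma \ref{A.14} combined with Remark \ref{directsumstetig}, and because elements of $\mathfrak{g}_{\mathcal{V}}(\mathcal{P})$ have finite support, its inverse can be built either by gluing to a global object (along the lines of Corollary \ref{algebraglue}) and restricting back, or directly by extending each $\eta_i$ to $\overline{V}_i$ using the compatibility on overlaps with neighbouring charts plus zero. For the final step $\mathfrak{g}_{\mathcal{V}}(\mathcal{P})\cong\mathrm{S}^\infty_c(\mathrm{Ad}(\mathcal{P}))$ the identification is essentially tautological: the maps $\widetilde{\theta}_i$ from Definition \ref{B.1.14} form an atlas of local trivializations for $\mathrm{Ad}(\mathcal{P})$ whose transition cocycle is precisely $\mathrm{Ad}(k_{ij})$, so by Remark \ref{vectorbundletop} the map $s\mapsto(\mathrm{pr}_\mathfrak{g}\circ\widetilde{\theta}_i\circ s|_{V_i})_{i\in\mathbb{N}}$ is a topological linear isomorphism onto $\mathfrak{g}_{\mathcal{V}}(\mathcal{P})$.

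Compatibility with the pointwise Lie brackets is routine from the identity $[\mathrm{Ad}(g).X,\mathrm{Ad}(g).Y]=\mathrm{Ad}(g).[X,Y]$, which propagates through all four descriptions. The hard part will be continuity of the inverse to $\Phi$: the codomain $C^\infty_c(P,\mathfrak{g})^G$ carries a direct limit topology over compact subsets of $M$ while the source is an infinite direct sum, so the point-by-point recipe must be repackaged as a smooth push-forward from finite subproducts into a fixed $C^\infty_K(P,\mathfrak{g})^G$; locally writing $\widetilde\eta\circ\theta_i^{-1}(x,g)=\mathrm{Ad}(g^{-1}).\eta_i(x)$ and invoking Lemmas \ref{pushforward} and \ref{fstern} on each $\pi^{-1}(V_i)$, combined with Remark \ref{directlimstetig}, should close this gap without requiring any global smoothness machinery on the non-compact manifold $P$.
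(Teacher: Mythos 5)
Your overall route is the same as the paper's: the maps $\eta\mapsto(\eta\circ\sigma_i)_i$, the twisted inverse formula $\mathrm{Ad}(k_{\sigma_i}(p))^{-1}.\eta_i(\pi(p))$, componentwise restriction, and the tautological identification of $S^\infty_c(\mathrm{Ad}(\mathcal{P}))$ via the trivializations $\widetilde{\theta}_i$ are exactly the maps used there, and your cocycle computations are correct. The one step that does not go through as you have written it is the continuity of the inverse direction, i.e.\ of $(\eta_i)_{i\in\mathbb{N}}\mapsto\widetilde{\eta}$ from the compatible tuples back into $C^\infty_c(P,\mathfrak{g})^G$. You propose to reduce this to ``finite subproducts'' and invoke \ref{directlimstetig} together with push-forwards. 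But \ref{directlimstetig} (and likewise \ref{directsumstetig}) applies to linear maps defined on a direct limit, respectively on the whole direct sum, and your map is only defined on the closed subspace $\mathfrak{g}_{\overline{\mathcal{V}}}(\mathcal{P})$: a single nonzero summand $\eta_i$ is not a compatible tuple, so there is no ``restriction to the $i$-th summand'' to test, and the naive extension by zero of $p\mapsto\mathrm{Ad}(k_{\sigma_i}(p))^{-1}.\eta_i(\pi(p))$ outside $\pi^{-1}(V_i)$ is not even continuous unless $\eta_i$ happens to vanish near $\partial V_i$. Moreover, it is not established (and is not automatic for closed subspaces of countable locally convex direct sums) that $\mathfrak{g}_{\overline{\mathcal{V}}}(\mathcal{P})$ carries the inductive limit topology of its intersections with the finite partial products, which is what your reduction would need.

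The paper closes exactly this gap with a partition of unity $(h_i)$ subordinate to $(V_i)$: it factors the inverse as $\Sigma\circ\Theta$, where $\Theta\colon(\eta_i)_i\mapsto(h_i\cdot\eta_i)_i$ cuts each component down to a compactly supported function on $V_i$ and $\Sigma$ extends each piece by zero (legitimately, via \ref{cont}) and sums. Both $\Theta$ and $\Sigma$ are linear on the \emph{entire} sum $\bigoplus_i C^\infty(V_i,\mathfrak{g})$, so the summand-wise criterion \ref{directsumstetig} really does apply, with \ref{fstern} handling the $\mathrm{Ad}$-twist on each trivial piece; one then checks that $\Sigma\circ\Theta$ agrees with your pointwise formula on $\mathfrak{g}_{\mathcal{V}}(\mathcal{P})$. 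A further economy you miss: arranging the three continuous maps into a cycle with $\Psi_3\circ\Psi_2\circ\Psi_1=\mathrm{id}$ yields continuity of all inverses at once, so you never have to prove continuity of $\Phi^{-1}$ or of the inverse of the restriction map separately. You in fact cite the right idea (\ref{algebraglue}/\ref{topemb}) for the untwisted restriction step; the missing observation is that the same partition-of-unity device, adapted to the $\mathrm{Ad}$-twisted transition functions, is what makes the inverse continuous, not a finite-stage reduction on the constraint subspace.
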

\begin{proof} 
For the first two isomorphisms it suffices to show that we have a sequence
\[C^{\infty}_c(P,\mathfrak{g})^G\xrightarrow{\Psi_1}\mathfrak{g}_{\overline{\mathcal{V}}}(\mathcal{P})\xrightarrow{\Psi_2}\mathfrak{g}_{\mathcal{V}}(\mathcal{P})\xrightarrow{\Psi_3} C^{\infty}_c(P,\mathfrak{g})^G\]
with continuous linear maps that are bijective such that $\Psi_3\circ\Psi_2\circ\Psi_1=\mathrm{id}_{C^{\infty}_c(P,\mathfrak{g})^G}$ holds. We begin by showing that the linear map
\[
\Psi_1\colon C^{\infty}_c(P,\mathfrak{g})^G\rightarrow\mathfrak{g}_{\overline{\mathcal{V}}}(\mathcal{P}),\quad\eta\mapsto(\eta_i)_{i\in\mathbb{N}}:=(\eta\circ\sigma_i)_{i\in\mathbb{N}}
\]
defines an element of $\mathfrak{g}_{\overline{\mathcal{V}}}(\mathcal{P})$, is continuous and bijective. In fact $\sigma_i(m)=\sigma_j(m)\cdot k_{ji}(m)$ for $m\in\overline{V}_i\cap\overline{V}_j$ implies
\begin{align}\label{formula5.6}
\eta_i(m)=\eta(\sigma_i(m))=&\eta\left(\sigma_j(m)\cdot k_{ji}(m)\right)=\mathrm{Ad}\left(k_{ji}(m)\right)^{-1}.\eta\left(\sigma_j(m)\right)=\\
&\mathrm{Ad}\left(k_{ij}(m)\right).\eta_j(m), \nonumber
\end{align}
and thus $(\eta_i)_{i\in\mathbb{N}}$ meets the transitional condition of  $\mathfrak{g}_{\overline{\mathcal{V}}}(\mathcal{P})$. Let $(L_n)_{n\in\mathbb{N}}$ be a compact exhaustion of $M$. Since there exists an $L_n$ with $\mathrm{supp}(\eta)\subseteq \pi^{-1}(L_n)$ we have $\eta\circ\sigma_i=0$ if $\overline{V}_i\cap L_n=\varnothing$ and this holds for almost all $i\in\mathbb{N}$, thus $(\eta_i)_{i\in\mathbb{N}}\in\mathfrak{g}_{\overline{\mathcal{V}}}(\mathcal{P})$. Because the map is obviously linear we only need to show that $\Psi_1\big|_{C^{\infty}_{L_n}(P,\mathfrak{g})^G}$ is continuous for every $n\in\mathbb{N}$ (see \ref{directlimstetig}). But for the same reasons as above for each $n\in\mathbb{N}$ there exists an $N\in\mathbb{N}$ with $\Psi_1\left(C^{\infty}_{L_n}(P,\mathfrak{g})^G\right)\subseteq\prod_{j=1}^NC^{\infty}(\overline{V}_j,\mathfrak{g})\cap\mathfrak{g}_{\overline{\mathcal{V}}}(\mathcal{P})$. On this finite product $\bigoplus_{i\in\mathbb{N}}C^{\infty}(\overline{V}_i,\mathfrak{g})$ 
induces the product topology. It follows that 
$\Psi_1\big|_{C^{\infty}_{L_n}(P,\mathfrak{g})^G}$ is continuous if its components are continuous. However, this holds because the topology on $C^{\infty}_{L_n}(P,\mathfrak{g})^G$ is induced by $C^{\infty}(P,\mathfrak{g})$ and pullbacks along smooth maps are smooth by \ref{pullback}.\\ 
It remains to show that $\Psi_1$ is bijective. As $\Psi_1$ is clearly injective, it remains to show its surjectivity. To this end let $k_{\sigma_i}\colon\pi^{-1}(\overline{V}_i)\rightarrow G$ be given by $p=\sigma_i\left(\pi(p)\right)\cdot k_{\sigma_i}(p)$ and let $(\eta_i)_{i\in\mathbb{N}}\in\mathfrak{g}_{\mathcal{\overline{V}}}(\mathcal{P})$. Then the map
\begin{align}\label{formula5.6b}
\eta:P\rightarrow\mathfrak{g},\quad p\mapsto\mathrm{Ad}\big(k_{\sigma_i}(p)\big)^{-1}.\eta_i\big(\pi(p)\big)\text{  if }\pi(p)\in\overline{V}_i
\end{align}
is well-defined and defines an inverse of $\Psi_1$. In fact, $k_{\sigma_j}(p)=k_{ji}(\pi(p))k_{\sigma_i}(p)$ 
shows that this map is well-defined by (\ref{formula5.6}). It is obviously smooth, $G$-equivariant and we have  $\eta\circ\sigma_i=\eta_i$.\\
Since for every $i\in\mathbb{N}$, the restriction map $\mathrm{res}_{V_i}\colon C^{\infty}(\overline{V}_i,\mathfrak{g})\rightarrow C^{\infty}(V_i,\mathfrak{g})$ is linear and continuous by \ref{A.14}, we can conclude that
\[
\mathrm{res}\colon\bigoplus_{i\in\mathbb{N}}C^{\infty}(\overline{V}_i,\mathfrak{g})\rightarrow\bigoplus_{i\in\mathbb{N}}C^{\infty}(V_i,\mathfrak{g}),\quad(\eta_i)_{i\in\mathbb{N}}\mapsto(\eta_i\big|_{V_i})_{i\in\mathbb{N}}
\] is linear and continuous, hence the linear map
\[ \Psi_2\colon\mathfrak{g}_{\overline{\mathcal{V}}}(\mathcal{P})\rightarrow\mathfrak{g}_{\mathcal{V}}(\mathcal{P}),\quad(\eta_i)_{i\in\mathbb{N}}\mapsto (\eta_i|_{V_i})_{i\in\mathbb{N}}
\] is also continuous. Moreover, this map is clearly injective. \\
To define $\Psi_3\colon\mathfrak{g}_{\mathcal{V}}(\mathcal{P})\rightarrow C^\infty_c(P,\mathfrak{g})^G$, let $h_n\in C^{\infty}(M,[0,1])$ be a partition of unity subordinate to $V_n$, i.e., $K_n:=\mathrm{supp}(h_n)\subseteq V_n$. Now following the same steps as in the proof of \ref{topemb}, the linear map
\[
\Theta\colon\bigoplus_{i\in\mathbb{N}}C^{\infty}(V_i,\mathfrak{g})\rightarrow\bigoplus_{i\in\mathbb{N}}C^\infty_{K_i}(V_i,\mathfrak{g}),\quad(\eta_i)_{i\in\mathbb{N}}\mapsto (h_i\cdot\eta_i)_{i\in\mathbb{N}}
\]
is continuous. Let us show that the linear map
\[
\Sigma\colon\bigoplus_{i\in\mathbb{N}}C_{K_i}^\infty(V_i,\mathfrak{g})\rightarrow C^{\infty}_c(P,\mathfrak{g}),\quad(\eta_i)_{i\in\mathbb{N}}\mapsto\sum_{i\in\mathbb{N}}\tilde{\eta}_i,
\]
where
\[
\tilde{\eta}_i(p):=\begin{cases}
\mathrm{Ad}\left(k_{\sigma_i}(p)\right).\eta_i\left(\pi(p)\right)&\text{ for }p\in\pi^{-1}(V_i)\\ 
0&\text{ else,}
\end{cases}
\]
is continuous.
Since $\Sigma$ is linear we only need to check this for $\Sigma_{K_i}:=\Sigma\big|_{C^\infty_{K_i}(V_i,\mathfrak{g})}$. The image of $\Sigma_{K_i}$ is contained in $C_{K_i}^{\infty}(P,\mathfrak{g})$. We can directly adapt \ref{cont} to the different notion of being compactly carried in $C^\infty(P,\mathfrak{g})$, hence $C^\infty_{K_i}(P,\mathfrak{g})\cong C^\infty_{K_i}(\pi^{-1}(V_i),\mathfrak{g})$ and we may assume that $\mathcal{P}=(G,\pi\colon P\rightarrow M)$ is trivial with a smooth global section $\sigma\colon M\rightarrow P$. Now, it suffices to show that
\[
C^\infty(M,\mathfrak{g})\rightarrow C^{\infty}(P,\mathfrak{g}),\quad\eta\mapsto \Big(p\mapsto\mathrm{Ad}\big(k_{\sigma}(p)\big)^{-1}.\eta\big(\pi(p)\big)\Big)
\]
is continuous. Since the pullback $C^\infty(M,\mathfrak{g})\rightarrow C^\infty(P,\mathfrak{g}),\quad \eta\mapsto\eta\circ\pi$ is continuous linear and hence smooth, this follows from the smoothness of $f\colon P\times\mathfrak{g}\rightarrow\mathfrak{g}$, $f(p,y):=\mathrm{Ad}(k_{\sigma}(p))^{-1}.y$ because then \ref{fstern} implies that 
\[
f_\ast\colon C^\infty(P,\mathfrak{g})\rightarrow C^\infty(P,\mathfrak{g}),\quad f_\ast\big(\gamma\big)(p)=\mathrm{Ad}(k_{\sigma}(p))^{-1}.\gamma(p)
\]
is smooth. 
Note that $\big((\Sigma\circ\Theta)\big|_{\mathfrak{g}_{\mathcal{V}}(\mathcal{P})}(\eta_i)_{i\in\mathbb{N}}\big)(p)=\sum_{i\in\mathbb{N}}h_i\cdot\tilde{\eta}_i(p)=\mathrm{Ad}\left(k_{\sigma_i}(p)\right).\eta_i\left(\pi(x)\right)$ for $p\in\pi^{-1}(V_i)$ because formula (\ref{formula5.6b}) 
is well-defined, independent of $i$ with $\pi(p)\in V_i$. 
Thus $\Psi_2\circ\Psi_1\circ(\Sigma\circ\Theta)=\mathrm{id}_{\bigoplus_{i\in\mathbb{N}}C^{\infty}(V_i,\mathfrak{g})}$, whence $\Psi_2$ is surjective and hence bijective. We set $\Psi_3:=\Sigma\circ\Theta$. By the proceeding, $\Psi_3$ is injective. Also calculating $\Psi_3((\eta_i)_{i\in\mathbb{N}})\circ\sigma_i=\eta_i$, we see that $\Psi_3\circ\Psi_2\circ\Psi_1=\mathrm{id}_{C^{\infty}_c(P,\mathfrak{g})}$ and thus $\Psi_3$ is surjective.\\
Next we will show the isomorphism $S_c^{\infty}(\mathrm{Ad}(\mathcal{P}))\cong\mathfrak{g}_{\mathcal{V}}(\mathcal{P})$ of topological vector spaces. Following \cite[Proposition 2.2.7 and Proposition 2.2.10]{Diss} the map 
\[
S_c^{\infty}(\mathrm{Ad}(\mathcal{P}))\hookrightarrow\bigoplus_{i\in\mathbb{N}}C^{\infty}(\mathrm{Ad}(\mathcal{P})\big|_{V_i}),\quad\sigma\mapsto(\sigma\big|_{V_i})_{i\in\mathbb{N}}
\]
is an embedding with closed image. Using the trivialization 
\[\widetilde{\theta}^{-1}_i\colon V_i\times\mathfrak{g}\rightarrow\mathrm{Ad}(\mathcal{P})\big|_{V_i},\quad (x,v)\mapsto[\sigma_i(x),v],\]
 we can identify the space of sections $C^{\infty}(\mathrm{Ad}(\mathcal{P})\big|_{V_i})$ with $C^{\infty}(V_i,\mathfrak{g})$ (see \ref{vectorbundletop}). It only remains to check that the image of $S_c^{\infty}(\mathrm{Ad}(\mathcal{P}))$ equals $\mathfrak{g}_{\mathcal{V}}(\mathcal{P})$ under this identification. Let $x\in V_i\cap V_j$ and $v\in\mathfrak{g}$. Then we get 
\[
[\theta^{-1}_i(x,1),v]=[\sigma_j(x)k_{ji}(x)\cdot 1,v]=[\sigma_j(x)\cdot 1,\mathrm{Ad}\left(k_{ji}(x)\right).v]
\]
\[
=[\theta^{-1}_j(x,1),\mathrm{Ad}\left(k_{ji}(x)\right).v],
\]
which shows that the transitional property of $\mathfrak{g}_{\mathcal{V}}(\mathcal{P})$ holds if and only if the sections of $\bigoplus_{i\in\mathbb{N}}C^{\infty}(\mathrm{Ad}(\mathcal{P})\big|_{V_i})$ coincide on the intersection of their domains. \\
By virtue of the pointwise definition of the Lie bracket it is obvious that the isomorphisms 
\[
S_c^{\infty}\left(\mathrm{Ad}(\mathcal{P})\right)\cong C^{\infty}_c(P,\mathfrak{g})^G\cong\mathfrak{g}_{\overline{\mathcal{V}}}(\mathcal{P})\cong \mathfrak{g}_{\mathcal{V}}(\mathcal{P})
\] are isomorphisms of abstract Lie algebras.
Now, since $\mathfrak{g}_{\mathcal{V}}(\mathcal{P})$ is a topological Lie algebra and the isomorphisms above are isomorphisms of locally convex spaces it follows that they are isomorphisms of topological Lie algebras.
\end{proof}

We now define the weak direct product that we want to use to topologise the gauge group. 

\begin{mydef}\label{1.5}
If $\mathcal{P}$ is a smooth $G$-bundle over a $\sigma$-compact finite-dimensional base $M$ and $\overline{\mathcal{V}}=(\overline{V}_i,\sigma_i)_{i\in\mathbb{N}}$ is a compact locally finite trivializing system with corresponding transition functions $k_{ij}\colon\overline{V}_i\cap\overline{V}_j\rightarrow G$, then we denote by $G_{\overline{\mathcal{V}}}(\mathcal{P})$ the set
\[\left\{(\gamma_i)_{i\in\mathbb{N}}\in\sideset{}{^\ast}\prod_{i\in\mathbb{N}}
     C^{\infty}(\overline{V}_i,G)
\colon\gamma_i(m)=k_{ij}(m)\gamma_j(m)k_{ji}(m)\text{ for all }m\in\overline{V}_i\cap\overline{V}_j\right\}\]
and turn it into a group with respect to pointwise group operations.
\end{mydef}

\begin{bem}\label{1.6}
In the situation of \ref{1.5}, the map
\begin{align*}
&\Psi\colon G_{\overline{\mathcal{V}}}(\mathcal{P})\rightarrow C^{\infty}_c(P,G)^G,   \\
&\Psi\big((\gamma_i)_{i\in\mathbb{N}}\big)(p):=k^{-1}_{\sigma_i}(p)\cdot\gamma_i\left(\pi(p)\right)
\cdot k_{\sigma_i}(p)\text{   if }\pi(p)\in\overline{V}_i
\end{align*}
is an isomorphism of abstract groups. The right-hand side is well-defined because 
$k_{\sigma_i}(p)=k_{ij}\left(\pi(p)\right)k_{\sigma_j}(p)$ if $\pi(p)\in\overline{V}_i\cap\overline{V}_j$, 
which implies
\begin{align*}
k^{-1}_{\sigma_i}(p)\gamma_i\left(\pi(p)\right) k_{\sigma_i}(p)&=
k_{\sigma_j}(p)^{-1}\
\underbrace{k_{ji}\left(\pi(p)\right)\gamma_i\left(\pi(p)\right)k_{ij}\left(\pi(p)\right)}_
{\gamma_j\left(\pi(p)\right)}
k_{\sigma_j}(p)\\
&=k^{-1}_{\sigma_j}(p)\gamma_j\left(\pi(p)\right) k_{\sigma_j}(p)
\end{align*}
and $\Psi\big((\gamma_i)_{i\in\mathbb{N}}\big)$ is equivariant because
\[
 \Psi\big((\gamma_i)_{i\in\mathbb{N}}\big)(\sigma_i(x)g)=g^{-1}\gamma_i(x)g=g^{-1}\Psi\big((\gamma_i)_{i\in\mathbb{N}}\big)(\sigma_i(x))g.
\]
In particular, this implies that $\Psi((\gamma_i)_{i\in\mathbb{N}})$ is smooth. Obviously,
\begin{align*}
&\Psi^{-1}\colon C^{\infty}_c(P,G)^G\rightarrow G_{\overline{\mathcal{V}}}(\mathcal{P}),\\
&\Psi^{-1}\big(\eta\big):=\big(\eta\circ\sigma_i\big)_{i\in\mathbb{N}}
\end{align*}
is the inverse function of $\Psi$. In view of \ref{GloDl4.1}, we see that $G_{\overline{\mathcal{V}}}(\mathcal{P})$ is closed subgroup of $\prod^\ast_{i\in\mathbb{N}}C^{\infty}(\overline{V}_,G)$ in the same way as in \ref{1.4b} because the point evaluation $\mathrm{ev}_m\colon C^{\infty}(\overline{V}_i,G)\rightarrow G$ is continuous for all $m\in\overline{V}_i$. Note that this does not automatically turn it into a Lie group since we are in the infinite-dimensional case.
\end{bem}

\subsection{The property  \texorpdfstring{$\mathrm{SUB}_\oplus$}{SUB}}

The following definition provides a sufficient condition for turning $G_{\overline{\mathcal{V}}}(\mathcal{P})$ into a closed Lie subgroup of $\prod^\ast_{i\in\mathbb{N}}C^\infty(\overline{V}_i,G)$. Then also $C^\infty_c(P,G)^G$ can be made a Lie group using the isomorphism from \ref{1.6} (and hence also $\mathrm{Gau}_c(\mathcal{P})\cong C^\infty_c(P,G)^G$).
It will turn out that quite different properties of $\mathcal{P}$ ensure this requirement. 

\begin{mydef}
If $\mathcal{P}$ is a smooth $G$-bundle over a $\sigma$-compact finite-dimensional base $M$ and $\overline{\mathcal{V}}=(\overline{V}_i,\sigma_i)_{i\in\mathbb{N}}$ is a locally finite compact trivializing system, then we say that $\mathcal{P}$ has the \textit{property} $\text{SUB}_\oplus$ with respect to $\overline{\mathcal{V}}$ if there exists a family of convex centered charts $\varphi:=(\varphi_i\colon W_i\rightarrow W'_i)_{i\in\mathbb{N}}$ of $G$ with $W'_i\subseteq\mathfrak{g}$ such that $d\varphi_i\big|_{\mathfrak{g}}=\mathrm{id}_{\mathfrak{g}}$ for all $i\in\mathbb{N}$ and
\[\varphi_{\ast}:=\Big((\varphi_i)_\ast\Big)_{i\in\mathbb{N}}\colon G_{\overline{\mathcal{V}}}(\mathcal{P})\cap
   \sideset{}{^\ast}\prod_{i\in\mathbb{N}}C^{\infty}(\overline{V}_i,W_i)
\rightarrow
\mathfrak{g}_{\overline{\mathcal{V}}}(\mathcal{P})\cap
\bigoplus_{\substack{
            i\in\mathbb{N}}}
     C^{\infty}(\overline{V}_i,W'_i),\]
     \[
     (\gamma_i)_{i\in\mathbb{N}}\mapsto(\varphi_i\circ\gamma_i)_{i\in\mathbb{N}}
\]
is bijective. We say that $\mathcal{P}$ has the property $\text{SUB}_\oplus$ if $\mathcal{P}$ has this property with respect to some trivializing system. We also say that $\mathcal{P}$ has the property $\text{SUB}_\oplus$ with respect to some family of charts of $G$ if it has the property $\text{SUB}_\oplus$ with respect to some trivializing system and this family of charts.
\end{mydef}

\begin{bem}
 In the case of a compact base $M$ it is possible to get the following results with Wockel's property SUB, i.e., by just using one chart $\varphi\colon W\rightarrow W'$ of $G$ in the definition above. It turns out that in our situation this is not enough and we need the extended property $\text{SUB}_\oplus$ to prove the most basic results. Here, ``$\oplus$" indicates that we deal with direct sums.
\end{bem}

We will later show that the Lie group structure on $\mathrm{Gau}_c(\mathcal{P})\cong G_{\overline{\mathcal{V}}}(\mathcal{P})$ 
is independent of the trivializing system $\overline{\mathcal{V}}$ as long as it has the property $\text{SUB}_\oplus$ with respect to the trivializing system. Using the following lemma we can now endow $G_{\overline{\mathcal{V}}}(\mathcal{P})$ with its Lie group structure.

\begin{lem}\label{3.5.19}\cite{GloBO}
Let $G$ be a Lie group, modelled on a locally convex space $E$, and $F\subseteq E$ be a sequentially closed vector subspace. Let $H\subseteq G$ be a subgroup. Then $H$ is a Lie subgroup modelled on $F$ if and only if there exists a chart $\phi\colon U\rightarrow V$ of $G$ around $1$ such that $\phi(U\cap H)=V\cap F$.
\end{lem}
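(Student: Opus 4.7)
The plan is to prove both implications. The \textquotedblleft only if\textquotedblright{} direction follows directly from the definition of a Lie subgroup modelled on $F$: such a subgroup by its very definition requires a submanifold chart around some point, and translating by the inverse of that point (which is a diffeomorphism of $G$) yields the desired chart around $1$.

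For the \textquotedblleft if\textquotedblright{} direction, I would transport the given chart to every point of $H$ by left translation. Concretely, for each $h \in H$ set $\phi_h \colon hU \to V$, $\phi_h(g) := \phi(h^{-1}g)$, and define candidate $H$-charts $\phi_h^H := \phi_h\big|_{hU \cap H}$. By hypothesis and the group structure of $H$, the image $\phi_h^H(hU \cap H)$ equals $V \cap F$. I would equip $H$ with the initial topology with respect to the family $(\phi_h^H)_{h\in H}$, where $V \cap F$ carries the subspace topology from $F$ (itself Hausdorff locally convex as a subspace of $E$). This topology makes each $\phi_h^H$ a homeomorphism onto an open subset of $F$ and is invariant under left translation by elements of $H$ by construction.

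The key technical step is to show that the transition maps $\phi_{h'}^H \circ (\phi_h^H)^{-1}$ are smooth between open subsets of $F$. These transitions are restrictions to $V \cap F$ of the map $\phi \circ L_{h'^{-1}h} \circ \phi^{-1}$, which is smooth as a map between open subsets of $E$ since $h'^{-1}h \in H$ gives a diffeomorphism of $G$. Because $h'^{-1}h \in H$, this smooth map sends inputs in $V \cap F$ to $V \cap F$. The main obstacle is the following general fact, which is where the sequential closedness of $F$ enters: if $f\colon \Omega \to E$ is smooth on an open $\Omega \subseteq E$ and satisfies $f(\Omega \cap F) \subseteq F$, then $f\big|_{\Omega \cap F}$ is smooth as a map into $F$. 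I would establish this by induction: the directional derivative $df(x,v)$ at $x \in \Omega \cap F$ along $v \in F$ is the limit of difference quotients $(f(x+tv) - f(x))/t$, all lying in $F$; as this limit exists in $E$, the sequential closedness of $F$ forces it to lie in $F$. Iterating gives smoothness of all orders of the restricted map.

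With transition smoothness in hand, the family $(\phi_h^H)_{h\in H}$ is a smooth atlas on $H$ modelled on $F$. To see $H$ is a Lie group, I would express the multiplication and inversion in these charts: $\phi_{h_1 h_2}^H \circ \mu \circ \bigl((\phi_{h_1}^H)^{-1} \times (\phi_{h_2}^H)^{-1}\bigr)$ is the restriction to $(V\cap F)\times(V\cap F)$ of the corresponding smooth chart expression of $\mu$ in $G$, and similarly for inversion; both take values in $V \cap F$ since $H$ is a subgroup, and the same restriction lemma applies (to $F \oplus F$ in the case of multiplication, which is again sequentially closed in $E \oplus E$). This gives $H$ the structure of a Lie group modelled on $F$, with the inclusion $H \hookrightarrow G$ smooth by virtue of the submanifold chart $\phi$, completing the proof.
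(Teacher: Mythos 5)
The paper does not prove this lemma; it is quoted verbatim from \cite{GloBO} as a known tool, so there is no in-paper argument to compare against. Your reconstruction is correct and is essentially the standard proof: the ``only if'' direction is definitional after a translation, and the ``if'' direction hinges exactly on the restriction lemma you isolate --- that a smooth map between open subsets of $E$ mapping $F$-points to $F$-points restricts to a smooth map into $F$, which follows because difference quotients of $F$-valued maps along $F$-directions stay in $F$ and sequential closedness forces their limits (the iterated directional derivatives) back into $F$. Two points deserve slightly more care but are routine: first, ``initial topology with respect to $(\phi_h^H)_{h\in H}$'' is not quite the right phrase since these are only partially defined maps --- what you actually do is declare the sets $h(U'\cap H)$ open and verify via the (homeomorphic) transition maps that this is a well-defined Hausdorff manifold topology, or equivalently invoke the local-data criterion (Proposition \ref{A.4} in this paper); second, the chart expression of the multiplication lands in $F$ but not automatically in $V$, so one must first shrink to a symmetric neighbourhood $V'$ with $V'V'\subseteq V$ before applying the restriction lemma to $F\oplus F$. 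Neither issue affects the validity of the argument.
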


\begin{lem}\label{1.8}
Let $\mathcal{P}$ be a smooth $G$-bundle with $\sigma$-compact finite-dimensional base $M$ which has the property $\text{SUB}_\oplus$ with respect to the compact locally finite trivializing system $\overline{\mathcal{V}}$. Let $\varphi=(\varphi_i\colon W_i\rightarrow W'_i)_{i\in\mathbb{N}}$ be the respective family of charts. Then $\varphi_{\ast}$ induces a smooth manifold structure on $G_{\overline{\mathcal{V}}}(\mathcal{P})\cap
\bigoplus_{i\in\mathbb{N}}C^{\infty}(\overline{V}_i,W)$ and $G_{\overline{\mathcal{V}}}(\mathcal{P})$ can be turned into a Lie group modelled on $\mathfrak{g}_{\overline{\mathcal{V}}}(\mathcal{P})$. Furthermore, we have  $\mathrm{L}(G_{\overline{\mathcal{V}}}(\mathcal{P}))\cong\mathfrak{g}_{\overline{\mathcal{V}}}(\mathcal{P})$.
\end{lem}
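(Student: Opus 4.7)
The plan is to realise $G_{\overline{\mathcal{V}}}(\mathcal{P})$ as a closed Lie subgroup of the weak direct product $\prod^\ast_{i\in\mathbb{N}} C^\infty(\overline{V}_i,G)$ by means of Lemma \ref{3.5.19}. First, each factor $C^\infty(\overline{V}_i,G)$ is a Lie group modelled on $C^\infty(\overline{V}_i,\mathfrak{g})$ (the relevant theorem above, applicable since $\overline{V}_i$ is a compact manifold with corners), and by Proposition \ref{directsum} these assemble into a Lie group structure on the weak direct product, modelled on $\bigoplus_{i\in\mathbb{N}} C^\infty(\overline{V}_i,\mathfrak{g})$, whose chart around $1$ can be taken to be the box map
\[
\Phi := \bigl((\varphi_i)_\ast\bigr)_{i\in\mathbb{N}}\colon \sideset{}{^\ast}\prod_{i\in\mathbb{N}} C^\infty(\overline{V}_i,W_i) \longrightarrow \bigoplus_{i\in\mathbb{N}} C^\infty(\overline{V}_i,W_i').
\]
Its domain is open in $\prod^\ast_{i} C^\infty(\overline{V}_i,G)$ because every $\varphi_i$ is centred, so $1 \in W_i$ for all $i$ and the relevant box condition is trivially met.

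The property $\text{SUB}_\oplus$ asserts precisely that $\Phi$ restricts to a bijection
\[
G_{\overline{\mathcal{V}}}(\mathcal{P}) \cap \sideset{}{^\ast}\prod_{i\in\mathbb{N}} C^\infty(\overline{V}_i,W_i) \;\longrightarrow\; \mathfrak{g}_{\overline{\mathcal{V}}}(\mathcal{P}) \cap \bigoplus_{i\in\mathbb{N}} C^\infty(\overline{V}_i,W_i'),
\]
which is exactly the submanifold chart required by Lemma \ref{3.5.19}. By Lemma \ref{1.4b}, $\mathfrak{g}_{\overline{\mathcal{V}}}(\mathcal{P})$ is closed, hence sequentially closed, in $\bigoplus_{i\in\mathbb{N}} C^\infty(\overline{V}_i,\mathfrak{g})$. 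Invoking Lemma \ref{3.5.19} therefore yields the Lie subgroup structure on $G_{\overline{\mathcal{V}}}(\mathcal{P})$ modelled on $\mathfrak{g}_{\overline{\mathcal{V}}}(\mathcal{P})$, and $\Phi$ induces the chart on $G_{\overline{\mathcal{V}}}(\mathcal{P}) \cap \prod^\ast_{i} C^\infty(\overline{V}_i,W_i)$ claimed in the first assertion.

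For the identification of the Lie algebra, the Lie subgroup structure supplies a canonical topological isomorphism $L(G_{\overline{\mathcal{V}}}(\mathcal{P})) \cong \mathfrak{g}_{\overline{\mathcal{V}}}(\mathcal{P})$ with bracket inherited from $L\bigl(\prod^\ast_{i} C^\infty(\overline{V}_i,G)\bigr) \cong \bigoplus_i C^\infty(\overline{V}_i,\mathfrak{g})$. The normalization $d\varphi_i\big|_{\mathfrak{g}} = \mathrm{id}_{\mathfrak{g}}$ ensures that the derivative of each $(\varphi_i)_\ast$ at the constant map $1$ is the identity on $C^\infty(\overline{V}_i,\mathfrak{g})$, so the ambient bracket computed via $\Phi$ coincides componentwise with the pointwise Lie bracket, matching the Lie algebra structure from Lemma \ref{1.4b}.

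The main obstacle is producing the single chart of the ambient Lie group required by Lemma \ref{3.5.19}: in Wockel's compact-base setting one chart of $G$ suffices, but over a $\sigma$-compact base the corresponding statement is false, and only the family of charts furnished by the strengthened property $\text{SUB}_\oplus$ (together with Proposition \ref{directsum} and Lemma \ref{1.4b}) is powerful enough to fabricate a submanifold chart in the direct-sum setting.
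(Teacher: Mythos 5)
Your proposal is correct and follows essentially the same route as the paper: the submanifold chart supplied by the property $\text{SUB}_\oplus$ is fed into Lemma \ref{3.5.19} (using that $\mathfrak{g}_{\overline{\mathcal{V}}}(\mathcal{P})$ is closed by Lemma \ref{1.4b}), and the normalization $d\varphi_i\big|_{\mathfrak{g}}=\mathrm{id}_{\mathfrak{g}}$ is what identifies the induced Lie algebra with $\mathfrak{g}_{\overline{\mathcal{V}}}(\mathcal{P})$. The only difference is one of detail: where you assert that the ambient bracket is the pointwise one, the paper actually verifies this by computing $L(\mathrm{ev}_{i,x_i})=\mathrm{Ev}_{i,x_i}$ for the point evaluations (smooth by \ref{productmorph} and continuous by \ref{7.1}) and using that $L$ of a morphism is a Lie algebra homomorphism.
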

\begin{proof}
The first statement follows immediately from the property $\text{SUB}_\oplus$ and \ref{3.5.19} since $\mathfrak{g}_{\overline{\mathcal{V}}}(\mathcal{P})$ is a closed vector subspace of $\bigoplus_{i\in\mathbb{N}}C^{\infty}(\overline{V}_i,\mathfrak{g})$. 
Thus, $\mathrm{L}(G_{\overline{\mathcal{V}}}(\mathcal{P}))$ coincides with $\mathfrak{g}_{\overline{\mathcal{V}}}(\mathcal{P})$ as a topological vector space and it remains to show that the Lie bracket coincides as well. Since the point evaluations $\mathrm{ev}_{i,x_i}\colon C^\infty(\overline{V}_i,G)\rightarrow G,\quad \mathrm{ev}_{i,x_i}(\gamma_i)=\gamma_i(x_i)$ and $\mathrm{Ev}_{i,x_i}\colon C^\infty(\overline{V}_i,\mathfrak{g})\rightarrow \mathfrak{g},\quad \mathrm{Ev}_{i,x_i}(\eta_i)=\eta_i(x_i)$ for $x_i\in\overline{V}_i$ are smooth morphisms, the evaluation on the weak direct product 
\[
 \mathrm{ev}_{(x_i)_{i\in\mathbb{N}}}:=(\mathrm{ev}_{i,x_i})_{i\in\mathbb{N}}\colon \sideset{}{^\ast}\prod_{i\in\mathbb{N}}C^{\infty}(\overline{V}_i,G)\rightarrow \sideset{}{^\ast}\prod_{i\in\mathbb{N}}G,\quad (\gamma_i)_{i\in\mathbb{N}}\mapsto\big(\gamma_i(x_i)\big)_{i\in\mathbb{N}}
\] is smooth by \ref{productmorph} and the evaluation on the locally convex direct sum
\[
 \mathrm{Ev}_{(x_i)_{i\in\mathbb{N}}}:=(\mathrm{Ev}_{i,x_i})_{i\in\mathbb{N}}\colon \bigoplus_{i\in\mathbb{N}}C^{\infty}(\overline{V}_i,\mathfrak{g})\rightarrow \bigoplus_{i\in\mathbb{N}}\mathfrak{g},\quad (\eta_i)_{i\in\mathbb{N}}\mapsto\big(\eta_i(x_i)\big)_{i\in\mathbb{N}}
\]
is linear and continuous by \ref{7.1}. Using the chart $d(\varphi_i)_\ast\big|_{T_1C^\infty(\overline{V}_i,G)}$ to identify $T_1C^\infty(\overline{V}_i,G)$ with $C^\infty(\overline{V}_i,\mathfrak{g})$ and $d\varphi_i\big|_{\mathfrak{g}}$ to identify $G$ with $\mathfrak{g}$, as well as the fact that $d\varphi_i\big|_{\mathfrak{g}}=\mathrm{id}_{\mathfrak{g}}$, a straightforward calculation shows $L(\mathrm{ev}_{i,x_i})=T_1(\mathrm{ev}_{i,x_i})=\mathrm{Ev}_{i,x_i}$. As $L((\mathrm{ev}_{i,x_i})$ is a Lie algebra homomorphism, we get $[\eta,\eta'](x)=[\eta(x),\eta'(x)]$ for all $\eta,\eta'\in C^\infty(\overline{V}_i,\mathfrak{g})$ (\cite[compare Section 3.2]{GloQU}). With this in mind and the above, we see that $L(\mathrm{ev}_{(x_i)_{i\in\mathbb{N}}}\big|_{G_{\overline{\mathcal{V}}}(\mathcal{P})})=\mathrm{Ev}_{(x_i)_{i\in\mathbb{N}}}\big|_{\mathfrak{g}_{\overline{\mathcal{V}}}(\mathcal{P})}$ and the assertion follows.
\end{proof}

The next corollary is only an observation. Since it justifies the name ``$\text{SUB}_\oplus$" we will spell it out explicitly. 
\begin{folg}\label{1.9}
If $\mathcal{P}$ is a smooth principal $G$-bundle with finite-dimensional $\sigma$-compact base $M$, having the property $\text{SUB}_\oplus$ with respect to the smooth closed trivializing system $\overline{\mathcal{V}}$, then $G_{\overline{\mathcal{V}}}(\mathcal{P})$ is a closed subgroup of $\prod^\ast_{i\in\mathbb{N}}C^\infty(\overline{V}_i,G)$ and it is a Lie group modelled on $\mathfrak{g}_{\overline{\mathcal{V}}}(\mathcal{P})$.
\end{folg}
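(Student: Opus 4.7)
The statement is essentially a consolidation of what has already been established, so the proof is a short assembly rather than a new argument. The plan is to recall the two ingredients separately and then check that they fit together to give a closed Lie subgroup in the required sense.

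First, I would address the closedness. By Remark \ref{1.6}, for every pair $i,j\in\mathbb{N}$ and every $m\in\overline{V}_i\cap\overline{V}_j$, the point evaluation
\[
\mathrm{ev}_{i,j,m}\colon\sideset{}{^\ast}\prod_{i\in\mathbb{N}}C^{\infty}(\overline{V}_i,G)\to G\times G,\quad(\gamma_i)_{i\in\mathbb{N}}\mapsto\bigl(\gamma_i(m),\,k_{ij}(m)\gamma_j(m)k_{ji}(m)\bigr)
\]
is continuous, being a composition of the smooth projections onto the $i$-th and $j$-th factors (continuous by Proposition \ref{directsum}), the continuous point evaluations on $C^{\infty}(\overline{V}_i,G)$ and $C^{\infty}(\overline{V}_j,G)$, and fixed group operations in $G$. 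Hence the equaliser condition defining $G_{\overline{\mathcal{V}}}(\mathcal{P})$ cuts out a closed subset, and a straightforward check using the pointwise operations shows that $G_{\overline{\mathcal{V}}}(\mathcal{P})$ is an abstract subgroup. This handles the topological subgroup statement.

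Next, I would invoke the property $\text{SUB}_\oplus$. By assumption there is a family of convex centred charts $\varphi_i\colon W_i\to W_i'$ of $G$ with $d\varphi_i|_{\mathfrak{g}}=\mathrm{id}_\mathfrak{g}$ such that the induced map $\varphi_\ast$ provides a bijection between $G_{\overline{\mathcal{V}}}(\mathcal{P})\cap\prod^\ast_{i\in\mathbb{N}}C^{\infty}(\overline{V}_i,W_i)$ and $\mathfrak{g}_{\overline{\mathcal{V}}}(\mathcal{P})\cap\bigoplus_{i\in\mathbb{N}}C^{\infty}(\overline{V}_i,W_i')$. Now by Proposition \ref{directsum}, the map $(\gamma_i)_{i\in\mathbb{N}}\mapsto(\varphi_i\circ\gamma_i)_{i\in\mathbb{N}}$ is a chart of $\prod^\ast_{i\in\mathbb{N}}C^{\infty}(\overline{V}_i,G)$ around the identity, and Lemma \ref{1.4b} shows that $\mathfrak{g}_{\overline{\mathcal{V}}}(\mathcal{P})$ is a closed (in particular sequentially closed) vector subspace of $\bigoplus_{i\in\mathbb{N}}C^{\infty}(\overline{V}_i,\mathfrak{g})$.

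Having produced the submanifold chart, Lemma \ref{3.5.19} applies directly and yields that $G_{\overline{\mathcal{V}}}(\mathcal{P})$ is a Lie subgroup of $\prod^\ast_{i\in\mathbb{N}}C^{\infty}(\overline{V}_i,G)$ modelled on $\mathfrak{g}_{\overline{\mathcal{V}}}(\mathcal{P})$; this is precisely the content of Lemma \ref{1.8}, and the resulting Lie group structure is automatically compatible with the induced topology from the weak product. Combining the two steps gives the claim. The only point worth flagging as a potential subtlety is that the chart produced from $\text{SUB}_\oplus$ must genuinely be a restriction of an ambient chart of the weak product; but this is guaranteed because the chart on $\prod^\ast_{i\in\mathbb{N}}C^{\infty}(\overline{V}_i,G)$ used in Proposition \ref{directsum} is constructed from exactly the component charts $\varphi_i$ appearing in the definition of $\text{SUB}_\oplus$, so the submanifold condition of Lemma \ref{3.5.19} is satisfied on the nose.
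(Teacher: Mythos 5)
Your proposal is correct and follows essentially the same route as the paper, which presents this corollary as a mere observation: closedness comes from the continuity of point evaluations exactly as in Remark \ref{1.6} (mirroring Lemma \ref{1.4b}), and the Lie group structure modelled on $\mathfrak{g}_{\overline{\mathcal{V}}}(\mathcal{P})$ is the content of Lemma \ref{1.8}, obtained by feeding the $\text{SUB}_\oplus$ chart into Lemma \ref{3.5.19}. Your closing remark that the $\text{SUB}_\oplus$ chart is a restriction of the ambient chart from Proposition \ref{directsum} is exactly the point that makes the submanifold condition hold.
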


That different choices of charts lead to isomorphic Lie group structures follows directly from \ref{3.5.19}. However, compared to the case of compact $M$ it is surprisingly cumbersome to show that different choices of trivializing systems also lead to isomorphic Lie group structures on $\mathrm{Gau}_c(\mathcal{P})$ (compare \cite[Proposition 1.10]{Wockel3}). The next two lemmas are needed to prove this in \ref{1.10}.

\begin{lem}\label{partition}
Let $(F_i)_{i\in I}$ be a family of locally convex spaces and $(I_j)_{j\in J}$ be a partition of $I$ into nonempty finite sets, i.e., $\bigcup_{j\in J}I_j=I$, such that each $I_j$ is finite and nonempty and $I_j\cap I_{j'}=\varnothing$ for $j\neq j'$. Then $\bigoplus_{i\in I}F_i\cong\bigoplus_{j\in J}\bigoplus_{i\in I_j}F_i$.
\end{lem}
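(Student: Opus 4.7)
The plan is to write down the obvious reindexing bijection
\[
\Phi\colon \bigoplus_{i\in I}F_i\to\bigoplus_{j\in J}\bigoplus_{i\in I_j}F_i,\qquad (x_i)_{i\in I}\mapsto\bigl((x_i)_{i\in I_j}\bigr)_{j\in J},
\]
and to verify that it and its inverse are continuous; since it is clearly linear, this gives the claimed isomorphism of topological vector spaces. First I would check well-definedness: if $(x_i)_{i\in I}$ has $x_i=0$ for all but finitely many $i$, then the finiteness of each $I_j$ forces $(x_i)_{i\in I_j}\in\prod_{i\in I_j}F_i=\bigoplus_{i\in I_j}F_i$, and only finitely many of those tuples can be nonzero (there are only finitely many nonzero $x_i$ in total), so the image really lies in the outer direct sum. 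The inverse is given analogously by forgetting the grouping, and set-theoretic bijectivity is immediate.

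For continuity I would invoke Remark \ref{directsumstetig}, which reduces the question to the behaviour of $\Phi$ on each summand $F_i$. The composition $\Phi\circ\lambda_i$ factors as
\[
F_i\hookrightarrow\bigoplus_{i'\in I_{j(i)}}F_{i'}\hookrightarrow\bigoplus_{j\in J}\bigoplus_{i'\in I_j}F_{i'},
\]
where $j(i)\in J$ denotes the unique index with $i\in I_{j(i)}$. The first inclusion is continuous because $I_{j(i)}$ is finite, so the direct sum $\bigoplus_{i'\in I_{j(i)}}F_{i'}$ agrees with the finite product and the inclusion of a factor is continuous; the second is a canonical summand inclusion, continuous by Lemma \ref{boxtop}. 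Hence each $\Phi\circ\lambda_i$ is continuous, and Remark \ref{directsumstetig} gives continuity of $\Phi$.

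For $\Phi^{-1}$ I would apply the same remark twice: it suffices to check that the restriction of $\Phi^{-1}$ to every summand $\bigoplus_{i\in I_j}F_i$ is continuous, and then further that on each such finite direct sum the restriction to every $F_i$ (for $i\in I_j$) is continuous. The latter maps are simply the inclusions $F_i\hookrightarrow\bigoplus_{i'\in I}F_{i'}$, continuous by definition of the direct sum topology. Thus $\Phi^{-1}$ is continuous as well, and $\Phi$ is the desired topological isomorphism.

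No step is really an obstacle here; the only subtlety is to remember that the topology on each inner sum $\bigoplus_{i\in I_j}F_i$ is precisely the (finite) product topology, so the universal property of direct sums in Remark \ref{directsumstetig} applies in both directions and no appeal to the finer Lemma \ref{7.1} is required.
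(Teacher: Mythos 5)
Your proposal is correct and follows essentially the same route as the paper: both reduce continuity of the reindexing map and its inverse to the summands via the universal property of the locally convex direct sum, using the finiteness of each $I_j$ (so that the inner sums carry the product topology) to handle the intermediate inclusions. The only cosmetic difference is that you apply the summand reduction a second time on the inner finite sums, where the paper treats each inclusion $\bigoplus_{i\in I_j}F_i\hookrightarrow\bigoplus_{i\in I}F_i$ in one step; the content is identical.
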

\begin{proof}
For every $i\in I$ there exists exactly one $j(i)\in J$ with $i\in I_{j(i)}$. The map 
\[
\bigoplus_{i\in I}F_i\rightarrow\bigoplus_{j\in J}\bigoplus_{i\in I_j}F_i,\quad(\eta_i)_{i\in I}\mapsto\Big(\bigoplus_{i\in I_j}\eta_i\Big)_{j\in J}
\]
is linear and continuous since its restriction to $F_i$ is the inclusion $F_i\hookrightarrow\bigoplus_{k\in I_{j(i)}}F_k$ into a finite direct sum and hence linear and continuous. 
On the other hand, the restrictions to the summands of the inverse map are given by 
\[
\bigoplus_{i\in I_j}F_i\hookrightarrow\prod_{i\in I_j}F_i\subseteq\bigoplus_{i\in I}F_i
\]
because the sum on the right-hand side induces the product topology on finite products. Thus the inverse is also continuous and linear.
\end{proof}

\begin{lem}\label{lem1.10}
Let $(E_n)_{n\in\mathbb{N}}$ and $(F_n)_{n\in\mathbb{N}}$ be families of locally convex spaces, $U_n\subseteq E_n$ open zero neighbourhoods and $r\in\mathbb{N}\cup\{\infty\}$. If there exist finite subsets $I_n\subseteq\mathbb{N}$ such that every $n\in\mathbb{N}$ is only contained in finitely many $I_k$, and there exist functions $f_n\colon\bigoplus_{i\in I_n}U_i\rightarrow F_n$ of class $C^r$ with $f_n(0)=0$, then
\[
f\colon\bigoplus_{n\in\mathbb{N}}U_n\rightarrow \bigoplus_{n\in\mathbb{N}}F_n,\quad(x_n)_{n\in\mathbb{N}}\mapsto\bigg(f_n(\big(x_i)_{i\in I_n}\big)\bigg)_{n\in\mathbb{N}}
\]
is a map of class $C^r$.
\end{lem}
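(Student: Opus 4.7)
The plan is to reduce the statement to Lemma \ref{7.1} by factoring $f$ as $g \circ \delta$, where $\delta$ is a continuous linear "duplication" map that copies coordinates into the blocks indexed by the $I_n$, and $g$ is the direct sum of the $f_n$'s.

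Concretely, I would first introduce the map
\[
\delta\colon\bigoplus_{n\in\mathbb{N}}E_n\rightarrow\bigoplus_{n\in\mathbb{N}}\bigoplus_{i\in I_n}E_i,\quad (x_n)_{n\in\mathbb{N}}\mapsto\bigl((x_i)_{i\in I_n}\bigr)_{n\in\mathbb{N}}.
\]
The first thing to check is that $\delta$ is well-defined: if $(x_n)_{n\in\mathbb{N}}$ has support in a finite set $S\subseteq\mathbb{N}$, then the $n$-th component $(x_i)_{i\in I_n}$ of $\delta((x_n)_{n\in\mathbb{N}})$ can be nonzero only when $I_n\cap S\neq\varnothing$. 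Since by hypothesis each $i\in S$ is contained in only finitely many $I_k$, the set $\{n\in\mathbb{N}:I_n\cap S\neq\varnothing\}=\bigcup_{i\in S}\{n:i\in I_n\}$ is a finite union of finite sets, hence finite. Thus $\delta$ maps into the direct sum. Note also that $\delta$ restricts to a map $\bigoplus_{n\in\mathbb{N}}U_n\to\bigoplus_{n\in\mathbb{N}}\bigoplus_{i\in I_n}U_i$, since $x_i\in U_i$ for each $i$.

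Second, I would verify that $\delta$ is continuous and linear. Linearity is clear. For continuity, by Remark \ref{directsumstetig} it suffices to show that $\delta|_{E_k}$ is continuous for every $k\in\mathbb{N}$. For fixed $k$, the image of $\delta|_{E_k}$ lies in the components $n$ with $k\in I_n$, of which there are only finitely many by hypothesis; on this finite sub-product the direct sum topology coincides with the product topology, so continuity reduces to continuity of each component, which is obvious. As a continuous linear map between locally convex spaces, $\delta$ is smooth, hence in particular of class $C^r$.

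Third, I would invoke Lemma \ref{7.1} for the family $f_n\colon\bigoplus_{i\in I_n}U_i\to F_n$ (each $I_n$ being finite, $\bigoplus_{i\in I_n}E_i$ is just a finite product) with $f_n(0)=0$. This yields a $C^r$-map
\[
g:=\bigoplus_{n\in\mathbb{N}}f_n\colon\bigoplus_{n\in\mathbb{N}}\bigoplus_{i\in I_n}U_i\rightarrow\bigoplus_{n\in\mathbb{N}}F_n.
\]
A direct computation shows $f=g\circ\delta$, so $f$ is of class $C^r$ as a composition of $C^r$-maps. The only delicate point is the bookkeeping establishing the well-definedness and continuity of $\delta$, which is precisely where the local-finiteness condition on the family $(I_n)_{n\in\mathbb{N}}$ is used; everything else is a direct application of Lemma \ref{7.1}.
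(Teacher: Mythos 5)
Your proof is correct, but it takes a genuinely different route from the paper. The paper proves the lemma directly by induction on $r$: for $r=0$ it checks continuity at a point by pulling back a box neighbourhood $\bigoplus_{n}V_n$ and, for each fixed $n$, intersecting the finitely many constraints coming from those $f_k$ with $n\in I_k$; for $r\geq 1$ it computes $df(x,y)=\bigl(df_n((x_k,y_k)_{k\in I_n})\bigr)_{n\in\mathbb{N}}$, observes that $df$ has exactly the same shape with the same index sets $I_k$ after identifying $\bigoplus_{n}(U_n\times E_n)$ with the appropriate sum, and closes the induction. Your factorization $f=\bigl(\bigoplus_{n}f_n\bigr)\circ\delta$ instead pushes all of the analysis into the cited Lemma \ref{7.1} and isolates the combinatorial content of the hypothesis in the single claim that the duplication map $\delta$ is a well-defined continuous linear (hence smooth) map; your verification of that claim is correct, and the finiteness of the sets $\{k\colon n\in I_k\}$ enters exactly where it should, both for well-definedness and for the reduction of continuity to finite sub-products. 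This is not circular: although the paper remarks after its proof that Lemma \ref{lem1.10} entails Lemma \ref{7.1}, the latter is quoted from \cite{GloMM} with its own proof, so you are free to use it. What your argument buys is brevity and a clean separation of the combinatorics from the calculus; what the paper's argument buys is self-containedness and the fact that it exhibits \ref{lem1.10} as the primary statement, with \ref{7.1} recovered as the special case $I_n=\{n\}$.
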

\begin{proof}
Let $\lambda_n\colon E_n\rightarrow\bigoplus_{k\in\mathbb{N}}E_k$ be the inclusion. We may assume $r<\infty$ and show the result by induction. Let $r=0$,  $x=(x_n)_{n\in\mathbb{N}}\in\bigoplus_{n\in\mathbb{N}}U_n$ and $V\subseteq\bigoplus_{n\in\mathbb{N}}F_n$ be an open neighbourhood of $f(x)$. Since $\bigoplus_{n\in\mathbb{N}}F_n$ carries the box-topology we can assume w.l.o.g. that $V=\bigoplus_{n\in\mathbb{N}}V_n$ with open neighbourhoods $V_n\subseteq F_n$ of $f_n((x_i)_{i\in I_n})$. By assumption, $f_k$ is continuous, whence there exist open neighbourhoods $W_{n,k}\subseteq U_n$ of $x_n$ for $n\in I_k$ such that $f_k(\bigoplus_{n\in I_k}W_{n,k})\subseteq V_k$. Since each $n\in\mathbb{N}$ is only contained in finitely many $I_k$ the sets $J_n:=\{k\in\mathbb{N}\colon n\in I_k\}$ are finite and thus $W'_n:=\bigcap_{k\in J_n}W_{n,k}$ is an open neighbourhood of $x_n$ in $U_n$ if $J_n\neq\varnothing$; if $J_n=\varnothing$, we set $W'_n:=U_n$. This implies $\bigoplus_{n\in\mathbb{N}}f_n^{-1}(V_n)\supseteq\bigoplus_{n\in\mathbb{N}}W'_n$ which is open, hence $f$ is continuous in $x$, hence $f$ is continuous.\\
Now let $r\geq 1$. Let $x\in\bigoplus_{n\in\mathbb{N}}U_n$, $y\in\bigoplus_{n\in\mathbb{N}}E_n$ and $N\in\mathbb{N}$ such that $x_l=y_n=0$ for all $l\in I_n$ if $n>N$. This implies
\[
\frac{f(x+ty)-f(x)}{t}\xrightarrow{t\rightarrow 0} \bigg(\mathrm{d}f_n\big((x_k,y_k)_{k\in I_n}\big)\bigg)_{n=1,\ldots,N}\cong\bigg( d f_n\big((x_k,y_k)_{k\in I_n}\big)\bigg)_{n\in\mathbb{N}}.
\]
Hence $ d f(x,y)$ exists and $ d f(x,y)=( d f_n(x_k,y_k)_{k\in I_n})_{n\in\mathbb{N}}$. Using the same sets $I_k$ and the natural isomorphism $\bigoplus_{n\in\mathbb{N}}E_n\times\bigoplus_{n\in\mathbb{N}}E_n\cong\bigoplus_{n\in\mathbb{N}}(E_n\times E_n)$, we see that
\[
 d f\colon\bigoplus_{n\in\mathbb{N}}(U_n\times E_n)\rightarrow\bigoplus_{n\in\mathbb{N}}F_n,\quad(x_n,y_n)_{n\in\mathbb{N}}\mapsto\big( d f_n(x_k,y_k)_{k\in I_n}\big)_{n\in\mathbb{N}}
\]
is of class $C^{r-1}$ and hence in particular continuous. It follows by induction that $f$ is of class $C^r$.
\end{proof}
The above lemma obviously entails lemma \ref{7.1}.

\begin{prop}\label{1.12}
Let $\mathcal{P}$ be a smooth $G$-bundle with $\sigma$-compact finite-dimensional base $M$ and  $\overline{\mathcal{U}}=(\overline{U}_i,\sigma_i)_{i\in\mathbb{N}}$ be a compact locally finite trivializing system of $\mathcal{P}$ that has the property $\text{SUB}_\oplus$. 
Then any refinement $\overline{\mathcal{V}}=(\overline{V}_i,\tau_i)_{i\in\mathbb{N}}$ of $\overline{\mathcal{U}}$ has the property $\text{SUB}_\oplus$. In this case we have $G_{\overline{\mathcal{U}}}(\mathcal{P})\cong G_{\overline{\mathcal{V}}}(\mathcal{P})$ as Lie groups.
\end{prop}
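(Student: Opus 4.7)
The plan is to realize the desired isomorphism as the natural restriction
\[
R \colon G_{\overline{\mathcal{U}}}(\mathcal{P}) \to G_{\overline{\mathcal{V}}}(\mathcal{P}), \qquad (\gamma_i)_i \mapsto (\gamma_{i(j)}|_{\overline{V}_j})_j.
\]
Since $\tau_j = \sigma_{i(j)}|_{\overline{V}_j}$, the transition functions of $\overline{\mathcal{V}}$ satisfy $\tilde k_{jl}(m) = k_{i(j)i(l)}(m)$ on $\overline{V}_j \cap \overline{V}_l \subseteq \overline{U}_{i(j)} \cap \overline{U}_{i(l)}$, so $R$ is a well-defined group homomorphism. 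Composing the abstract group isomorphisms from \ref{1.6} shows that $R = \Psi_{\overline{\mathcal{V}}}^{-1} \circ \Psi_{\overline{\mathcal{U}}}$ is a bijection, where the simplification relies on $k_{\sigma_{i(j)}}(\sigma_{i(j)}(m)) = 1$. The analogous linear restriction $R_{\mathrm{alg}} \colon \mathfrak{g}_{\overline{\mathcal{U}}}(\mathcal{P}) \to \mathfrak{g}_{\overline{\mathcal{V}}}(\mathcal{P})$, $(\eta_i)_i \mapsto (\eta_{i(j)}|_{\overline{V}_j})_j$, equals the composition of the topological Lie algebra isomorphisms from \ref{1.4} and is therefore a topological isomorphism of locally convex spaces.

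To show $\overline{\mathcal{V}}$ has property $\text{SUB}_\oplus$, I would use the chart family $\psi_j := \varphi_{i(j)}|_{\widetilde W_{i(j)}}$ with $\widetilde W_{i(j)} := \varphi_{i(j)}^{-1}(C_j)$ for suitable convex centered open neighborhoods $C_j \subseteq W'_{i(j)}$ of $0$, chosen small enough that: (A)~every $(\gamma_j)_j \in G_{\overline{\mathcal{V}}}(\mathcal{P}) \cap \sideset{}{^\ast}\prod_j C^\infty(\overline{V}_j, \widetilde W_{i(j)})$ has $R^{-1}((\gamma_j)_j) \in \sideset{}{^\ast}\prod_i C^\infty(\overline{U}_i, W_i)$, and (B)~the analogous statement holds on the Lie algebra side via $R_{\mathrm{alg}}^{-1}$. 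These conditions reduce to finitely many uniform constraints per index $i$, because $R^{-1}((\gamma_j))_i(m) = k_{i,i(j)}(m)\,\gamma_j(m)\,k_{i(j),i}(m)$ on $\overline{V}_j \cap \overline{U}_i$ only involves the finitely many $j$ with $\overline{V}_j \cap \overline{U}_i \neq \varnothing$ (by local finiteness of $\overline{\mathcal{V}}$ and compactness of $\overline{U}_i$), and continuity of the $k_{i,i(j)}$ on the compact sets $\overline{V}_j \cap \overline{U}_i$ lets us shrink $C_j$ uniformly.

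With such $C_j$ fixed, one checks $R^{-1}(\mathrm{dom}(\psi_\ast)) \subseteq \mathrm{dom}(\varphi_\ast)$ by (A), $\varphi_\ast(R^{-1}(\mathrm{dom}(\psi_\ast))) = R_{\mathrm{alg}}^{-1}(\mathrm{ran}(\psi_\ast))$ by (B), and that $\psi_\ast \circ R = R_{\mathrm{alg}} \circ \varphi_\ast$ on $R^{-1}(\mathrm{dom}(\psi_\ast))$. Hence $\psi_\ast = R_{\mathrm{alg}} \circ \varphi_\ast \circ R^{-1}$ is a composition of three bijections onto $\mathrm{ran}(\psi_\ast)$, proving $\text{SUB}_\oplus$ for $\overline{\mathcal{V}}$; by \ref{1.8}, $G_{\overline{\mathcal{V}}}(\mathcal{P})$ is then a Lie group modelled on $\mathfrak{g}_{\overline{\mathcal{V}}}(\mathcal{P})$. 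Reading $R$ in the charts $\varphi_\ast$ and $\psi_\ast$ yields $\psi_\ast \circ R \circ \varphi_\ast^{-1} = R_{\mathrm{alg}}$ on an open subset, so $R$ is smooth near the identity and hence smooth globally by the homomorphism property; the identical argument applied to the continuous linear map $R_{\mathrm{alg}}^{-1}$ shows that $R^{-1}$ is smooth, completing the Lie group isomorphism. The principal obstacle is the chart shrinking in the second step: unlike in the compact-base case of \cite{Wockel3}, a family $(\gamma_j)_j$ with components in $W_{i(j)}$ need not extend to one with components in $W_i$, so aligning the chart domains via (A) and (B) is precisely what makes the diagrammatic bijectivity argument go through.
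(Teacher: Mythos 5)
Your proof is correct, and its core step -- transferring $\text{SUB}_\oplus$ to the refinement by shrinking the charts $\varphi_{i(j)}$ to $\widetilde{W}_{i(j)}$ so that conjugation by the transition functions (resp.\ the adjoint action) carries them into $W_i$ (resp.\ $W_i'$), with local finiteness and compactness reducing this to finitely many uniform constraints per index -- is exactly the paper's argument (its conditions (\ref{condition3}) and (\ref{condition4}) are your (A) and (B)). Where you genuinely diverge is in how you obtain bijectivity of the restriction maps and smoothness of the inverse. The paper constructs $\Phi_G^{-1}$ and $\Phi_{\mathfrak{g}}^{-1}$ explicitly, component by component, by gluing conjugated restrictions over a finite cover of each $\overline{U}_j$, and then proves smoothness of $\Phi_G^{-1}$ by re-expressing that gluing with a partition of unity as a map on the whole direct sum and invoking \ref{lem1.10}. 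You instead factor both restriction maps through the global objects: $R=\Psi_{\overline{\mathcal{V}}}^{-1}\circ\Psi_{\overline{\mathcal{U}}}$ with the isomorphisms of \ref{1.6}, and $R_{\mathrm{alg}}$ as the composite of the topological isomorphisms of \ref{1.4}, so bijectivity and -- crucially -- continuity of $R_{\mathrm{alg}}^{-1}$ come for free; since the local representation of $R^{-1}$ is the continuous linear map $R_{\mathrm{alg}}^{-1}$, its smoothness requires no further analytic work. This is legitimate (there is no circularity: \ref{1.4} and \ref{1.6} precede \ref{1.12} and do not use it) and shorter, essentially because the partition-of-unity gluing you avoid here was already carried out once in the proof of \ref{1.4} to establish continuity of $\Psi_1^{-1}=\Psi_3\circ\Psi_2$. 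What the paper's self-contained construction buys is an explicit formula for the inverse at the group level. Two small points you leave implicit both rest on the finiteness of the fibres $I_i=\{j\colon i(j)=i\}$ (local finiteness of $\overline{\mathcal{V}}$ plus compactness of $\overline{U}_i$): that $R$ actually lands in the weak direct product, and that $R^{-1}(\mathrm{dom}\,\psi_\ast)$ is an open identity neighbourhood of $G_{\overline{\mathcal{U}}}(\mathcal{P})$, which you need before you can "read $R$ in the charts".
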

\begin{proof}
Note that $\overline{\mathcal{V}}$ being a refinement of $\overline{\mathcal{U}}$ implies that there exists a map $j\colon \mathbb{N}\rightarrow\mathbb{N}$ such that $\overline{V}_i\subseteq\overline{U}_{j(i)}$ and $\tau_{i}=\sigma_{j(i)}\big|_{\overline{V}_i}$ for all $i\in\mathbb{N}$. Note that this implies $k_{ii'}(x)=k_{j(i)j(i')}(x)$. For $j\in\mathbb{N}$ we set $I_j:=\{i\in\mathbb{N}\colon j(i)=j\}$.
First, we show that the mappings
\[
\Phi_G\colon G_{\overline{\mathcal{U}}}(\mathcal{P})\rightarrow G_{\overline{\mathcal{V}}}(\mathcal{P}),\quad(\gamma_i)\mapsto(\gamma_{j(i)}\big|_{\overline{U}_{j(i)}})\text{ and}\]
\[
\Phi_{\mathfrak{g}}\colon \mathfrak{g}_{\overline{\mathcal{U}}}(\mathcal{P})\rightarrow \mathfrak{g}_{\overline{\mathcal{V}}}(\mathcal{P}),\quad(\eta_i)\mapsto(\eta_{j(i)}\big|_{\overline{V}_{j(i)}})
\]
are bijections and then we use this fact to transfer the property $\text{SUB}_\oplus$ from one trivializing system to the other. For this we need construct the respective inverse functions. 
Starting with $\Phi_G^{-1}$, we construct each component $({\Phi_G})^{-1}_j\colon G_{\overline{\mathcal{V}}}(\mathcal{P})\rightarrow C^{\infty}(\overline{U}_j,G)$ separately. The conditions that $(({\Phi_G})^{-1}_j)_{j\in\mathbb{N}}$ is inverse to $\Phi_G$ are then given by
\begin{align}
({\Phi_G})^{-1}_j\big((\gamma_i)_{i\in\mathbb{N}}\big)\big|_{\overline{V}_i}&=\gamma_i\quad\text{ for all }i\text{ with }j=j(i)\quad\text{and}\label{condition1}\\
({\Phi_G})^{-1}_j\big((\gamma_i)_{i\in\mathbb{N}})(x)&=k_{jl}(x)({\Phi_G})^{-1}_l\big((\gamma_i)_{i\in\mathbb{N}})(x)k_{lj}(x)\quad\text{for}\quad x\in\overline{U}_j\cap\overline{U}_k\cap\overline{U}_l.\label{condition2}
\end{align}
Let $x\in\overline{U}_j\setminus\bigcup_{i\in I_j}V_i$. Then $x\in V_{i_x}$ for some $i_x\in\mathbb{N}$ and thus there exists a relativly open neighbourhood $U_x\subseteq \overline{U}_j$ of $x$ such that $\overline{U}_x$ is a manifold with corners, contained in $\overline{U}_j\cap V_{i_x}$. Now, finitely many $U_{x_1},\cdots,U_{x_l}$ cover $\overline{U}_j\setminus\bigcup_{i\in I_j}V_i$ and we set
\[
({\Phi_G}^{-1})_j\big((\gamma_i)_{i\in\mathbb{N}}\big):=\mathrm{glue}\left((\gamma_i)_{i\in I_j},\left((k_{ji_{x_k}}\cdot\gamma_{i_{x_k}}\cdot k_{i_{x_k}j})\big|_{U_{x_k}}\right)_{k=1,\ldots,l}\right).
\]
Note that because $j(i)=j$ implies $i\in I_j$ this map satisfies (\ref{condition1}). Condition (\ref{condition2}) is satisfied because the transition functions arise as restrictions and we have $k_{ij}(x)=k_{il}(x)k_{lj}(x)$ for $x\in\overline{U}_i\cap\overline{U}_l\cap\overline{U}_j$.\\
The construction of $\Phi_{\mathfrak{g}}^{-1}$ can be done analogously by defining
\[
({\Phi_{\mathfrak{g}}})^{-1}_j\big((\eta_i)_{i\in\mathbb{N}}\big):=\mathrm{glue}\left((\eta_i)_{i\in I_j},\left(\mathrm{Ad}(k_{ji_x}).\eta_{i_{x_k}}\big|_{U_{x_k}}\right)_{k=1,\ldots,l}\right).
\]
With this definition it is apparent that $\Phi_G$ (resp. $\Phi_{\mathfrak{g}}$) is not only bijective but an isomorphism of groups (resp. vector spaces). Now, we assume $\overline{\mathcal{U}}$ has the property $\text{SUB}_\oplus$ with respect to the family of centered charts $\varphi:=(\varphi_i\colon W_i\rightarrow W'_i)_{i\in\mathbb{N}}$. For every $i\in\mathbb{N}$ we choose a unity neighbourhood $\widetilde{W}_i$ such that for all $j\in\mathbb{N}$ with $\overline{V}_i\cap U_j\neq\varnothing$ the conditions
\begin{align}
 k_{ji}(x)\cdot\widetilde{W}_i\cdot k_{ij}(x)\subseteq W_i\quad&\text{and}\label{condition3}\\
 \mathrm{Ad}\big(k_{ji}(x)\big).\varphi_i(\widetilde{W}_i)\subseteq W'_i\quad&\text{for all}\quad x\in\overline{V}_i\cap \overline{U}_j\label{condition4}
\end{align}
hold. This is possible since only finitely many sets $U_j$ intersect $\overline{V}_i$ and $\overline{V}_i\cap \overline{U}_j$ is compact. Consider $(\gamma_i)_{i\in\mathbb{N}}\in G_{\overline{\mathcal{V}}}(\mathcal{P})\cap\sideset{}{^\ast}\prod_{i\in\mathbb{N}}C^{\infty}(\overline{V}_i,\widetilde{W}_i)$. By (\ref{condition3}) we have that $(\Phi_G)^{-1}_j((\gamma_{i\in\mathbb{N}}))\in C^{\infty}(\overline{U}_j,W_j)$ for all $j\in\mathbb{N}$. 
Hence, $\varphi_\ast(\Phi_G^{-1}((\gamma_i)_{i\in\mathbb{N}}))$ is defined and we get 
\[
\Phi_{\mathfrak{g}}\left(\varphi_\ast\left(\Phi_G^{-1}((\gamma_i)_{i\in\mathbb{N}})\right)\right)=\varphi_\ast\Big((\gamma_i)_{i\in\mathbb{N}}\Big).
\]
This shows that $\Phi_{\mathfrak{g}}^{-1}$ is the local representation of $\Phi_G^{-1}$.
We set $\widetilde{W}'_i:=\varphi_i(\widetilde{W}_i)$. Starting with $(\eta_i)_{i\in\mathbb{N}}\in\mathfrak{g}_{\overline{\mathcal{V}}}(\mathcal{P})\cap\bigoplus_{i\in\mathbb{N}}C^{\infty}(\overline{U}_i,\widetilde{W}'_i)$ and using condition (\ref{condition4}) for an analogous argument we see that
\[
 \varphi_\ast\colon G_{\overline{\mathcal{V}}}(\mathcal{P})\cap\sideset{}{^\ast}\prod_{i\in\mathbb{N}}C^{\infty}(\overline{V}_i,\widetilde{W}_i)\rightarrow\mathfrak{g}_{\overline{\mathcal{V}}}(\mathcal{P})\cap\bigoplus_{i\in\mathbb{N}}C^{\infty}(\overline{U}_i,\widetilde{W}'_i)
\]
 is indeed bijective as needed. Thus, $\overline{\mathcal{V}}$ has the property $\text{SUB}_\oplus$ with respect to the family of charts $(\varphi_{j(i)}\big|^{\widetilde{W}'_{j(i)}}_{\widetilde{W}_{j(i)}})_{i\in\mathbb{N}}$.\\
Next, we show that $\Phi_G$ is in fact an isomorphism of Lie groups. Using the charts from above, the components of the local description of $\Phi_G$ extend to the maps
\[
C^{\infty}(\overline{U}_j,W'_j)\rightarrow \bigoplus_{i\in I_j}C^{\infty}(\overline{V}_i,W'_j),\quad
\eta_j\mapsto\bigoplus_{i\in I_j}\varphi_j\circ(\varphi_j^{-1}\circ\eta_j\big|_{\overline{V}_i}),
\]
which are smooth maps. Hence, the resulting map 
\[
\bigoplus_{i\in \mathbb{N}}C^{\infty}(\overline{U}_i,W'_j)\rightarrow \bigoplus_{j\in\mathbb{N}}\bigoplus_{i\in I_j}C^{\infty}(\overline{V}_i,W'_j)\cong\bigoplus_{i\in \mathbb{N}}C^{\infty}(\overline{V}_i,W'_j)
\]is smooth and with \ref{partition} we see that the local representation of $\Phi_G$ is smooth and thus $\Phi_G$ is a smooth morphism.\\
Recall the construction of the components $(\Phi_G^{-1})_j$ from above. It is given by
\[
({\Phi_G}^{-1})_j\big((\gamma_i)_{i\in\mathbb{N}}\big):=\mathrm{glue}\left((\gamma_i)_{i\in I_j},\left((k_{ji_{x_k}}\cdot\gamma_{i_{x_k}}\cdot k_{i_{x_k}j})\big|_{U_{x_k}}\right)_{k=1,\ldots,l}\right).
\] 
for certain $i_x\in\mathbb{N}$ and $U_{x_1},\ldots,U_{x_l}$ such that $\overline{U}_{x_n}$ is a manifold with corners contained in $\overline{U}_j\cap V_{i_{x_n}}$.
Unfortunately we cannot extend this map to a map on a neighbourhood of the whole weak direct product but since $\Phi^{-1}_G$ is a morphism it is enough to check smoothness on a neighbourhood and via charts we then only need to check the smoothness of a map between locally convex direct sums. There it will be possible to express gluing with a partition of unity (compare \ref{topemb}) and define a corresponding map on the whole sum that will be smooth by \ref{lem1.10}.\\
Let $I'_j:=I_j\cup\dot\bigcup_{k=1,\ldots,l}\{i_{x_k}\}$ and note that since every $V_i$ intersects only finitely many sets $\overline{U}_j$, the sets $I'_j$  fulfil the requirements of \ref{lem1.10}. Further, let $(f_n)_{n\in I'_j}$ be a smooth partition of unity subordinated to $(V_i)_{i\in I_j}$ combined with $(U_{x_k})_{k=1,\ldots,l}$ on the manifold $\overline{U}_j$.
Now, consider the map
\[
\Psi_j\colon\bigoplus_{i\in I'_j}C^\infty(\overline{V}_i,\widetilde{W}'_j)\rightarrow C^\infty(\overline{U}_j,\mathfrak{g}),
\]
\[
(\eta_i)_{i\in I'_j}\mapsto
\sum_{i\in I_j}f_i\cdot\big(\varphi_j\circ\varphi_j^{-1}(\eta_i)\big)^{\widetilde{}}+
\sum_{k=1}^lf_{i_{x_k}}\cdot\left(\varphi_j\circ\big(k_{ji_{x_k}}\cdot\varphi_j^{-1}\circ\eta_{i_{x_k}}\cdot k_{i_{x_k}j}\big)\big|_{U_{x_k}}\right)^{\widetilde{}},
\]
where the tilde indicates that maps are continued to $\overline{U}_j$ as in \ref{cont}. This map is smooth as a composition of smooth maps and it defines the components of a map $\Psi\colon\bigoplus_{i\in\mathbb{N}}C^\infty(\overline{V}_{i},\widetilde{W}'_{j(i)})\rightarrow\bigoplus_{j\in\mathbb{N}}C^\infty(\overline{U}_j,\mathfrak{g})$ on the whole sum mapping zero to zero, which is smooth by \ref{lem1.10}. Furthermore, the map $\Psi$ restricts to $\Phi_{\mathfrak{g}}^{-1}$ on the intersection with $\mathfrak{g}_{\overline{\mathcal{V}}}(\mathcal{P})$. Since the $\Phi_{\mathfrak{g}}^{-1}$ is the local representation of $\Phi_G^{-1}$ we have thus shown that $\Phi_G$ is an isomorphism of Lie groups.
\end{proof}

\begin{folg}\label{1.10}
Let $\mathcal{P}$ be a smooth $G$-bundle with $\sigma$-compact finite-dimensional base $M$. If we have two compact locally finite trivializing systems  $\overline{\mathcal{V}}=(\overline{V}_i,\sigma_i)_{i\in\mathbb{N}}$ and
$\overline{\mathcal{U}}=(\overline{U}_i,\tau_i)_{i\in\mathbb{N}}$ and $\mathcal{P}$ 
has the property $\text{SUB}_\oplus$ with respect to $\overline{\mathcal{U}}$ and $\overline{\mathcal{V}}$, then 
$G_{\overline{\mathcal{V}}}(\mathcal{P})$ is isomorphic to $G_{\overline{\mathcal{U}}}(\mathcal{P})$ as a Lie group.
\end{folg}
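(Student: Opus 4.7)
The plan is to reduce to Proposition \ref{1.12} by passing through a common refinement of the underlying open covers. A subtlety is that a refinement in the strict sense of the paper requires the sections to be literal restrictions of the parent sections, and $\overline{\mathcal{U}}$ and $\overline{\mathcal{V}}$ come with \emph{different} sections, so no single trivializing system can simultaneously be a refinement of both. This forces an intermediate comparison step.

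First, using Corollary \ref{common} applied to the underlying open covers $(U_i)$ and $(V_j)$, and the corner-producing technique of Remark \ref{umfkt} together with Remark \ref{refinementcorners}, I would construct a locally finite family $(\overline{W}_l)_{l\in\mathbb{N}}$ of compact submanifolds with corners of $M$ whose interiors cover $M$, such that for every $l$ one can choose indices $i(l), j(l)$ with $\overline{W}_l\subseteq U_{i(l)}\cap V_{j(l)}$. Restricting the parent sections yields two closed trivializing systems on the same underlying cover, namely $\overline{\mathcal{W}}_U := (\overline{W}_l, \sigma_{i(l)}|_{\overline{W}_l})$ and $\overline{\mathcal{W}}_V := (\overline{W}_l, \tau_{j(l)}|_{\overline{W}_l})$, which are refinements of $\overline{\mathcal{U}}$ and $\overline{\mathcal{V}}$ respectively. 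Proposition \ref{1.12} then immediately yields Lie group isomorphisms $G_{\overline{\mathcal{U}}}(\mathcal{P})\cong G_{\overline{\mathcal{W}}_U}(\mathcal{P})$ and $G_{\overline{\mathcal{V}}}(\mathcal{P})\cong G_{\overline{\mathcal{W}}_V}(\mathcal{P})$ and ensures that both $\overline{\mathcal{W}}_U$ and $\overline{\mathcal{W}}_V$ enjoy property $\text{SUB}_\oplus$.

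It then remains to produce a Lie group isomorphism $G_{\overline{\mathcal{W}}_V}(\mathcal{P})\to G_{\overline{\mathcal{W}}_U}(\mathcal{P})$ relating two systems sharing the same cover but carrying different sections. Since $G$ acts freely and transitively on the fibres, the two section choices are related by unique smooth transition functions $h_l\colon \overline{W}_l\to G$ with $\sigma_{i(l)}|_{\overline{W}_l} = \tau_{j(l)}|_{\overline{W}_l}\cdot h_l$. Via the identifications with $C^\infty_c(P,G)^G$ from Remark \ref{1.6} and the $G$-equivariance of its elements, the corresponding abstract group isomorphism turns out to be componentwise conjugation $(\gamma_l)_{l\in\mathbb{N}}\mapsto (h_l^{-1}\gamma_l h_l)_{l\in\mathbb{N}}$. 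Each component is an inner automorphism of the Lie group $C^\infty(\overline{W}_l,G)$, so by Corollary \ref{productmorph} the map extends to a smooth Lie group automorphism of the ambient weak direct product $\prod^\ast_{l\in\mathbb{N}} C^\infty(\overline{W}_l,G)$, with smooth inverse given by conjugation with $h_l^{-1}$. The submanifold chart structure produced by $\text{SUB}_\oplus$ (Lemma \ref{1.8}, Corollary \ref{1.9}) for both systems ensures that this ambient automorphism restricts to a Lie group isomorphism between $G_{\overline{\mathcal{W}}_V}(\mathcal{P})$ and $G_{\overline{\mathcal{W}}_U}(\mathcal{P})$. Composing the three isomorphisms yields the claim.

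The step I expect to be the main obstacle is the last one: even though the conjugation is visibly a smooth automorphism of the \emph{ambient} weak product, verifying that its restriction to the closed subgroup is smooth in the intrinsic Lie group structure modelled on $\mathfrak{g}_{\overline{\mathcal{W}}_U}(\mathcal{P})$ requires one to carefully combine the submanifold chart from $\text{SUB}_\oplus$ with the fact that the conjugation preserves the defining cocycle condition of $G_{\overline{\mathcal{W}}_U}(\mathcal{P})$ and $G_{\overline{\mathcal{W}}_V}(\mathcal{P})$. The remaining book-keeping (existence of the common corner-refinement, and the fact that the restricted-section systems really are refinements in the sense of the definition) is routine but must be set up cleanly so that Proposition \ref{1.12} can be quoted directly.
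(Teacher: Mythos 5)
Your proposal is correct and follows essentially the same route as the paper: reduce to a common locally finite refinement and invoke Proposition \ref{1.12} for the refinement step, then handle the same-cover/different-sections case by componentwise conjugation with the transition functions, whose smoothness on the ambient weak direct product follows from Corollary \ref{productmorph} and restricts to the closed subgroups. The only difference is the order in which the two steps are presented, which is immaterial.
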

\begin{proof}
Assume $\mathcal{P}$ has the property $\text{SUB}_\oplus$ with respect to $\overline{\mathcal{V}}$ and $\overline{\mathcal{U}}$. First, we note that if the covers underlying $\overline{\mathcal{V}}$ and $\overline{\mathcal{U}}$ are the same, but the sections differ by smooth functions $k_i\in C^{\infty}(\overline{V}_i,G)$, i.e., $\sigma_i=\tau_i\cdot k_i$, then this induces an isomorphism of Lie groups
\[
G_{\overline{\mathcal{V}}}(\mathcal{P})\rightarrow G_{\overline{\mathcal{U}}}(\mathcal{P}),\quad(\gamma_i)_{i\in\mathbb{N}}\mapsto(k_i\cdot\gamma_i\cdot k_i^{-1})_{i\in\mathbb{N}}.
\]
In fact, in this case $G_{\overline{\mathcal{V}}}(\mathcal{P})$ and $G_{\overline{\mathcal{U}}}(\mathcal{P})$ are closed Lie subgroups of the weak direct product $\sideset{}{^\ast}\prod_{i\in\mathbb{N}}C^\infty(\overline{V}_i,G)$ and the map above arises as the restriction of
\[
 \sideset{}{^\ast}\prod_{i\in\mathbb{N}}C^\infty(\overline{V}_i,G)\rightarrow\sideset{}{^\ast}\prod_{i\in\mathbb{N}}C^\infty(\overline{U}_i,G),\quad(\gamma_i)_{i\in\mathbb{N}}\mapsto(k_i\cdot\gamma_i\cdot k_i^{-1})_{i\in\mathbb{N}},
\]
which is smooth by \ref{productmorph} because the conjugations on the components are Lie group morphisms.\\
Applying the above and using \ref{1.12}, we may assume that $\overline{\mathcal{V}}$ is a refinement of $\overline{\mathcal{U}}$ since the two covers have a common locally finite refinement. The result follows now from \ref{1.12}.
\end{proof}

We come to the main result of this section. 

\begin{satz}\label{1.11}\cite[cf. Theorem 1.11]{Wockel3}
Let $\mathcal{P}$ be a smooth $G$-bundle over the $\sigma$-compact finite-dimen\-sional manifold $M$ that has the property $\text{SUB}_\oplus$. 
If $\mathcal{P}$ has the property $\text{SUB}_\oplus$ with respect to some trivializing system $\overline{\mathcal{U}}$, then the gauge group $\mathrm{Gau}_c(\mathcal{P})\cong C^{\infty}_c(P,G)^G$ carries a Lie group structure modelled on $\mathfrak{g}_{\overline{\mathcal{U}}}(\mathcal{P})$. This Lie group structure is independent of the $\overline{\mathcal{U}}$. If, moreover, $G$ is locally exponential, then $\mathrm{Gau}_c(\mathcal{P})$ is so.
\end{satz}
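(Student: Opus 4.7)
My plan proceeds in four stages, each invoking results already established.

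First, I assemble the Lie group structure. Assuming $\mathcal{P}$ has the property $\text{SUB}_\oplus$ with respect to a locally finite compact trivializing system $\overline{\mathcal{U}}$, Corollary \ref{1.9} (built on Lemma \ref{1.8} and the submanifold chart criterion \ref{3.5.19}) turns $G_{\overline{\mathcal{U}}}(\mathcal{P})$ into a Lie group modelled on the closed subspace $\mathfrak{g}_{\overline{\mathcal{U}}}(\mathcal{P})$ of $\bigoplus_{i\in\mathbb{N}} C^\infty(\overline{V}_i,\mathfrak{g})$. The abstract group isomorphism $\Psi \colon G_{\overline{\mathcal{U}}}(\mathcal{P}) \to C^\infty_c(P,G)^G$ of Remark \ref{1.6}, combined with the identification $\mathrm{Gau}_c(\mathcal{P}) \cong C^\infty_c(P,G)^G$ of Definition \ref{1.2}, then transports this Lie group structure to $\mathrm{Gau}_c(\mathcal{P})$.

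Second, independence of $\overline{\mathcal{U}}$ is immediate from Corollary \ref{1.10}: given two trivializing systems $\overline{\mathcal{U}}$ and $\overline{\mathcal{V}}$ with respect to which $\mathcal{P}$ has $\text{SUB}_\oplus$, we obtain an isomorphism $G_{\overline{\mathcal{U}}}(\mathcal{P}) \cong G_{\overline{\mathcal{V}}}(\mathcal{P})$ of Lie groups, which is compatible with the transport map $\Psi$ on both sides, hence descends to a unique Lie group structure on $\mathrm{Gau}_c(\mathcal{P})$.

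Third, for the local exponentiality claim, suppose $G$ is locally exponential with exponential map $\exp_G \colon \mathfrak{g} \to G$. For each compact $\overline{V}_i$ the push-forward $(\exp_G)_\ast \colon C^\infty(\overline{V}_i,\mathfrak{g}) \to C^\infty(\overline{V}_i,G)$ is smooth by Lemma \ref{pushforward} and restricts to a local diffeomorphism near $0$, since on a centered chart domain $W \subseteq \mathfrak{g}$ of $\exp_G$ its inverse $\exp_G^{-1}$ also pushes forward smoothly. This makes each $C^\infty(\overline{V}_i,G)$ locally exponential. Using Lemma \ref{2.12a} in both directions, the componentwise push-forward
\[
\bigoplus_{i\in\mathbb{N}} C^\infty(\overline{V}_i,\mathfrak{g}) \supseteq U \;\longrightarrow\; \sideset{}{^\ast}\prod_{i\in\mathbb{N}} C^\infty(\overline{V}_i,G),\qquad (\eta_i)_{i\in\mathbb{N}} \mapsto \bigl(\exp_G \circ \eta_i\bigr)_{i\in\mathbb{N}},
\]
is a local diffeomorphism near the origin and serves as exponential map for the weak direct product.

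Fourth, I transfer this to the subgroup. The naturality identity $\exp_G(\mathrm{Ad}(g).X) = g\,\exp_G(X)\,g^{-1}$ shows that for $(\eta_i)_{i\in\mathbb{N}} \in \mathfrak{g}_{\overline{\mathcal{V}}}(\mathcal{P})$ the image satisfies $\exp_G \circ \eta_i(m) = k_{ij}(m)\bigl(\exp_G \circ \eta_j(m)\bigr)k_{ji}(m)$ on $\overline{V}_i \cap \overline{V}_j$, so the componentwise exponential sends $\mathfrak{g}_{\overline{\mathcal{V}}}(\mathcal{P})$ into $G_{\overline{\mathcal{V}}}(\mathcal{P})$; the same naturality, applied to $\exp_G^{-1}$, shows it maps onto a neighborhood of the identity. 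Combined with Lemma \ref{3.5.19} and $\text{SUB}_\oplus$, this yields a local diffeomorphism $\mathfrak{g}_{\overline{\mathcal{V}}}(\mathcal{P}) \supseteq U' \to G_{\overline{\mathcal{V}}}(\mathcal{P})$, establishing local exponentiality of $\mathrm{Gau}_c(\mathcal{P})$.

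The main obstacle will be the fourth step: one must verify that the exponential chosen for $G_{\overline{\mathcal{V}}}(\mathcal{P})$ actually coincides (up to the usual identification $L(G_{\overline{\mathcal{V}}}(\mathcal{P})) \cong \mathfrak{g}_{\overline{\mathcal{V}}}(\mathcal{P})$ of Lemma \ref{1.8}) with the intrinsic Lie group exponential, so that one genuinely obtains local exponentiality rather than merely a smooth local parametrization. This comes down to a naturality check at the level of tangent maps, leveraging the fact that the point evaluations intertwine the two exponentials on the finite-dimensional factors $G$, as used in the proof of Lemma \ref{1.8}.
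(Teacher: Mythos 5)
Your overall route is the same as the paper's: transport the Lie group structure of $G_{\overline{\mathcal{U}}}(\mathcal{P})$ (Corollary \ref{1.9}) through the abstract isomorphisms of \ref{1.6} and \ref{1.2}, invoke \ref{1.10} for independence, and build the exponential componentwise from $\mathrm{exp}_G$. Two of your steps, however, are asserted rather than proved, and one of them is closed by an argument that does not quite work.

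First, the independence claim is not ``immediate from Corollary \ref{1.10}''. That corollary gives \emph{some} Lie group isomorphism $G_{\overline{\mathcal{U}}}(\mathcal{P})\cong G_{\overline{\mathcal{V}}}(\mathcal{P})$; what you need is that this particular isomorphism closes the triangle with the two transport maps $\Psi^{-1}\colon C^\infty_c(P,G)^G\to G_{\overline{\mathcal{U}}}(\mathcal{P})$ and $\Psi^{-1}\colon C^\infty_c(P,G)^G\to G_{\overline{\mathcal{V}}}(\mathcal{P})$, i.e.\ that the diagram commutes. You name this compatibility but do not verify it. The verification is short (reduce to the two cases of \ref{1.12} and \ref{1.10}: for a refinement one computes $\gamma\circ\tau_i=\gamma\circ\sigma_{j(i)}\big|_{\overline{V}_i}$, and for sections differing by $k_i$ one computes $\gamma\circ\tau_i=k_i\cdot(\gamma\circ\sigma_i)\cdot k_i^{-1}$), but it is the actual content of the independence statement and must appear.

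Second, in the local exponentiality part you correctly flag the real issue yourself: one must show that the smooth local parametrization $(\eta_i)_{i\in\mathbb{N}}\mapsto(\mathrm{exp}_G\circ\eta_i)_{i\in\mathbb{N}}$ \emph{is} the intrinsic exponential function, not merely a local diffeomorphism. But your proposed fix --- ``point evaluations intertwine the two exponentials'' --- only yields a uniqueness statement: \emph{if} $C^\infty(\overline{V}_i,G)$ (resp.\ the weak product) possesses an exponential function, then evaluations force it to be the push-forward. It does not establish that an exponential function exists, which is not automatic for locally convex Lie groups. The paper closes this by a direct existence argument: for $\eta\in C^\infty(M,\mathfrak{g})$ the curve $\Gamma_\eta(t)=\mathrm{exp}_G(t\cdot\eta(\cdot))$ is a one-parameter group, smooth near $t=0$ via the push-forward chart, with left logarithmic derivative $\delta^l(\Gamma_\eta)=\eta$; hence $\eta\mapsto\Gamma_\eta(1)$ is the exponential of $C^\infty(M,G)$, and the componentwise exponential of the weak direct product restricts to the equalizer subgroup $G_{\overline{\mathcal{U}}}(\mathcal{P})$. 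You should replace your tangent-map naturality check by this (or an equivalent) one-parameter-group computation. Finally, a minor technical point you gloss over: to make the componentwise map well defined between the equalizer subsets one must shrink the chart domains so that $\mathrm{Ad}(k_{ij}(x)).W_i'\subseteq W'$ for all $x\in\overline{U}_i\cap\overline{U}_j$, which uses local finiteness and compactness of the intersections.
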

\begin{proof}
We endow $\mathrm{Gau}_c(\mathcal{P})$ with the Lie group structure induced by the isomorphisms of groups $\mathrm{Gau}_c(\mathcal{P})\cong C^{\infty}_c(P,G)^G\cong G_{\overline{\mathcal{U}}}(\mathcal{P})$ for some trivializing system $\overline{\mathcal{U}}=(\overline{U}_i,\sigma_i)_{i\in\mathbb{N}}$ with respect to which $\mathcal{P}$ has the property $\text{SUB}_\oplus$. Now, let $\overline{\mathcal{V}}=(\overline{V}_i,\tau_i)_{i\in\mathbb{N}}$ be another trivializing system for which $\mathcal{P}$ has the property $\text{SUB}_\oplus$. To see that the Lie group structure of $\mathrm{Gau}_c(\mathcal{P})$ is independent of the choice of the trivializing system 
we only need to show that  
\[
    \xymatrix{
		&  G_{\overline{\mathcal{U}}}(\mathcal{P})  \ar[dd]_{}  \\
        C^{\infty}_c(P,G)^G \ar[ru]^{} \ar[rd]_{} & \\
             & G_{\overline{\mathcal{V}}}(\mathcal{P})
    }
\]
is a commutative diagram, using the isomorphisms of Lie groups from \ref{1.6} and \ref{1.10}. 
In the light of the proofs of \ref{1.12} and \ref{1.10} it suffices to check this if $\overline{\mathcal{V}}$ is a refinement of $\overline{\mathcal{U}}$ or both trivializing systems differ only on their sections. Let $\overline{\mathcal{V}}$ be a refinement of $\overline{\mathcal{U}}$ such that $\overline{V}_i\subseteq\overline{U}_{j(i)}$ and $\tau_i=\sigma_{j(i)}\big|_{\overline{V}_i}$ holds for some $j\colon\mathbb{N}\rightarrow\mathbb{N}$. For $\gamma\in C^{\infty}_c(P,G)^G$, we then have $\gamma\circ\tau_i=\gamma\circ\sigma_{j(i)}\big|_{\overline{V}_i}$ which shows that the diagram commutes. If, on the other hand, $\overline{U}_i=\overline{V}_i$ and $\sigma_i=\tau_i\cdot k_i$ for $k_i\in C^\infty(\overline{U}_i,G)$ holds, we conclude from the equality   $\gamma\circ\tau_i=\gamma\circ(\sigma_i\cdot k_i^{-1})=k_i\cdot(\gamma\circ\sigma_i)\cdot k_i^{-1}$ that the diagram commutes.\\
Next, we construct an exponential function for $G_{\overline{\mathcal{U}}}(\mathcal{P})$ under the assumption that $G$ is locally exponential.
We first show that if $G$ is locally exponential and $M$ is a compact manifold with corners, then
\[
(\mathrm{exp}_G)_\ast\colon C^{\infty}(M,\mathfrak{g})\rightarrow C^{\infty}(M,G),\quad\eta\mapsto\mathrm{exp}_G\circ\eta
\]
is the exponential function for $C^{\infty}(M,G)$. Afterwards we use this map to define local diffeomorphisms on the components of $G_{\overline{\mathcal{V}}}(\mathcal{P})$, which will yield the exponential function for $\mathfrak{g}_{\overline{\mathcal{V}}}(\mathcal{P})$. In fact, for $x\in\mathfrak{g}$ let $\gamma_x\in C^{\infty}(\mathbb{R},G)$ be the solution of the initial value problem $\gamma(0)=e$, $\gamma'(t)=\gamma(t).x$. Let $\eta\in C^{\infty}(M,\mathfrak{g})$, then
\[
\Gamma_\eta\colon\mathbb{R}\rightarrow C^{\infty}(M,G),\quad(t,m)\mapsto\gamma_{\eta(m)}(t)=\mathrm{exp}_G\left(t\cdot\eta(m)\right)
\]
is a group homomorphism. Furthermore, $\Gamma_\eta$ is smooth because it is smooth on a zero neighbourhood of $\mathbb{R}$ since the push-forward of the local inverse of $\mathrm{exp}_G$ provides a chart on a unit neighbourhood in $C^{\infty}(M,G)$. For the left logarithmic derivative we then have
\[
\delta^l(\Gamma_\eta)(t)=\Gamma_\eta(t)^{-1}\cdot\Gamma_\eta(t)\cdot\eta=\eta,
\]
understood as an equation in $T(C^{\infty}(M,G))$. This shows that $\eta\mapsto\Gamma_\eta(1)=\mathrm{exp}_G\circ\gamma$ is the exponential function of $C^{\infty}(M,G)$ (compare \cite{GloQU} for an alternative argument). Now, let $W'\subseteq\mathfrak{g}$ such that $\mathrm{exp}_G\big|_{W'}$ is a diffeomorphism onto an open set $W\subseteq G$. 
Because any given $\overline{U}_i$ only intersects with finitely other sets of the trivializing system and the intersections are compact, we can choose an open zero neighbourhood $W'_i\subseteq W'$ such that $\mathrm{Ad}(k_{ij}(x)).W'_i\subseteq W'$ for all $x\in\overline{U}_i\cap\overline{U}_j$. We define $W_i:=\mathrm{exp}_G(W'_i)$ and
for $\mathrm{exp}_{G,i}:=\mathrm{exp}_G\big|_{W'_i}^{W_i}$ the equation
\[ \mathrm{exp}_{G,i}\circ\eta_i(x)=\mathrm{exp}_{G,i}\circ\mathrm{Ad}(k_{ij}(x)).\eta_j(x)=k_{ij}(x)(\mathrm{exp}_{G,i}\circ\eta_j(x))k_{ji}(x)
\]
is well defined and holds. Thus
\[
\mathrm{exp}_\ast:=((\mathrm{exp}_{G,i})_\ast)_{i\in\mathbb{N}}\colon\mathfrak{g}_{\overline{\mathcal{U}}}(\mathcal{P})\cap\bigoplus_{i\in\mathbb{N}}C^{\infty}(\overline{U}_i,W'_i)\rightarrow G_{\overline{\mathcal{U}}}(\mathcal{P})\cap\sideset{}{^\ast}\prod_{i\in\mathbb{N}}C^{\infty}(\overline{U}_i,W_i),\]
\[ 
(\eta_i)_{i\in\mathbb{N}}\mapsto(\mathrm{exp}_{G,i}\circ\eta_i)_{i\in\mathbb{N}}
\]
is a diffeomorphism. Finally, since the componentwise exponential function of the weak direct product restricts to intersections of equalisers the map above is obviously the exponential function on $G_{\overline{\mathcal{V}}}(\mathcal{P})$.
\end{proof}

Next we show that $\mathcal{P}$ automatically has property $\text{SUB}_\oplus$ for large classes of well-known Lie groups.

\begin{lem}\label{1.14}
Let $\mathcal{P}$ be a smooth principal $G$-bundle over the $\sigma$-compact finite\-dimensional manifold $M$. Then $\mathcal{P}$ has the property $\text{SUB}_\oplus$ with respect to each compact locally finite trivializing system if
\begin{enumerate}[(a)]
\item $\mathcal{P}$ is trivial, i.e., there exists a global smooth trivializing system,
\item if $G$ is abelian,
\item if $G$ is a Banach-Lie group or 
\item if $G$ is locally exponential.
\end{enumerate}
\end{lem}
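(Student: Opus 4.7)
The plan is to translate the bijectivity requirement in the definition of $\text{SUB}_\oplus$ into a conjugation-equivariance condition on the family of charts, and then verify this condition in each of the four listed settings, using the local finiteness of $\overline{\mathcal{V}}$ and compactness of the overlaps to handle the size of the chart domains. Unfolding $\varphi_{\ast}$ on the cocycle subsets and using that $(\gamma_i)\in G_{\overline{\mathcal{V}}}(\mathcal{P})$ satisfies $\gamma_i(x)=k_{ij}(x)\gamma_j(x)k_{ji}(x)$ while $(\eta_i)\in\mathfrak{g}_{\overline{\mathcal{V}}}(\mathcal{P})$ satisfies $\eta_i(x)=\mathrm{Ad}(k_{ij}(x)).\eta_j(x)$, one sees that bijectivity of
\[
\varphi_{\ast}\colon G_{\overline{\mathcal{V}}}(\mathcal{P})\cap\sideset{}{^\ast}\prod_{i\in\mathbb{N}}C^{\infty}(\overline{V}_i,W_i)\rightarrow\mathfrak{g}_{\overline{\mathcal{V}}}(\mathcal{P})\cap\bigoplus_{i\in\mathbb{N}}C^{\infty}(\overline{V}_i,W'_i)
\]
reduces to the intertwining
\[
\varphi_i\bigl(k_{ij}(x)\,h\,k_{ji}(x)\bigr)=\mathrm{Ad}(k_{ij}(x)).\varphi_j(h)
\]
for all $x\in\overline{V}_i\cap\overline{V}_j$ and $h$ in a suitable neighbourhood. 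The required containment $k_{ij}(x)W_j k_{ji}(x)\subseteq W_i$ on overlaps is then arranged by shrinking the $W_i$: since each $\overline{V}_i$ meets only finitely many other $\overline{V}_j$ and each overlap $\overline{V}_i\cap\overline{V}_j$ is compact, continuity of conjugation gives the uniform bounds needed.

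For (b), abelian $G$ makes both the inner automorphisms and the adjoint actions trivial, so the intertwining collapses to $\varphi_i=\varphi_j$ and it suffices to take $\varphi_i=\varphi$ for a single centered chart $\varphi$ of $G$, composed with $(d\varphi(1))^{-1}$ if necessary to enforce $d\varphi|_{\mathfrak{g}}=\mathrm{id}_{\mathfrak{g}}$. For (d), $G$ locally exponential, take $\varphi_i=\exp_G^{-1}$ on a small convex neighbourhood of $0$; the intertwining is then the naturality identity $\exp_G(\mathrm{Ad}(g).Y)=g\,\exp_G(Y)\,g^{-1}$, which holds by construction of $\exp_G$, and the derivative condition is $d(\exp_G^{-1})|_{1}=\mathrm{id}_{\mathfrak{g}}$. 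Case (c) follows at once from (d) because every Banach--Lie group is locally exponential.

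The main obstacle is case (a): for a general locally convex Lie group no fixed chart can be expected to satisfy the intertwining under the non-constant transition functions. The plan is to exploit that triviality of $\mathcal{P}$ expresses every cocycle as a coboundary. A global section $\tau\colon M\rightarrow P$ gives $\sigma_i=\tau\cdot k_i$ with $k_i\in C^{\infty}(\overline{V}_i,G)$, hence $k_{ij}(x)=k_i(x)k_j(x)^{-1}$; direct substitution shows that the assignments
\[
\tilde{\gamma}(x):=k_i(x)^{-1}\gamma_i(x)k_i(x),\qquad \tilde{\eta}(x):=\mathrm{Ad}(k_i(x))^{-1}.\eta_i(x)
\]
are independent of the index $i$ with $x\in\overline{V}_i$, producing abstract group isomorphisms of $G_{\overline{\mathcal{V}}}(\mathcal{P})$ and $\mathfrak{g}_{\overline{\mathcal{V}}}(\mathcal{P})$ with the compactly carried mapping group of $M$ to $G$ and the locally convex space $C^{\infty}_c(M,\mathfrak{g})$ respectively. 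The family $(\varphi_i)$ is then obtained by pulling the push-forward of a single centered chart $\varphi$ of $G$ back through this identification on each $\overline{V}_i$. The technically delicate step, requiring care that the present case (a) specifically needs, is to realize the resulting data by honest charts of $G$ (rather than $x$-dependent twists) with compatible domains, which is handled by combining the coboundary form of the $k_{ij}$ with a partition of unity subordinate to $(V_i)$ and the continuity of the $k_i$ to shrink the $W_i$ uniformly.
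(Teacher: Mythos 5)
The genuine gap is in case (a). You correctly observe that for a general locally convex $G$ no fixed chart can be expected to intertwine conjugation with the adjoint action, and you propose to untwist an arbitrary trivializing system using the coboundary presentation $k_{ij}$ in terms of the $k_i$ coming from a global section. But the chart data this produces, namely $\gamma_i\mapsto\bigl(x\mapsto \mathrm{Ad}(k_i(x)).\varphi\bigl(k_i(x)^{-1}\gamma_i(x)k_i(x)\bigr)\bigr)$, are $x$-dependent twists and not post-compositions with charts $\varphi_i\colon W_i\rightarrow W'_i$ of $G$, which is what the definition of $\text{SUB}_\oplus$ demands; for a non-constant $k_{ij}$ a fixed pair $\varphi_i,\varphi_j$ would have to satisfy $\varphi_i(ghg^{-1})=\mathrm{Ad}(g).\varphi_j(h)$ for every $g$ in the range of $k_{ij}$, an equivariance that general locally convex groups do not admit, and no partition of unity produces it. The ``technically delicate step'' you defer is therefore the entire difficulty. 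The paper sidesteps it: since $\text{SUB}_\oplus$ only requires \emph{some} trivializing system, it takes the system whose sections are all restrictions of one global section, so that every $k_{ij}\equiv 1$, the intertwining condition becomes vacuous, and (a) reduces to the abelian-type argument of (b). (In particular neither your argument nor the paper's establishes (a) literally ``for each'' compact locally finite trivializing system; the existence of one suitable system is what is used.)

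For (b), (c), (d) your route coincides with the paper's: a single centered chart for abelian $G$, reduction of the Banach case to the locally exponential case, and $\mathrm{Ad}$-equivariance of $\exp_G$ for (d). One imprecision there: the containment $k_{ij}(x)W_jk_{ji}(x)\subseteq W_i$ for all overlapping pairs cannot in general be arranged, since combined with the symmetric condition for the pair $(j,i)$ it forces $k_{ij}(x)W_jk_{ji}(x)=W_i$ for every $x$ in the overlap. What is actually needed, and what the paper arranges using local finiteness and compactness of the overlaps exactly as you suggest, is that $\mathrm{Ad}(k_{ij}(x)).W'_j$ stays inside the \emph{fixed} larger domain $W'$ on which $\exp_G$ is injective; injectivity there then identifies the two candidate logarithms and yields the bijectivity of $\varphi_\ast$.
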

\begin{proof}
(a) If $\mathcal{P}$ is trivial we may choose a compact locally finite trivializing system $\overline{\mathcal{V}}=(\overline{V}_n,\sigma_n)_{n\in\mathbb{N}}$ such that $\sigma_i(x)=\sigma_j(x)$ for all $x\in\overline{V}_i\cap\overline{V}_j$. Then all transition functions are trivial and so is the adjoint action used in $\mathfrak{g}_{\overline{\mathcal{V}}}(\mathcal{P})$, thus we can proceed as in (b).\\
(b) If $G$ is abelian then the conjugation of $G$ on itself and the adjoint action of $G$ on $\mathfrak{g}$ are trivial. Thus for any compact locally finite trivializing system $\overline{\mathcal{V}}=(\overline{V}_n,\sigma_n)_{n\in\mathbb{N}}$, we have $\mathfrak{g}_{\overline{\mathcal{V}}}(\mathcal{P})=\mathfrak{g}_{\overline{\mathcal{V}}}$ (see \ref{algebraglue}) and $G_{\overline{\mathcal{V}}}(\mathcal{P})$ is the set
\[
\left\{(\gamma_i)_{i\in\mathbb{N}}\in\sideset{}{^\ast}\prod_{i\in\mathbb{N}}C^\infty(\overline{V}_i,G)\colon\: \gamma_i(x)=\gamma_j(x)\text{ for all }x\in\overline{V}_i\cap\overline{V}_j\right\}.
\]
Now, we choose a centered chart $\varphi\colon W\rightarrow W'$ such that $d\varphi\big|_{\mathfrak{g}}=\mathrm{id}_{\mathfrak{g}}$ and infer from \ref{directsum} that $\mathcal{P}$ has the property $\text{SUB}_\oplus$.\\
(c) If $G$ is a Banach-Lie group then it is in particular locally exponential and thus it will be enough to show (d).\\ 
(d) Let $G$ be locally exponential and let $\overline{\mathcal{V}}=(\overline{V}_n,\sigma_n)_{n\in\mathbb{N}}$ be a compact locally finite trivializing system. Furthermore, let $W'\subseteq\mathfrak{g}$ be an open zero neighbourhood such that $\mathrm{exp}_G|_{W'}$ is a diffeomorphism onto $W:=\mathrm{exp}_G(W')$. 
Following the proof of \ref{1.11} we find open zero neighbourhoods $W'_i\subseteq W'_i$ such that
\[
\mathrm{exp}_\ast:=((\mathrm{exp}_{G,i})_\ast)_{i\in\mathbb{N}}\colon\mathfrak{g}_{\overline{\mathcal{U}}}(\mathcal{P})\cap\bigoplus_{i\in\mathbb{N}}C^{\infty}(\overline{U}_i,W'_i)\rightarrow G_{\overline{\mathcal{U}}}(\mathcal{P})\cap\sideset{}{^\ast}\prod_{i\in\mathbb{N}}C^{\infty}(\overline{U}_i,W_i),\]
\[ 
(\eta_i)_{i\in\mathbb{N}}\mapsto(\mathrm{exp}_{G,i}\circ\eta_i)_{i\in\mathbb{N}}
\]
is bijective for $W_i:=\mathrm{exp}_G(W'_i)$ and $\mathrm{exp}_{G,i}:=\mathrm{exp}_G\big|_{W'_i}^{W_i}$. Since the inverse of this map arrises as push-forward of restrictions of $\mathrm{exp}^{-1}_G$ and $d(\mathrm{exp}_{G})\big|_{\mathfrak{g}}=\mathrm{id}_{\mathfrak{g}}$, the property $\text{SUB}_\oplus$ is proven.
\end{proof}

While we have shown that $\mathcal{P}$ has the property $\text{SUB}_\oplus$ in many interesting cases, as of now, the author does not know of any examples where $\mathcal{P}$ does not have the property $\text{SUB}_\oplus$. This remains an area for more thorough examination.
\begin{prob}\label{1.16}
Is there a smooth principal $G$-bundle $\mathcal{P}$ over a $\sigma$-compact base $M$ which does not have the property $\text{SUB}_\oplus$?
\end{prob}

\section{The Automorphism Group as an Infinite-Dimen\-sional Lie Group}

In this section we will turn $\mathrm{Aut}_c(\mathcal{P})$ into a Lie group, where $\mathcal{P}$ is a smooth principal $G$-bundle over a finite-dimensional $\sigma$-compact base $M$ \textit{without} boundary. We will do so by constructing an extension of Lie groups
\begin{align*}
\mathrm{Gau}_c(\mathcal{P})\hookrightarrow\mathrm{Aut}_c(\mathcal{P})\xtwoheadrightarrow{\mathcal{Q}}\mathrm{Diff}_c(M)_{\mathcal{P}},
\end{align*}
where $\mathrm{Diff}_c(M)_{\mathcal{P}}$ is the image of the homomorphism $\mathcal{Q}\colon\mathrm{Aut}_c(\mathcal{P})\rightarrow\mathrm{Diff}_c(M)$ from \ref{1.1b}. To this end, we need to find a so called smooth factor system, which will induce a Lie group structure on the automorphisms from the Lie group structures on the gauge group and the group of diffeomorphisms on $M$. The main work in this endeavor comes from finding a suitable section $S$ with respect to $\mathcal{Q}$ and checking the various smoothness conditions. 
\bigskip

Throughout this section we will only use bundles over a base without corners, i.e., $P$ and $M$ will be manifolds without corners. We fix one particular $G$-bundle $\mathcal{P}$ over a $\sigma$-compact finite-dimensional base $M$ without corners and assume that $\mathcal{P}$ has the property $\text{SUB}_\oplus$.

\subsection{Extension of Lie groups}
We begin by defining what extensions of Lie groups are and how to get such an extension via a smooth factor system. Since we use these objects only as a tool, this section is rather brief. For a thorough explanation see \cite{Neeb}.
\begin{mydef}\label{II.1}
Let $G$ be a Lie group. A subgroup $H$ of $G$ is called a \textit{split Lie subgroup} if it carries a Lie group structure for which the inclusion $i_H\colon H\hookrightarrow G$ is a morphism of Lie groups and the right action of $H$ on $G$ defined by restricting the multiplication map of $G$ to a map $G\times H\rightarrow G$ defines a smooth principal $H$-bundle. This means that the coset space $G/H$ is a smooth manifold and that the quotient map $\pi\colon G\rightarrow G/H$ has smooth local sections.
\end{mydef}
\begin{mydef}\label{II.4}
An extension of Lie groups is a short exact sequence
\[
1\rightarrow N\xrightarrow{\iota}\widehat{G}\xrightarrow{q} G\rightarrow 1
\]
of Lie group morphisms, for which $N\cong\mathrm{ker}(q)$ is a split Lie subgroup and $G\cong\widehat{G}/\mathrm{ker}q$. This means $\widehat{G}$ is a smooth principal $N$-bundle over $G$ and $G\cong \widehat{G}/N$. Here and in the following, we identify $N$ with the normal subgroup $\iota(N)\trianglelefteq\widehat{G}$.\\
In the above situation we also call the map $q\colon\widehat{G}\rightarrow G$ an extension of $G$ by $N$. 
We call two extensions $N\hookrightarrow\widehat{G}_1\twoheadrightarrow G$ and $N\hookrightarrow\widehat{G}_2\twoheadrightarrow G$ \textit{equivalent} if there exists a morphism of Lie groups $\psi\colon\widehat{G}_1\rightarrow\widehat{G}_2$ such that the diagram
\[
    \xymatrix{
        N \ar[r] \ar[d]_{\mathrm{id}_N} & \widehat{G}_1 \ar[d]_{\psi} \ar[r] & G \ar[d]_{\mathrm{id}_G}\\
        N \ar[r]       & \widehat{G}_2 \ar[r] & G}
\]
commutes.
\end{mydef}

The following remark gives a motivation for the particular properties the smooth factor systems (defined afterwards) need to have for turning an exact sequence of abstract groups into an extension of Lie groups. It will also serve as a guideline for the construction of a smooth factor system in the later parts.

\begin{bem}\cite[cf. Remark II.5]{Neeb}\label{II.5}
Let $q\colon\widehat{G}\rightarrow G$ be a Lie group extension of $G$ by $N$. By assumption, there exists a smooth section $\hat{\sigma}\colon U\rightarrow\widehat{G}$ on an identity neighbourhood $U\subseteq G$. After multiplication with an element of $N$ we may assume $\hat{\sigma}(1_G)=1_{\widehat{G}}$ and extend $\hat{\sigma}$ to a global section $\sigma\colon G\rightarrow\widehat{G}$ (not necessarily continuous). Then the map
\[
\Phi\colon N\times G\rightarrow\widehat{G},\quad(n,g)\mapsto n\cdot\sigma(g)
\]
is a bijection which, while it is generally not continuous, restricts to a local diffeomorphism on an identity neighbourhood. We will use $\Phi$ to identify  $\widehat{G}$ with $N\times G$ endowed with the operation
\begin{align}
(n,g)(n',g')=(n\cdot T(g)(n')\omega(g,g'),gg'),\label{formula2.1}
\end{align}
where
\[
T:=C_N\circ\sigma\colon G\rightarrow\mathrm{Aut}(N)\quad\text{for}\quad C_N\colon\widehat{G}\rightarrow\mathrm{Aut}(N),\quad C_N(g)(n):=gng^{-1},\]
\[
\text{and}\quad\omega\colon G\times G\rightarrow N,\quad (g,g')\mapsto\sigma(g)\sigma(g')\sigma(gg')^{-1}.
\]
Let $N\times_{(T,\omega)} G$ be the set $N\times G$ with the above operation. We want to show that in this situation $N\times_{(T,\omega)} G$ can be turned into a Lie group. Note that $\omega$ is smooth on an identity neighbourhood and the map $G\times N\rightarrow N,(g,n)\mapsto T(g)(n)$ is smooth on the set $U\times N$, where $U$ is the identity neighbourhood from above. Direct verification shows that the maps $T$ and $\omega$ satisfy the relations 
\begin{align}
\sigma(g)\sigma(g')=\omega(g,g')\sigma(gg'),\label{formula2.4}
\end{align}
\begin{align}
T(g)T(g')=C_N\big(\omega(g,g')\big)T(gg'),\label{formula2.5}
\end{align}
and
\begin{align}
\omega(g,g')\omega(gg',g'')=T(g)\big(\omega(g',g'')\big)\omega(g,g'g'')\label{formula2.6}
\end{align}
for all $g,g',g''\in G$. It can be shown that the operation (\ref{formula2.1}) turns $N\times_{(T,\omega)}G$ into a group if and only if (\ref{formula2.4}), (\ref{formula2.5}) and (\ref{formula2.6}) are satisfied. 
The fact that $T(1_G)=\mathrm{id}_N$ assures the existence of an identity element, (\ref{formula2.5}) and (\ref{formula2.6}) are necessary for the associativity of the group multiplication and together with (\ref{formula2.4}) formulas for the inversion and conjugation can be found (\cite[cf. Lemma II.7]{Neeb}).
\end{bem}

\begin{mydef}\cite[cf. Definition II.6]{Neeb}\label{II.6}
Let $G$ and $N$ be Lie groups and assume we have a map $T\colon G\rightarrow\mathrm{Aut}(N)$ with $T(1_G)=\mathrm{id}_N$ such that 
\[
U\times N\rightarrow N,\quad (g,n)\mapsto T(g)(n)
\]
is smooth on an open identity neighbourhood $U\subseteq G$. Also, let $\omega\colon G\times G\rightarrow N$ be a map that is smooth on an identity neighbourhood such that $\omega(1_G,g)=\omega(g,1_G)=1_N$ for all $g\in G$. We say $\omega$ is \textit{normalized} if it has the latter property. Furthermore, we require that the maps
\[
\omega_g\colon G\rightarrow N,\quad x\mapsto \omega(g,x)\omega(gxg^{-1},g)^{-1}
\]
are also smooth on an identity neighbourhood $U_g$ of $G$ for every $g\in G$. If additionally the relations
\[
T(g)T(g')=C_N\big(\omega(g,g')\big)T(gg')\text{   and }
\]
\[
\omega(g,g')\omega(gg',g'')=T(g)\big(\omega(g',g'')\big)\omega(g,g'g'')
\]
hold (compare formula (\ref{formula2.5}) and (\ref{formula2.6}) above), then we call the pair $(T,\omega)$ a \textit{smooth factor system}.
\end{mydef}
We come to the main tool of this section.
\begin{prop}\cite[Proposition II.8]{Neeb}\label{II.8}
If $(T,\omega)$ is a smooth factor system, then $N\times_{(T,\omega)}G$ carries a unique structure of a Lie group for which the map
\[
N\times G\rightarrow N\times_{(T,\omega)}G,\quad(n,g)\mapsto (n,g)
\]
is smooth on a set of the form $N\times U$, where $U$ is an open identity neighbourhood in $G$ and (\ref{formula2.1}) defines the operation on the right-hand side. Then
\[
q\colon N\times_{(T,\omega)}G\rightarrow G,\quad(n,g)\mapsto g
\]
is a Lie group extension of $G$ by $N$. Each Lie group extension $q\colon\widehat{G}\rightarrow G$ of $N$ gives rise to smooth factor systems by choosing a locally smooth normalized section $\sigma\colon G\rightarrow\widehat{G}$ and defining $(T,\omega):=(C_N\circ\sigma,\omega)$ where $\omega(g,g'):=\sigma(g)\sigma(g')\sigma(gg')^{-1}$ (see \ref{II.5}). Then the extension $q\colon\widehat{G}\rightarrow G$ is equivalent to $N\times_{(T,\omega)} G$.
\end{prop}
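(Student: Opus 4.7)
The plan is to exhibit a manifold atlas on $N \times_{(T,\omega)}G$ by transporting a single product chart around, then verify that multiplication, inversion, and the projection $q$ are smooth with respect to this atlas, and finally address the converse by chasing the construction outlined in Remark \ref{II.5}. For the chart at the identity, choose centered charts $\varphi_N \colon V_N \to \mathfrak n$ of $N$ and $\varphi_G \colon V_G \to \mathfrak g$ of $G$ with $V_G$ contained in the identity neighbourhood $U$ where $T|_{U\times N}$ and $\omega$ are simultaneously smooth, and declare the bijection $(\varphi_N \times \varphi_G) \circ \mathrm{id}\colon V_N \times V_G \to N \times_{(T,\omega)} G$ to be a diffeomorphism onto its image. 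For an arbitrary point $(n_0,g_0)$, define a chart by left-translating this identity chart by $(n_0,g_0)$ using the operation (\ref{formula2.1}); the image of the translated chart is an open neighbourhood of $(n_0,g_0)$ by construction.

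The technical core is to show that the transition maps between overlapping left-translated charts are smooth. Writing out $L_{(n_0,g_0)}(n,g) = (n_0 T(g_0)(n)\omega(g_0,g),\, g_0 g)$ and composing with $L_{(n_1,g_1)}^{-1}$, one must express the $N$-component as a smooth function of $(n,g)$ in a neighbourhood of the origin of $\mathfrak n \oplus \mathfrak g$. This is where each of the three clauses in Definition \ref{II.6} is used: the local smoothness of $(g,n) \mapsto T(g)(n)$ lets us handle the $T(g_0)$ factor, the local smoothness of $\omega$ lets us handle $\omega(g_0,g)$, and the local smoothness of $\omega_g$ together with the cocycle identity (\ref{formula2.6}) lets us rewrite the combination of $\omega$-factors that arise from conjugation by $g_0$ and $g_1$ as a locally smooth function of $g$. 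I expect this chart-compatibility computation to be the main obstacle, since it requires carefully invoking (\ref{formula2.5})--(\ref{formula2.6}) to rewrite globally defined but only locally smooth data in a form visibly smooth near the origin; everything else in the first half of the proposition is then automatic.

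With the smooth structure established, multiplication reads, in identity charts, as $(\varphi_N(n) + T(g)(\varphi_N(n')) + \omega(g,g'),\, \varphi_G(g\cdot g'))$ up to the chart identifications, and this is smooth on a neighbourhood of $((1,1),(1,1))$; translating by left multiplication shows multiplication is smooth everywhere, and inversion is handled analogously using the formulas derived from (\ref{formula2.4}). The projection $q(n,g) = g$ is smooth, the inclusion $n \mapsto (n,1_G)$ identifies $N$ with a closed normal Lie subgroup, and the map $g \mapsto (1_N,g)$ furnishes a smooth section of $q$ on an identity neighbourhood, verifying that $q$ is a Lie group extension in the sense of Definition \ref{II.4}; uniqueness of the smooth structure follows from the requirement that $N \times U \to N \times_{(T,\omega)} G$ be smooth together with the fact that left translations must be diffeomorphisms. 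For the converse, given an extension $q\colon \widehat G \to G$, pick a smooth local section $\widehat\sigma \colon U \to \widehat G$ with $\widehat\sigma(1_G) = 1_{\widehat G}$ and extend it set-theoretically to a normalized global section $\sigma$; as indicated in Remark \ref{II.5}, the pair $(T,\omega) = (C_N \circ \sigma,\, (g,g') \mapsto \sigma(g)\sigma(g')\sigma(gg')^{-1})$ satisfies the algebraic identities of Definition \ref{II.6}, and the requisite local smoothness of $T$, $\omega$, and $\omega_g$ follows because, near $1_G$, $\sigma = \widehat\sigma$ is smooth and conjugation by the fixed element $\sigma(g) \in \widehat G$ is a diffeomorphism of $\widehat G$. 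Finally, $\Phi\colon N \times_{(T,\omega)} G \to \widehat G$, $(n,g) \mapsto n\cdot \sigma(g)$, is an abstract group isomorphism that is a diffeomorphism on a neighbourhood of the identity, hence globally a Lie group isomorphism by equivariance under left translation, providing the desired equivalence of extensions.
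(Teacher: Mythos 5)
The paper does not actually prove this proposition: it is imported verbatim from Neeb (\cite[Proposition II.8]{Neeb}), so there is no internal proof to compare against. Your sketch follows the standard construction and is essentially sound. Two remarks. First, the heavy lifting you place in the chart-compatibility computation is exactly what Proposition \ref{A.4} (``Lie groups from local data'') packages: rather than building the left-translation atlas by hand, you could endow $N\times V_G$ with the product manifold structure and verify hypotheses (i)--(iii) of \ref{A.4}; this is how Neeb (and this paper, in \ref{2.14}) actually uses the result, and it shortens your argument considerably. Note also that for two translated charts $L_{(n_0,g_0)}$ and $L_{(n_1,g_1)}$ to overlap at all, the $G$-component of $(n_1,g_1)^{-1}(n_0,g_0)$ is forced into $V_G V_G^{-1}$, which you may shrink into the neighbourhood where $T$ and $\omega$ are jointly smooth --- so chart compatibility only ever involves translations whose $G$-part is near $1_G$, and the $\omega_g$-condition is not really needed there.

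Where the $\omega_g$-condition \emph{is} needed is in globalizing the smoothness of multiplication: writing $x=a\xi$, $y=b\eta$ with $\xi,\eta$ near the identity, one has $xy=(ab)\bigl((b^{-1}\xi b)\eta\bigr)$, so one must know that conjugation by an \emph{arbitrary} element is smooth near the identity, and that is precisely condition (iii) of \ref{A.4}, supplied by the smoothness of $\omega_g$ together with (\ref{formula2.5}) and (\ref{formula2.6}). Your sentence ``translating by left multiplication shows multiplication is smooth everywhere'' silently uses this; you should make the conjugation step explicit, since left translation alone does not suffice. The rest --- inversion via (\ref{formula2.4})--(\ref{formula2.5}), uniqueness, the converse via a locally smooth normalized section, and the equivalence $\Phi(n,g)=n\cdot\sigma(g)$ --- is correct as stated; in the converse direction your computation implicitly uses that a smooth $\widehat G$-valued map with values in $N$ is smooth into $N$, which holds because $N$ is a split Lie subgroup (Definition \ref{II.1}).
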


\subsection{The Lie group structure of the automorphism group}
We will now construct a smooth factor system $(T,\omega)$, or rather the restriction of one, that will turn $\mathrm{Aut}_c(\mathcal{P})\cong \mathrm{Gau}_c(\mathcal{P})\times_{(T,\omega)}\mathrm{Diff}_c(M)_{\mathcal{P}}$ into a Lie group. We begin by fixing several trivializing systems. 

\begin{bem}\label{2.2}
For the rest of this section, we choose and fix particular locally finite smooth trivializing systems $\overline{\mathcal{V}}=(\overline{V}_i,\sigma_i)_{i\in\mathbb{N}}$ and $\overline{\mathcal{U}}=(\overline{U}_i,\tau_i)_{i\in\mathbb{N}}$ of $\mathcal{P}$ such that:
\begin{itemize}
\item $\overline{\mathcal{V}}$ is a closed trivializing system and each $\overline{V}_i$ is a compact submanifold with corners of $M$,
\item $\overline{\mathcal{V}}$ is a refinement of $\overline{\mathcal{U}}$ such that  $\sigma_i=\tau_i\big|_{\overline{V}_i}$,
\item $\overline{\mathcal{U}}$ is a closed trivializing system and each $\overline{U}_i$ is a compact submanifold with corners of $M$,
\item $\mathcal{V}=(V_i,\sigma_i\big|_{V_i})_{i\in\mathbb{N}}$ and $\mathcal{U}=(U_i,\tau_i\big|_{U_i})_{i\in\mathbb{N}}$ are the respective underlying open trivializing systems,
\item $\mathcal{P}$ has the property $\text{SUB}_\oplus$ with respect to $\overline{\mathcal{U}}$ (and thus with respect to $\overline{\mathcal{V}}$ by \ref{1.12}).
\end{itemize}
To construct trivializing systems as above we start with an arbitrary locally finite smooth trivializing system such that $\mathcal{P}$ has the property $\text{SUB}_\oplus$ with respect to this system. Note that this exists because throughout this section we assume $\mathcal{P}$ having the property $\text{SUB}_\oplus$. Applying \ref{refinementcorners} two times, we then find refinements  $\overline{\mathcal{U}}$ and $\overline{\mathcal{V}}$ as needed. We let $k_{ij}$ be the transition functions on $\mathcal{U}$ and since the transition functions on $\overline{\mathcal{V}}$ arise as restrictions we will use the same notation for them.
\end{bem}

We want to follow \ref{II.5} for the construction of our smooth factor system. Hence, we need to find a section $S$ of $\mathcal{Q}$ and it becomes necessary to lift diffeomorphisms on $M$ to bundle automorphisms. Since in general our bundle is not trivial, we initially only achieve this for diffeomorphisms on $M$ whose support is contained in a given trivialization. Later on we explain how to split up any function $g\in\mathrm{Diff}_c(M)$ into suitable diffeomorphisms that are supported in this way. For this, some knowledge of the Lie group structure on $\mathrm{Diff}_c(M)$ is required, which we will introduce in a moment.

\begin{bem}\label{2.3}
Let $U\subseteq M$ be open and trivializing with section $\sigma\colon U\rightarrow P$ and corresponding $k_\sigma\colon\pi^{-1}(U)\rightarrow K$, given by $\sigma(\pi(p))\cdot k_{\sigma}(p)=p$ for $p\in\pi^{-1}(U)$ as in \ref{1.1}. If $g\in\mathrm{Diff}_c(M)$ with $\mathrm{supp}(g)\subseteq U$, then the function $\widetilde{g}$ defined by
\[
\widetilde{g}(p)=
\begin{cases}
   \sigma\big(g(\pi(p))\big)\cdot k_{\sigma}(p) & \text{if } p\in\pi^{-1}(U) \\
   p       & \text{else } 
  \end{cases}
\]
is a smooth bundle automorphism because each $x\in\partial U$ has a neighbourhood on which $g$ is the identity and $\pi\circ\widetilde{g}=g\circ\pi$. The latter relation immediately implies $\mathcal{Q}(\widetilde{g})=\widetilde{g}_M=g$ where $\mathcal{Q}$ is the homomorphism $\mathcal{Q}\colon\mathrm{Aut}_c(\mathcal{P})\rightarrow\mathrm{Diff}_c(M)$ from \ref{1.1b}.
Furthermore, we have $\mathrm{supp}(g)=\mathrm{supp}(g^{-1})$ and 
\begin{align*}
\widetilde{g^{-1}}\circ\widetilde{g}(p)&
=\sigma\bigg(g^{-1}\Big(\pi\Big(\sigma(g(\pi(p)))\cdot k_\sigma(p)\Big)\Big)\bigg)\cdot k_\sigma\bigg(\sigma(g(\pi(p)))\cdot k_\sigma(p)\bigg)\\
&=\sigma(\pi(p))\cdot k_\sigma\bigg(\sigma\big(g(\pi(p))\big)\bigg)\cdot k_{\sigma}(p)=p
\end{align*}
for $p\in\pi^{-1}(U)$. The same calculation for $\widetilde{g}(\widetilde{g^{-1}})(p)$ shows  $\widetilde{g^{-1}}=\widetilde{g}^{-1}$. 
\end{bem}

We continue with some facts about the topology on $\mathrm{Diff}_c(M)$.

\begin{satz}\label{5.11}\cite[Theorem 5.11]{GloPatched}, \cite[cf. Theorem 11.11]{Michor}
Let $M$ be a $\sigma$-compact finite-dimensional smooth manifold and $g$ some smooth Riemannian metric on $M$. The group $\mathrm{Diff}(M)$ can be made into a Lie group such that the following condition is satisfied:\\
There exists a zero-neighbourhood $\Omega_g$ in the space of compactly carried smooth vector fields $\mathcal{V}_c(M)$, such that $\varphi^{-1}\colon\Omega_g\rightarrow\mathrm{Diff}(M),\quad\xi\mapsto\mathrm{exp}_g\circ\xi$ is a  $C^{\infty}$-diffeomorphism from $\Omega_g$ onto an open submanifold $\mathcal{O}_g:=\varphi^{-1}(\Omega_g)\subseteq\mathrm{Diff}(M)$. In particular, $\varphi$ is a chart of $\mathrm{Diff}(M)$. The topology of $\mathrm{Diff}(M)$ is independent of the chosen Riemannian metric. 
\end{satz}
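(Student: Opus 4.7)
The plan is to construct the chart $\varphi$ using the Riemannian exponential and then use left translations to spread the chart over all of $\mathrm{Diff}(M)$. First, I recall that the Riemannian exponential yields a smooth map $\Psi_g\colon TM\supseteq\mathcal{O}\to M\times M$, $(x,v)\mapsto(x,\exp_g^x(v))$, which restricts to a diffeomorphism of some open neighbourhood of the zero section onto an open neighbourhood of the diagonal. For $\xi\in\mathcal{V}_c(M)$ with $\xi(M)$ contained in this neighbourhood, define $\widetilde\xi\colon M\to M$, $x\mapsto\exp_g^x(\xi(x))$. Since $\xi$ is compactly supported, $\widetilde\xi$ equals the identity off $\mathrm{supp}(\xi)$ and is a local diffeomorphism everywhere; global injectivity holds for $\xi$ sufficiently close to zero in $\mathcal{V}_c(M)$ by a standard argument using $\Psi_g$ on a tubular neighbourhood of $\mathrm{supp}(\xi)$. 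This produces an injective parametrisation $\varphi^{-1}(\xi):=\widetilde\xi$ from an open zero-neighbourhood $\Omega_g\subseteq\mathcal{V}_c(M)$ onto a subset $\mathcal{O}_g\subseteq\mathrm{Diff}(M)$, which we declare to be an open submanifold.

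Second, I would obtain charts around every point of $\mathrm{Diff}(M)$ by left translations: for $f\in\mathrm{Diff}(M)$ set $\lambda_f\circ\varphi^{-1}\colon\Omega_g\to\mathrm{Diff}(M)$, $\xi\mapsto f\circ\widetilde\xi$. The associated coordinate changes have the form $\xi\mapsto\varphi(h\circ\widetilde\xi)$ for $h\in\mathrm{Diff}(M)$, and after fixing a locally finite relatively compact open cover of $M$ (as in \ref{finite}) and identifying $\mathcal{V}_c(M)$ with a closed subspace of a direct sum of spaces of smooth maps on compact pieces (compare \ref{topemb}), they decompose patch by patch into push-forwards of the smooth finite-dimensional fibrewise map $(x,v)\mapsto\exp_g^{-1}(h(\exp_g^x v))$. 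Their smoothness then follows from Lemma \ref{pushforward} and Lemma \ref{fstern}. Smoothness of the group multiplication $(\xi,\eta)\mapsto\varphi(\widetilde\xi\circ\widetilde\eta)$ is verified by the same patching, and smoothness of inversion is reduced on each patch to an application of the implicit function theorem expressing $\widetilde\xi^{-1}$ as a smooth function of $\xi$.

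Third, to show independence of the Riemannian metric, I would compare the charts $\varphi_1,\varphi_2$ associated with two metrics $g_1,g_2$: on a common zero-neighbourhood, $\varphi_2\circ\varphi_1^{-1}$ is the push-forward of the smooth fibrewise map $\exp_{g_2}^{-1}\circ\exp_{g_1}$ on an open neighbourhood of the zero section of $TM$, hence is smooth by \ref{fstern} (applied in the patched form above); interchanging $g_1$ and $g_2$ shows it is a diffeomorphism, so both atlases define the same Lie group structure.

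The main obstacle will be technical rather than conceptual: one must ensure that the zero-neighbourhood $\Omega_g$ on which $\widetilde\xi$ is a global (not merely local) diffeomorphism of $M$ is actually open in the direct-limit topology of $\mathcal{V}_c(M)=\varinjlim\mathcal{V}_K(M)$, and that all push-forwards occurring in the coordinate changes and group operations are smooth with respect to this direct-limit structure and not only on the individual steps $\mathcal{V}_K(M)$. This is precisely what the patched-space machinery of \cite{GloPatched} achieves, by realising $\mathcal{V}_c(M)$ as a closed subspace of a direct sum indexed by a locally finite cover and thereby reducing every smoothness question to compact patches, where Lemmas \ref{pushforward} and \ref{fstern} apply directly.
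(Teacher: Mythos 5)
The paper does not prove this theorem: it is imported verbatim from \cite{GloPatched} (Theorem 5.11) and \cite{Michor}, so there is no in-paper argument to compare against. Your sketch follows exactly the standard construction of those references --- Riemannian exponential chart near the identity, left translations, reduction of coordinate changes and group operations to fibrewise push-forwards over a locally finite cover, and the patched/direct-sum machinery to handle the direct-limit topology on $\mathcal{V}_c(M)$ --- and you correctly identify where the real technical work lies (openness of $\Omega_g$ in $\varinjlim\mathcal{V}_K(M)$ and smoothness of inversion), so as a reconstruction of the cited proof it is sound.
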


\begin{bem}{(The Lie group structure on $\mathrm{Diff}_c(M)$)}\label{depend}
Note that $\mathcal{O}_g\subseteq\mathrm{Diff}_c$, the subgroup of compactly carried diffeomorphisms. Thus $\mathrm{Diff}_c(M)$ can be considered as an open Lie subgroup of $\mathrm{Diff}(M)$ (\cite[p. 26, 5.12]{GloPatched}). Here, the support of $g\in\mathrm{Diff}_c(M)$ is defined as the closure of $\{x\in M\colon g(x)\neq x\}$.
Choosing $\Omega_g$ small enough (which we always assume) we can achieve that $\varphi^{-1}(\xi)(m)=m$ if and only if $\xi(m)=0_m\in TM$ for $\xi\in\Omega_g$.\\
Now, assume $\Omega_g\ni\xi=\varphi(g)$ and thus $g=\mathrm{exp}_g\circ\xi$. We then have $g(x)=\mathrm{exp}_g\big|_{T_xM}(\xi(x))$ which implies $\varphi(g)(x)=\xi(x)=(\mathrm{exp}_g\big|_{T_xM})^{-1}(g(x))$ and thus $\varphi(g)(x)$ depends only on $g(x)$.
\end{bem}

\begin{bem}\label{2.4}
For the following applications we shall modify the zero neighbourhood $\mathcal{O}_g\subseteq\mathrm{Diff}_c(M)$ from \ref{5.11}. Let $(V_i)_{i\in\mathbb{N}}$ be the open cover of $M$ from \ref{2.2}. Furthermore, let $(f_i)_{i\in\mathbb{N}}$ with $f_i\in C^{\infty}(M,[0,1])$ be a smooth partition of unity subordinate to $(V_i)_{i\in\mathbb{N}}$. 
Then the set 
\[\Omega:=\{\xi\in\Omega_g\colon (f_1+\dots+f_n)\cdot \xi\in\Omega_g\text{ for all }n\in\mathbb{N}\}\] 
is an open zero neighbourhood because for every compact subset $K\subseteq M$ there exists an $N\in\mathbb{N}$ such that $V_n\cap K=\varnothing$ for $n>N$, hence
\[
\mathcal{V}_K(M)\cap\Omega=\{\xi\in\Omega_g\colon (f_1+\dots+f_n)\cdot \xi\in\Omega_g\}\cap\mathcal{V}_K(M)
\]
for $n=0,\ldots, N$.
Because multiplication with the sum of finitely many functions $f_i$ is continuous by \cite[p. 107 Corollary F.13]{GloTF}, this set is open in $\mathcal{V}_K(M)$ and since $K$ was arbitrary we see that $\Omega\subseteq\Omega_g$ is open by definition of the direct limit topology on $\mathcal{V}_c(M)$ (compare \ref{directlim}). This restriction of $\Omega_g$ is crucial in the following steps and was overlooked in \cite[Remark 2.4]{Wockel3}.
We define $\mathcal{O'}:=\varphi^{-1}(\Omega)\cap\varphi^{-1}(\Omega)^{-1}$. \\ 
Let $(W_i)_{i\in\mathbb{N}}$ and $(\widetilde{W}_i)_{i\in\mathbb{N}}$ be two locally finite covers of $M$ with relatively compact sets such that $\overline{W}_i\subseteq \widetilde{W}_i$ for all $i\in\mathbb{N}$. By construction of the topology on $\mathrm{Diff}_c(M)$ in \cite{GloPatched} there exists an open unit neighbourhood $\mathcal{O}_W\subseteq \mathrm{Diff}_c(M)$ such that $\psi(\overline{W}_i)\subseteq \psi(\widetilde{W}_i)$ for all $\psi\in\mathcal{O}_W$ and $i\in\mathbb{N}$. Given the locally finite cover from \ref{2.2} and after shrinking the set $\mathcal{O}'$, we can assume that $\psi(\overline{V}_i)\subseteq U_i$ for all $\psi\in\mathcal{O'}$. 
Finally, there exists a symmetric open unit neighbourhood $\mathcal{O}\subseteq\mathcal{O'}$ with $\mathcal{O}\circ\mathcal{O}\circ\mathcal{O}\subseteq\mathcal{O'}$. We further demand that
\[
(f_1+\dots+f_n)\cdot \xi\in\Omega'\text{ for } \xi\in\varphi(\mathcal{O}), n\in\mathbb{N}
\] 
holds. In the remainder, we fix the sets $\mathcal{O'}$ and $\mathcal{O}$.
\end{bem}

Now we are ready to decompose $g\in\mathcal{O}$ into $(g_i)_{i\in\mathbb{N}}$ such that $\mathrm{supp}(g_i)\subseteq V_i$. By \ref{2.3} we can then lift these maps to bundle automorphisms $\widetilde{g}_i$ which we will use to define a lift for $g$.\\
 While our strategy is the same as in \cite[Lemma 2.5]{Wockel3}, there are some subtle differences. Since we want to define the lift of $g$ as a limit $\lim_{i\to\infty}s_1(g)\circ\cdots\circ s_i(g)=g$, we need to change the order of the factors in the definition of $s_i$ in the following lemma (compared to \cite{Wockel3}).

\begin{lem}\label{2.5}
Let $M$ be a finite-dimensional $\sigma$-compact smooth manifold and let $(f_i)_{i\in\mathbb{N}}$ and $\mathcal{O}\subseteq\mathrm{Diff}_c(M)$ be defined as in \ref{2.4}. Then there exist smooth maps
\[s_i\colon\mathcal{O}\rightarrow\mathcal{O}'^{-1}\circ\mathcal{O}'\]
for $i\in\mathbb{N}$ such that $\mathrm{supp}(s_i(g))\subseteq V_i$ and $\lim_{i\to\infty}s_1(g)\circ\cdots\circ s_i(g)=g$ as a pointwise limit, for all $g\in\mathcal{O}$.
\end{lem}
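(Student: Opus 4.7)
The plan is to build each $s_i(g)$ from a telescoping sequence of partial lifts using the chart $\varphi$ of \ref{5.11} and the partition of unity $(f_i)_{i\in\mathbb{N}}$ from \ref{2.4}. Given $g\in\mathcal{O}$ write $\xi:=\varphi(g)\in\Omega$ and set $\sigma_n:=f_1+\cdots+f_n$ (with $\sigma_0:=0$). Define
\[
t_n\colon\mathcal{O}\to\mathrm{Diff}_c(M),\qquad t_n(g):=\varphi^{-1}(\sigma_n\cdot\varphi(g)),
\]
which by the very definition of $\Omega$, together with the additional demand made in \ref{2.4}, takes values in $\mathcal{O}'$; in particular $t_0(g)=\mathrm{id}_M$. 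Then I would define
\[
s_n(g):=t_{n-1}(g)^{-1}\circ t_n(g)\in\mathcal{O}'^{-1}\circ\mathcal{O}',
\]
so that a trivial induction yields the telescoping identity
\[
s_1(g)\circ s_2(g)\circ\cdots\circ s_i(g)=t_i(g)\qquad(i\in\mathbb{N}).
\]

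The support claim follows from the key observation recorded in \ref{depend}: for the chart $\varphi$, the value $\varphi^{-1}(\eta)(x)$ depends only on $\eta(x)$. Hence if $f_i(x)=0$ we have $\sigma_i(x)\xi(x)=\sigma_{i-1}(x)\xi(x)$, so $t_i(g)(x)=t_{i-1}(g)(x)$ and therefore $s_i(g)(x)=x$. This shows $\mathrm{supp}(s_i(g))\subseteq\mathrm{supp}(f_i)\subseteq V_i$ as required. For the pointwise convergence, fix $x\in M$; because $(V_j)_{j\in\mathbb{N}}$ is locally finite, only finitely many $f_j$ are nonzero near $x$, and since $(f_j)$ is a partition of unity we have $\sigma_n(x)=1$ for all $n$ large enough. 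Again invoking the fact from \ref{depend} that $\varphi^{-1}$ acts pointwise, we obtain $t_n(g)(x)=\varphi^{-1}(\xi)(x)=g(x)$ for $n\gg 0$, so $s_1(g)\circ\cdots\circ s_i(g)(x)=t_i(g)(x)\to g(x)$.

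It remains to verify smoothness of $s_n$. The map $g\mapsto\varphi(g)$ is a chart, so it is smooth, and the multiplication operator $\xi\mapsto\sigma_n\cdot\xi$ on $\mathcal{V}_c(M)$ is continuous and linear by (the vector-bundle variant of) \ref{mult}, hence smooth. Composing with the inverse chart $\varphi^{-1}$ shows that $t_n\colon\mathcal{O}\to\mathcal{O}'$ is smooth. Since $s_n(g)=t_{n-1}(g)^{-1}\circ t_n(g)$ is obtained from $t_{n-1}$ and $t_n$ by applying inversion and group multiplication in the Lie group $\mathrm{Diff}_c(M)$, it is smooth as well.

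The most delicate point in this plan is the smoothness of the ``cut-off'' operator $\xi\mapsto\sigma_n\xi$ at the level of compactly supported vector fields; here one needs that multiplication by the smooth finite sum $\sigma_n$ is continuous linear on each step $\mathcal{V}_K(M)$ of the direct limit defining $\mathcal{V}_c(M)$, which is exactly \ref{mult}. The second subtlety is to ensure that the values $t_n(g)$ do stay inside $\mathcal{O}'$ (not merely in $\varphi^{-1}(\Omega)$), so that $s_n(g)$ lies in $\mathcal{O}'^{-1}\circ\mathcal{O}'$; this is precisely why the zero neighbourhood $\Omega$ was carved out in \ref{2.4} with the clause involving all partial sums $\sigma_n\xi$. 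Once these two points are granted, every other step reduces to the telescoping identity and the pointwise nature of $\varphi^{-1}$.
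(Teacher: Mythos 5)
Your proposal is correct and follows essentially the same route as the paper: the paper defines $s_i(g)=\bigl(\varphi^{-1}((f_1+\dots+f_{i-1})\cdot\varphi(g))\bigr)^{-1}\circ\varphi^{-1}\bigl((f_1+\dots+f_i)\cdot\varphi(g)\bigr)$, which is exactly your telescoping $t_{i-1}(g)^{-1}\circ t_i(g)$, and it deduces the support claim and the pointwise convergence from the pointwise nature of $\varphi^{-1}$ recorded in \ref{depend}, just as you do. Your explicit justification of the smoothness of $g\mapsto t_n(g)$ via \ref{mult} and the Lie group operations of $\mathrm{Diff}_c(M)$ is a slightly more detailed spelling-out of what the paper leaves implicit, but it is the same argument.
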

\begin{proof}
Let $\varphi\colon\mathcal{O}\rightarrow\varphi(\mathcal{O})=:\Omega\subseteq\mathcal{V}_c(M)$ be the chart from \ref{5.11} and let $g\in\mathcal{O}$. Recall that $\varphi^{-1}(\xi)(m)=m$ if and only if $\xi(m)=0_m$.  
By our choice of $\Omega$, we can define $s_i\colon\mathcal{O}\rightarrow\mathcal{O}'^{-1}\circ\mathcal{O}'$ by
\begin{align*}
&s_1(g):=\varphi^{-1}(f_1\cdot\varphi(g)),\\
&s_2(g):=\left(\varphi^{-1}(f_1\cdot\varphi(g))\right)^{-1}\circ\varphi^{-1}\left((f_1+f_2)\cdot\varphi(g)\right)\text{ and }\\
&s_i(g):=\left(\varphi^{-1}((f_1+\dots+f_{i-1})\cdot\varphi(g))\right)^{-1}\circ\varphi^{-1}\left((f_1+\dots+f_i)\cdot\varphi(g)\right)\text{ for }2<i.
\end{align*}
These maps are diffeomorphisms because $\varphi$ is a chart. Furthermore, the limit is well-defined because for each $g\in\mathcal{O}$ we have $\mathrm{supp}(g)=\mathrm{supp}(\varphi(g))$ and since there exists a minimal $N\in\mathbb{N}$ such that $V_n\cap\mathrm{supp}(g)=\varnothing$ for all $n>N$ we have $(f_1+\dots+f_n)\cdot\varphi(g)=(f_1+\dots+f_{n+1})\cdot\varphi(g)$, hence $s_n(g)=\mathrm{id}_{\mathrm{Diff}_c(M)}$. For such an $N$ the rightmost factor that can be nontrivial is $\varphi^{-1}((f_1+\dots+f_N)\cdot\varphi(g))=\varphi^{-1}(\varphi(g))=g$ and all other factors annihilate each other. If $f_i(m)=0$, we see with the same argument that both factors of $s_i$ neutralize each other, i.e., $s_i(g)(m)=m$, which shows $\mathrm{supp}(s_i(g))\subseteq V_i$.
\end{proof}

\begin{mydef}\label{2.6}
If $\mathcal{O}\subseteq\mathrm{Diff}_c(M)$ is the open identity neighbourhood from \ref{2.4} and $s_i\colon\mathcal{O}\rightarrow\mathcal{O}'\circ\mathcal{O}'^{-1}$ are the smooth mappings from \ref{2.5}, then we define
\begin{align}
S\colon\mathcal{O}\rightarrow\mathrm{Aut}_c(\mathcal{P}),\quad g\mapsto S(g):=\lim_{n\to\infty}\widetilde{g_1}\circ\cdots\circ\widetilde{g_n},\label{formula5.1}
\end{align}
where $g_i:=s_i(g)$ and $\widetilde{g_i}:=\widetilde{s_i(g)}$ is defined as the bundle automorphism of $\mathcal{P}$ from \ref{2.3}. The map $S$ is well-defined since $\mathrm{supp}(s_i(g))\subseteq V_i$. Furthermore there exists an $N\in\mathbb{N}$ such that $V_n\cap\mathrm{supp}(g)=\varnothing$ for all $n>N$, which implies $S(g)=\widetilde{g_1}\circ\cdots\circ\widetilde{g_N}$. Hence $\mathcal{Q}\circ S=\mathrm{id}_\mathcal{O}$. In other words $S$ is a local section for the homomorphism $\mathcal{Q}\colon\mathrm{Aut}_c(\mathcal{P})\rightarrow\mathrm{Diff}_c(M), F\mapsto F_M$ from \ref{1.1b}.
\end{mydef}

We would like to derive a formula for $S(g)$ in terms of local trivializations, i.e., express $S(g)\circ\sigma_i$ with $g_j,\sigma_j$ and $k_{jj'}$. This will be needed to show smoothness of maps with values in $\mathrm{Gau}_c(\mathcal{P})$, since we can only access the Lie group structure on the gauge group by the isomorphism $\mathrm{Gau}_c(\mathcal{P})\cong G_{\overline{\mathcal{V}}}(\mathcal{P})$.

\begin{bem}\label{2.7}\cite[Remark 2.7]{Wockel3}
First we look at an easy example to understand the general strategy to obtain such a formula. Let $x\in \overline{V}_i$ be such that $x\notin \overline{V}_j$ for $j<i$ and $g_i(x)\notin \overline{V}_j$ for $j>i$. This means $g_j(x)=x$ for all $j<i$ and $g_i(g_j(x))=g_i(x)$ for all $j>i$, hence  $S(g)(\sigma_i(x))=\sigma_i(g_i(x))=\sigma_i(g(x))$.\\
In general the situation is more complicated since the point $x\in M$ might be moved by successive $g_i$. We will now derive a formula to express these shifts locally. Let $g\in\mathcal{O}$ and $N\in\mathbb{N}$ minimal such that $\overline{V}_n\cap\mathrm{supp}(g)=\varnothing$ for all $n>N$ then $S(g)=\widetilde{g_1}\circ\cdots\circ\widetilde{g_N}$. Let further $j_1\in\mathbb{N}$ be the maximal index such that $x\in \overline{V}_{j_1}$, hence $\widetilde{g_{j_1}}$ is the first factor in $S(g)$ that can move $\sigma_i(x)$. We then get
\[
\widetilde{g_{j_1}}\big(\sigma_i(x)\big)=\widetilde{g_{j_1}}\big(\sigma_{j_1}(x)\big)\cdot k_{j_1i}(x)=\sigma_{j_1}\big(g_{j_1}(x)\big)\cdot k_{j_1i}(x)
\]
directly from the definition of $\widetilde{g_{j_1}}$ in \ref{2.3} since $k_{\sigma_{j_1}}(\sigma_i(x))=k_{j_1i}(x)$. The next $\widetilde{g_{j_2}}$ that could move $\widetilde{g_{j_1}}(\sigma_i(x))$ is the one for the maximal $j_2<j_1$ such that $g_{j_1}(x)\in\overline{V}_{j_2}$ and because $k_{\sigma_{j_2}}(p\cdot g)=k_{\sigma_{j_2}}(p)\cdot g$ we immediately get
\[
\widetilde{g_{j_2}}\left(\widetilde{g_{j_1}}\left(\sigma_i(x)\right)\right)=\sigma_{j_2}\left(g_{j_2}\circ g_{j_1}(x)\right)\cdot k_{j_2j_1}\left(g_{j_1}(x)\right)\cdot k_{j_1i}(x).
\]
We eventually arrive at
\begin{align}\label{formula6}
S(g)\big(\sigma_i(x)\big)=\sigma_{j_l}\big(g(x)\big)\cdot k_{j_lj_{l-1}}\left(g_{j_{l-1}}\circ\cdots\circ g_{j_1}(x)\right)\cdots\cdot\cdot k_{j_1i}(x),
\end{align}
since $g_{j_{l}}\circ\cdots\circ g_{j_1}(x)=g_1\circ\cdots\circ g_{j_1}(x)=g(x)$ because the omitted factors do not move the fixed $x\in M$. Here $\{j_l,\ldots,j_1\}\subseteq\{1,\ldots,j_1\}$ are maximal such that
\[
g_{j_{p-1}}\circ\cdots\circ g_{i_1}(x)\in \overline{V}_{j_p}\text{ for }2\leq p\leq l\text{ and } j_1>\ldots>j_p.
\]
It is not possible to use all indices in this formula because otherwise the $k_{jj'}$ and $\sigma_j$ would not be well-defined, however since $g_i(x)=x$ if $x\notin\overline{V}_i$, we may write
\begin{align}\label{2.7formula}
S(g)\big(\sigma_i(x)\big)=\sigma_{j_l}\big(g(x)\big)\cdot k_{j_lj_{l-1}}\left(g_{1}\circ\cdots\circ g_{j_1}(x)\right)\cdots\cdot\cdot k_{j_1i}(x),
\end{align}
using all $g_i$ from $g_1$ to $g_{j_1}$.\\ 
We will now show that there exist open neighbourhoods $\mathcal{O}_g$ of $g$ and $U_x$ of $x$, such that we may use the formula (\ref{formula6}) for $S(g')(\sigma_i(x'))$ with $g'\in\mathcal{O}_g$, $x'\in U_x$ without changing any of the indices.  For the formula to hold only the conditions
\begin{align}
&g_{j_{p-1}}\circ\cdots\circ g_{j_1}(x)\notin\overline{V}_j\text{ for }2\leq p\leq l\text{ and }j\notin\{j_p,\ldots,j_l\},j\leq N\text{ and}\label{formula7}\\
&g_{j_{p-1}}\circ\cdots\circ g_{j_1}(x)\in U_{j_p}\cap U_{j_{p-1}}\text{ for }2\leq p\leq l\label{formula8}
\end{align}
need to be satisfied for the given indices $j_1>\ldots>j_l$. Since the action of the diffeomorphisms $\alpha\colon\mathrm{Diff}_c(M)\times M\rightarrow M,g.m=g(m)$ is smooth, thus in particular continuous (\cite[cf. Proposition 6.2]{GloPatched}), we find open neighbourhoods of $g$ and $x$ where each of these finitely many conditions hold. After intersecting these neighbourhoods we obtain open neighbourhoods $U_x\subseteq U_i$ of $x$ and $\mathcal{O}_g$ of $g$ for which formula (\ref{formula6}) holds and is well-defined. This is because (\ref{formula7}) implies $g_{j_p}\circ\cdots\circ g_{j_1}\circ g_j(x)=g_{j_p}\circ\cdots\circ g_{j_1}(x)$ (which is obvious if $j>N$) and (\ref{formula8}) implies that the transition functions $k_{j_pj_{p-1}}$ are defined. 
\end{bem}

For many of the following considerations concerning smoothness we will need that the values of $S(g)$ locally only depend on the local values of $g$.

\begin{bem}\label{2.7a}
Using formula (\ref{2.7formula}) above and the definition of $s_i\colon\mathcal{O}\rightarrow\mathcal{O}'^{-1}\circ\mathcal{O}'$ from \ref{2.5}, we arrive at 
\[
S(g)\big(\sigma_i(x)\big)=\sigma_{j_l}\big(g(x)\big)\cdot k_{j_lj_{l-1}}\big(\varphi^{-1}((f_1+\cdots+f_{j_1})\cdot\varphi(g))(x)\big)\cdots\cdot\cdot k_{j_1i}(x)
\]
for $x\in\overline{V}_i$, because subsequent terms cancel except for the last one.
Since every factor only contains terms of the form $\varphi^{-1}((f_1+\cdots+f_{j})\cdot\varphi(g))$ for some $j\in\mathbb{N}$ all the factors are contained in $\mathcal{O}'$ and $S(g)\circ\sigma_i$ depends only on $g\big|_{\overline{V}_i}$ (see \ref{depend}).\\
Furthermore, since $g(x)\in U_i$ we have $\sigma(g(x))_{j_1}\cdot k_{j_1i}(x)=\tau_i(x)$, hence $\pi(S(g(x)))\in U_i$.
\end{bem}

Since we defined the Lie group structure on $\mathrm{Gau}_c(\mathcal{P})$ only indirectly via the isomorphism $\mathrm{Gau}_c(\mathcal{P})\cong C^\infty_c(P,G)^G$, we give a description of how the conjugation of $\mathrm{Aut}_c(\mathcal{P})$ on $\mathrm{Gau}_c(\mathcal{P})$ corresponds to the action of $\mathrm{Aut}_c(\mathcal{P})$ on $C^\infty_c(P,G)^G$. We need this conjugation to proceed as in \ref{II.5} in defining our smooth factor system.

\begin{bem}\label{2.8}
Recall that $\mathrm{Gau}_c(\mathcal{P})$ is a normal subgroup of $\mathrm{Aut}_c(\mathcal{P})$ as it the kernel of a homomorphism, also recall the identification of the gauge group with $C_c^{\infty}(P,G)^G$ via
\[
C_c^{\infty}(P,G)^G\rightarrow\mathrm{Gau}_c(\mathcal{P}),\quad\gamma\mapsto F_\gamma
\]
where $F_{\gamma}(p)=p\cdot\gamma(p)$ (see \ref{1.2}). The conjugation action 
\[
c\colon\mathrm{Aut}_c(\mathcal{P})\times\mathrm{Gau}_c(\mathcal{P})\rightarrow\mathrm{Gau}_c(\mathcal{P})
\]
is given by $c_F(F_\gamma)=F\circ F_\gamma\circ F^{-1}$. Under the above identification this changes into
\[
c\colon\mathrm{Aut}_c(\mathcal{P})\times C_c^{\infty}(P,G)^G \rightarrow C_c^{\infty}(P,G)^G,\quad (F,\gamma)\mapsto\gamma\circ F^{-1},
\]
because
\[
\left(F\circ F_\gamma\circ F^{-1}\right)(p)=F\left(F^{-1}(p)\cdot\gamma\left(F^{-1}(p)\right)\right)=p\cdot\gamma\left(F^{-1}(p)\right)=F_{(\gamma\circ F^{-1})}(p).
\]
\end{bem}

In the following lemmas we define the maps that will constitute our smooth factor system and check the local smoothness conditions. 
We will have to show that certain mappings with values in $C^\infty_c(P,G)^G$, resp. in $\mathrm{Gau}_c(\mathcal{P})$ are smooth. In the case of compact base $M$ it suffices to show that the composition of these maps with the pullback $\sigma_i^\ast$ of the sections $\sigma_i\colon\overline{V}_i\rightarrow P$ are smooth for each of the finitely many $i$. In our situation this is not sufficient and we must use arguments that relate to maps between direct sums.\\
However, we still need to see the smoothness of the local representations and can borrow heavily from the proofs in \cite{Wockel3} to do so. In general, we will use the explicit formula for $S(g)\circ\sigma_i(x)$ derived in \ref{2.7} and show that for some neighbourhood $U_x$ of $x$ and $\mathcal{O}_g$ of $g$ the formula depends smoothly on $g$. Then we can cover $\overline{V}_i$ with finitely many such neighbourhoods and (using the restriction and gluing maps) see that $S(g)\circ\sigma_i$ depends smoothly on $g$.\\
We begin with a lemma that will be used to see the smoothness of these explicit formulae. It uses new tools and simplifies some technical aspects of \cite{Wockel3}.

\begin{lem}\label{smooth}
Let $M$ be finite-dimensional manifold and $G$ be a Lie group. Further, let $\overline{U}\subseteq M$ and $\overline{U}'\subseteq M$ be compact manifolds with corners. If $\mathcal{O}''$ is an open set in $\mathrm{Diff}_c(M)$ such that $g(\overline{U})\subseteq\overline{U}'$ for all $g\in\mathcal{O}''$, then the map
\[
\alpha\colon\mathcal{O}''\times C^\infty(\overline{U}',G)\rightarrow C^\infty(\overline{U},G),\quad
(g,\gamma)\mapsto \gamma\circ g\big|_{\overline{U}}
\]
is smooth.
\end{lem}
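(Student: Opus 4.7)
The approach is to apply the exponential law (Theorem \ref{exp}): since $\overline{U}$ is a compact smooth manifold with corners and $\mathcal{O}'' \times C^\infty(\overline{U}', G)$ is a smooth manifold (being an open subset of a product of Lie groups), the map $\alpha$ will be smooth if and only if its adjoint
\[
\hat{\alpha}\colon \mathcal{O}'' \times C^\infty(\overline{U}', G) \times \overline{U} \to G, \quad (g, \gamma, x) \mapsto \gamma(g(x))
\]
is smooth. It therefore suffices to establish smoothness of $\hat\alpha$.

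Next, I would factor $\hat\alpha$ through the restricted action
\[
\beta\colon \mathcal{O}'' \times \overline{U} \to \overline{U}', \quad (g, x) \mapsto g(x),
\]
which is well-defined by the hypothesis $g(\overline{U}) \subseteq \overline{U}'$ for all $g\in\mathcal{O}''$. Since the action $\mathrm{Diff}_c(M) \times M \to M$ is smooth by \cite[Proposition 6.2]{GloPatched} (a fact already invoked in Remark \ref{2.7}), the restriction to $\mathcal{O}''\times\overline{U}$ is a smooth map into $M$, and corestricting to the submanifold with corners $\overline{U}'$ yields smoothness of $\beta$. Combining this with the evaluation map
\[
\epsilon\colon C^\infty(\overline{U}', G) \times \overline{U}' \to G, \quad (\gamma, y) \mapsto \gamma(y),
\]
which is smooth by Theorem \ref{eval}, we may write
\[
\hat\alpha(g, \gamma, x) = \epsilon\bigl(\gamma, \beta(g, x)\bigr),
\]
a composition of smooth maps, hence smooth. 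Applying Theorem \ref{exp} then yields the claim.

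The only real subtlety is the corestriction step: we must check that a smooth map into $M$ whose image is contained in $\overline{U}'$ is smooth as a map into $\overline{U}'$. This is a local statement and follows by choosing charts of $M$ adapted to the submanifold-with-corners structure of $\overline{U}'$, so that in coordinates the corestriction is simply reading off the relevant components of the composition. Once this technicality is addressed, the remainder of the argument is a clean application of the exponential law and the smoothness of evaluation from \cite{Hamsa}, and avoids any direct manipulation of charts on the mapping spaces.
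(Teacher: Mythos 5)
Your proposal is correct and follows essentially the same route as the paper: reduce to the adjoint map $\hat\alpha$ via Theorem \ref{exp}, then write it as the composition of the smooth action from \cite{GloPatched} with the evaluation map from Theorem \ref{eval}. You are in fact slightly more careful than the paper's own proof, which silently corestricts the action to $\overline{U}'$ without comment.
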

\begin{proof}
From \ref{eval} it directly follows that
\[
\epsilon_1\colon C^\infty(\overline{U}',G)\times\overline{U}'\rightarrow G,\quad
(\gamma,x)\mapsto\gamma(x)
\]
is a smooth map. Also, it is known (see \cite{GloPatched}) that
\[
\epsilon_2\colon\mathcal{O}''\times M\rightarrow M,\quad 
(g,x)\mapsto g(x)
\]
is a smooth map.  Now, by \ref{exp}, $\alpha$ is smooth if and only if
\[
\widehat{\alpha}\colon\mathcal{O}\times C^\infty(\overline{U}',G)\times\overline{U}\rightarrow G,\quad (g,\gamma,x)\mapsto \gamma(g(x))
\]
is smooth. But this is obvious because we have 
\[
\widehat{\alpha}\big(g,\gamma,x\big)=\gamma\big(\epsilon_2(g,x)\big)=\epsilon_1(\gamma,\epsilon_2(g,x)).
\]
\end{proof}

Since we want to apply \ref{2.12a} we need to have suitable direct sums of locally convex spaces on the left-hand side. The next lemma provides the necessary steps to achieve this.

\begin{lem}\label{2.9lem}
Let $\pi\colon E\rightarrow M$ be a smooth vector bundle, where $M$ is a finite-dimensional $\sigma$-compact smooth manifold without corners. Also, let $\mathcal{U}=(U_i)_{i\in\mathbb{N}}$ be an open locally finite cover of $M$ by relatively compact sets. If $\mathcal{V}=(V_j)_{j\in\mathbb{N}}$ is a closed refinement of $\mathcal{U}$ such that $V_i\subseteq U_i$, then there exists a continuous linear map
\[
\Phi\colon C^\infty_c(M,E)\rightarrow\bigoplus_{i\in\mathbb{N}}C_c^\infty(M,E)
\]
such that $(\Phi(\eta))_i\big|_{V_i}=\eta\big|_{V_i}$ for all $i\in\mathbb{N}$.
\end{lem}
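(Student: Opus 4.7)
The plan is to construct $\Phi$ by multiplying $\eta$ with smooth cut-off functions adapted to the pairs $(V_i, U_i)$. Since $M$ is a smooth paracompact manifold and $V_i$ is closed with $V_i \subseteq U_i$ open and relatively compact, smooth Urysohn's lemma furnishes smooth $f_i \colon M \to [0,1]$ with $f_i \equiv 1$ on $V_i$ and $\mathrm{supp}(f_i) \subseteq U_i$ (hence $\mathrm{supp}(f_i)$ is compact). I then set
\[
\Phi(\eta) := (f_i \cdot \eta)_{i\in\mathbb{N}}.
\]
The required identity $(\Phi(\eta))_i\big|_{V_i} = \eta\big|_{V_i}$ is immediate from $f_i\big|_{V_i} \equiv 1$, and $\Phi$ is obviously linear.

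To see that $\Phi$ actually lands in $\bigoplus_{i\in\mathbb{N}} C^\infty_c(M,E)$, note that each $f_i \cdot \eta$ has support in $\mathrm{supp}(f_i) \subseteq \overline{U}_i$ and is thus compactly carried. Moreover, given $\eta \in C^\infty_c(M,E)$, the compact set $\mathrm{supp}(\eta)$ meets only finitely many $U_i$ by Lemma \ref{finiteinter}, so $f_i \eta = 0$ for all but finitely many indices, and $\Phi(\eta)$ really does lie in the direct sum rather than in the full product.

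For continuity I invoke the direct limit description $C^\infty_c(M,E) = \varinjlim_K C^\infty_K(M,E)$ together with Remark \ref{directlimstetig}: it is enough to verify continuity of $\Phi\big|_{C^\infty_K(M,E)}$ for every compact $K \subseteq M$. Fix such a $K$ and set $I_K := \{i \in \mathbb{N} \colon U_i \cap K \neq \varnothing\}$, which is finite by Lemma \ref{finiteinter}. Then $\Phi$ maps $C^\infty_K(M,E)$ into the finite subsum $\bigoplus_{i \in I_K} C^\infty_{\overline{U}_i}(M,E)$, on which the direct-sum topology coincides with the product topology, so it suffices to check continuity of the individual components $\eta \mapsto f_i \cdot \eta$. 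Each of these is the section-space analogue of Lemma \ref{mult} (verified locally in trivializations using Remark \ref{vectorbundletop}), and its image sits inside $C^\infty_{\overline{U}_i \cap K}(M,E)$, which embeds continuously into $C^\infty_c(M,E)$ by construction of the direct-limit topology.

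The only delicate point is the bookkeeping between the two layers of compact support in the target: each component must live in $C^\infty_c(M,E)$, while the outer direct-sum structure additionally demands that almost all components vanish. Both conditions are forced by the single compact set $\mathrm{supp}(\eta)$ together with the local finiteness of $\mathcal{U}$, which is exactly why the construction is consistent; once this is in place, continuity reduces to standard facts about multiplication by smooth scalar functions on section spaces and the behaviour of linear maps out of locally convex direct limits.
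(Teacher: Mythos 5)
Your proof is correct and constructs the same map as the paper: both multiply $\eta$ by a family of smooth cutoff functions that equal $1$ on $V_i$ and have compact support inside $U_i$. The only difference is organizational --- the paper factors $\Phi$ through the restriction embedding $C^\infty_c(M,E)\hookrightarrow\bigoplus_{i\in\mathbb{N}}C^\infty(U_i,E\big|_{U_i})$ and checks continuity summandwise on direct sums, whereas you verify continuity directly on each $C^\infty_K(M,E)$ via the direct-limit description of the domain and local finiteness of $\mathcal{U}$; both routes are standard and yield the identical map.
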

\begin{proof}
Using \cite[Proposition F.19(a)]{GloTF} we get a continuous linear embedding 
\[
C_c^{\infty}(M,E)\hookrightarrow\bigoplus_{i\in\mathbb{N}} C^{\infty}(U_i,E\big|_{U_i})\quad \eta\mapsto\left(\eta\big|_{U_i}\right)_{i\in\mathbb{N}}.
\] 
Let $h_i\colon U_i\rightarrow\mathbb{R}$ be smooth functions such that $h_i\big|_{V_i}\equiv 1$ and $K_i:=\mathrm{supp}(h_i)\subseteq U_i$ is compact. Because $C^{\infty}(U_i,E\big|_{U_i})$ is a topological $C^{\infty}(U_i,\mathbb{R})$-module (\cite[Corollary F.13]{GloTF}) and $C^{\infty}(U_i,E\big|_{U_i})$ induces the topology on $C_{K_i}^{\infty}(U_i,E\big|_{U_i})$ we have that
\[
\bigoplus_{i\in\mathbb{N}} C^{\infty}(U_i,E\big|_{U_i})\rightarrow
\bigoplus_{i\in\mathbb{N}}C^{\infty}_{K_i}(U_i,E\big|_{U_i}),\quad(\eta_i)_{i\in\mathbb{N}}\mapsto(\eta_i\cdot h_i)_{i\in\mathbb{N}}
\]
is continuous and linear. We further have $C^{\infty}_{K_i}(U_i,E\big|_{U_i})\cong C^{\infty}_{K_i}(M,E)$ for all $i\in\mathbb{N}$ by \cite[Lemma F.15(b)]{GloTF} and using the inclusion map from \cite[Proposition F19(b)]{GloTF} we have $C_{K_i}^\infty(M,E)\hookrightarrow C^\infty_c(M,E)$. By composition we finally get a continuous linear map 
\[
\Phi\colon C^{\infty}_c(M,E)\rightarrow\bigoplus_{i\in\mathbb{N}}C^\infty_c(M,E),\quad \eta\mapsto(\widetilde{h}_i\cdot\eta)_{i\in\mathbb{N}} 
.\] 
Also, for all $\eta\in C^\infty_c(M,E)$ we have $(\Phi(\eta))_i\big|_{V_i}=\eta\big|_{V_i}$ by construction because $h_i\big|_{V_i}\equiv 1$.
\end{proof}

Since we modelled $C^\infty_c(P,G)^G$ on $\mathfrak{g}_{\overline{\mathcal{V}}}(\mathcal{P})$ we will take a slightly different approach than the one used in \cite[Lemma 2.9]{Wockel3} to prove the next lemma. 

\begin{lem}\label{2.9}
Let $\mathcal{O}\subseteq\mathrm{Diff}_c(M)$ be the open identity neighbourhood from \ref{2.4} and let $S\colon\mathcal{O}'\rightarrow\mathrm{Aut}_c(\mathcal{P})$ be the map from \ref{2.6}. Then there exists an open set $U\subseteq C_c^{\infty}(P,G)^G$ such that the map
\[
t\colon U\times\mathcal{O}\rightarrow C_c^{\infty}(P,G)^G,\quad(\gamma,g)\mapsto\gamma\circ S(g)^{-1}
\] 
is smooth.
\end{lem}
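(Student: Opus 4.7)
The strategy is to exploit the defining isomorphism $\Psi^{-1}\colon C^\infty_c(P,G)^G\to G_{\overline{\mathcal{V}}}(\mathcal{P})$ from \ref{1.6} together with the embedding $G_{\overline{\mathcal{V}}}(\mathcal{P})\hookrightarrow\prod^\ast_{i\in\mathbb{N}}C^\infty(\overline{V}_i,G)$. Since the Lie group structure on the gauge group is defined via the property $\text{SUB}_\oplus$-chart $\varphi_\ast$ of $\overline{\mathcal{V}}$, smoothness of $t$ will be reduced to smoothness of each component $(\gamma,g)\mapsto t(\gamma,g)\circ\sigma_i \in C^\infty(\overline{V}_i,G)$ plus a direct-sum assembly in the chart.

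The first step is to derive an explicit local formula. Writing $S(g)^{-1}=\widetilde{g_N^{-1}}\circ\cdots\circ\widetilde{g_1^{-1}}$ by \ref{2.3} and repeating the bookkeeping of \ref{2.7} with reversed order, for fixed $g_0\in\mathcal{O}$ and $x_0\in\overline{V}_i$ there exist neighbourhoods $\mathcal{O}_{g_0}\subseteq\mathcal{O}$, $U_{x_0}\subseteq\overline{V}_i$ and indices $j_1<\cdots<j_l$ such that on $\mathcal{O}_{g_0}\times U_{x_0}$,
\[
S(g)^{-1}(\sigma_i(x))=\sigma_{j_l}\bigl(g^{-1}(x)\bigr)\cdot h(g,x),
\]
where $h(g,x)$ is a finite product of transition functions $k_{j_pj_{p-1}}$ evaluated at iterates $g_{j_{p-1}}^{-1}\circ\cdots\circ g_{j_1}^{-1}(x)$. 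Applying $\gamma$ and using its $G$-equivariance yields
\[
t(\gamma,g)(\sigma_i(x))=h(g,x)^{-1}\cdot(\gamma\circ\sigma_{j_l})\bigl(g^{-1}(x)\bigr)\cdot h(g,x).
\]

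Each factor is smooth in $(\gamma,g)$: the map $g\mapsto h(g,\cdot)\in C^\infty(\overline{U_{x_0}},G)$ depends smoothly on $g$, because each $g_j^{-1}=s_j(g)^{-1}$ does so by \ref{2.5} together with smoothness of inversion, because composition of diffeomorphisms is smooth, and because pullback by the smooth $k_{j_pj_{p-1}}$ is continuous linear; the map $(\gamma,g)\mapsto (\gamma\circ\sigma_{j_l})\circ g^{-1}|_{\overline{U_{x_0}}}$ is smooth by Lemma \ref{smooth} (after shrinking $U_{x_0}$ and $\mathcal{O}_{g_0}$ so that $g^{-1}(\overline{U_{x_0}})$ stays inside a fixed compact submanifold with corners of $V_{j_l}$), combined with the smoothness of the coordinate projection $\gamma\mapsto\gamma\circ\sigma_{j_l}$ and of the restriction \ref{A.17}; pointwise conjugation is smooth in the Lie group $C^\infty(\overline{U_{x_0}},G)$. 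Covering the compact $\overline{V}_i$ by finitely many such neighbourhoods and gluing via a subordinate partition of unity (as in the construction in \ref{1.12}) gives smoothness of $(\gamma,g)\mapsto t(\gamma,g)\circ\sigma_i$ into $C^\infty(\overline{V}_i,G)$.

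The main obstacle is the passage from componentwise smoothness to smoothness into the weak direct product. I would choose $U$ small enough that $t(U\times\mathcal{O})$ lies in the $\text{SUB}_\oplus$-chart of $G_{\overline{\mathcal{V}}}(\mathcal{P})$, so that composing with $\varphi_\ast$ translates the problem into one about maps into the direct sum $\bigoplus_{i\in\mathbb{N}}C^\infty(\overline{V}_i,\mathfrak{g})$. Since $g\in\mathcal{O}$ has compact support and $\gamma$ is compactly carried, only finitely many $i$ contribute to each pair, so the target is genuinely in the direct sum. To express $\gamma$-dependence as a map from a direct sum, I would use \ref{2.9lem} to split $\gamma$ into a family $(\gamma_k)_{k\in\mathbb{N}}$ with $\gamma_k|_{V_k}=\gamma|_{V_k}$; the $i$-th component formula above then involves $\gamma_{j_l}$ only, and the finite collection of indices used by the $i$-th component has the local-finiteness property required by Lemma \ref{lem1.10}. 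Applying \ref{lem1.10} delivers smoothness of the assembled map into the direct sum chart of $\mathfrak{g}_{\overline{\mathcal{V}}}(\mathcal{P})$, and passing back through $\varphi_\ast$ yields the claim. The delicate point, and the reason $\text{SUB}_\oplus$ rather than Wockel's $\text{SUB}$ is needed, is precisely this combinatorial packaging of infinitely many local pieces into a single smooth map between direct sums.
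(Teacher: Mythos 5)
Your overall strategy coincides with the paper's: derive the explicit local formula for $S(g)^{-1}\circ\sigma_i$ as in \ref{2.7}, use the equivariance of $\gamma$ to turn it into a conjugation by a product $\kappa$ of transition functions, establish smooth dependence of each factor via \ref{smooth} and the smoothness of the $s_j$, glue over finitely many neighbourhoods covering the compact $\overline{V}_i$, and then assemble the countably many components into a single smooth map in the $\text{SUB}_\oplus$-chart. Up to and including the gluing step your argument is sound and matches the paper.

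The genuine gap is in the final assembly. You invoke \ref{2.9lem} to split $\gamma$ into a family $(\gamma_k)_{k}$, but this is both misplaced and insufficient. It is misplaced because in the $\text{SUB}_\oplus$-chart the gauge element is \emph{already} presented as an element $(\eta_i)_{i\in\mathbb{N}}$ of a locally convex direct sum $\bigoplus_i C^\infty(\overline{U}_i,W_i')$, so no duplication of $\gamma$ is needed; moreover the paper arranges the formula so that the $i$-th output component involves only $\eta_i$, by rewriting $S(g)^{-1}(\sigma_i(x'))=\tau_i(g^{-1}(x'))\cdot\kappa_{x,g}(x')$ with the section $\tau_i$ over the larger set $\overline{U}_i\supseteq g^{-1}(\overline{V}_i)$, rather than leaving the $x$-dependent index $j_l$ in the $\gamma$-factor as you do. It is insufficient because the object that genuinely must be duplicated is the diffeomorphism: $g$ enters every component, but its chart image $\xi=\varphi(g)$ is a \emph{single} element of $C^\infty_c(M,TM)$, not a member of a direct sum, so neither \ref{2.12a} nor \ref{lem1.10} applies to the map $(\;(\eta_i)_i,\xi\;)\mapsto(\ldots)_i$ as it stands. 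The paper resolves this by applying \ref{2.9lem} to $TM$, obtaining a continuous linear $\Phi\colon C^\infty_c(M,TM)\rightarrow\bigoplus_i C^\infty_c(M,TM)$ with $(\Phi(\xi))_i\big|_{\overline{U}_i}=\xi\big|_{\overline{U}_i}$; since by \ref{2.7a} the $i$-th component depends on $g$ only through $g\big|_{U_i}$, precomposing with $\mathrm{id}\times\Phi$ turns the whole map into one of the componentwise form $\bigoplus_i f_i$ to which \ref{2.12a} applies. Your proof needs this step (or an equivalent device) for the $g$-variable; without it the reduction to \ref{lem1.10} does not typecheck, since that lemma requires the entire domain to be a locally convex direct sum.
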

\begin{proof}
Let $\varphi\colon\mathcal{O}\rightarrow\Omega$ be the chart from \ref{2.4} and 
let $(\phi_i\colon W_i\rightarrow W'_i)_{i\in\mathbb{N}}$ be a family of centered charts of $G$ for which $G_{\overline{\mathcal{U}}}(\mathcal{P})$ has the property $\text{SUB}_\oplus$. Then the sets $U'':=\mathfrak{g}_{\mathcal{\overline{U}}}(\mathcal{P})\cap\bigoplus_{i\in\mathbb{N}} C^\infty(\overline{U}_i,W'_i)$ and  $U':=G_{\mathcal{\overline{U}}}(\mathcal{P})\cap\prod_{i\in\mathbb{N}}^\ast C^\infty(\overline{U}_i,W_i)$ are open and by the isomorphism $C_c^{\infty}(P,G)^G\cong G_{\mathcal{\overline{U}}}(\mathcal{P})$ we get a respective open set $U\subseteq C_c^{\infty}(P,G)^G$.\\
We want to derive an explicit formula for the function $t$. Let $g\in\mathcal{O}$ and $x\in\overline{V}_i$. As in \ref{2.7}, we find $i_1,\ldots,i_l$ and open neighbourhoods $\mathcal{O}_g$ of $g$ and a relatively open neighbourhood $U_x\subseteq\overline{V}_i$ of $x$ (w.l.o.g. so that $\overline{U}_x$ is a submanifold with corners of $\overline{V}_i$) such that
\begin{align*}
S(g')^{-1}\left(\sigma_{i}(x')\right)&=\sigma_{i_l}\left(g'^{-1}(x')\right)\cdot k_{i_li_{l-1}}\left((g_{l-1}'^{-1}\circ\cdots\circ g'^{-1}_{i_1})(x')\right)\cdots\cdot\cdot k_{{i_1}i}(x')\\
&=\tau_i(g'^{-1}(x'))\cdot\\
&\underbrace{k_{ii_l}(g'^{-1}(x'))\cdot k_{i_li_{l-1}}\left((g_{l-1}'^{-1}\circ\cdots\circ g'^{-1}_{i_1})(x')\right)\cdots\cdot\cdot k_{{i_1}i}(x')}_{\kappa_{x,g'}(x'):=}
\end{align*}
for all $g'\in\mathcal{O}_g$ and $x'\in \overline{U}_x$, because $g'^{-1}(x')\in U_i$. By \ref{2.7a} the values of this formula depend only on $g'\big|_{U_i}$. Since we will not vary $i$ and $g$ in the sequel, we suppressed the dependence of $\kappa_{x,g'}$ on them. We will show that for fixed $x$, the formula for $\kappa_{x,g'}$ defines a smooth function on $\overline{U}_x$ that depends smoothly on $g'$. In fact, we have $g'(\overline{U}_x)\subseteq\overline{U}_i$ and thus 
\[
\mathcal{O}\times C^\infty(\overline{U}_i,G)\rightarrow C^\infty(\overline{U}_x,G),\quad (g,\gamma)\mapsto \gamma\circ g\big|_{\overline{U}_x}
\]
is smooth by \ref{smooth}. Since $C^\infty(\overline{U}_x,G)$ is a Lie group, finite products of Lie group elements are smooth and $g'_i$ depends smoothly on $g'$ because the $s_i$ from \ref{2.5} are smooth, hence $\kappa_{x,g'}$ depends smoothly on $g'$.\\ 
Furthermore, $\kappa_{x_1,g'}(x')$ and $\kappa_{x_2,g'}(x')$ coincide on $\overline{U}_{x_1}\cap\overline{U}_{x_2}$, because both define $S(g')^{-1}(\sigma_{i}(x'))$ there. Now, finitely many $U_{x_1},\ldots,U_{x_m}$ cover $\overline{V}_i$ and since the gluing and restriction maps from \ref{A.17} and \ref{A.18} are smooth 
\[ \theta_{g,i}(g'):=\mathrm{glue}(\kappa_{x_1,g'}\big|_{\overline{U}_{x_1}},\ldots,\kappa_{x_m,g'}\big|_{\overline{U}_{x_m}})\in C^\infty(\overline{V}_i,G)
\]
depends smoothly on $g'$. After intersecting finitely many open sets we may assume $\theta_{g,i}$ is defined on an open neighbourhood $\mathcal{O}_{g,i}\subseteq\mathcal{O}$ of $g$. There exists an $N\in\mathbb{N}$ such that $\mathrm{supp}(g)\cap\overline{U}_n=\varnothing$ for all $n>N$ which implies that $\theta_{g,n}$ is trivial. Because $\theta_{\mathrm{id}_M,i}$ is trivial for all $i\in\mathbb{N}$ we may assume that almost all $\mathcal{O}_{g,i}$ contain the identity. Recall that for a given $(\eta_i)_{i\in\mathbb{N}}\in U''$ the corresponding $\gamma\in C^\infty_c(P,G)^G$ is given by $k_{\tau_i}(p)^{-1}\cdot(\phi_i^{-1}\circ\eta_i(\pi(p)))\cdot k_{\tau_i}(p)$ for $p\in \pi^{-1}(\overline{U}_i)$. 
For $\gamma\circ S(g)^{-1}\circ\sigma_i$ and $\xi=\varphi^{-1}(g)$ we get  
\[
(C^\infty(\overline{V}_i,\mathfrak{g})\times\Omega)\ni(\eta_i,\xi)\mapsto(\eta_i,\varphi^{-1}(\xi))\mapsto\theta_{g,i}^{-1}\cdot(\phi^{-1}\circ\eta_i\circ g^{-1}\big|_{\overline{V}_i})\cdot\theta_{g,i}
\]
as local description of the components of $t$. We want to apply \ref{2.12a} and thus need to find a suitable direct sum. 
Define $\Omega_{g,i}:=\varphi^{-1}(\mathcal{O}_{g,i})$ and note that almost all $\Omega_{g,i}$ are zero neighbourhoods.
Using \ref{2.9lem} there exists a continuous linear map
$\Phi\colon C_c^\infty(M,TM)\rightarrow\bigoplus_{i\in\mathbb{N}}C_c^\infty(M,TM)$
such that $(\Phi(\xi))_i\big|_{\overline{U}_i}=\xi\big|_{\overline{U}_i}$. The map
\[
\Psi\colon\bigoplus_{i\in\mathbb{N}}\bigg( C^\infty(\overline{U}_i,W'_i)\times \Omega_{g,i}\bigg)\rightarrow\sideset{}{^\ast}\prod_{i\in\mathbb{N}} C^\infty(\overline{V}_i,G),
\]
\[
(\eta_i,\xi_i)_{i\in\mathbb{N}}\mapsto\bigg(\theta_{g,i}^{-1}\cdot(\phi_i^{-1}\circ\eta_i\circ\varphi^{-1}(\xi_i)^{-1}\big|_{\overline{V}_i})\cdot\theta_{g,i}\bigg)_{i\in\mathbb{N}}
\]
is smooth by \ref{2.12a}, \ref{smooth} and the above. The open set $\Omega_{g}:=\Phi^{-1}(\bigoplus_{i\in\mathbb{N}}\Omega_{g,i})\cap\Omega$
is a neighbourhood of $\varphi^{-1}(g)$ and $\Psi\circ(\mathrm{id}_{U''}\times\Phi\big|_{\Omega_{g}})$ is smooth. By definition of $\Phi$ the components of this map coincide with the components of the local description of $t$.
\end{proof}

\begin{lem}\label{2.10}\cite[cf. Lemma 2.10]{Wockel3}
If $\mathcal{O}\subseteq\mathrm{Diff}_c(M)$ is the open identity neighbourhood from \ref{2.4} and 
if $S\colon\mathcal{O}\rightarrow\mathrm{Aut}_c(\mathcal{P})$ is the map from \ref{2.6}, then for each 
$\gamma\in C_c^{\infty}(P,G)^G$, the map
\[\mathcal{O}\ni g\mapsto\gamma\circ S(g)^{-1}\in C_c^{\infty}(P,G)^G
\]
is smooth.
\end{lem}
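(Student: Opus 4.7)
Since the Lie group isomorphism $C_c^\infty(P,G)^G \cong G_{\overline{\mathcal{V}}}(\mathcal{P})$ of Remark~\ref{1.6} identifies the Lie group structures, it suffices to prove that $g \mapsto (\gamma \circ S(g)^{-1} \circ \sigma_i)_{i\in\mathbb{N}}$ is smooth from $\mathcal{O}$ into $G_{\overline{\mathcal{V}}}(\mathcal{P}) \subseteq \sideset{}{^\ast}\prod_{i\in\mathbb{N}} C^\infty(\overline{V}_i, G)$. Smoothness being a local property, fix $g_0 \in \mathcal{O}$ and set $\delta_0 := \gamma \circ S(g_0)^{-1}$. Since left translation $L_{\delta_0}$ in $C_c^\infty(P,G)^G$ is a diffeomorphism of Lie groups, it is equivalent to show that
\[
\Lambda\colon g \mapsto \delta_0^{-1} \cdot (\gamma \circ S(g)^{-1})
\]
is smooth on an open neighbourhood of $g_0$; note that $\Lambda(g_0) = 1$.

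The plan is to mimic the proof of Lemma~\ref{2.9}. For each $i$ and each $x \in \overline{V}_i$, Remark~\ref{2.7} together with the $G$-equivariance of $\gamma$ yields the local formula
\[
\gamma\bigl(S(g)^{-1}(\sigma_i(x'))\bigr) = \kappa_{x,g}(x')^{-1} \cdot \mu_i\bigl(g^{-1}(x')\bigr) \cdot \kappa_{x,g}(x'),
\]
where $\mu_i := \gamma \circ \tau_i \in C^\infty(\overline{U}_i, G)$, valid on a relatively open neighbourhood $\overline{U}_x \subseteq \overline{V}_i$ of $x$ and for $g$ in some open $\mathcal{O}_{g_0} \subseteq \mathcal{O}$ around $g_0$. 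Each factor depends smoothly on $g$: the middle factor by Lemma~\ref{smooth} (using that $g^{-1}(\overline{U}_x) \subseteq \overline{U}_i$), and the outer factors by the smooth dependence of $\kappa_{x,g}$ established in the proof of Lemma~\ref{2.9}. Pointwise multiplication by the fixed $(\delta_0 \circ \sigma_i)^{-1}$ preserves smoothness, and gluing via Lemmas~\ref{A.17} and~\ref{A.18} after covering $\overline{V}_i$ by finitely many such $\overline{U}_x$ gives smoothness of $g \mapsto \Lambda(g)_i$ into $C^\infty(\overline{V}_i, G)$.

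The main technical obstacle is that $\mu_i$ need not take values in the chart domain $W_i$ of the charts witnessing $\mathrm{SUB}_\oplus$, so the chart $(\varphi_i)_\ast$ cannot be applied directly to $\gamma \circ S(g)^{-1}$ as in Lemma~\ref{2.9}. This is precisely resolved by the left translation: since $\Lambda(g_0) = 1$ and each componentwise expression is continuous in $g$, local finiteness of $\overline{\mathcal{V}}$ implies that for $g$ in a sufficiently small neighbourhood of $g_0$, every component $\Lambda(g)_i$ takes values in $W_i$ simultaneously. The chart $(\varphi_i)_\ast$ can then be applied componentwise, and the individual smooth maps assemble into a smooth map into $\bigoplus_{i\in\mathbb{N}} C^\infty(\overline{V}_i, W'_i)$ by the same chain of reductions used at the end of the proof of Lemma~\ref{2.9}, namely Lemmas~\ref{2.9lem}, \ref{2.12a} and~\ref{lem1.10}. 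This establishes smoothness of $\Lambda$ near $g_0$, and hence of $g \mapsto \gamma \circ S(g)^{-1}$ on all of $\mathcal{O}$.
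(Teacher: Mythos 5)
Your local computation is correct and coincides with the paper's: the formula $\gamma\bigl(S(g)^{-1}(\sigma_i(x'))\bigr)=\kappa_{x,g}(x')^{-1}\cdot(\gamma\circ\tau_i)\bigl(g^{-1}(x')\bigr)\cdot\kappa_{x,g}(x')$ from Remark \ref{2.7} and $G$-equivariance, smooth dependence of each factor via Lemma \ref{smooth}, and gluing over finitely many $\overline{U}_{x_k}$ covering $\overline{V}_i$ by \ref{A.17} and \ref{A.18}. Where you diverge is in assembling the componentwise smoothness into smoothness of the map into $\prod^\ast_{i\in\mathbb{N}}C^\infty(\overline{V}_i,G)$, and there one step is justified by the wrong reason. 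The paper observes at the outset that, since $\mathrm{supp}(\gamma)\subseteq\pi^{-1}(K)$ with $K$ compact and $S(g)^{-1}\circ\sigma_i(\overline{V}_i)\subseteq\pi^{-1}(\overline{U}_i)$ by Remark \ref{2.7a}, the components $\gamma\circ S(g)^{-1}\circ\sigma_i$ are identically $1$ for all $i>N$ (with $N$ chosen so that $\overline{U}_i\cap K=\varnothing$ for $i>N$) and for \emph{all} $g\in\mathcal{O}$; hence the map takes values in a finite sub-product, on which the weak direct product induces the product topology, and smoothness reduces to the finitely many componentwise verifications. No left translation, no $\text{SUB}_\oplus$ charts, and none of \ref{2.9lem}, \ref{2.12a}, \ref{lem1.10} are needed. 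Your route would also work, but the assertion that local finiteness of $\overline{\mathcal{V}}$ alone forces every component $\Lambda(g)_i$ to take values in $W_i$ simultaneously for $g$ near $g_0$ is not justified as stated: continuity gives, for each $i$, a neighbourhood $\mathcal{O}_i$ of $g_0$, and an intersection of infinitely many such neighbourhoods need not be a neighbourhood. What rescues the claim is precisely the compact support of $\gamma$ together with \ref{2.7a}, which makes all but finitely many components identically $1$ -- and once that is invoked, the finite-product shortcut renders the remaining direct-sum machinery superfluous. So the conclusion stands, but the global assembly should be routed through the compact support of $\gamma$ rather than through local finiteness of the cover and Lemmas \ref{2.9lem}, \ref{2.12a}, \ref{lem1.10}.
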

\begin{proof}
Let $\gamma\in C^{\infty}_c(P,G)^G$ with $\mathrm{supp}(\gamma)=K\subseteq M$ compact and $N\in\mathbb{N}$ such that $\overline{U}_i\cap K=\varnothing$ if $i>N$. Using $C^{\infty}_c(P,G)^G\cong G_{\overline{\mathcal{V}}}(\mathcal{P})$ it now suffices to show that $\gamma\circ S(g)^{-1}\circ\sigma_i\big|_{\overline{V}_i}$ depends smoothly on $g$ for $1\leq i\leq N$ because $\gamma$ is trivial outside of $\pi^{-1}(K)$ and $S(g)^{-1}\circ\sigma_i(\overline{V}_i)\subseteq\pi^{-1}(\overline{U}_i)$ by \ref{2.7a}. Let $(\gamma_1,\ldots,\gamma_N)\in G_{\overline{\mathcal{U}}}(\mathcal{P})\cap\prod_{i=1}^NC^\infty(\overline{U}_i,G)\subseteq\bigoplus_{i\in\mathbb{N}}C^\infty(\overline{U}_i,G)$ be the local description of $\gamma$. Fix $g\in\mathcal{O}$ and $x\in\overline{V}_i$. Then \ref{2.7} yields an open neighbourhood $\mathcal{O}_g$ of $g$ and a relatively open neighbourhood $U_x\subseteq\overline{V}_i$ of $x$ (w.l.o.g. such that $\overline{U}_x\subseteq\overline{V}_i$ is a submanifold with corners) such that
\begin{align*}
\gamma\big(S(g')^{-1}\left(\sigma_i(x')\right)\big)&=\gamma\bigg(\left(\sigma_{j_l}(g'^{-1}(x'))\right)\cdot
\underbrace{k_{j_lj_{l-1}}\left(g'^{-1}_{j_{l-1}}\circ\cdots\circ g'^{-1}_{j_1}(x')\right)\cdot\cdot\cdots k_{j_1i}(x')}_{:=\kappa_{x,g'}(x')}\bigg)\\
&=\kappa_{x,g'}(x')^{-1}\cdot\gamma\left(\sigma_{j_l}\left(g'^{-1}(x')\right)\right)\cdot\kappa_{x,g'}(x')\\
&=\underbrace{\kappa_{x,g'}(x')^{-1}\cdot\gamma_{j_l}\left(g'^{-1}(x')\right)\cdot\kappa_{x,g'}(x')}_{:=\theta_{x,g'}(x')}
\end{align*}
for all $g'\in\mathcal{O}_g$ and $x'\in\overline{U}_x$. Since we will not vary $i$ and $g$ in the sequel, we suppressed the dependence of $\kappa_{x,g'}(x')$ and $\theta_{x,g'}(x')$ on them. For fixed $x$, the formula for $\theta_{x,g'}$ defines a smooth function on $\overline{U}_x$ that depends smoothly on $g'$. In fact, we have $g'(\overline{U}_x)\subseteq\overline{U}_i$ and thus 
\[
\mathcal{O}'\times C^\infty(\overline{U}_i,G)\rightarrow C^\infty(\overline{U}_x,G),\quad (g,\gamma)\mapsto \gamma\circ g\big|_{\overline{U}_x}
\]
is smooth by \ref{smooth}. Furthermore, $C^\infty(\overline{U}_x,G)$ is a Lie group, finite products of Lie group elements are smooth and $g'_i$ depends smoothly on $g'$ because the $s_i$ from \ref{2.5} are smooth.\\
Note that $\theta_{x_1,g'}(x')$ and $\theta_{x_2,g'}(x')$ coincide on $\overline{U}_{x_1}\cap\overline{U}_{x_2}$, because both define $\gamma\circ S(g')^{-1}\circ\sigma_i$ there. Now finitely many $U_{x_1},\ldots,U_{x_m}$ cover $\overline{V}_i$ and since the gluing and restriction maps from \ref{A.17} and \ref{A.18} are smooth, 
\[
\gamma\circ S(g')^{-1}\circ\sigma_i=\mathrm{glue}\left(\theta_{x_1,g'}\big|_{\overline{U}_{x_1}},\ldots,\theta_{x_m,g'}\big|_{\overline{U}_{x_m}}\right)
\]
depends smoothly on $g'$.
\end{proof}

\begin{lem}\label{2.11}\cite[cf. Lemma 2.11]{Wockel3}
Let $\mathcal{O}\subseteq\mathrm{Diff}_c(M)$ be the open identity neighbourhood from \ref{2.4} and let $S\colon\mathcal{O}\rightarrow\mathrm{Aut}_c(\mathcal{P})$ be the map from \ref{2.6}. Then for each $F\in\mathrm{Aut}_c(\mathcal{P})$ the map 
$c_F\colon C_c^{\infty}(P,G)^G\rightarrow C_c^{\infty}(P,G)^G,\gamma\mapsto\gamma\circ F^{-1}$ 
is an automorphism of $C_c^{\infty}(P,G)^G$ and the map
\[T\colon C_c^{\infty}(P,G)^G\times\mathcal{O}\rightarrow C_c^{\infty}(P,G)^G,\quad(\gamma,g)\mapsto\gamma\circ S(g)^{-1}
\]
is smooth.
\end{lem}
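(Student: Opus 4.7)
The plan is to reduce both assertions to the preceding lemmas \ref{2.9} and \ref{2.10}, exploiting the Lie group structure on $C^{\infty}_c(P,G)^G$ (with pointwise multiplication) and the key observation that right composition with a bundle automorphism distributes over pointwise products: $(\gamma_1\gamma_2)\circ F = (\gamma_1\circ F)\cdot(\gamma_2\circ F)$. In particular every $c_F$ is an abstract group homomorphism with two-sided inverse $c_{F^{-1}}$, so the first assertion is reduced to showing that $c_F$ is smooth.

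Since $c_F$ is a group homomorphism, it suffices to verify smoothness on an identity neighbourhood, and I would proceed along the lines of the proof of \ref{2.9}. Set $h:=\mathcal{Q}(F^{-1})\in\mathrm{Diff}_c(M)$ and fix $i\in\mathbb{N}$. For each $x\in\overline{V}_i$, choose an index $j(x)$ with $h(x)\in U_{j(x)}$; then on a relatively open neighbourhood $U_x$ of $x$ in $\overline{V}_i$ (chosen so that $\overline{U}_x$ is a submanifold with corners and $h(\overline{U}_x)\subseteq U_{j(x)}$) one obtains a smooth, $\gamma$-independent cocycle $\kappa_{i,x}\colon\overline{U}_x\to G$ characterised by $F^{-1}(\sigma_i(x'))=\sigma_{j(x)}(h(x'))\cdot\kappa_{i,x}(x')$, whence by $G$-equivariance of $\gamma$,
\[
(\gamma\circ F^{-1})(\sigma_i(x'))\;=\;\kappa_{i,x}(x')^{-1}\cdot\gamma_{j(x)}(h(x'))\cdot\kappa_{i,x}(x').
\]
Covering $\overline{V}_i$ by finitely many such $U_x$ and invoking the gluing/restriction maps from \ref{A.17} and \ref{A.18}, together with \ref{smooth} to handle the term $\gamma_{j(x)}\circ h$, shows that $(\gamma\circ F^{-1})\circ\sigma_i$ depends smoothly on finitely many components $(\gamma\circ\sigma_j)_j$. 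The passage to the full direct sum $G_{\overline{\mathcal{V}}}(\mathcal{P})$ is then carried out with \ref{2.9lem} and \ref{2.12a}, exactly as in \ref{2.9}.

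For the joint smoothness of $T$, I would fix an arbitrary point $(\gamma_0,g_0)\in C^{\infty}_c(P,G)^G\times\mathcal{O}$ and use the factorisation
\[
T(\gamma,g)\;=\;T(\gamma\gamma_0^{-1},g)\cdot T(\gamma_0,g),
\]
which holds because right composition with $S(g)^{-1}$ distributes over pointwise multiplication. For $\gamma$ in a sufficiently small neighbourhood of $\gamma_0$, the element $\gamma\gamma_0^{-1}$ lies in the identity neighbourhood $U$ from \ref{2.9}, so $(\gamma,g)\mapsto T(\gamma\gamma_0^{-1},g)$ is smooth by \ref{2.9}; the factor $(\gamma,g)\mapsto T(\gamma_0,g)$ is constant in $\gamma$ and smooth in $g$ by \ref{2.10}. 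Since multiplication in the Lie group $C^{\infty}_c(P,G)^G$ is smooth, $T$ is smooth near $(\gamma_0,g_0)$, and as the point was arbitrary, $T$ is smooth everywhere.

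The main obstacle I expect is the first half: unlike in \ref{2.9}, here $F$ is an \emph{arbitrary} element of $\mathrm{Aut}_c(\mathcal{P})$, so $h=\mathcal{Q}(F^{-1})$ may move points of $\overline{V}_i$ arbitrarily far and the index $j(x)$ and associated cocycle $\kappa_{i,x}$ genuinely depend on $F$ and on $x$. One therefore has to ensure that the covering of $\overline{V}_i$ by the neighbourhoods $U_x$ can be chosen finite (which follows from compactness of $\overline{V}_i$) and that the local expressions agree on overlaps (which is forced by the cocycle condition on the transition functions of $\mathcal{P}$); the direct-sum smoothness argument then proceeds verbatim as in \ref{2.9}, and the joint-smoothness step becomes purely formal.
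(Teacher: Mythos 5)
Your treatment of $T$ is essentially the paper's argument: the same translation trick $\gamma\circ S(g)^{-1}=\big(\gamma_0\circ S(g)^{-1}\big)\cdot\big((\gamma_0^{-1}\gamma)\circ S(g)^{-1}\big)$ (you multiply $\gamma_0$ off on the other side, which changes nothing) combined with \ref{2.9} and \ref{2.10}. For $c_F$, however, you take a genuinely different route. The paper transports the trivializing system along $F$: it observes that $\overline{\mathcal{V}}':=\big(F_M(\overline{V}_i),F\circ\sigma_i\circ F_M^{-1}\big)_{i\in\mathbb{N}}$ is again a locally finite compact trivializing system for which $\mathcal{P}$ has the property $\text{SUB}_\oplus$ (with the same charts), and that relative to the pair $\overline{\mathcal{V}},\overline{\mathcal{V}}'$ the local representation of $c_F$ is the bare componentwise pullback $(\eta_i)_{i\in\mathbb{N}}\mapsto(\eta_i\circ F_M^{-1})_{i\in\mathbb{N}}$, which extends to the whole direct sum and is smooth there; independence of the Lie group structure from the trivializing system then finishes the argument. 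You instead stay in the fixed system and write out the local cocycles $\kappa_{i,x}$ explicitly, in the style of \ref{2.9} and \ref{2.10}. This works, but two details need adjusting. First, since $F_M^{-1}$ can move $\overline{V}_i$ entirely off $U_i$, the $i$-th output component depends on \emph{several} input components $\gamma_j$ (those with $\overline{U}_j\cap F_M^{-1}(\overline{V}_i)\neq\varnothing$), so \ref{2.12a} --- which is strictly component-to-component --- does not apply; you need \ref{lem1.10} applied in charts, whose finiteness hypotheses hold because the covers are locally finite and the $F_M^{\pm 1}(\overline{V}_i)$ are compact; likewise \ref{2.9lem} plays no role here, as there is no diffeomorphism variable to distribute over the sum. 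Second, as in the proof of \ref{1.12}, one must shrink the chart domains so that conjugation by the finitely many relevant $\kappa_{i,x}$ keeps the local representation inside the target charts. The paper's transport trick buys exactly the avoidance of these two pieces of bookkeeping, at the price of invoking the independence result \ref{1.10}; your route is more computational but stays entirely within the machinery already set up for \ref{2.9}.
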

\begin{proof}
Since $\gamma\mapsto\gamma\circ F^{-1}$ is a group homomorphism it suffices to show that it is smooth on an identity neighbourhood. Let $F_M\in\mathrm{Diff}_c(M)$ be the diffeomorphism corresponding to $F$ from \ref{1.1b}. Then $(F_M(\overline{V}_i))_{i\in\mathbb{N}}$ is a locally finite cover of $M$ by compact sets $\overline{V}'_i:=F_M(\overline{V}_i)$. These sets are trivializing with the sections $\sigma'_i:=F\circ\sigma_i\circ F_M^{-1}$ because $\pi(\sigma_i'(x))=F_M\circ\pi\circ\sigma_i\circ F_M^{-1}(x)=x$ for all $x\in \overline{V}'_i$. Furthermore, $\mathcal{P}$ has obviously the property $\text{SUB}_\oplus$ with respect to $\overline{\mathcal{V}}':=(\overline{V}'_i,\sigma'_i)_{i\in\mathbb{N}}$, using the same family of charts as for $\overline{\mathcal{V}}$. Since we only need to check smoothness on an identity neighbourhood and we have $F^{-1}\circ\sigma_i'=\sigma_i\circ F_M^{-1}$ it is enough to check the smoothness of
\[
F_{M}^\ast\colon\mathfrak{g}_{\overline{\mathcal{V}}}(\mathcal{P})\rightarrow 
\mathfrak{g}_{\overline{\mathcal{V}'}}(\mathcal{P}),\quad(\eta_i)_{i\in\mathbb{N}}\mapsto(\eta_i\circ F_M^{-1})_{i\in\mathbb{N}},
\]
as this is the local description of the map $\gamma\mapsto\gamma\circ F^{-1}$. We can define $F_{M}^\ast$ on the whole locally convex direct sum and there the components of $F_{M}^\ast$ are pullbacks and thus smooth, hence $F_{M}^\ast$ is smooth.\\
Because of \ref{2.9}, there exists a unit neighbourhood $U\subseteq C^{\infty}_c(P,G)^G$ such that 
\[
U\times\mathcal{O}\rightarrow C^{\infty}_c(P,G)^G,\quad(\gamma,g)\mapsto\gamma\circ S(g)^{-1}
\]
is smooth. Now, for each $\gamma_0\in C_c^{\infty}(P,G)^G$, there exists an open neighbourhood $U_{\gamma_0}$ with $\gamma_0^{-1}\cdot U_{\gamma_0}\subseteq U$. Hence
\[
\gamma\circ S(g)^{-1}=(\gamma_0\cdot\gamma_0^{-1}\cdot\gamma)\circ S(g)^{-1}=\left(\gamma_0\circ S(g)^{-1}\right)\cdot\left((\gamma_0^{-1}\cdot\gamma)\circ S(g)^{-1}\right),
\]
where the first factor depends smoothly on $g$ due to \ref{2.10} and the second factor depends smoothly on $\gamma$ and $g$, because $\gamma_0^{-1}\cdot\gamma\in U$.
\end{proof}

\begin{lem}\label{2.12}\cite[cf. Lemma 2.12]{Wockel3}
If $\mathcal{O}\subseteq\mathrm{Diff}_c(M)$ is the open identity neighbourhood from \ref{2.4} and 
if $S\colon\mathcal{O}\rightarrow\mathrm{Aut}_c(\mathcal{P})$ is the map from \ref{2.6}, then
\[
\omega\colon\mathcal{O}\times\mathcal{O}\rightarrow \mathrm{Gau}_c(\mathcal{P}),\quad 
(g,g')\mapsto S(g)\circ S(g')\circ S(g\circ g')^{-1}
\]
is smooth. Furthermore, if $\mathcal{Q}\colon\mathrm{Aut}_c(\mathcal{P})
\rightarrow\mathrm{Diff}_c(M),\quad F\mapsto F_M$ is the homomorphism from \ref{1.1b}, then for each $g\in\mathcal{Q}(\mathrm{Aut}_c(\mathcal{P}))$ there exists an open identity neighbourhood $\mathcal{O}_g\subseteq\mathcal{O}$ such that
\[
\omega_g\colon\mathcal{O}_g\rightarrow \mathrm{Gau}_c(\mathcal{P}),\quad g'\mapsto F\circ S(g')\circ F^{-1}\circ S\left(g\circ g'\circ g^{-1}\right)^{-1}
\]
is smooth for any $F\in\mathrm{Aut}_c(\mathcal{P})$ with $F_M=g$.
\end{lem}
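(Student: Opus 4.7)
The plan is to follow the strategy of Lemmas \ref{2.9}--\ref{2.11}: derive explicit local formulae for the components of $\omega$ (and of $\omega_g$) in the trivializing system $\overline{\mathcal{V}}$, and then assemble them into a smooth map into the weak direct product by means of \ref{2.9lem} and \ref{2.12a}. First note that $\mathcal{Q}\circ S = \mathrm{id}_{\mathcal{O}}$ together with $\mathcal{Q}$ being a homomorphism yields $\mathcal{Q}(\omega(g,g')) = \mathrm{id}_M$, so $\omega$ does take values in $\mathrm{Gau}_c(\mathcal{P}) \cong C^\infty_c(P,G)^G \cong G_{\overline{\mathcal{V}}}(\mathcal{P})$. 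Through the isomorphism of \ref{1.6}, smoothness of $\omega$ reduces to smoothness of
\[
(g,g') \mapsto \bigl(\gamma_i(g,g')\bigr)_{i\in\mathbb{N}} \in \sideset{}{^\ast}\prod_{i\in\mathbb{N}} C^\infty(\overline{V}_i,G),
\]
where $\gamma_i(g,g')\colon\overline{V}_i\to G$ is determined by $\omega(g,g')(\sigma_i(x)) = \sigma_i(x)\cdot\gamma_i(g,g')(x)$.

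The first step is to derive an explicit local formula for $\gamma_i(g,g')$ in the spirit of \ref{2.7}. Fixing $(g_0,g_0')$ and $x\in\overline{V}_i$, I would apply the formula of \ref{2.7} three times in succession (to $S(g_0g_0')^{-1}$, then $S(g_0')$, then $S(g_0)$) and collect the result as $\sigma_i(x)\cdot K_{i,x}(g,g')$, where $K_{i,x}(g,g')$ is a finite product of transition functions evaluated at iterated compositions of the $s_j(g)$, $s_j(g')$ and $s_j(gg')$. Continuity of the action $\mathrm{Diff}_c(M)\times M\to M$ makes the set of indices appearing in the formula stable on a neighbourhood $\mathcal{O}_{(g_0,g_0'),x}$ of $(g_0,g_0')$ and on a relatively open $U_x\subseteq\overline{V}_i$ with $\overline{U}_x$ a submanifold with corners of $\overline{V}_i$. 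Combining \ref{smooth}, the smoothness of $s_j$ from \ref{2.5}, and smoothness of multiplication in the Lie group $C^\infty(\overline{U}_x,G)$, the restriction $(g,g')\mapsto K_{i,x}(g,g')|_{\overline{U}_x}$ is smooth; covering $\overline{V}_i$ by finitely many such $U_{x_k}$ and gluing via \ref{A.17}, \ref{A.18} yields a smooth local representative of $(g,g')\mapsto\gamma_i(g,g')$ on a neighbourhood $\mathcal{O}_{(g_0,g_0'),i}\subseteq\mathcal{O}\times\mathcal{O}$.

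The hard part will be promoting this componentwise smoothness to smoothness into the infinite weak product. By \ref{2.7a} each $\gamma_i(g,g')$ depends only on the restrictions of $g,g'$ to a fixed compact neighbourhood of $\overline{V}_i$ contained in $\overline{U}_i$; combined with $s_i(\mathrm{id}_M)=\mathrm{id}_M$ and the local finiteness of $\overline{\mathcal{V}}$, this forces almost all $\gamma_i$ to be identically $1$ near $(\mathrm{id}_M,\mathrm{id}_M)$. Following \ref{2.9}, I would use \ref{2.9lem} to obtain a continuous linear lift $\Phi\colon\mathcal{V}_c(M)\to\bigoplus_i\mathcal{V}_c(M)$ satisfying $\Phi(\xi)_i|_{\overline{U}_i}=\xi|_{\overline{U}_i}$, compose with the chart $\varphi$ of \ref{2.4} to move from $\mathrm{Diff}_c(M)$ to $\mathcal{V}_c(M)$, and then apply \ref{2.12a} to conclude smoothness of the assembled map, and hence of $\omega$. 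For the second statement, let $F\in\mathrm{Aut}_c(\mathcal{P})$ with $F_M=g$ and, as in \ref{2.11}, consider the pushed-forward trivializing system $\overline{\mathcal{V}}':=(F_M(\overline{V}_i),F\circ\sigma_i\circ F_M^{-1})_{i\in\mathbb{N}}$, with respect to which $\mathcal{P}$ again has property $\text{SUB}_\oplus$. Then $F\circ S(g')\circ F^{-1}$ admits a description in $\overline{\mathcal{V}}'$ strictly analogous to the one of $S(g')$ in \ref{2.7}, and by continuity of the conjugation $g'\mapsto gg'g^{-1}$ there is an open identity neighbourhood $\mathcal{O}_g\subseteq\mathcal{O}$ on which $gg'g^{-1}\in\mathcal{O}$. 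Running the same argument for $\omega_g(g')=(F\circ S(g')\circ F^{-1})\circ S(gg'g^{-1})^{-1}$, computing the first factor through $\overline{\mathcal{V}}'$ and the second through $\overline{\mathcal{V}}$, then gives smoothness of $\omega_g$ on $\mathcal{O}_g$.
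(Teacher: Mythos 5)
Your proposal matches the paper's proof in all essentials: the containment in $\ker(\mathcal{Q})$, the triple application of the local formula from \ref{2.7} to $S(g\circ g')^{-1}$, $S(g')$ and $S(g)$, the smoothness of the resulting finite products of transition functions via \ref{smooth} and gluing via \ref{A.17}/\ref{A.18}, the observation that almost all components are trivial near the identity, and the final reduction to \ref{2.9lem} and \ref{2.12a} through the chart of \ref{2.4}. For $\omega_g$ you phrase the conjugation step via the pushed-forward trivializing system from \ref{2.11}, whereas the paper works directly with $F$ and introduces correction functions $k_{F,x,g'}$ relating $F^{-1}\circ\sigma_{j_l}$ to $\sigma_{j'_1}$ — but these are exactly the transition data between the two systems, so this is the same argument in different packaging.
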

\begin{proof}
Since $\mathcal{Q}$ is a homomorphism of groups and $S$ is a section of $\mathcal{Q}$ we have
\[
\mathcal{Q}\left(\omega(g,g')\right)=\mathcal{Q}\left(S(g)\right)\circ\mathcal{Q}\left(S(g')\right)\circ\mathcal{Q}\left(S(g\circ g')\right)^{-1}=\mathrm{id}_M
\]
for $g,g'\in G$ and thus $\omega(g,g')$ is indeed an element of $\mathrm{ker}(\mathcal{Q})=\mathrm{Gau}_c(\mathcal{P})\cong C_c^{\infty}(P,G)^G$.\\
Next we show the smoothness of $\omega$. We want to do this by applying \ref{2.12a} but first we will derive a formula for $\omega(g,g')\circ\sigma_i\in C^{\infty}(\overline{V}_i,G)$ that depends smoothly on $g$ and $g'$ similarly to the one derived in \ref{2.7}. Fix $g,g'\in\mathcal{O}$, $x\in\overline{V}_i$ and denote $\hat{g}:=g\circ g'$. For $S(\hat{g})^{-1}$ we proceed as in \ref{2.7} to find $i_1,\ldots,i_l$ such that
\[
S(\hat{g})^{-1}\left(\sigma_{i}(x)\right)=\sigma_{i_l}\left(\hat{g}^{-1}(x)\right)\cdot k_{i_li_{l-1}}\left((\hat{g}_{l-1}^{-1}\circ\cdots\circ\hat{g}^{-1}_{i_1})(x)\right)\cdots\cdot\cdot k_{{i_1}i}(x).
\]
Accordingly we find $i'_{l'},\ldots,i'_1$ for $S(g')$ and $i''_{l''},\ldots i''_1$ for $S(g)$. Furthermore, we find open neighbourhoods $\mathcal{O}_g$ and $\mathcal{O}_{g'}$ of $g$ and $g'$ and a relatively open neighbourhood $U_x\subseteq\overline{V}_i$ of $x$ (w.l.o.g. such that $\overline{U}_x\subseteq\overline{V}_i$ is a submanifold with corners) such that for $h\in\mathcal{O}_g,h'\in\mathcal{O}_{g'}$ and $x'\in\overline{U}_x$ we have
\begin{align*}
&S(h)\circ S(h')\circ S(h\circ h')^{-1}\left(\sigma_i(x')\right)\\
=&S(h)\circ S(h')\left(\sigma_{i_l}\left(\hat{h}^{-1}(x')\right)\cdot k_{i_{l}i_{l-1}}\left((\hat{h}_{i_{l-1}})^{-1}\circ\cdots\circ (\hat{h}_{i_1})^{-1}(x')\right)\cdots\cdot\cdot k_{i_1i_{l}}(x')\right)\\
=&S(h)\bigg(\sigma_{i'_{l'}}\left(h'\circ h'^{-1}\circ h^{-1}(x')\right)\\
\cdot &k_{i'_{l'}i'_{l'-1}}\left(h'_{i'_{l'-1}}\circ\cdots\circ h'_{i'_1}\circ \hat{h}^{-1}(x')\right)\cdots\cdot\cdot k_{i'_1i_{l}}\left(\hat{h}^{-1}(x')\right)\\
\cdot& k_{i_{l}i_{l-1}}\left((\hat{h}_{i_{l-1}})^{-1}\circ\cdots\circ (\hat{h}_{i_1})^{-1}(x')\right)\cdots\cdot\cdot k_{i_1i_{l}}(x')\bigg)\\
=&\tau_i(x')\cdot\\
&\left.\begin{aligned}
\Big[&k_{ii''_{l''}}(x')k_{i''_{l''}i''_{l''-1}}\left(h_{i''_{l''-1}}\circ\cdots\circ h_{i''_1}\circ h^{-1}(x')\right)\cdots\cdot\cdot k_{i''_1i'_{l'}}\left(h^{-1}(x')\right)\\
\cdot &k_{i'_{l'}i'_{l'-1}}\left(h'_{i'_{l'-1}}\circ\cdots\circ h'_{i'_1}\circ \hat{h}^{-1}(x')\right)\cdots\cdot\cdot k_{i'_1i_{l}}\left(\hat{h}^{-1}(x')\right)\\
\cdot &k_{i_{l}i_{l-1}}\left((\hat{h}_{i_{l-1}})^{-1}\circ\cdots\circ (\hat{h}_{i_1})^{-1}(x')\right)\cdots\cdot\cdot k_{i_1i_{l}}(x')
\Big].
\end{aligned}
\right\}
\qquad :=\kappa_{x,h,h'}(x')
\end{align*} 
Since we do not change $g$ and $g'$ in the following $\kappa_{x,h,h'}(x')$ is independent of them.
Each of the factors can be interpreted as an element of $C^\infty(\overline{U}_x,G)$ that depends smoothly on $h$ and $h'$, as in \ref{2.10}. Thus  for fixed $x$, the whole formula depends smoothly on $h$ and $h'$.
Furthermore, $\kappa_{x_1,h,h'}$ coincides with $\kappa_{x_2,h,h'}$ on $\overline{U}_{x_1}\cap\overline{U}_{x_2}$ because
\[
\sigma_i(x')\cdot\kappa_{x_1,h,h'}(x')=S(h)\circ S(h')\circ S(h\circ h')^{-1}\left(\sigma_i(x')\right)=\sigma_i(x')\cdot\kappa_{x_2,h,h'}(x')
\]
for $x'\in\overline{U}_{x_1}\cap\overline{U}_{x_2}$. Now, finitely many $U_{x_1},\ldots,U_{x_l}$ cover $\overline{V}_i$ and we thus see that
\[
\gamma_i(h,h'):=\mathrm{glue}\left(\kappa_{x_1,h,h'}\big|_{\overline{U}_{x_1}},\ldots,\kappa_{x_m,h,h'}\big|_{\overline{U}_{x_l}}\right)\bigg|_{\overline{V}_i}
\]
depends smoothly on $h$ and $h'$. After intersecting finitely many sets we may assume that $\gamma_i$ is defined on an open neighbourhood $\mathcal{O}_{g,i}\times\mathcal{O}_{g',i}$ of $(g,g')$. For almost all $j\in\mathbb{N}$ we have that $(\mathrm{supp}(g)\cup\mathrm{supp}(g'))\cap\overline{V}_j=\varnothing$ and in this case $\gamma_j$ is trivial. Hence, we may assume that almost all $\mathcal{O}_{g,j}\times\mathcal{O}_{g',j}$ are a neighbourhood of $(\mathrm{id}_M,\mathrm{id}_M)$, because for the identity all components $\omega(\mathrm{id}_M,\mathrm{id}_M)\circ\sigma_i$ are trivial. 
Following \ref{2.7a} we see that $\gamma_i(h,h')$ only depends on  $h\big|_{U_i}$ and $h'\big|_{U_i}$ because we have at most compositions of three maps from $\mathcal{O}$ in our formula and $x\in\overline{V}_i$.\\ 
Recall from \ref{1.2} that for every $F_\gamma\in\mathrm{Gau}_c(\mathcal{P})$ there exists an $\gamma\in C^{\infty}_c(P,G)^G$ with $F_\gamma(p)=p\cdot \gamma(p)$ for all $p\in P$. It follows that the local description of $F_\gamma$ is given by $F_\gamma\circ\sigma_i(x)=\sigma_i(x)\cdot \gamma\circ\sigma_i(x)$ and thus $\gamma_i$ defines the local representation of $\omega(h,h')$ via the isomorphism $\mathrm{Gau}(\mathcal{P})\cong G_{\overline{\mathcal{V}}}(\mathcal{P})$.\\
Now, it remains to find a suitable locally convex direct sum and open sets so that \ref{2.12a} can be applied. In fact, for the compactly carried smooth vector fields on $M$ we have the natural isomorphism $C_c^{\infty}(M,TM)^2\cong C_c^{\infty}(M,TM\oplus TM)$ (\cite[cf. F.12]{GloTF}) and by using \ref{2.9lem} we get a continuous linear map 
\[
\Phi\colon C^{\infty}_c(M,TM\oplus TM)\rightarrow\bigoplus_{i\in\mathbb{N}}C^\infty_c(M,TM\oplus TM)
\] 
such that $(\Phi(\eta))_i\big|_{U_i}=\eta\big|_{U_i}$
Recall the chart $\varphi\colon\mathcal{O}\rightarrow\Omega\subseteq C_c^{\infty}(M,TM)$ and
let $\Omega_{g,i}:=\varphi^{-1}(\mathcal{O}_{g,i})$, $\Omega_{g',i}:=\varphi^{-1}(\mathcal{O}_{g',i})$ and $\Omega_i:=\Omega_{g,i}\times\Omega_{g',i}$. Note that almost all of these sets are zero neighbourhoods.
Because $\gamma_i\circ(\varphi^{-1}\times\varphi^{-1})\big|_{\Omega_i}\colon\Omega_i\rightarrow C^\infty(\overline{V}_i,G)$ is a smooth map, we can apply \ref{2.12a} and see that 
\[
\Psi\colon\bigoplus_{i\in\mathbb{N}}\Omega_i\rightarrow \bigoplus_{i\in\mathbb{N}}C^{\infty}(\overline{V}_i,G),\quad (\eta)_{i\in\mathbb{N}}
\mapsto (\gamma_i\circ(\varphi^{-1}\times\varphi^{-1})(\eta_i))_{i\in\mathbb{N}}
\]
is smooth (zero obviously maps to the unity element). Identifying $\Omega\times\Omega$ with an open set in $C_c^\infty(M,TM\oplus TM)$,
we define the open neighbourhood  $\Omega':=\Omega\times\Omega\cap\Phi^{-1}(\bigoplus_{i\in\mathbb{N}}\Omega_i)$ of $(\varphi^{-1}(g),\varphi^{-1}(g'))$ and note that $\Psi\circ\Phi\big|_{\Omega'}$ is smooth. By construction $\Psi((\eta_i)_{i\in\mathbb{N}})$ only depends on $(\eta_i\big|_{U_i})_{i\in\mathbb{N}}$ and the components of $\Psi(\eta)$ are determined by $\eta\big|_{U_i}$, hence $\Psi\circ\Phi\big|_{\Omega'}(\eta)=\omega\circ(\varphi^{-1}\times\varphi^{-1})\big|_{\Omega'}(\eta)$ which shows that $\omega$ is smooth.\\
Now, let $g\in\mathcal{Q}(\mathrm{Diff}_c(M))$ and $F\in\mathrm{Aut}_c(\mathcal{P})$ with $F_M=g$. The strategy to prove that
\[
\omega_g\colon\mathcal{O}_g\rightarrow \mathrm{Gau}_c(\mathcal{P}),\quad g'\mapsto F\circ S(g')\circ F^{-1}\circ S\left(g\circ g'\circ g^{-1}\right)^{-1}
\]
is smooth is almost identical to the above: First we derive an explicit formula for $\omega_g(g')\circ\sigma_i\in C^{\infty}(\overline{V}_i,G)$ and then use it to apply \ref{2.12a}. Note that
\[
\mathcal{Q}\left(F\circ S(g')\circ F^{-1}\circ S(g\circ g'\circ g^{-1})^{-1}\right)=g\circ g'\circ g^{-1}\circ g\circ g'^{-1}\circ g^{-1}=\mathrm{id}_M
\]
holds. Let $\mathcal{O}_g\subseteq\mathcal{O}$ be an open identity neighbourhood with $g\circ\mathcal{O}_g\circ g^{-1}\subseteq\mathcal{O}$ and denote $\overline{g'}=g\circ g'\circ g^{-1}$ for $g'\in\mathcal{O}_g$. Fix $g'$ and $x\in\overline{V}_i$. Proceeding as in \ref{2.7} we find $j_l,\ldots,j_1$ such that
\[
S\left(\overline{g'}\right)^{-1}\big(\sigma_i(x)\big)=\sigma_{j_l}\left(\overline{g'}^{-1}(x)\right)\cdot k_{j_lj_{l-1}}\left((\overline{g'}_{j_{l-1}})^{-1}\circ\cdots\circ(\overline{g'}_{j_1})^{-1}(x)\right)\cdot\cdots\cdot k_{j_1i}(x).
\]
Furthermore, let $j_1'$ be maximal such that
\[
\left(F_M^{-1}\circ S(\overline{g'})^{-1}_M\right)(x)=g'^{-1}\circ g^{-1}(x)\in V_{j'_1}
\]
and let $U_x\subseteq\overline{V}_i$ be a relatively open neighbourhood of $x$ (w.l.o.g. such that $\overline{U}_x\subseteq\overline{V}_i$ is a submanifold with corners) such that $\overline{g'}^{-1}(\overline{U}_x)\subseteq V_{j_l}$ and $g'^{-1}\circ g^{-1}(\overline{U}_x)\subseteq V_{j'_1}$. Since $F_M=g$ and
\[
F^{-1}\left(\sigma_{j_l}\left(\overline{g'}^{-1}(x')\right)\right)\in\sigma_{j'_1}\left(g'^{-1}\circ g^{-1}(U_x)\right)\text{ for }x'\in U_x
\]
we have
\[
F^{-1}\left(\sigma_{j_l}\left(\overline{g'}^{-1}(x')\right)\right)=\sigma_{j'_1}\left(g'^{-1}\circ g^{-1}(x')\right)\cdot k_{F,x,g'}(x')\text{ for }x'\in U_x,
\]
for some smooth function $k_{F,x,g'}\colon U_x\rightarrow G$. In fact we have
\[
k_{F,x,g'}(x)=k_{\sigma_{j_1}}\left(F^{-1}\left(\sigma_{j_l}\left(\overline{g'}^{-1}(x')\right)\right)\right).
\]
After shrinking $U_x$, \ref{smooth} shows that $k_{F,x,g'}\big|_{\overline{U}_x}\in C^{\infty}(\overline{U}_x,G)$ depends smoothly on $g'$ for fixed $x$ because $\overline{g'}^{-1}(\overline{U}_x)\subseteq \overline{U}_i$.\\
Using the same strategy for $S(g')(\sigma_{j'_1}(g'^{-1}\circ g^{-1}(x')))$ and $F\circ S(g')$, we find indices $j_2',\ldots,$ $j_{l'+1}'$ and a smooth function $k'_{F,x,g'}\colon\overline{U}_x\rightarrow G$ (possibly after shrinking $U_x$ once more), depending smoothly on $g$ such that
\begin{align*}
\omega_g(g')\big(\sigma_i(x)\big)=&\sigma_{j'_{l'+1}}\big(g'(x)\big)\cdot k'_{F,x,g'}(x)\cdot k_{j'_{l'}j'_{l'-1}}\big(g'(x)\big)\cdot\cdots\cdot k_{j'_2j'_1}\left(g^{-1}(x)\right)\cdot k_{F,x,g'}(x)\\
&\cdot k_{j_lj_{l-1}}\left(\overline{g}'^{-1}(x)\right)\cdot\cdots\cdot k_{j_1i}(x)\\
&=\tau_i(x)\cdot\\
&\left.\begin{aligned}
\Big[&k_{ij'_{l'+1}}\big(g'(x)\big)\cdot k'_{F,x,g'}\cdot k_{j'_{l'}j'_{l'-1}}\big(g'(x)\big)\cdot\cdots\cdot k_{j'_2j'_1}\left(g^{-1}(x)\right)\\
&\cdot k_{F,x,g'}(x)\cdot k_{j_lj_{l-1}}\left(\overline{g}'^{-1}(x)\right)\cdot\cdots\cdot k_{j_1i}(x)
\Big].
\end{aligned}
\right\}
:=\kappa_{x,g'}(x)
\end{align*}
Since we will not vary $F$ and $g$ in the sequel, we suppressed the dependence of $\kappa_{x,g'}$ on them. Following the lines of \ref{2.7} we find an open neighbourhood $\mathcal{O}_{g'}$ and a relatively open neighbourhood $U_x'\subseteq\overline{V}_i$ of $g'$ and $x$ (w.l.o.g. such that $\overline{U'}_x\subseteq\overline{V}_i$ is a submanifold with corners) such that $\sigma_i(x')\cdot\kappa_{x,g'}(x')=\omega_g(h')(\sigma_i(x'))$ for all $h'\in\mathcal{O}_{g'}$ and $x'\in\overline{U'}_x$. As in the first part of the proof, finitely many $U_{x_1},\ldots U_{x_m}$ cover $\overline{V}_i$ and $\kappa_{x_1,g'}=\kappa_{x_2,g'}$ on $\overline{U}_{x_1}\cap\overline{U}_{x_2}$. Since the gluing and restriction maps from
\ref{A.17} and \ref{A.18} are smooth,
\[
\gamma'_i(g'):=\mathrm{glue}\left(\kappa_{x_1,g'}\big|_{\overline{U}_{x_1}},\ldots,\kappa_{x_m,g'}\big|_{\overline{U}_{x_m}}\right)\bigg|_{\overline{V}_i}
\]
depends smoothly on $g'$ and is defined on an open neighbourhood $\mathcal{O}'_i$ of $g$. Again, we may assume that almost all $\mathcal{O}'_i$ contain the identity because $\mathrm{supp}(g)\cap\overline{V}_i=\varnothing$ implies that $\gamma'_i$ is trivial.
This map is the local representation of $\omega_g(g')\in\mathrm{Gau}_c(\mathcal{P})$ and it only depends on $g'\big|_{U_i}$ because $g'\in\mathcal{O}$ and $x\in\overline{V}_i$.
Using \ref{2.9lem} we get a continuous linear map
\[
\Phi'\colon C_c^\infty(M,TM)\rightarrow\bigoplus_{i\in\mathbb{N}}C^{\infty}_c(M,TM)
\]
such that $\eta\big|_{\overline{U}_i}=(\Phi'(\eta))_i\big|_{\overline{U}_i}$ for $\eta\in C_c^\infty(M,TM)$. Almost all of the open sets  $\Omega'_i:=\varphi^{-1}(\mathcal{O}'_i)$ are zero neighbourhoods. Also, the map
\[
\Psi'\colon\bigoplus_{i\in\mathbb{N}}\Omega'_i\rightarrow \sideset{}{^\ast}\prod_{i\in\mathbb{N}}C^\infty(\overline{V}_i,G),\quad
(\eta_i)_{i\in\mathbb{N}}\mapsto (\gamma'_i(\varphi^{-1}(\eta_i)))_{i\in\mathbb{N}}
\]
is smooth by \ref{2.12a} and depends only on the values of $(\eta_i\big|_{U_i})_{i\in\mathbb{N}}$. 
Thus $\omega_g(\varphi^{-1}(\eta))=\Psi'(\Phi'(\eta))$ for $\eta\in\Phi'^{-1}(\bigoplus_{i\in\mathbb{N}}\Omega'_i)$ which implies $\omega_g$ is smooth on an open neighbourhood of $g$, hence smooth.
\end{proof}

After having adapted the preceding results from maps on compact manifolds to compactly carried maps on $\sigma$-compact manifolds, the main result of this work follows exactly as \cite[Theorem 3.4.14]{Diss}. In the proof, we will also give an explicit description of the Lie group structure on $\mathrm{Aut}_c(\mathcal{P})$ by calculating the group operations in some chart. Afterwards, we use this description and \ref{A.4} to show that up to isomorphism the Lie group structure does not depend on the ´choice of the section $S$. 

\begin{satz}\label{2.14}\cite[cf. Theorem 3.4.14]{Diss}
Let $\mathcal{P}$ be a smooth principal $G$-bundle over the $\sigma$-compact finite-dimensional manifold $M$. If $\mathcal{P}$ has the property $\text{SUB}_\oplus$, then $\mathrm{Aut}_c(\mathcal{P})$ carries a Lie group structure such that we have an extension of smooth Lie groups
\[
\mathrm{Gau}_c(\mathcal{P})\hookrightarrow\mathrm{Aut}_c(\mathcal{P})\xtwoheadrightarrow{\mathcal{Q}}\mathrm{Diff}_c(M)_{\mathcal{P}},
\]
where $\mathcal{Q}:\mathrm{Aut}_c(\mathcal{P})\rightarrow\mathrm{Diff}_c(M)_{\mathcal{P}}$ is the homomorphism from \ref{1.1b} and $\mathrm{Diff}_c(M)_{\mathcal{P}}:=\mathrm{im}(\mathcal{Q})$. 
\end{satz}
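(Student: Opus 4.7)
The plan is to realize $\mathrm{Aut}_c(\mathcal{P})$ as a Lie group extension of $\mathrm{Diff}_c(M)_{\mathcal{P}}$ by $\mathrm{Gau}_c(\mathcal{P})$ via Proposition \ref{II.8}, using the local section $S$ from Definition \ref{2.6} to construct a smooth factor system $(T,\omega)$. First I would confirm that $\mathrm{Diff}_c(M)_{\mathcal{P}}$ is open in $\mathrm{Diff}_c(M)$: since the identity neighbourhood $\mathcal{O}\subseteq\mathrm{Diff}_c(M)$ from \ref{2.4} is contained in $\mathrm{im}(\mathcal{Q})$ by the very existence of the lift $S$, the subgroup $\mathrm{Diff}_c(M)_{\mathcal{P}}$ contains a neighbourhood of the identity and hence is open. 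Thus it inherits a Lie group structure as an open subgroup.

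Next, I would define the would-be factor system. Let $N:=\mathrm{Gau}_c(\mathcal{P})\cong C^\infty_c(P,G)^G$, equipped with the Lie group structure from Theorem \ref{1.11}, and let $G:=\mathrm{Diff}_c(M)_{\mathcal{P}}$. Following \ref{II.5}, extend $S$ from $\mathcal{O}$ to a (possibly non-continuous) global normalized section $S\colon G\to\mathrm{Aut}_c(\mathcal{P})$ of $\mathcal{Q}$ and put
\[
T\colon G\to\mathrm{Aut}(N),\quad T(g)(\gamma):=\gamma\circ S(g)^{-1},\qquad \omega(g,g'):=S(g)S(g')S(gg')^{-1}.
\]
By Remark \ref{II.5} the cocycle identities \eqref{formula2.5} and \eqref{formula2.6} hold automatically, and normalization $\omega(1,g)=\omega(g,1)=1$ is clear. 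What remains is the local smoothness in Definition \ref{II.6}: that $T$ is smooth on $U\times N$ for an identity neighbourhood $U\subseteq G$, that $\omega$ is smooth on an identity neighbourhood, and that each $\omega_g$ is smooth on a neighbourhood of $1$. But these are precisely Lemmas \ref{2.11} and \ref{2.12} applied to $\mathcal{O}\subseteq G$. Hence $(T,\omega)$ is a smooth factor system.

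Applying Proposition \ref{II.8} then yields a unique Lie group structure on $N\times_{(T,\omega)}G$ such that the inclusion $N\times\mathcal{O}\hookrightarrow N\times_{(T,\omega)}G$ is a diffeomorphism onto its open image, and turns
\[
N\hookrightarrow N\times_{(T,\omega)}G\xrightarrow{\mathrm{pr}_2}G
\]
into a Lie group extension. Transporting this structure along the bijection
\[
\Phi\colon N\times G\to\mathrm{Aut}_c(\mathcal{P}),\quad(\gamma,g)\mapsto F_\gamma\circ S(g)
\]
from Remark \ref{II.5} (where $F_\gamma(p)=p\cdot\gamma(p)$, cf.\ \ref{1.2}) defines the Lie group structure on $\mathrm{Aut}_c(\mathcal{P})$. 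In this picture the group operation agrees with \eqref{formula2.1} via \ref{2.8}, the kernel of $\mathcal{Q}$ is a split Lie subgroup (the graph $N\times\{1\}$), and $\mathcal{Q}$ is the projection, so the sequence is exact as claimed.

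Finally I would observe that the resulting Lie group structure is independent of the auxiliary choices (trivializing systems, partitions of unity, and in particular the section $S$): two different choices produce equivalent factor systems because any two sections differ by a map into $N$, and the induced identity map between the two copies of $\mathrm{Aut}_c(\mathcal{P})$ is locally smooth on a neighbourhood of the identity and extends to a Lie group isomorphism by translation. The main obstacle in the entire argument is really the smoothness of $\omega$ and $\omega_g$; these have already been handled in Lemmas \ref{2.9}--\ref{2.12} by the delicate local formula derived in \ref{2.7} together with the direct-sum tools \ref{2.12a}, \ref{7.1}, and \ref{2.9lem}, so at this stage the theorem is an assembly of these pieces into the framework of \ref{II.8}.
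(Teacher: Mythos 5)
Your proposal is correct and follows essentially the same route as the paper: it builds the smooth factor system $(T,\omega)$ from the local section $S$ of \ref{2.6}, verifies the smoothness requirements of \ref{II.6} via Lemmas \ref{2.11} and \ref{2.12} (with $F=S(g)$ for the $\omega_g$ condition), and then invokes \ref{II.8}. The only difference is cosmetic: the paper additionally spells out the resulting multiplication, inversion and conjugation in the chart $\varphi^{-1}\colon F_\gamma\circ S(g)\mapsto(\gamma,g)$, and defers the independence of the choices to Remark \ref{2.17}, which you fold into the proof itself.
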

\begin{proof}
First we note that $\mathrm{Diff}_c(M)_{\mathcal{P}}$ is an open Lie subgroup because $\mathcal{Q}\circ S=\mathrm{id}_{\mathcal{O}}$, hence $\mathcal{O}\subseteq\mathrm{im}(\mathcal{Q})$. Recall the definition of a smooth factor system from \ref{II.6}. We identify $\mathrm{Gau}_c(\mathcal{P})$ with $C_c^\infty(P,G)^G$ and extend $S$ to a (possibly non-continuous) section $S\colon\mathrm{Diff}_c(M)_\mathcal{P}\rightarrow\mathrm{Aut}_c(\mathcal{P})$. 
Now Lemma \ref{2.11} combined with the Remark \ref{2.8} show that $T$ extends to a map satisfying the smoothness conditions of \ref{II.6}. 
If we translate the conditions on $\omega_g$ from \ref{II.6} to our situation we need to check that
\[
\omega(g,x)\circ\omega(g\circ x\circ g^{-1},g)^{-1}=S(g)\circ S(x)\circ S(g)^{-1}\circ S(g\circ x\circ g^{-1})^{-1}
\]
is locally smooth. But with $F=S(g)$, this follows from the second part of \ref{2.12}. The first part of \ref{2.12} and \ref{II.5} show the remaining requirements of \ref{II.6} to be satisfied as well. Thus $(T,\omega)$ is a smooth factor system and now \ref{II.8} yields the assertion. \\
However, using $\ref{A.4}$ we are in a position to give a more explicit description of the Lie group structure on $\mathrm{Aut}_c(\mathcal{P})$. We define a smooth manifold structure on $W:=\mathrm{Gau}_c(\mathcal{P})\circ S(\mathcal{O})$ by requiring the map 
\[
\varphi^{-1}\colon W\rightarrow C^\infty_c(P,G)^G\times\mathcal{O},\quad F_\gamma\circ S(g)\mapsto (\gamma,g)
\]
to be a diffeomorphism (see \ref{II.5}). Let $\mathcal{O}''\subseteq\mathcal{O}$ be a symmetric open identity neighbourhood such that $\mathcal{O}''\circ\mathcal{O}''\subseteq\mathcal{O}$ and for each $g\in\mathrm{Diff}_c(M)$ denote by $\mathcal{O}_g$ the open identity neighbourhood from \ref{2.12}. Then multiplication in terms of $\varphi$ is given by
\[
(C^\infty_c(P,G)^G\times\mathcal{O}'')^2\ni\big((\gamma,g),(\gamma',g')\big)\mapsto\varphi\big(\varphi^{-1}(\gamma,g)\circ\varphi^{-1}(\gamma',g')\big)\in C^\infty_c(P,G)^G\times\mathcal{O},
\]
the inversion in terms of $\varphi$ is given by
\[
C^\infty_c(P,G)^G\times\mathcal{O}\ni(\gamma,g)\mapsto\varphi\big((\varphi^{-1}(\gamma,g))^{-1}\big)\in C^\infty_c(P,G)^G\times\mathcal{O}
\]
and conjugation with $F\in\mathrm{Aut}_c(\mathcal{P})$ is given by
\[
C^\infty_c(P,G)^G\times\mathcal{O}_{\mathcal{Q}(F)}\ni(\gamma,g)\mapsto\varphi\big(F\circ\varphi^{-1}(\gamma,g)\circ F^{-1}\big)\in C^\infty_c(P,G)^G\times\mathcal{O}.
\]
Because of \ref{2.11}, \ref{2.12} and $\mathcal{Q}(S(g))=g$ the smoothness of these maps is shown by the following identities (using $\mathrm{Gau}_c(\mathcal{P})\cong C^\infty_c(P,G)^G$):
\begin{align*}
&\varphi\big(\varphi^{-1}(\gamma,g)\circ\varphi^{-1}(\gamma',g')\big)\\
=&\varphi(F_\gamma\circ S(g)\circ F_{\gamma'}\circ S(g'))\\
=&\big(F_\gamma\circ S(g)\circ F_{\gamma'}\circ S(g')\circ S(g\circ g')^{-1},g\circ g'\big)\\
=&\big(F_\gamma\circ\underbrace{S(g)\circ F_{\gamma'}\circ S(g)^{-1}}_{=T(\gamma,g)}\circ\underbrace{S(g)\circ S(g')\circ S(g\circ g')^{-1}}_{=\omega(g,g')},g\circ g'\big),
\end{align*}
\begin{align*}
&\varphi\big((\varphi^{-1}(\gamma,g))^{-1}\big)\\
=&\big(S(g)^{-1}\circ F_{\gamma^{-1}}\circ S(g^{-1})^{-1},g^{-1}\big)\\
=&\big(\underbrace{S(g)^{-1}\circ S(g^{-1} )^{-1}}_{=\omega(g^{-1},g)^{-1}}
\circ\underbrace{S(g^{-1}\circ F_{\gamma^{-1}}\circ S(g^{-1})^{-1}}_{=T(\gamma^{-1},g^{-1})},g^{-1}\big)\quad\quad\quad\quad
\end{align*}
and
\begin{align*}
&\varphi\big(F\circ\varphi^{-1}(\gamma,g)\circ F^{-1}\big)\\
=&\big(F\circ\varphi^{-1}(\gamma,g)\circ F^{-1}\circ S(F_M\circ g\circ F_M^{-1})^{-1},F_M\circ g\circ F^{-1}_M\big)\\
=&\big(\underbrace{F\circ F_\gamma\circ F^{-1}}_{=c_F(\gamma)}\circ\underbrace{F\circ S(g)\circ F^{-1}\circ S(F_M\circ g\circ F^{-1}_M)^{-1}}_{=\omega_{F_M}(g)},F_M\circ g\circ F_M^{-1}\big)
\end{align*}
\end{proof}

\begin{bem}\label{2.17}\cite[Remark 2.17]{Wockel3}
Of course, the construction of the Lie group structure on $\mathrm{Aut}_c(\mathcal{P})$ from \ref{2.14} depends on the choice of $S$ and thus on the choice of the chart $\varphi\colon\mathcal{O}\rightarrow\Omega$ from \ref{2.4}, the choice of the trivializing systems from \ref{2.2} and the partition of unity chosen in \ref{2.4}.\\
However, different choices lead to the same Lie group structures on $\mathrm{Aut}_c(\mathcal{P})$ and, moreover to equivalent extensions. To see this we show that $\mathrm{id}_{\mathrm{Aut}_c(\mathcal{P})}$ is smooth when choosing two different trivializing systems $\overline{\mathcal{V}}=(\overline{V}_i,\sigma_i)_{i\in\mathbb{N}}$ and $\overline{\mathcal{V}}'=(\overline{V}'_j,\tau'_j)_{j\in\mathbb{N}}$.\\
Denote by $S\colon\mathcal{O}\rightarrow\mathrm{Aut}_c(\mathcal{P})$ and $S'\colon\mathcal{O}\rightarrow\mathrm{Aut}_c(\mathcal{P})$ the corresponding sections of $\mathcal{Q}$. Since
\[
\mathrm{Gau}_c(\mathcal{P})\circ S(\mathcal{O})=\mathcal{Q}^{-1}(\mathcal{O})=
\mathrm{Gau}_c(\mathcal{P})\circ S'(\mathcal{O})
\]
is an open unit neighbourhood and $\mathrm{id}_{\mathrm{Aut}_c(\mathcal{P})}$ is an isomorphism of abstract groups, it suffices to show that the restriction of $\mathrm{id}_{\mathrm{Aut}_c(\mathcal{P})}$ to $\mathcal{Q}^{-1}(\mathcal{O})$ is smooth. Now, given the explicit description of the Lie group structure of $\mathrm{Aut}_c(\mathcal{P})$ in \ref{2.14} we require
\begin{align*}
&\mathcal{Q}^{-1}(\mathcal{O})\ni F\mapsto\big(F\circ S(F_M)^{-1},F_M\big)\in\mathrm{Gau}_c(\mathcal{P})\times\mathrm{Diff}_c(M)\quad\text{    and}\\
&\mathcal{Q}^{-1}(\mathcal{O})\ni F\mapsto\big(F\circ S'(F_M)^{-1},F_M\big)\in\mathrm{Gau}_c(\mathcal{P})\times\mathrm{Diff}_c(M)
\end{align*}
to be diffeomorphisms. We thus only need to show that
\[
\mathcal{O}\ni g\mapsto S'(g)\circ S(g)^{-1}\in\mathrm{Gau}_c(\mathcal{P})
\]
is smooth. By deriving explicit formulae for $S'(g)\circ S(g)^{-1}\circ\sigma_i$ on a neighbourhood $U_x$ of $x\in\overline{V}_i$ and $\mathcal{O}_g$ of $g\in\mathcal{O}$ this follows by the same steps as in \ref{2.12}. Indeed, if the formula for $S(g)^{-1}\circ\sigma_i(x)$ is given by $\sigma_{i_l}(g^{-1}(x))\cdot k(x)$ for some smooth function $k\colon U_x\rightarrow G$, there exists an $i_1'$ such that $\sigma_{i_l}(g^{-1}(x))\in \overline{V}'_{i_1'}$ and we can write $\tau'_{i_1'}(g(x))k_{i_1'i_l}(g(x))\cdot k(x)$ for some smooth transition function $k_{i_1'i_l}$. From there we continue as before.
\end{bem}

\begin{prop}[Lie groups from local data]\cite{GloBO}\label{A.4}
Let $G$ be a group with locally convex manifold structure on some subset $U\subseteq G$ with $1\in U$. Furthermore, assume that there exists an open $V\subseteq U$ such that $1\in V,VV\subseteq U, V=V^{-1}$ and
\begin{enumerate}[(i)]
\item $V\times V\rightarrow U,(g,h)\mapsto gh$ is smooth.
\item $V\rightarrow V,g\mapsto g^{-1}$ is smooth.
\item $\text{for all }g\in G$, there exists an open neighbourhood $W\subseteq U$ such that we have 
$g^{-1}Wg\subseteq U$ and the map $W\rightarrow U,h\mapsto g^{-1}hg$ is smooth.
\end{enumerate}
Then there exists a unique locally convex manifold structure on $G$ which turns $G$ into a Lie group, such that $V$ is an open submanifold of $G$.
\end{prop}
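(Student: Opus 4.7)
The plan is to transport the given manifold structure on $V$ to every left translate $gV$ via $L_g\colon h\mapsto gh$, thereby obtaining a candidate atlas on $G$, and then to check that transitions are smooth and that multiplication and inversion are smooth with respect to the resulting structure. Conditions (i) and (ii) will govern behaviour near the identity, while condition (iii) will be needed precisely to handle the base points $g\neq 1$ when verifying smoothness of the group operations.

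First, I would introduce a topology on $G$ by declaring $A\subseteq G$ open iff $g^{-1}A\cap V$ is open in $V$ for every $g\in G$; equivalently, the sets $gO$ with $g\in G$ and $O\subseteq V$ open form a basis. In this topology every $gV$ is open and every left translation is a homeomorphism. For each $g\in G$ I would then take the map $\phi_g\colon gV\to V$, $x\mapsto g^{-1}x$, composed with any chart of $V$, as a chart of $G$ at $g$. The compatibility requirement is that the transition $h\mapsto g_2^{-1}g_1 h$ be smooth on the open set $\{h\in V : g_2^{-1}g_1 h\in V\}$. Given $h_0$ in this domain and $k_0:=g_2^{-1}g_1 h_0\in V$, I factor $g_2^{-1}g_1 h=k_0\cdot(h_0^{-1}h)$; since $V=V^{-1}$ we have $h_0^{-1}\in V$, so by (i) both the map $h\mapsto h_0^{-1}h$ (near $h_0$, with value $1$ at $h_0$) and $k\mapsto k_0 k$ (near $1$) are smooth, which establishes smoothness of the transition near $h_0$. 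Hausdorffness follows by separating two distinct points either in disjoint translates of $V$, or, when they share a common $gV$, by pulling back a separation in $V$ via $L_{g^{-1}}$.

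The main content is then to establish smoothness of the group operations. For multiplication at $(g_1,g_2)$, using the charts $L_{g_1}$, $L_{g_2}$, $L_{g_1g_2}$ the local representation is
\[
(h_1,h_2)\longmapsto (g_1g_2)^{-1}(g_1 h_1)(g_2 h_2)=(g_2^{-1}h_1 g_2)\,h_2=C_{g_2^{-1}}(h_1)\cdot h_2,
\]
defined for $(h_1,h_2)$ near $(1,1)$. Condition (iii) applied to $g_2$ makes $h_1\mapsto g_2^{-1}h_1 g_2$ smooth on some identity neighbourhood and maps $1$ to $1$; the subsequent multiplication is smooth by (i) on a product of identity neighbourhoods, yielding smoothness of multiplication at $(g_1,g_2)$. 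For inversion at $g$, with charts $L_g$ at $g$ and $L_{g^{-1}}$ at $g^{-1}$, the identity $(gh)^{-1}=h^{-1}g^{-1}=g^{-1}\cdot(gh^{-1}g^{-1})$ gives the local representation $h\mapsto C_g(h^{-1})$ near $h=1$. By (ii) the inversion $h\mapsto h^{-1}$ on $V$ is smooth with $1\mapsto 1$, and by (iii) applied to $g^{-1}$ the conjugation $k\mapsto gkg^{-1}$ is smooth on an identity neighbourhood; the composition is then smooth.

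Uniqueness is forced because in any Lie group structure on $G$ extending the given structure on $V$ the left translations $L_g$ must be diffeomorphisms, so the chart at $g$ is determined by transport from $V$, and the atlas is therefore unique. The main obstacle I expect is the domain bookkeeping: at every step one must simultaneously shrink the various neighbourhoods so that all intermediate products actually stay inside $V$ (or $U$), so that the smoothness promised by (i)--(iii) applies. This rests on $VV\subseteq U$, $V=V^{-1}$, and the continuity of the maps in (iii) at $1$, which permits further shrinking of any identity neighbourhood so that its image under a given conjugation lands where required.
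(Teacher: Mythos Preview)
The paper does not prove this proposition; it is stated with a citation to \cite{GloBO} and no proof is given. Your argument is a correct and standard proof of this well-known ``local-to-global'' principle: transport the manifold structure on $V$ via left translations, verify chart compatibility using (i) and the factorisation $g_2^{-1}g_1 h = k_0\cdot(h_0^{-1}h)$, and then check multiplication and inversion in local coordinates using (iii) to handle the conjugation by the base point and (i), (ii) for the remaining operations near $1$. The domain bookkeeping you flag at the end is indeed the only subtlety, and your remarks there are accurate.
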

%
\section*{Acknowledgements}
The author wants to express his utmost gratitude to Professor Helge Gl\"ockner of Paderborn University, who gave the initiative and supervised the author's master's thesis from which the present paper arose. 
\def\polhk#1{\setbox0=\hbox{#1}{\ooalign{\hidewidth
  \lower1.5ex\hbox{`}\hidewidth\crcr\unhbox0}}}

{\footnotesize
{\bf Jakob Sch\"{u}tt}\\
Universit\"at Paderborn\\
Institut f\"ur Mathematik\\
Warburger Str.\ 100\\
33098 Paderborn, Germany\\
Email: spoon@math.upb.de}

\end{document}